\newcommand{\lint}{\llbracket}
\newcommand{\rint}{\rrbracket}
\newcommand{\bt}{\mathbf{t}}
\newcommand{\bx}{\mathbf{x}}
\newcommand{\bc}{\mathbf{c}}
\newcommand{\bu}{\mathbf{u}}
\numberwithin{equation}{section}
\newtheorem{theorema}{Theorem}
\newtheorem{theorem}{Theorem}[section]
\newtheorem{lemma}[theorem]{Lemma}
\newtheorem{proposition}[theorem]{Proposition}
\newtheorem{cor}[theorem]{Corollary}
\newtheorem{rem}[theorem]{Remark}
\newcommand{\dd}{\mathrm{d}}
\newcommand{\ind}{\mathbf{1}}
\newcommand{\restrict}[1]{\raise-.3ex\hbox{\big|}_{#1}}
\newcommand{\R}{\mathbb{R}}
\renewcommand{\tilde}{\widetilde}
\renewcommand{\hat}{\widehat}
\newcommand{\cc}{\complement}
\newcommand{\cX}{{\ensuremath{\mathcal X}} }
\newcommand{\cA}{{\ensuremath{\mathcal A}} }
\newcommand{\cB}{{\ensuremath{\mathcal B}} }
\newcommand{\cF}{{\ensuremath{\mathcal F}} }
\newcommand{\cP}{{\ensuremath{\mathcal P}} }
\newcommand{\cE}{{\ensuremath{\mathcal E}} }
\newcommand{\cC}{{\ensuremath{\mathcal C}} }
\newcommand{\cN}{{\ensuremath{\mathcal N}} }
\newcommand{\cL}{{\ensuremath{\mathcal L}} }
\newcommand{\cT}{{\ensuremath{\mathcal T}} }
\newcommand{\cD}{{\ensuremath{\mathcal D}} }
\newcommand{\cZ}{{\ensuremath{\mathcal Z}} }
\newcommand{\cK}{{\ensuremath{\mathcal K}} }
\newcommand{\cM}{{\ensuremath{\mathcal M}} }
\newcommand{\bP}{{\ensuremath{\mathbf P}} }
\newcommand{\bQ}{{\ensuremath{\mathbf Q}} }
\newcommand{\bE}{{\ensuremath{\mathbf E}} }
\newcommand{\bC}{{\ensuremath{\mathbf C}} }
\newcommand{\cG}{{\ensuremath{\mathcal G}} }
\DeclareMathSymbol{\leqslant}{\mathalpha}{AMSa}{"36} 
\DeclareMathSymbol{\geqslant}{\mathalpha}{AMSa}{"3E} 
\DeclareMathSymbol{\eset}{\mathalpha}{AMSb}{"3F}     
\renewcommand{\leq}{\, \leqslant\,}                   
\newcommand{\Var}{\mathrm{Var}}        
\newcommand{\suptwo}[2]{\sup_{\substack{#1 \\ #2}}} 
\DeclareMathOperator*{\supp}{\mathrm{Support}}
\newcommand{\bbE}{{\ensuremath{\mathbb E}} }
\newcommand{\bbL}{{\ensuremath{\mathbb L}} }
\newcommand{\bbN}{{\ensuremath{\mathbb N}} }
\newcommand{\bbP}{{\ensuremath{\mathbb P}} }
\newcommand{\bbR}{{\ensuremath{\mathbb R}} }
\newcommand{\bbX}{{\ensuremath{\mathbb X}} }
\newcommand{\bbZ}{{\ensuremath{\mathbb Z}} }
\newcommand{\bZ}{{\ensuremath{\mathbf Z}} }
\newcommand{\ga}{\alpha}
\newcommand{\gb}{\beta}
\newcommand{\gep}{\varepsilon}       
\newcommand{\gG}{\Gamma}
\newcommand{\gD}{\Delta}
\newcommand{\go}{\omega}
\newcommand{\gl}{\lambda}
\newcommand{\ups}{\upsilon}
\def\captionfont@{\footnotesize}
\def\captionheadfont@{\scshape}
\long\def\@makecaption#1#2{%
  \vspace{2mm}
  \setbox\@tempboxa\vbox{\color@setgroup
    \advance\hsize-6pc\noindent
    \captionfont@\captionheadfont@#1\@xp\@ifnotempty\@xp
        {\@cdr#2\@nil}{.\captionfont@\upshape\enspace#2}%
    \unskip\kern-6pc\par
    \global\setbox\@ne\lastbox\color@endgroup}%
  \ifhbox\@ne 
    \setbox\@ne\hbox{\unhbox\@ne\unskip\unskip\unpenalty\unkern}%
  \fi
  \ifdim\wd\@tempboxa=\z@ 
    \setbox\@ne\hbox to\columnwidth{\hss\kern-6pc\box\@ne\hss}%
  \else 
    \setbox\@ne\vbox{\unvbox\@tempboxa\parskip\z@skip
        \noindent\unhbox\@ne\advance\hsize-6pc\par}%
\fi
  \ifnum\@tempcnta<64 
    \addvspace\abovecaptionskip
    \moveright 3pc\box\@ne
  \else 
    \moveright 3pc\box\@ne
    \nobreak
    \vskip\belowcaptionskip
  \fi
\relax
}
\def\writefig#1 #2 #3 {\rlap{\kern #1 truecm
\raise #2 truecm \hbox{#3}}}
\title{The continuum directed polymer in L\'evy Noise}
\author{Quentin Berger}
\address{
LPSM, Sorbonne Universit\'e,  UMR 8001\\
Campus Pierre et Marie Curie, Bo\^ite courrier 158, 4 Place Jussieu, 75252 Paris Cedex 05, France.
}
\email{quentin.berger@sorbonne-universite.fr}
\author{Hubert Lacoin}
\address{
  IMPA, Institudo de Matem\'atica Pura e Aplicada, Estrada Dona Castorina 110
Rio de Janeiro, CEP-22460-320, Brasil. 
}
\email{lacoin@impa.br}
\subjclass[2010]{Primary 82B44, 60G57; Secondary 60K35, 60H15}
\keywords{Disordered polymer model, Scaling limit, L\'evy noise, Stochastic Heat Equation}
\begin{document}

\begin{abstract}
We present in this paper the construction of a continuum  directed polymer model in an environment given by  space-time L\'evy noise. 
One of the main objectives of this construction is to describe the scaling limit of discrete directed polymer in an heavy-tail environment and for this reason we put special emphasis on the case of $\alpha$-stable noises with $\alpha \in (1,2)$.
Our construction can be performed in arbitrary dimension, provided that the L\'evy measure satisfies specific (and dimension dependent) conditions.
We also discuss a few basic properties of the continuum polymer  and the relation between 
this model and the Stochastic Heat Equation with multiplicative
Lévy noise.
\end{abstract}

\maketitle

\tableofcontents


\section{Introduction}

The aim of this paper is to build a continuum model which describes the scaling limit of directed polymers in $\bbZ^d$ with an environment which has infinite second moment: 
the continuum directed polymer in a  space-time L\'evy noise.
Our construction can be thought as an extension to arbitrary noise and dimension of that presented in \cite{AKQ14b} of a continuum polymer in dimension $1$ with Gaussian white noise.
In a companion paper \cite{BL20_disc}, we prove that the scaling limit of the directed polymer in $\bbZ^d$ with heavy tailed environment is indeed the continuum model constructed in the present paper.

Whereas the construction in \cite{AKQ14b}  is directly based on the solution of the Stochastic Heat Equation (SHE) with multiplicative noise, our approach here needs to be slightly different since the solution of SHE with a general L\'evy noise (see \cite{Chong17} for recent developpements) does not display sufficient regularity. Our continuum model is thus defined via a martingale approximation of the noise obtained by truncating the ``small jumps'' part of the noise. This construction is not specific to directed polymers  and can possibly be applied to describe the scaling limit of a wide variety of disordered models with heavy tailed noise, including the disordered pinning model (see  \cite{CSZ14,CSZ13} for the construction of the corresponding Gaussian scaling limits).

In order to motivate our construction, we provide a brief introduction to the directed polymer model, the notion of its scaling limit and review some literature on the subject.

\subsection{Directed polymer in a random environment (the discrete model)}
\label{sec:discrete}

Let us consider $\eta=(\eta_{n,x})_{n\in \bbN, x\in \bbZ^d}$ a \emph{discrete} $(1+d)$-dimensional field of i.i.d.\ random variables, with law denoted by $\bbP$.
 We assume that 
 \begin{equation} \label{standard}
  \bbP[\eta \ge -1]=1 \quad \text{ and } \quad \bbE[\eta]=0 \, .
 \end{equation}
With some harmless abuse of notation, we let $\eta$ denote a generic random variable with the same law as $\eta_{n,x}$.
We consider the following $(1+d)$-dimensional (discrete) directed polymer model, in environment $(\eta_{n,x})_{n\in \bbN, x\in \bbZ^d}$. 
Let $S=(S_i)_{i\geq 0}$ be the simple symmetric random walk on $\bbZ^d$, with law denoted by $\bP$.
Given a parameter $\beta\in (0,1)$ (which allows to tune the disorder's intensity)  we define the  partition function $Z^{\eta}_{N,\beta}$ by
\begin{equation}
\label{def:Zn}
Z^{\eta}_{N,\beta}:= \bE\Big[\prod_{n=1}^N \big( 1+\beta \eta_{n,S_n} \big) \Big] \, ,
\end{equation}
and the associated polymer (Gibbs) measure $\bP^{\eta}_{N,\beta}$  by
\begin{equation}\label{polymers}
 \frac{ \dd \bP^{\eta}_{N,\beta}}{\dd \bP}(S):=\frac{1}{Z^{\eta}_{N,\beta}} \prod_{n=1}^N \big( 1+\beta \eta_{n,S_n} \big).
\end{equation}
The environment $\eta$ can be thought as a field of impurities, and under 
$\bP^{\eta}_{N,\beta}$ the law of the random walk is modified so that it favors visits to (space-time) sites where $\eta$ assumes a larger value. Assumptions \eqref{standard} are merely practical:
they ensure that $1+\beta \eta_{n,S_n}$  is always positive and imply that $\bbE [ Z^{\eta}_{N,\beta}]=1$.

The directed polymer model has a long history, dating back to~\cite{HH85}, see \cite{C17} for an extensive review. In many directed polymer references (including \cite{C17}) the setup is slightly different and the Gibbs weights  are rather written in an exponential form $\exp( \beta \sum_{n=1}^{N}  \tilde\eta_{n,S_n})$ instead of  $\prod_{n=1}^N \big( 1+\beta  \eta_{n,S_n} \big)$ used here.
For most purposes the two formalisms are equivalent, but the latter turns out to be the adequate one for the specific problem we wish to study (we discuss this point later in the introduction, see Remark~\ref{rem:weights}).

\subsubsection*{Localization transition}

A major point of focus in the directed polymer model has been the localization transition from a high temperature diffusive phase (small $\beta$) to a  low temperature localized phase (large~$\beta$).
This phase transition can be studied via the free-energy $p(\beta):= -\lim_{N\to \infty}\frac{1}{N} \bbE [\log Z^{\eta}_{N,\beta}]$;
we refer to \cite[Prop.~2.5]{CSY03} for a proof of its existence.
The free-energy is a non-negative, non-decreasing and continuous function of $\beta\in (0,1)$ 
(see \cite[Thm.~3.2]{CSY06} for a proof, \cite[Thm.~A.1]{Vi19} for its adaptation to the setup presented here). In particular there exists a critical value $\beta_c\in [0,1]$ which is such that  $p(\gb)=0$ if and only if $\beta \leq \gb_c$.

\smallskip
This phase transition has been mostly studied in the case where the environment has a finite second moment
$\bbE[\eta^2]<\infty$. In the exponential setup, this corresponds to having $\bbE[e^{2\beta\tilde \eta}]<\infty$ (the standard assumption considered in the literature is that $\eta$ has exponential moment of all orders see e.g.~\cite{AKQ10})
and under this assumption it has been show that $\beta_c>0$ when $d\ge 3$, in \cite{Bol89,IS88}, while $\beta_c=0$ when $d=1$ \cite{CV06} and $d=2$ \cite{Lac10pol}. In particular this implies that there is no observable transition in dimension $d=1$ and $d=2$.

\subsubsection*{Intermediate disorder regime and scaling limit}

When $d \le 2$, under a finite second moment assumption (assuming that $\beta>0$ and $\Var(\eta)>0$), we have 
\begin{equation}\label{legitinter}
\lim_{N \to \infty} Z^{\eta}_{N,\beta}= 0 \quad \text{ and } \quad
\lim_{\beta\to 0} Z^{\eta}_{N,\beta}=1 \, .
\end{equation}
A legitimate question is therefore to know how
to scale $\beta$ with $N$ (or $N$ with $\beta$) in order to observe a non-trivial random behavior for  $Z^{\eta}_{N,\beta_N}$  and $\bP^{\eta}_{N,\beta}$ in the limit $N\to\infty$.

\smallskip

This problem has been the object of a large number of works \cite{AKQ10,  CSZ14, CSZ13} (see the review~\cite{CSZreview} and references therein). When $d=1$, the correct scaling is to take $\beta$ proportional to $N^{-1/4}$
---~note that in this case, $N$ is proportional to the correlation length of the system which is given by $|p(\beta)|^{-1}\asymp \beta^{-4}$ see \cite{AY15, Nak16}.
The limit is formally obtained by replacing the random walk path and its environment by their scaling limit, which are respectively given by Brownian Motion and space-time White Noise.
In particular, the scaling limit of the partition function $\lim_{N\to \infty} Z^{\eta}_{N, \hat \beta N^{-1/4}}$ is intimately related to the solution of the Stochastic Heat Equation (SHE) with multiplicative noise \cite{CB95}.

\smallskip

The case of the dimension $d=2$  presents additional difficulty as the SHE with multiplicative noise is ill-defined, so that the heuristic picture we had in dimension $d=1$ cannot be valid.
 For a hierarchical version of the model
the scaling limit of the polymer measure when $\beta_N$ is sent to zero at the appropriate rate is identified in  \cite{clark2019}. The original problem for the model on $\bbZ^2$ (and its continuum counterpart on $\bbR^2$) is still partially open but it has witnessed substantial progress in the recent years \cite{CSZ15, CSZ18scaling, gu2019}. A decisive step toward the identification of the scaling limit, \textit{i.e.}\ the convergence of the partition functions for the right value of $\beta_N$, has be made in a recent breakthrough paper \cite{CSZ21}.
More precisely it is shown that taking
 $\beta_N = \frac{\sqrt{\pi}}{\sqrt{\log N}} (1+  \frac{\vartheta}{\log N})$ for some $\vartheta\in \bbR$,
the rescaled
field of point-to-point partition functions
  $N Z^{\eta}_{N, \beta_N}(\sqrt{\lceil Nx\rceil},\sqrt{\lceil Ny \rceil}))_{x\in \bbR^d, y\in \bbR^d}$ converges in distribution   when $N\to \infty$ to a non-trivial limit, called the \emph{critical $2D$ stochastic heat flow}.
  Note that in this case also, the choice for $\beta_N$ is such that the corresponding correlation length $|p(\beta_N)|^{-1}$ is proportional to $N^{1+o(1)}$,
 since $\log |p(\beta)|  \sim - \frac{\pi}{\gb^2}$ as $\gb\downarrow0$ (see \cite{BL17}).

\subsubsection*{Heavy tailed disorder}

Our main motivation is to investigate intermediate disorder limits beyond the case $\bbE[ \eta^2]<\infty$. 
Our interest lies in the case where 
 $\eta$ is in the domain of attraction of an $\alpha$-stable law for $\alpha\in (1,2)$ and that \eqref{standard} still holds (we can also consider the case $\alpha\in(0,1]$ if one drops the assumption that $\eta$ has zero average). To be more specific, let us assume the tail distribution has a pure power-law decay, \textit{i.e.} that in the large $z$ limit we have
\begin{equation}
\label{def:eta}
\bbP(\eta> z)  = z^{-\alpha}(1+o(1))  \, .
\end{equation}

This kind of heavy tail environment has been studied in \cite{Vi19}. In this case, the existence of a non-trivial weak disorder phase  depends on $\alpha$ and the dimension $d$. We have $\gb_c=0$
if and only if $d\leq \frac{2}{\ga-1}$,
see \cite[Thm.~1.1]{Vi19}.
Moreover, when $d< \frac{2}{\alpha-1}$,
the behavior of the free energy near criticality (that is, for $\beta$ small) is given by   
 $p(\beta)=\beta^{\nu+o(1)}$ with $\nu =\frac{2\alpha}{2-d(\alpha-1)}$.

\smallskip
One of our main goal is to identify the intermediate disorder scaling limit of this model under the assumption~\eqref{def:eta}, when  $\alpha<1+\frac{2}{d}$, \textit{i.e.} $\alpha\in (0,1]$ or $\alpha \in (1,2)$ and $d<\frac{2}{\alpha-1}$.
We present in this paper the construction of the continuum measure that appears as the limit of $\bP_{N,\gb_N}^{\eta}$ in the intermediate disorder regime. The convergence of the discrete model to the continuum one, when $\beta_N$ goes to $0$
at some adequate rate, is the object of a separate work~\cite{BL20_disc},
see Theorem~\ref{thm:conv} below.

\begin{rem}
\label{rem:weights}
Let us  stress that
directed polymers in heavy-tail random environment are also considered in \cite{AL11,BT19,DZ16}:
the main difference is that in these papers the Gibbs weights are written under the exponential form $\exp(\gb \sum_{n=1}^N \tilde \eta_{n,S_n})$.
When the second moment of $\tilde \eta_{n,S_n}$ is infinite,
such a model exhibits very strong localization properties:
polymer trajectories remain in the neighborhood of a single favorite trajectory which visits the high enery sites (see~\cite{AL11,BT19}). 
Also, the intermediate disorder regime is somehow trivial
in this case. Indeed, in~\cite{BT19} the authors show that there is a specific scaling at which 
 a sharp \emph{weak-to-strong disorder} transition occurs.
Under this scaling, there is a (random) threshold below which
the partition function  goes to $1$ and above which it goes to $+\infty$ (see \cite[Thms~2.7-2.8]{BT19}
for a more precise statement).
 For a fixed value of $\beta$, the two setups, exponential $(e^{\gb \tilde \eta})$ and additive $(1+\beta\eta)$ are equivalent, since one can be obtained from the other via a simple transformation. Of course such a correspondence between the two setup disappears when studying the limit $\beta\to 0$, with a fixed distribution for $\eta$ or $\tilde \eta$. In the heavy-tail environment case, the additive setup $(1+\beta \eta)$ is the one which keeps a balance between the randomness of the random walk and the rewards of the environment and allows the existence of a scaling limit in which both are retained.
This comes from the fact the field $\gb \eta_{n,x}$ converges after scaling (as a distribution) to a non trivial limit ---~this is never the case for $\exp(\gb \tilde \eta_{n,x})$, even after centering, because large values of $\tilde \eta$ create too wild fluctuations.


\end{rem}

\subsection{An informal definition of a continuum polymer with L\'evy noise}

Before stating our main result concerning the intermediate disorder regime in $\alpha$-stable environment, we need to provide a description of the scaling limit. The object we construct is formally obtained by 
considering a Feynman--Kac formula where the random walk and the environment are replaced by their respective scaling limits.

\smallskip

The scaling limit of our random walk is a Brownian Motion with covariance matrix~$\frac{1}{d} I_d$ where $I_d$ is the identity matrix in $\bbR^d$.
To define the continuum polymer,
we rather consider a \emph{standard} $d$-dimensional Brownian motion $(B_t)_{t\in  [0,T]}$ (for practical reason it is convenient to define $B$ only until a fixed finite time horizon $T$).
 We let $\bQ$ denote the associated distribution (we omit the dependence in $T$ to lighten notation) on the Wiener space 
\begin{equation}\label{defwiener}
C_0([0,T]):= \big\{ \varphi  \colon  [0,T]\to \bbR^d \ : \ \varphi \text{ is continuous and } \varphi(0)=0 \big\} \,,
\end{equation}
endowed with the topology of uniform convergence and the associated Borel $\sigma$-algebra.
\smallskip

When $\eta$ has a finite second moment, the scaling limit for the environment is given by a  space-time Gaussian white noise. In that case a Brownian polymer model  in dimension $d=1$ can be (and has been) constructed based on the solution of the Stochastic Heat Equation, see \cite{AKQ14b}.
On the other hand, in the case where \eqref{def:eta} is satisfied for some $\alpha\in (0,2)$,
we have to consider a different object, namely
the space-time $(1+d)$-dimensional  $\alpha$-stable noise with L\'evy measure supported on $\bbR_+$. This is the multidimensional analog of the derivative of the $\alpha$-stable process with only  positive jumps.
This is a well studied object, see \cite{FFU17} and references therein, but we try to offer here a short and self-contained introduction for the sake of completeness.
For simplicity, we focus our exposition on the
case $\ga\in (1,2)$, which displays the most interesting phenomenology. However we also treat below a much more general class of noise which includes the case  
$\alpha \in (0,1]$.

\subsubsection*{One-sided $\alpha$-stable noise in $\bbR\times \bbR^d$}
Given $\alpha\in(1,2)$, we start with a Poisson
point process~$\go$ on $\bbR\times \bbR^d\times \bbR_+$ (time, space, and value of disorder) with intensity
\begin{equation}
\label{Poissondens}
\dd t \otimes \dd x \otimes \alpha \ups^{-(1+\alpha)} \dd \ups \, ,
\end{equation}
which is obtained as the scaling limit of the extremal process associated with $(\eta_{n,x})_{n \in \bbN, x\in \bbZ^d}$ satisfying~\eqref{def:eta}.
As it shall draw no confusion the distribution of $\go$ is also denoted by~$\bbP$.
Our $\alpha$-stable L\'evy noise $\xi_{\go}$ is the random distribution which is formally obtained by summing weighted Dirac masses $\ups\,\delta_{(t,x)}$  corresponding to all the points $(t,x,\ups)\in \go$ and subtracting a non-random quantity so that the obtained distribution is centered in expectation. The delicate part is that, as in the definition of $\alpha$-stable processes, the counter term that has to be substracted is infinite.

Let us thus explain how $\xi_{\go}$ can be obtained using a limiting procedure.
We consider~$\go$ as a set of points, and for any $a\in(0,1]$ we define $\go^{(a)}:=\{ (t,x,\ups)\in \go \colon \ups\ge a\}$  the \emph{truncated} environment,  \textit{i.e.}\ removing  atoms (jumps) of size less than $a$.
We then let  $\xi_{\go}^{(a)}$ be the random measure on $\bbR\times \bbR^d$ defined by 
\begin{equation}
\label{def:omegabar}
 \xi_{\go}^{(a)}:= \bigg(\sum_{(t,x,\ups)\in \go} \ups\ind_{\{\ups \ge a\}} \delta_{(t,x)} \bigg)- \frac{ \alpha (a^{1-\alpha}-1)}{\alpha-1} \, \cL \, ,
\end{equation}
where $\cL$ denotes Lebesgue measure on $\bbR\times \bbR^d$ 
(note that our centering only compensates the jumps of intensity smaller than one, so that $\xi_{\go}^{(a)}$ is 
not centered).
We define $\xi_{\go}$ as the distributional limit of~$ \xi_{\go}^{(a)}$ when~$a$ tends to zero.

For the sake of fixing ideas, let us specify a functional space in which  this convergence holds.
Given $s\in \bbR$, the Sobolev space $H^s(\bbR^{1+d})$ is defined as the closure of the space of smooth compactly supported function with respect to the norm
\begin{equation}
\label{Hsnorm}
 \|f\|_{H^s}:=  \left( \int_{\bbR^{1+d}}(1+|z|^2)^{s} |\hat f(z)|^2 \dd z \right)^{1/2},
\end{equation}
where $\hat f(z)= \int_{\bbR^{d+1}} f(x) e^{-i z\cdot x} \dd x$ is the Fourier transform of $f$.
We also consider the local Sobolev space 
\[
H^{s}_{\mathrm{loc}}(\bbR^{1+d}):= \big\{ f \ : \    f \psi \in H^{s} \text{ for every $C^{\infty}$ compactly supported $\psi$} \big\} \,,
\]
considered with the topology induced by the family of semi-norms
$\| \psi f\|_{H^{s}}$ indexed by $\psi$.
We then have the following (standard) result:
When $\alpha\in (1,2)$, then
 $\xi_{\go}^{(a)}$ converges almost surely in  $H^{-s}_{\mathrm{loc}}(\bbR^{1+d})$ with $s>(1+d)/2$ towards a limit $\xi_{\go} \in H^{-s}_{\mathrm{loc}}(\bbR^{1+d})$, see Proposition~\ref{convergence2}.
In particular, this means that~$\xi_{\go}$ can be integrated against any function in $H^s(\bbR^{1+d})$ which has compact support.  

\subsubsection*{Informal description of the scaling limit}

In order to describe the candidate scaling limit of the model \eqref{polymers} 
we must make sense of a Feynman--Kac formula analogous to \eqref{def:Zn}
in which the random walk $S$ is replaced by a Brownian motion $B$
and $\eta$ replaced by the $\ga$-stable noise $\xi_{\go}$.
Similarly to \eqref{polymers} we wish to define a polymer model which is a modification of the Wiener  Brownian measure $\bQ$ obtained via tilting by an energy functional. For $T>0$ and $\beta>0$ we would like to define $\bQ^{\go}_{T,\beta}$ as   
\begin{equation}\label{wewant}
 \frac{\dd \bQ^{\go}_{T,\beta}}{\dd \bQ}(B)= \frac{1}{\cZ^{\go}_{T,\beta}} \, \mathbf{:}e^{\beta H_\go(B)}\mathbf{:}  \, ,
\end{equation}
where the energy functional is given by $\xi_{\go}$ integrated against the Brownian trajectory,
 in the following sense ($\delta_{(s,y)}$ denotes the dirac mass at $(s,y)$)
\begin{equation}
      H_{\go}(B)= \xi_{\go}\left( \int_{0}^T \delta_{(t,B_t)} \, \dd t\right).
       \end{equation}
At this stage, we only consider this expression at a formal level, as it is quite clear that the fact that
$\xi_{\go}\in H_{\rm loc}^{-s}$ is not sufficient to provide a mathematical interpretation of this expression.
       
 \smallskip      
       
The exponential $\mathbf{:}e^{\beta H_\go(B)}\mathbf{:}$ is to be interpreted as an analogous of the time-ordered Wick exponential which is considered for the continuum directed polymer in white noise, see~\cite{AKQ14b}.
Informally, $\mathbf{:}e^{\beta H_\go(B)}\mathbf{:} $ is defined via the following expansion 
\begin{equation}\label{giancarlo}
\mathbf{:}e^{\beta H_\go(B)}\mathbf{:}\;   = \sum_{k=0}^\infty \beta^k\int_{0<t_1<\dots<t_k<T} \prod_{i=1}^k\xi_{\go} ( \delta_{(t_i, B_{t_i}) } \dd t_i)  \, .
\end{equation}
While it is challenging to make sense of the above formula, things become simpler if one looks at the partition function $\cZ^{\go}_{T,\beta}=\bQ\left[ \mathbf{:}e^{\beta H_\go(B)}\mathbf{:}   \right]$, because 
a formal integration with respect to Brownian trajectories makes the integrand more regular.
Let us denote
\begin{equation}\label{defrhotx}
 \rho_t(x):=\frac{1}{(2\pi t)^{d/2}}e^{-\frac{\|x\|^2}{2t}}
\end{equation}
the heat-kernel on $\bbR^d$ associated with the Brownian motion ($\|x\|$ stands for the Euclidean norm of $x$). 
For $0<t_1<\cdots < t_k$ and $x_1, \ldots, x_k \in \bbR^d$, we also use the short-hand notation 
\begin{equation}
\label{def:rho}
\varrho(\bt,\bx) := \prod_{i=1}^{k} \rho_{t_i-t_{i-1}} (x_i-x_{i-1})\, ,
\end{equation}
with by convention $t_0=0$ and $x_0=0$
(in the following, if a different choice is made it will be duly notified).
We will also use the notation $\dd \bt$ and $\dd \bx$ for Lebesgue measure on $\bbR^k$ and $(\bbR^d)^k$ respectively.
With these notation, the expectation of 
\eqref{giancarlo} with respect to the Wiener measure can be formally defined by 
\begin{equation}\label{lexpress}
\cZ^{\go}_{T,\beta}=1+\sum_{k=1}^\infty \beta^k\int_{0<t_1<\dots<t_k<T}\int_{(\bbR^d)^k} \varrho(\bt,\bx)  \prod_{i=1}^k \xi_{\go} ( \dd t_i, \dd x_i).
\end{equation}
In order to give a meaning to the above expression, we will approximate $\xi_{\go}$ by $\xi_{\go}^{(a)}$ and investigate the limiting behavior when $a$ goes to zero.
As it will be seen later, giving a meaning to $\cZ^{\go}_{T,\beta}$ is the most important step in order to give a rigorous interpretation to~\eqref{wewant}.

\section{Model and results}

We can now introduce our results. We present in Sections \ref{sec:mainres}-\ref{sec:generalnoise} our construction of the continuum measure $\bQ_{T,\gb}^{\go}$,
thus defining the continuum directed polymer in L\'evy noise.
For pedagogical reason, we first present in Section~\ref{sec:mainres} the case of the $\alpha$-stable noise with $\ga\in (1,2)$, since it corresponds 
to the scaling limit of the model introduced in Section~\ref{sec:discrete} above;
we turn afterwards in Section~\ref{sec:generalnoise}  to the case of a general heavy-tail noise. 
In Section~\ref{sec:mainprop}, 
we present finer properties of the
measure constructed
and
in Section~\ref{sec:SHE} we discuss the 
relation between our model
and the Stochastic Heat Equation with multiplicative Lévy noise.
Further comments on the results are
made in Section~\ref{sec:comments}.

\subsection{The construction of the continuum polymer in L\'evy $\alpha$-stable noise}
\label{sec:mainres}



Our main result is the construction of a measure on the Wiener space $C_0([0,T])$,
corresponding to the definition~\eqref{wewant}.
To ease the exposition, we single out the most important step of this construction which is the construction of the partition function,
that is giving a mathematical interpretation for the formal integral \eqref{lexpress}.
As mentioned above, we treat the case of an $\alpha$-stable noise first,
before we turn to more general noises.
%

Recall the definition~\eqref{def:omegabar}
of the \textit{truncated} noise~$\xi_{\go}^{(a)}$.
We define, for $a>0$,
\begin{equation}\label{lafirstdef}
\cZ^{\go,a}_{T,\beta}:=1+\sum_{k=1}^\infty \beta^k\int_{0<t_1<\dots<t_k<T}\int_{(\bbR^d)^k} \varrho(\bt,\bx) \prod_{i=1}^k \xi_{\go}^{(a)} ( \dd t_i, \dd x_i).
\end{equation}
Since $\xi_{\go}^{(a)} ( \dd t_i, \dd x_i)$ is a locally finite signed measure, the only possible issue with the above definition is the integrability over $t_i$'s and $x_i$'s and summability over $k$. These conditions are not difficult to check and this done in Proposition~\ref{prop:contiZfini}.
It is also not immediate from~\eqref{lafirstdef} that 
$\cZ^{\go,a}_{T,\beta}$ is positive (which is a required property for being a partition function), but this is ensured by Lemma~\ref{eazy}.

\smallskip

We prove that considering the limit of   $\cZ^{\go,a}_{T,\beta}$ when $a\downarrow0$, we obtain a \emph{non-trivial} (\textit{i.e.}\ disordered) quantity, provided that $\alpha$ is smaller than a critical threshold.
Let us define
\begin{equation}
\label{def:alphac}
\alpha_c=\alpha_c(d)  =
\begin{cases}
2 & \quad \text{if } d=1,2 \, ,\\
1+ \frac{2}{d}& \quad \text{if } d\ge 3 \, .
\end{cases}
\end{equation}

\begin{theorem}
\label{thm:continuous}
If $\alpha\in (1,\alpha_c)$ with $\alpha_c$ defined in \eqref{def:alphac}, 
there exists an almost surely positive random variable $\cZ^{\go }_{T, \beta}$
such that the following convergence 
\begin{equation*}
 \lim_{a \to 0}\cZ^{\go,a }_{T, \beta}=\cZ^{\go }_{T, \beta}
\end{equation*}
holds almost surely and in $\bbL_1$. 
When  $d\ge 3$ and $\alpha\in [\alpha_c,2)$ then for all $\beta>0$ we have 
$\lim_{a \to 0}\cZ^{\go,a }_{T, \beta}=0$ almost surely.
\end{theorem}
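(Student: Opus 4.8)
The argument opens, exactly as for the first statement of the theorem, by exhibiting a martingale. Put $\cF_a:=\sigma\big(\go\cap(\bbR\times\bbR^d\times[a,\infty))\big)$, which increases as $a\downarrow0$. For $a'<a$ the signed measure $\xi_{\go}^{(a')}-\xi_{\go}^{(a)}$ is the compensated sum of the atoms of $\go$ with value in $[a',a)$; it is centred and independent of $\cF_a$, and since the atoms of $\go$ have pairwise distinct time coordinates almost surely (so that in the strictly time-ordered integral of \eqref{lafirstdef} each atom can feed at most one factor), multilinearity of the chaos expansion yields $\bbE[\cZ^{\go,a'}_{T,\beta}\mid\cF_a]=\cZ^{\go,a}_{T,\beta}$. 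By Lemma~\ref{eazy} we have $\cZ^{\go,a}_{T,\beta}\ge0$, and a direct computation gives $\bbE[\cZ^{\go,a}_{T,\beta}]=e^{\frac{\alpha}{\alpha-1}\beta T}$, independently of $a$. Thus $a\mapsto\cZ^{\go,a}_{T,\beta}$ is an $\bbL_1$-bounded nonnegative martingale, which converges $\bbP$-a.s.\ as $a\downarrow0$ to some $\cZ^{\go}_{T,\beta}\ge0$. Since this a.s.\ convergence holds unconditionally, to prove the second statement it is enough to show that $\cZ^{\go,a}_{T,\beta}\to0$ in $\bbP$-probability.

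The value of $\alpha_c$ is dictated by the exact scale covariance of the model. The process $\go$ is invariant under $(t,x,\ups)\mapsto(\lambda^2 t,\lambda x,\lambda^{(2+d)/\alpha}\ups)$; carrying this through \eqref{def:omegabar}--\eqref{lexpress} — the one delicate point being that the partial counterterm in \eqref{def:omegabar} is not scale covariant and produces a deterministic Lebesgue drift, which multiplies the partition function by an explicit exponential factor — one obtains, with $\gamma:=(2+d)/\alpha$,
\[
\cZ^{\go}_{T,\beta}\ \overset{d}{=}\ \exp\!\Big(\tfrac{\alpha}{\alpha-1}\,\beta T\,\big(1-\lambda^{\gamma-2-d}\big)\Big)\;\cZ^{\go}_{T/\lambda^{2},\,\beta\lambda^{\gamma-d}},\qquad \lambda>0 .
\]
Coarse-graining by a factor $\lambda>1$ thus replaces $\beta$ by $\beta\lambda^{\gamma-d}$: the disorder is irrelevant when $\gamma>d$, i.e.\ $\alpha<\alpha_c$, and relevant (marginally at equality) when $\gamma\le d$, i.e.\ $\alpha\ge\alpha_c$. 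Together with the standard dichotomy for such a martingale — either $\cZ^{\go}_{T,\beta}=0$ a.s., or $\cZ^{\go}_{T,\beta}>0$ a.s.\ and the $a$-martingale is uniformly integrable with $\bbE[\cZ^{\go}_{T,\beta}]=e^{\frac{\alpha}{\alpha-1}\beta T}$ — this reduces the problem, for $d\ge3$ and $\alpha\in[\alpha_c,2)$, to excluding the latter ``weak disorder'' alternative. (The multiplicativity $\bbE[\cZ^{\go}_{T+s,\beta}]=\bbE[\cZ^{\go}_{T,\beta}]\,\bbE[\cZ^{\go}_{s,\beta}]$, coming from the Chapman--Kolmogorov decomposition of \eqref{lexpress} over $[0,T]$ and $[T,T+s]$, is what makes these the only two alternatives.)

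Carrying out this exclusion is the heart of the proof and, I expect, its main difficulty. A plain second moment estimate is unavailable: the $\alpha$-stable tail makes the variance of the noise infinite, and even after capping the jumps the relevant two-replica quantity is controlled by the collision local time of two independent Brownian motions, which is itself infinite once $d\ge2$. One must therefore argue with fractional moments. Writing $\widehat\cZ^{\go,a,K}_{T,\beta}$ for the partition function built from the jumps with value in $[a,K)$, recompensated so as to be centred, one would bound $\bbE\big[(\widehat\cZ^{\go,a,K}_{T,\beta})^{\theta}\big]$ for a well chosen $\theta\in(0,1)$ by decomposing over the dyadic scales $a\le2^{-j}\le1$ of the truncation and controlling each single-scale increment through a ``self-energy'' integral pairing the heat kernel against the Lévy measure, schematically of the form
\[
\int_{0}^{T}\!\!\int_{\bbR^d}\!\!\int_{\bbR_+}\Big((\beta\ups\,\rho_{t}(x))^{2}\wedge(\beta\ups\,\rho_{t}(x))\Big)\,\alpha\ups^{-1-\alpha}\,\dd\ups\,\dd x\,\dd t\ =\ C_{\alpha}\,\beta^{\alpha}\!\int_{0}^{T}t^{-d(\alpha-1)/2}\,\dd t ,
\]
which is finite precisely when $d(\alpha-1)<2$ and diverges (at $t\to0$) as soon as $\alpha\ge\alpha_c$. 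For $\alpha\ge\alpha_c$ this divergence forces $\widehat\cZ^{\go,a,K_a}_{T,\beta}\to0$ in probability along a suitable choice $K_a\uparrow\infty$; combining this with a check that the jumps of value $\ge K_a$ carry only a vanishing fraction of $\cZ^{\go,a}_{T,\beta}$ with probability tending to $1$, one concludes that $\cZ^{\go,a}_{T,\beta}\to0$ in probability, hence $\cZ^{\go}_{T,\beta}=0$ almost surely. Implementing this scheme without any $L^2$ control — in particular, quantifying both the single-scale increments and the large-jump error purely through $\theta$-th moments — is where the substantive work lies.
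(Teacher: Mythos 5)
Your proposal establishes only the easy part of the theorem. The martingale property and the uniform bound $\bbE[\cZ^{\go,a}_{T,\beta}]=e^{\beta\mu T}$ (with $\mu=\tfrac{\alpha}{\alpha-1}$) give almost sure convergence to some nonnegative limit, but they give neither the $\bbL_1$ convergence nor the almost sure positivity asserted for $\alpha\in(1,\alpha_c)$, and your argument never supplies these. The ``standard dichotomy'' you invoke (either $\cZ^{\go}_{T,\beta}=0$ a.s., or it is a.s.\ positive and the martingale is uniformly integrable with full mean) is not an off-the-shelf fact in this continuum Poissonian setting and is not justified in your text; and even if it were granted, you never exclude the degenerate alternative when $\alpha<\alpha_c$ — the scale-covariance computation only identifies the heuristic relevance threshold, it proves nothing about uniform integrability. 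In the paper this is precisely the heart of the proof (Proposition~\ref{thm:continuous2}, via Proposition~\ref{metaprop}): one constructs restricted partition functions — truncating large jumps when $d=1$, and when $d\ge 2$ restricting to collections $\sigma\in\cB_q$ on which products of the $u_i$ are dominated by $q^{|\sigma'|}\prod(t'_i-t'_{i-1})^{\gamma}$ — which are bounded in $\bbL_2$ (Proposition~\ref{llavar}) and approximate $\cZ^{\go,a}_{T,\beta}$ in $\bbL_1$ uniformly in $a$ (Proposition~\ref{lesper}); positivity of the limit is then obtained separately through a Kolmogorov $0{-}1$ law argument (Proposition~\ref{lapositivity}). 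Note that your claim that no second-moment argument is available is exactly what this restriction circumvents: the full partition function indeed has infinite variance for $d\ge2$, but the restricted one does not, so the $\bbL_2$ method survives.

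For the degenerate regime $d\ge3$, $\alpha\in[\alpha_c,2)$, your fractional-moment, dyadic-scale scheme is a genuinely different route from the paper's, which instead shows that the total variation distance between $\bbP$ and the size-biased law $\tilde\bbP^a_\beta$ tends to $1$: by Lemma~\ref{spayne} the size-biased environment is $\go$ plus an independent Poisson cloud planted along a Brownian path, and one exhibits a counting statistic $Y_a$ whose means under the two laws separate faster than its fluctuations (Proposition~\ref{prop:tozero}). Your self-energy integral does identify the correct threshold $d(\alpha-1)<2$, but the threshold is not a proof: the single-scale $\theta$-moment estimates and the control of the jumps above $K_a$ are exactly the steps you defer (``where the substantive work lies''), so the second statement is not established either. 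As it stands, both halves of the theorem carry genuine gaps: the first because uniform integrability and positivity are never proved, the second because the fractional-moment program is only sketched.
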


\begin{rem} 
Note that the definitions \eqref{def:omegabar} and \eqref{lafirstdef} also make sense when $\alpha\ge 2$. In that case $\xi^{(a)}_{\go}$ does not converge to a limiting distribution but this does a priori prevent $\cZ^{\go,a}_{T,\beta}$ from having a non-trivial limit.
 Proposition~\ref{prop:tozero} below shows that we have in fact $\lim_{a\downarrow 0}\cZ^{\go,a }_{T, \beta} = 0$ for every  $\alpha\in [\alpha_c,\infty)$ in any dimension $d\ge 1$.
\end{rem}

Let us now present the construction of the polymer measure described in \eqref{wewant}.
Recall that our objective is to define a probability on
the Wiener space $C_0([0,T])$ 
which corresponds to the formal definition \eqref{wewant}. 
We proceed in a similar manner as with the partition function: we first consider a measure on $C_0([0,T])$ built with the truncated noise~$\xi_{\go}^{(a)}$. 
Let us introduce the following families of functions on  $C_0([0,T])$:
\begin{equation}\label{thespaces}
\begin{split}
 \cB&:= \left\{ \, f\colon C_0([0,T]) \to \bbR \ : \ f \text{ measurable and bounded} \, \right\},\\
 \cC&:= \left\{ \, f \colon C_0([0,T]) \to \bbR \ : \ f \text{ continuous and bounded}\, \right\},\\
 \cB_b&:= \left\{ \, f\in \cB \ : \ \supp(f) \text{ is  bounded}\,  \right\},\\
\cC_b&:= \left\{  \, f\in \cC \ : \ \supp(f) \text{ is  bounded}\, \right\}\,.
\end{split}
\end{equation}
 Recall that $C_0([0,T])$ is equipped with the topology of the supremum norm: hence we say that $f\colon C_0([0,T]) \to \bbR$ has bounded support if there exists $M>0$ such that   $f(\varphi) =0$ for any $\varphi \in C_0([0,T])$ with $\|\varphi\|_{\infty} >M$.

Given  a bounded Borel-measurable function $f\in \cB$, we define 
\begin{equation}\label{lafirstdeff}
 \cZ^{\go,a}_{T,\beta}(f)=\bQ(f)+\sum_{k=1}^\infty \beta^k\int_{0<t_1<\dots<t_k<T}\int_{(\bbR^d)^k} \varrho( \bt,\bx , f) \prod_{i=1}^k \xi_{\go}^{(a)} ( \dd t_i, \dd x_i)\,,
\end{equation}
 where we use the notation $\bQ(f) := \bQ( f( (B_t)_{t\in [0,T}) )$,
and where $\varrho(\bt,\bx , f)$ is defined by 
(recall~\eqref{def:rho})
\begin{equation}\label{defvarro}
 \varrho( \bt,\bx, f)=  \varrho(\bt,\bx) \bQ\Big[ f\left((B_{t})_{t\in [0,T]}\right) \, \Big| \, \forall i\in \lint 1,k \rint,  \, B_{t_i}=x_i\  \Big].
\end{equation}
With some abuse of notation,
the conditional measure $\bQ\left( \, \cdot \, |  \, \forall i\in \lint 1,k \rint,  \, B_{t_i}=x_i\ \right)$ denotes  the distribution of the process obtained by concatenating independent Brownian bridges linking $(t_{i-1},x_{i-1})$ to $(t_{i},x_{i})$ for $ i\in \lint 1,k \rint$.
 The fact that~\eqref{lafirstdeff} is well-defined for $f\in \cB_b$ is ensured by Proposition~\ref{prop:contiZfini} below; the extension to non-negative $f\in \cB$ is given in Lemma~\ref{eazy}.
Note that  $f\mapsto \varrho( \bt, \bx , f)$ is linear  and thus so is $\cZ^{\go,a}_{T,\beta}(\cdot)$.
 From Lemma \ref{eazy} below, $\cZ^{\go,a}_{T,\beta}(f)\ge 0$ when $f\ge 0$  and $\cZ^{\go,a}_{T,\beta}({\bf 1})=\cZ^{\go,a}_{T,\beta}> 0$. As a consequence, 
for any $a>0$, we can define a probability measure
 $\bQ^{\go,a}_{T,\beta}$  on $C_0([0,T])$ by setting
\begin{equation}\label{QTA}
\bQ^{\go,a}_{T,\beta}\left( A \right):= \frac{\cZ^{\go,a}_{T,\beta}(\ind_{A})}{\cZ^{\go,a}_{T,\beta}} \, ,
\end{equation}
for any Borel set $A$.
We also write $\bQ^{\go,a}_{T,\beta}\left( f \right)$ for the expectation, with respect to $\bQ^{\go,a}_{T,\beta}$,
of a function $f \colon  C_0([0,T])\to \bbR$.
In the same way as for the partition function, we define the measure $\bQ^{\go}_{T,\beta}$ as the limit of $\bQ^{\go,a}_{T,\beta}$ when $a$ goes to zero: this requires $\alpha<\alpha_c$, and the convergence holds for the weak topology.
Let $\cM_T$ denote the space of probability measures on $C_0([0,T])$ equipped with the topology of weak convergence.

\begin{theorem}\label{thm:contmeasure} 
If $\ga\in (1,\ga_c)$, there exists a probability measure 
$\bQ^{\go}_{T,\beta}$ on $C_0([0,T])$ such that 
the following convergence holds almost surely in $\cM_T$
\[
 \lim_{a\to 0} \bQ^{\go,a}_{T,\beta}= \bQ^{\go}_{T,\beta} \,.
\]
In other words, we have almost surely  for every $f\in \cC$
\begin{equation}
 \lim_{a\to 0} \bQ^{\go,a}_{T,\beta}(f)= \bQ^{\go}_{T,\beta}(f)\, .
\end{equation}
\end{theorem}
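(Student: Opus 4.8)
\textbf{Proof plan for Theorem~\ref{thm:contmeasure}.}

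The natural strategy is to reduce the convergence of the measures $\bQ^{\go,a}_{T,\beta}$ to the convergence of the linear functionals $\cZ^{\go,a}_{T,\beta}(f)$ for a sufficiently rich class of test functions $f$. Since $\cM_T$ with the weak topology is metrizable (and, on compact subsets, separable), it suffices to prove that for every $f \in \cC_b$ one has $\cZ^{\go,a}_{T,\beta}(f) \to \cZ^{\go}_{T,\beta}(f)$ almost surely as $a \to 0$, where $\cZ^{\go}_{T,\beta}(f)$ is a limiting linear functional; then $\bQ^{\go}_{T,\beta}(f) := \cZ^{\go}_{T,\beta}(f)/\cZ^{\go}_{T,\beta}$ (using Theorem~\ref{thm:continuous} to guarantee the denominator is a.s.\ positive). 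The first step is thus to establish the $a \to 0$ convergence of $\cZ^{\go,a}_{T,\beta}(f)$: I expect this to be carried out exactly as the proof of Theorem~\ref{thm:continuous}, viewing $(\cZ^{\go,a}_{T,\beta}(f))_a$ as a martingale in the parameter $a$ (decreasing $a$ adds independent small-jump contributions) with an $\bbL^1$ or $\bbL^p$ bound uniform in $a$ coming from the same chaos-expansion estimates that control $\cZ^{\go,a}_{T,\beta}=\cZ^{\go,a}_{T,\beta}(\ind)$; the only new input is that $|\varrho(\bt,\bx,f)| \le \|f\|_\infty \varrho(\bt,\bx)$, so all the bounds used for $f=\ind$ dominate the general case, and the martingale convergence theorem applies.

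The second step is to upgrade from pointwise (in $f$) convergence to weak convergence of the probability measures, i.e.\ to handle the uncountably many $f \in \cC$ simultaneously on a single almost-sure event. Here I would fix a countable convergence-determining family $\{f_j\}_{j \ge 1} \subset \cC_b$ (e.g.\ bounded continuous functions depending on finitely many coordinates $B_{s_1},\dots,B_{s_m}$ with rational times and in a countable dense set for the sup-norm topology on such cylinder functions), take the countable intersection of the almost-sure events on which $\cZ^{\go,a}_{T,\beta}(f_j) \to \cZ^{\go}_{T,\beta}(f_j)$, and deduce that along this event the limiting functional $f \mapsto \cZ^{\go}_{T,\beta}(f)$ extends by linearity and the bound $|\cZ^{\go}_{T,\beta}(f)| \le \|f\|_\infty \cZ^{\go}_{T,\beta}$ (inherited from $|\cZ^{\go,a}_{T,\beta}(f)| \le \|f\|_\infty \cZ^{\go,a}_{T,\beta}$ by passing to the limit, using Theorem~\ref{thm:continuous} for the right-hand side) to a positive bounded linear functional on all of $\cC$ with $\cZ^{\go}_{T,\beta}(\ind) = \cZ^{\go}_{T,\beta}$. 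Normalizing gives a finitely additive, monotone, normalized functional; by the Riesz representation / Daniell construction (or simply because the family $\{f_j\}$ is convergence-determining for tightness-free weak convergence on the Polish space $C_0([0,T])$ once we know convergence of cylinder functionals plus a tightness argument), this defines the probability measure $\bQ^{\go}_{T,\beta}$, and $\bQ^{\go,a}_{T,\beta} \to \bQ^{\go}_{T,\beta}$ weakly.

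The main obstacle I anticipate is the tightness of the family $\{\bQ^{\go,a}_{T,\beta}\}_{a \in (0,1]}$ in $\cM_T$, needed to promote convergence of finite-dimensional (cylinder) functionals to genuine weak convergence in $C_0([0,T])$ — pointwise convergence of $\cZ^{\go,a}_{T,\beta}(f)$ for cylinder $f$ only controls finite-dimensional marginals, and one must rule out escape of mass through lack of equicontinuity of paths. I would handle this by a modulus-of-continuity estimate: using \eqref{lafirstdeff}–\eqref{defvarro} and the fact that under the conditional (concatenated-bridge) measure the Brownian increments still satisfy Gaussian tail bounds, one bounds $\bQ^{\go,a}_{T,\beta}(|B_t - B_s|^4)$ or a Kolmogorov-type quantity uniformly in $a$ on a large-probability event for $\go$, via the same kernel integrals $\varrho(\bt,\bx,f)$ with $f(\varphi) = |\varphi(t)-\varphi(s)|^4 \wedge 1$ (a bounded function, so covered by Step~1's estimates). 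This yields tightness by the Kolmogorov–Chentsov criterion, and combined with Step~2 closes the proof. A secondary technical point is checking that the limiting functional is $\sigma$-additive (countably additive) rather than merely finitely additive; this follows from the uniform bound $|\cZ^{\go}_{T,\beta}(f)| \le \|f\|_\infty \cZ^{\go}_{T,\beta}$ together with monotone convergence applied to $f_n \downarrow 0$, or is subsumed in the tightness argument via Prokhorov.
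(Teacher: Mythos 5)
Your proposal is correct and follows essentially the same route as the paper: the paper proves a.s.\ and $\bbL_1$ convergence of $\cZ^{\go,a}_{T,\beta}(f)$ for every bounded measurable $f$ by exactly your domination argument ($\cZ^{\go,a}_{T,\beta}(f)\le \|f\|_\infty \cZ^{\go,a}_{T,\beta}$ transfers the uniform integrability of the total mass, Corollary~\ref{toutlesf}), uses the a.s.\ positivity of the limit $\cZ^{\go}_{T,\beta}$ (Proposition~\ref{lapositivity}), and then concludes via tightness of $(\bQ^{\go,a}_{T,\beta})_{a\in(0,1]}$ plus a countable determining class. The only real divergence is in the tightness step: rather than a Kolmogorov--Chentsov moment bound on increments under the normalized (random) measure, the paper works with the unnormalized functional directly, taking $\cK_N$ to be H\"older balls and applying Doob's maximal inequality to the martingale $a\mapsto \cZ^{\go,a}_{T,\beta}(\ind_{\cK_N^{\cc}})$, whose mean is $e^{\gb\mu}\bQ(\cK_N^{\cc})$; this sidesteps the random normalization that your version would have to control uniformly in $a$, but both arguments rest on the same identity $\bbE[\cZ^{\go,a}_{T,\beta}(f)]=e^{\gb\mu}\bQ(f)$ and are interchangeable here.
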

Since 
$\cZ^{\go,a }_{T, \beta}(\cdot)$ induces a positive measure on $C_0([0,T])$ the above statement turns out to be equivalent to the existence of a positive measure $\cZ^{\go}_{T, \beta}$ such that for every $f\in \cC$
\begin{equation}
  \lim_{a \to 0}\cZ^{\go,a }_{T, \beta}(f)=\cZ^{\go}_{T, \beta}(f) \, .
\end{equation}

\subsubsection{Scaling limit of the discrete model}

In order to justify the fact that  $\bQ^{\go,a}_{T,\beta}$ is the natural model for a continuum polymer based on $\alpha$-stable noise, let mention here the scaling limit result which we prove in \cite{BL20_disc},
namely that the discrete polymer model defined in~\eqref{polymers},
when properly rescaled, converges to the continuum
polymer in L\'evy stable noise.  
We present the convergence with time horizon $T=1$ (which yields no loss of generality by scaling) and set  $\bQ^{\go }_{ \hat \beta}:= \bQ^{\go }_{ \hat \beta,1}$.

Let $S_t^{(N)}$ be the linear interpolation 
of a random walk trajectory, rescaled diffusively:
\begin{equation}\label{defsn}
S^{(N)}_t:= \sqrt{\frac{d}{N}} \Big( (1-u_t) S_{\lfloor Nt \rfloor}  +u_t S_{\lfloor Nt \rfloor+1}    \Big)\,,
 \quad \text{ with } \  u_t= Nt-\lceil N t \rceil.
\end{equation}
We then have the following convergence result. 

\begin{theorema}[cf.~\cite{BL20_disc}]
\label{thm:conv}
Assume that the distribution of the environment $\eta$ satisfies~\eqref{def:eta} for some $\alpha\in (1,\alpha_c)$, with $\alpha_c$ defined in \eqref{def:alphac}.
Setting 
\begin{equation}\label{scalechoice}
 \beta_N:= \hat \beta   2^{\frac{1-\ga}{\alpha}} d^{\frac{d(1-\alpha)}{2}} N^{- \frac{d}{2\ga} ( 1+ \frac{2}{d}-\alpha ) },
\end{equation}
then we have the following convergence in distribution in $\cM_1$,
\begin{equation}
\bP^{\eta}_{N ,\beta_N}\Big( (S^{(N)}_t)_{t\in [0,1]} \in \cdot \Big)  \stackrel{N\to \infty}\Longrightarrow     \bQ^{\go }_{ \hat \beta} \, .
\end{equation}
\end{theorema}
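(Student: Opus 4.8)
\medskip
\noindent\textbf{Proof strategy for Theorem~\ref{thm:conv}.}
The plan is to reduce the convergence of the polymer measures to a joint convergence in distribution of partition functions, obtained through a polynomial chaos expansion that parallels, at the discrete level, the expansions \eqref{lafirstdef}--\eqref{lafirstdeff} used here to construct $\cZ^{\go}_{T,\beta}$ and $\bQ^{\go}_{T,\beta}$. First I would expand the product in \eqref{polymers}: for bounded measurable $f$ on $C_0([0,1])$, setting $Z^{\eta}_{N,\beta_N}(f):=\bE\big[f(S^{(N)})\prod_{n=1}^{N}(1+\beta_N\eta_{n,S_n})\big]$ one gets
\begin{equation*}
 Z^{\eta}_{N,\beta_N}(f)= \bE\big[f(S^{(N)})\big]+\sum_{k\ge 1}\beta_N^{k}\sumtwo{0<n_1<\dots<n_k\le N}{x_1,\dots,x_k\in\bbZ^{d}}\varrho_N(\mathbf n,\mathbf x,f)\prod_{i=1}^{k}\eta_{n_i,x_i},
\end{equation*}
with $\varrho_N(\mathbf n,\mathbf x,f):=\bE\big[f(S^{(N)})\,\big|\,S_{n_i}=x_i\ \forall i\big]\prod_{i=1}^{k}\bP\big(S_{n_i}-S_{n_{i-1}}=x_i-x_{i-1}\big)$ (with $x_0=n_0=0$) the exact discrete analogue of $\varrho(\bt,\bx,f)$ in \eqref{defvarro}. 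Since $\bP^{\eta}_{N,\beta_N}\big(f(S^{(N)})\big)=Z^{\eta}_{N,\beta_N}(f)/Z^{\eta}_{N,\beta_N}$ and $\cZ^{\go}_{1,\hat\beta}>0$ almost surely by Theorem~\ref{thm:continuous}, it suffices to prove, for every finite family $f_1,\dots,f_m\in\cC$,
\begin{equation*}
 \big(Z^{\eta}_{N,\beta_N}(f_1),\dots,Z^{\eta}_{N,\beta_N}(f_m),Z^{\eta}_{N,\beta_N}\big)\ \Longrightarrow\ \big(\cZ^{\go}_{1,\hat\beta}(f_1),\dots,\cZ^{\go}_{1,\hat\beta}(f_m),\cZ^{\go}_{1,\hat\beta}\big),
\end{equation*}
together with tightness of the random measures $\bP^{\eta}_{N,\beta_N}((S^{(N)})\in\cdot)$ in $\cM_1$.

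For the partition functions I would run a truncation procedure mirroring Section~\ref{sec:mainres}. Fix $a>0$; by a Skorokhod coupling, realise $(\eta_{n,x})$ and the Poisson process $\go$ on a common probability space so that the rescaled exceedance process $\{(n/N,\sqrt{d/N}\,x,c_N\eta_{n,x}):c_N\eta_{n,x}\ge a\}$ converges almost surely to $\go^{(a)}$, where $c_N$ is the normalisation implicit in~\eqref{scalechoice}; after subtracting the matching deterministic drift (the Lebesgue term in~\eqref{def:omegabar}), the rescaled truncated discrete environment converges to $\xi^{(a)}_{\go}$ in $H^{-s}_{\mathrm{loc}}$. A term-by-term analysis of the chaos series then gives $Z^{\eta,a}_{N,\beta_N}(f)\to\cZ^{\go,a}_{1,\hat\beta}(f)$ almost surely, where $Z^{\eta,a}_{N,\beta_N}(f)$ is built from the truncated environment: in each coefficient the kernels $\bP(S_{n_i}-S_{n_{i-1}}=x_i-x_{i-1})$ become heat kernels through the local central limit theorem, the conditioned weights $\bE[f(S^{(N)})\mid S_{n_i}=x_i]$ become the Brownian-bridge expectations in~\eqref{defvarro}, and the Riemann sums over $(n_i,x_i)$ become the integrals in~\eqref{lafirstdeff}; the scaling~\eqref{scalechoice} is exactly the one making the $N$-dependent prefactors (from the local CLT and the tail asymptotics~\eqref{def:eta}) cancel, so that the limiting $k$-th coefficient is $\hat\beta^{k}\varrho(\bt,\bx,f)$, while summability in $k$ is controlled as in Proposition~\ref{prop:contiZfini} with constants uniform in $N$. \emph{The main obstacle} is to interchange $a\to0$ and $N\to\infty$, i.e.\ to show $\lim_{a\to0}\sup_N\bbE\big|Z^{\eta}_{N,\beta_N}(f)-Z^{\eta,a}_{N,\beta_N}(f)\big|=0$: this is where $\alpha<\alpha_c$ is essential, since the difference only involves the ``small jumps'' of the rescaled environment --- which do have finite second moment --- and an $L^{2}$ estimate on the associated chaos reduces to bounding iterated convolutions of heat kernels, precisely the estimates that in the continuum force $\alpha<\alpha_c$ and underlie Theorem~\ref{thm:continuous}, now to be carried out with constants uniform in $N$. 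Granting this together with $\cZ^{\go,a}_{1,\hat\beta}(f)\to\cZ^{\go}_{1,\hat\beta}(f)$ (Theorems~\ref{thm:continuous}--\ref{thm:contmeasure}), a standard $3\varepsilon$-argument yields $Z^{\eta}_{N,\beta_N}(f)\to\cZ^{\go}_{1,\hat\beta}(f)$, and the joint statement over $f_1,\dots,f_m$ follows as all these variables are continuous functionals of the same coupled noise.

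Finally, for tightness of $\big\{\bP^{\eta}_{N,\beta_N}((S^{(N)})\in\cdot)\big\}$ in $\cM_1$, I would use that $\bbE\big[Z^{\eta}_{N,\beta_N}(g)\big]=\bE\big[g(S^{(N)})\big]$ for every non-negative bounded measurable $g$ (the space-time points along a path being distinct and $\eta$ centred), that $g\mapsto Z^{\eta}_{N,\beta_N}(g)$ is a positive functional (because $1+\beta_N\eta\ge0$), and that $Z^{\eta}_{N,\beta_N}\to\cZ^{\go}_{1,\hat\beta}>0$: given $\varepsilon>0$, Donsker's theorem furnishes a compact $K\subset C_0([0,1])$ with $\sup_N\bP(S^{(N)}\notin K)$ small, one dominates $\ind_{K^{c}}$ by a bounded measurable $g\ge0$ with $\bE[g(S^{(N)})]$ small, applies Markov's inequality to $Z^{\eta}_{N,\beta_N}(g)$, and bounds the denominator $Z^{\eta}_{N,\beta_N}$ from below off an event of small probability; this makes $\bP^{\eta}_{N,\beta_N}(S^{(N)}\notin K)$ small with high probability, uniformly in $N$. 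Combining tightness with the identification of the finite-dimensional marginals above (taking the $f$'s to depend on finitely many coordinates $S^{(N)}_{s_1},\dots,S^{(N)}_{s_\ell}$), every subsequential limit of $\bP^{\eta}_{N,\beta_N}((S^{(N)})\in\cdot)$ coincides in law with $\bQ^{\go}_{\hat\beta}$, which gives the claimed convergence in distribution in $\cM_1$.
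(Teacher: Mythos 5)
This theorem is not proved in the present paper: it is quoted from the companion work \cite{BL20_disc}, so the only basis for comparison is the strategy the authors indicate for it (see Remark~\ref{chongsremark}, where they state that their proof of Theorem~\ref{thm:conv} relies on the same restriction strategy as Section~\ref{sec:UI}). Your overall architecture --- discrete chaos expansion of $Z^{\eta}_{N,\beta_N}(f)$, coupling of the rescaled exceedances above level $a$ with $\go^{(a)}$, term-by-term convergence of the truncated series via the local CLT with the scaling \eqref{scalechoice} absorbing the prefactors, a uniform-in-$N$ control of the small-value remainder, then tightness and identification of finite-dimensional marginals --- is the right skeleton and matches that strategy.

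However, the step you yourself single out as the main obstacle is resolved incorrectly for $d\ge 2$. The plain $L^2$ bound on the chaos generated by the small values is \emph{not} uniform in $N$: the variance of the rescaled disorder truncated at level $a$ is of order $a^{2-\alpha}N^{(d-2)/2}$, while the replica overlap sum satisfies $\sum_{n\le N}\sum_{x} \bP(S_n=x)^2 \asymp \sum_{n\le N} n^{-d/2}$, so already the first chaos of the remainder has second moment of order $a^{2-\alpha}\log N$ for $d=2$ and $a^{2-\alpha}N^{(d-2)/2}$ for $d\ge 3$; hence $\lim_{a\to 0}\sup_N$ of this quantity is not zero, and the interchange of limits cannot be justified this way. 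The continuum counterpart of this computation is exactly the divergence of $\int_{\mathfrak{X}^k}\prod_{i}(t_i-t_{i-1})^{-d/2}\dd \bt$ which forces the authors to abandon the naive second-moment method in Section~\ref{sec:contd>2}; in particular $\alpha<\alpha_c$ does not rescue such a bound, and Theorem~\ref{thm:continuous} is \emph{not} proved by an $L^2$ estimate on the small-jump chaos but by the mixed first/second moment scheme of Propositions~\ref{lesper} and~\ref{llavar}, based on the multibody restriction $\cB_q$ of \eqref{def:BNcont} which couples the product of the weights to the time increments. What your argument is missing, and what \cite{BL20_disc} actually carries out, is a discrete analogue of this restricted-partition-function scheme with constants uniform in $N$; as written, your plan is adequate only for $d=1$, where the simple truncation of Section~\ref{sec:dim1} suffices.
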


\begin{rem}
The prefactor in $\beta_N$ comes from various factors, including the normalization of the Brownian motion and the periodicity of the random walk.
The above theorem remains valid slightly beyond the assumption \eqref{def:eta}, one can allow for a slowly varying function in the tail distribution provided an appropriate correction in the scaling of $\beta_N$ is made. The analogous result is valid also for $\ga \in (0,1]$.
We refer to \cite{BL20_disc} for details.
\end{rem}

\subsection{The case of a general noise}
\label{sec:generalnoise}

We have focused until now on the case of an $\alpha$-stable noise with $\alpha\in (1,2)$, both because our motivation is to describe the scaling limit for the discrete polymer model with  heavy tailed environment and to make the exposition lighter.
Our result can nonetheless be applied to a much larger variety of noise. 
Let us consider in this section a Poisson process $\go$ on $\bbR\times \bbR^d \times \bbR_+$ with density 
 \begin{equation}
\label{Poissondensgen}
\dd t \otimes \dd x \otimes \gl( \dd \ups),
\end{equation}
where $\gl$ is a measure on $(0,\infty)$  such that $\lambda([a,\infty))<\infty$ for every $a>0$. 
One may keep in mind the case $\lambda(\dd \ups) = \alpha \ups^{-(1+\ga)} \dd \ups$ with $\ga \in (0,2)$, referred to as $\ga$-stable.
We define the truncated noise  $\xi^{(a)}_{\go}$ for $a >0 $ similarly to~\eqref{def:omegabar} (recall that $\cL$ denotes the Lebesgue measure on $\bbR\times \bbR^d$)
\begin{equation}\label{petitnoise}
 \xi^{(a)}_{\go}= \Big(\sum_{(t,x,\ups)\in \go} \ups \ind_{\{\ups \geq a\}} \delta_{(t,x)} \Big)-  \kappa_a  \cL 
\end{equation}
where 
\begin{equation}
\label{def:kappa}
  \kappa_a = \int_{[a,1)} \ups \gl(\dd \ups),
\end{equation}
note that we have in particular $\kappa_a=0$ for $a\geq 1$.
The truncated noise $\xi^{(a)}_{\go}$ converges to a limit  $\xi_{\go}  \in H_{\rm loc}^{-s}(\bbR^{1+d})$
with $s>(1+d)/2$, if and only if  $\int_{(0,1)}  \ups^{2} \gl(\dd \ups)<\infty$ ( the result is definitely standard, but we could not find a reference where it is displayed in this form form, hence we include a proof in the Appendix for completeness, see Proposition~\ref{convergence2}).
Note also that if $\int_{(0,1)} \ups \lambda(\dd \ups) <+\infty$, then  $\kappa_0<\infty$ and
the definition~\eqref{petitnoise}  directly makes sense with $a=0$ so this approximation procedure is not required.
We define, similarly to~\eqref{lafirstdeff}, for any  $f\in \cB_b$
\begin{equation}\label{lafirstdeffpti}
 \cZ^{\go,a}_{T,\beta}(f):=\bQ(f)+\sum_{k=1}^\infty \beta^k\int_{0<t_1<\dots<t_k<T}\int_{(\bbR^d)^k} \varrho( \bt , \bx , f) \prod_{i=1}^k \xi^{(a)}_{\go} ( \dd t_i, \dd x_i) \, .
\end{equation}
The condition that $f$ has a bounded support ensures
that all the integrals are well-defined since the integration is
only over a bounded space-time region;  the summability is shown in Proposition~\ref{prop:contiZfini} below.
Let us stress that Lemma \ref{eazy} below ensures that  $\cZ^{\go,a}_{T,\beta}(f)\ge0$ when $f$ is non-negative.
Given an increasing sequence of positive functions $f_n \in \cB_b$ converging to $1$, one sets 
\begin{equation}
\label{def:Zmonotone}
\cZ^{\go,a}_{T,\beta}:= \lim_{n\to \infty}\cZ^{\go,a}_{T,\beta}(f_n).
\end{equation}
Lemma \ref{eazy} also ensures that the above definition does not depend on the choice of $f_n$.
 Note that the above definition makes it possible to have  $\cZ^{\go,a}_{T,\beta}=\infty$, but this
does not occur provided the following condition is satisfied:
\begin{equation}\label{hypolarge}
 \int_{[1,\infty)} (\log  \ups)^{d/2} \gl(\dd \ups)<\infty.
 \end{equation}

\begin{proposition}
\label{prop:Zfinite}
 Under the assumption \eqref{hypolarge}, 
we have $ \cZ^{\go,a}_{T,\beta} \in (0,\infty)$
for any $a\in (0,1]$.
\end{proposition}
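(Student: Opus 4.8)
The plan is to control the two separate sources of possible divergence in
$\cZ^{\go,a}_{T,\beta}$: the series over $k$ coming from ``small'' atoms (those of
size $\ups<1$) and the contribution of ``large'' atoms (those of size $\ups\geq1$).
First I would split the noise as $\xi^{(a)}_{\go}=\xi^{\sm,a}_{\go}+\xi^{\larg}_{\go}$,
where $\xi^{\sm,a}_{\go}$ collects the jumps with $a\le\ups<1$ (centered by
$\kappa_a\cL$) and $\xi^{\larg}_{\go}=\sum_{(t,x,\ups)\in\go}\ups\ind_{\{\ups\geq1\}}\delta_{(t,x)}$
is a positive, locally finite sum of Diracs. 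Since $\cZ^{\go,a}_{T,\beta}(\cdot)$ is
linear in the noise measure (as noted after \eqref{defvarro}) and monotone in the sense
of Lemma~\ref{eazy}, one can expand the product $\prod_{i=1}^k\xi^{(a)}_{\go}(\dd t_i,\dd x_i)$
along this decomposition; because all the heat-kernel weights $\varrho(\bt,\bx)$ are
non-negative, each resulting term is non-negative and we only need an upper bound on
the total.

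For the small part alone, I would show $\bbE\big[\cZ^{\go,a}_{T,\beta,\sm}\big]<\infty$
uniformly in $a$: since $\xi^{\sm,a}_{\go}$ has mean $0$ and a finite second moment (the
condition $\int_{(0,1)}\ups^2\gl(\dd\ups)<\infty$ is automatic here because $\gl$ is
finite away from $0$ and we only need $a>0$; more carefully one should use the second
moment computation underlying Proposition~\ref{convergence2}), the $L^1$ and $L^2$
norms of the $k$-th term are bounded by a chaos-type estimate of the form
$C^k\int_{0<t_1<\dots<t_k<T}\prod_i(t_i-t_{i-1})^{-d/2}\,\dd\bt$, which is summable
over $k$ for $d\leq2$ and, for $d\geq3$, summable once one notes that the relevant
exponents are offset by the $\gb$ power and the fact that only finitely many atoms of
size $\geq a$ fall in the bounded region $[0,T]\times(\text{support considerations})$.
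Actually the cleanest route, given that we may assume everything stated earlier, is to
use Proposition~\ref{prop:contiZfini} (which already gives that each term of
\eqref{lafirstdeff}–\eqref{lafirstdeffpti} is well-defined) together with an $L^1$
bound to get $\cZ^{\go,a}_{T,\beta,\sm}<\infty$ almost surely.

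The heart of the matter is the large part. Conditionally on $\go$, the atoms
$(t_j,x_j,\ups_j)$ with $\ups_j\geq1$ lying in a fixed space-time slab $[0,T]\times\bbR^d$
form a Poisson configuration; a path can only pick up finitely many of them, but a
priori the number landing in $[0,T]\times\bbR^d$ is infinite since the spatial direction
is unbounded. The Brownian-bridge weighting $\varrho(\bt,\bx,f_n)$ decays like a Gaussian
in the spatial increments, so the contribution of an atom at spatial location $x_j$ is
damped by $e^{-\|x_j\|^2/(2T)}$-type factors; summing a weight $\ups$ against such a
Gaussian envelope over a Poisson($\dd t\dd x\,\gl(\dd\ups)$) process produces, after
integrating out the path, an expression whose finiteness is exactly governed by the
logarithmic moment condition \eqref{hypolarge}. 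Concretely, I expect the bound
$\bbE\big[\cZ^{\go,a}_{T,\beta,\larg}\big]\leq\exp\!\big(C_{T,\gb}\int_{[1,\infty)}(\log\ups)^{d/2}\gl(\dd\ups)\big)<\infty$,
obtained by a Poisson exponential-moment (Campbell) computation: the $k$-fold integral
$\int_{0<t_1<\dots<t_k<T}\!\int(\bbR^d)^k\varrho(\bt,\bx)\prod\ups_i\,\dd\bt\,\dd\bx$,
after performing the Gaussian spatial integrals, leaves a factor polynomial in $T$ and,
crucially, a factor $\prod_i(\text{something}\cdot\ups_i^{\,?})$; tracking the exponent
shows that the relevant single-atom weight is $(\log\ups)^{d/2}$, matching \eqref{hypolarge}.
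Once $\bbE[\cZ^{\go,a}_{T,\beta,\larg}]<\infty$ we get $\cZ^{\go,a}_{T,\beta,\larg}<\infty$
a.s., hence $\cZ^{\go,a}_{T,\beta}<\infty$ a.s.\ for each fixed $a$; strict positivity
is immediate from $\cZ^{\go,a}_{T,\beta}\geq\bQ(f_n)\uparrow1>0$ together with Lemma~\ref{eazy}.

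\textbf{Main obstacle.} The delicate point is the large-atom estimate: one must
interchange the sum over $k$, the Brownian (bridge) expectation, and the Poisson
expectation, and carry out the spatial Gaussian integrals in a way that isolates
precisely the $(\log\ups)^{d/2}$ growth — the exponent $d/2$ in \eqref{hypolarge}
is the count of Gaussian factors one ``uses up'' to absorb the spatial spread of an
atom, and getting this bookkeeping right (including the dependence on $T$ and $\gb$,
and the fact that the bound must be uniform enough to conclude a.s.\ finiteness but
need not be uniform in $a$) is where the real work lies. Everything else — linearity,
positivity, the small-atom chaos bound — is routine given Lemma~\ref{eazy},
Proposition~\ref{prop:contiZfini}, and Proposition~\ref{convergence2}.
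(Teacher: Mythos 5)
There is a genuine gap, and it sits exactly at what you call the heart of the matter. Your proposed bound $\bbE\big[\cZ^{\go,a}_{T,\beta,\larg}\big]\leq\exp\big(C_{T,\gb}\int_{[1,\infty)}(\log\ups)^{d/2}\gl(\dd\ups)\big)$ is false: the Gaussian damping you invoke disappears the moment you take expectations, because the spatial integral of the heat kernel is $1$. Already the single-atom ($k=1$) term of the large part has mean $\gb T\int_{[1,\infty)}\ups\,\gl(\dd\ups)$, and summing over $k$ gives $\bbE[\cZ^{\go,a}_{T,\gb}]=e^{\gb\mu T}$ with $\mu=\int_{[1,\infty)}\ups\,\gl(\dd\ups)$ (cf.\ \eqref{lamine}), which is $+\infty$ for measures perfectly allowed by \eqref{hypolarge}, e.g.\ $\gl(\dd\ups)=\ups^{-2}\ind_{\{\ups\geq1\}}\dd\ups$. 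So no annealed (Campbell/exponential-moment) computation can close the argument: finiteness under \eqref{hypolarge} is a strictly quenched phenomenon. The $(\log\ups)^{d/2}$ weight does not come out of the $k$-fold Gaussian integrals; it comes from a Borel--Cantelli count of how many atoms of size about $\ups$ can sit within spatial distance $\sqrt{\log\ups}$ of the path, which is an almost-sure statement about the configuration $\go$, not a moment bound. (Your positivity argument is essentially fine, though the cleaner statement is $\cZ^{\go,a}_{T,\gb}\geq w_{a,\gb}(\emptyset)=e^{-\gb\kappa_a}>0$ from Lemma~\ref{eazy}.)

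What the paper does instead (Proposition~\ref{polopopo}, Lemmas~\ref{entross} and~\ref{bcantelli}) is a pathwise entropy--energy comparison. For $\sigma\in\cP(\go)$ set $G(\sigma)=\sum_i\log u_i$ and $H(\sigma)=\sum_i\|x_i-x_{i-1}\|^2/(t_i-t_{i-1})$; Lemma~\ref{entross} shows that under \eqref{hypolarge}, almost surely $\cT(\go):=\sup_{\sigma}\{G(\sigma)-\tfrac{\gep}{2}H(\sigma)\}<\infty$, the proof being a Borel--Cantelli argument over dyadic scales $\log\ups\in[e^{k-1},e^{k})$ and path lengths $n$ (here the equivalence of \eqref{hypolarge} with $\sum_k\gl_k e^{dk/2}<\infty$ is exactly your ``volume $\sim(\log\ups)^{d/2}$'' heuristic, but used pathwise). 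One then bounds $e^{G(\sigma)}\leq e^{\cT}e^{\gep H(\sigma)/2}$ and absorbs the factor $e^{\gep H(\sigma)/2}$ into a heat kernel with variance inflated by $(1-\gep)^{-1}$; only after the weights $\prod_i u_i$ have been removed in this way does one apply Mecke's formula (Proposition~\ref{OKLMecke}) to $e^{-\cT(\go)}\cZ^{\go,a}_{T,\gb}$, whose expectation is controlled by $\gl([a,\infty))$ rather than by $\int\ups\,\gl(\dd\ups)$, hence finite. Your small-atom analysis is harmless but also unnecessary in this scheme; without an argument of the quenched type above, however, the large-atom step of your proposal cannot be repaired.
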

\noindent The condition \eqref{hypolarge} is in fact optimal, if it fails to hold then our partition function is degenerate.
\begin{proposition}\label{prop:toinfinity}
 If the measure $\gl$ does not satisfy~\eqref{hypolarge} then for any $a\in(0,1]$
 we have $\cZ^{\go,a}_{T,\beta}=\infty$ almost surely.
 \end{proposition}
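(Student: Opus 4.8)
The plan is to deduce the claim from the monotone definition~\eqref{def:Zmonotone}. Taking $f_n:=\ind_{\{\|\varphi\|_\infty\le n\}}\in\cB_b$ one has $f_n\uparrow 1$ and $\cZ^{\go,a}_{T,\beta}=\lim_n\cZ^{\go,a}_{T,\beta}(f_n)$, so it suffices to prove that this limit equals $+\infty$ almost surely. The crucial ingredient is a lower bound isolating the contribution of trajectories making a single excursion through one atom: for every non-negative $f\in\cB_b$,
\[
\cZ^{\go,a}_{T,\beta}(f)\ \ge\ e^{-\beta\kappa_a T}\,\beta\!\!\sum_{\substack{(t_0,x_0,\ups_0)\in\go\\ \ups_0\ge a,\ 0<t_0<T}}\!\!\ups_0\,\rho_{t_0}(x_0)\,\bQ\big[f(B)\,\big|\,B_{t_0}=x_0\big].
\]
To prove it I would argue by monotonicity in the atoms. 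Comparing, in the expansion~\eqref{lafirstdeffpti}, the noises $\xi^{(a)}_{\go}$ and $\xi^{(a)}_{\go\setminus\{\mathbf p\}}$ for a single atom $\mathbf p=(t_0,x_0,\ups_0)$ of size $\ups_0\ge a$, and using that the distinct times $t_1<\dots<t_k$ prevent using $\mathbf p$ more than once, one finds that adding $\mathbf p$ increases $\cZ^{\go\setminus\{\mathbf p\},a}_{T,\beta}(f)$ by $\beta\ups_0$ times the ($f$-weighted) partition function of the polymer constrained to visit $(t_0,x_0)$, computed with the noise $\xi^{(a)}_{\go\setminus\{\mathbf p\}}$. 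By the positivity provided by Lemma~\ref{eazy} this constrained partition function is non-negative (so $\cZ^{\go,a}_{T,\beta}(f)$ does not decrease when a positive atom is added), and it is moreover $\ge e^{-\beta\kappa_a T}\rho_{t_0}(x_0)\,\bQ[f(B)\mid B_{t_0}=x_0]$, because the other (positive) atoms only increase it, the constraint contributes the transition density $\rho_{t_0}(x_0)$, and the negative counterterm $-\kappa_a\cL$, acting over a time window of length $\le T$, costs at most a factor $e^{-\beta\kappa_a T}$ (consistently with $\cZ^{\emptyset,a}_{T,\beta}(f)=e^{-\beta\kappa_a T}\bQ(f)$). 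Iterating this increment bound over the finitely many atoms of size $\ge a$ that influence $f$ (those lying in the bounded space region determined by the support of $f$), and using $\cZ^{\emptyset,a}_{T,\beta}(f)\ge0$, yields the displayed bound.

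Next I would make the pinning probability uniform over a region growing with $n$. Decomposing $B$ under $\bQ(\,\cdot\mid B_{t_0}=x_0)$ into a Brownian bridge on $[0,t_0]$ followed by an independent Brownian motion restarted at time $t_0$, one gets the pathwise bound $\|B\|_\infty\le\|x_0\|+\zeta_T$, where $\zeta_T$ is an almost surely finite random variable whose law does not depend on $(t_0,x_0)$. Hence there is a deterministic $n_1=n_1(T)$ such that $\bQ[f_n(B)\mid B_{t_0}=x_0]\ge\tfrac12$ for every atom with $\|x_0\|\le n/2$ and every $n\ge n_1$. Inserting this into the bound above and discarding the non-negative terms with $\ups_0<1$ or $\|x_0\|>n/2$, we obtain for $n\ge n_1$
\[
\cZ^{\go,a}_{T,\beta}(f_n)\ \ge\ \tfrac12\,e^{-\beta\kappa_a T}\,\beta\!\!\sum_{\substack{(t_0,x_0,\ups_0)\in\go\\ \ups_0\ge 1,\ \|x_0\|\le n/2,\ 0<t_0<T}}\!\!\ups_0\,\rho_{t_0}(x_0),
\]
and as $n\to\infty$ the right-hand side increases to $\tfrac12 e^{-\beta\kappa_a T}\beta\,S$, where $S:=\sum_{(t_0,x_0,\ups_0)\in\go,\,\ups_0\ge1,\,0<t_0<T}\ups_0\,\rho_{t_0}(x_0)$. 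Therefore $\cZ^{\go,a}_{T,\beta}\ge\tfrac12 e^{-\beta\kappa_a T}\beta\,S$, and it only remains to show $S=+\infty$ almost surely.

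Since $S$ sums a non-negative function over the Poisson process $\go$ restricted to $(0,T)\times\bbR^d\times[1,\infty)$, whose intensity $\dd t\otimes\dd x\otimes\gl(\dd\ups)$ is $\sigma$-finite (as $\gl([1,\infty))<\infty$), the zero–one law for Poisson functionals gives $S=+\infty$ a.s.\ if and only if $\int_{(0,T)\times\bbR^d\times[1,\infty)}\big(\ups\rho_t(x)\wedge1\big)\,\dd t\,\dd x\,\gl(\dd\ups)=+\infty$. Using $\ups\rho_t(x)\wedge1\ge\ind_{\{\ups\rho_t(x)\ge1\}}$ together with the fact that $\{x\in\bbR^d:\ups\rho_t(x)\ge1\}$ is, when non-empty, a Euclidean ball of radius $\big(2t\log(\ups(2\pi t)^{-d/2})\big)^{1/2}$, a direct computation (integrating $t$ over $[1/\log\ups,\,T/2]$) bounds this integral below by $c_{d,T}\int_{[\ups_1,\infty)}(\log\ups)^{d/2}\gl(\dd\ups)$ for suitable constants $c_{d,T}>0$ and $\ups_1\ge1$; since $\int_{[1,\ups_1)}(\log\ups)^{d/2}\gl(\dd\ups)<\infty$, the failure of~\eqref{hypolarge} makes this infinite, whence $S=+\infty$ a.s.\ and $\cZ^{\go,a}_{T,\beta}=+\infty$ a.s., for every $a\in(0,1]$. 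The main obstacle is the first step — extracting from the signed series~\eqref{lafirstdeffpti} a positive single-atom lower bound whose constant does not degenerate as the atom ranges over the spatially unbounded set $(0,T)\times\bbR^d$ — which is exactly where the representation and positivity behind Lemma~\ref{eazy} enter; the remaining steps are routine computations, and it is reassuring that~\eqref{hypolarge} surfaces precisely as the integrability threshold for $S$.
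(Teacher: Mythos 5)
Your proof is correct, and its core mechanism is the same as the paper's: both arguments exploit the positivity coming from Lemma~\ref{eazy} to lower bound $\cZ^{\go,a}_{T,\beta}$ by the contribution of single-atom trajectories, reducing the problem to the divergence of $\sum_{(t,x,\ups)\in\go^{(a)}}\ups\,\rho_t(x)$. Two remarks on where you diverge. First, your opening step is over-engineered: the representation~\eqref{altexp} of Lemma~\ref{eazy} is stated (by monotone convergence) without any assumption on $\gl$ and with all terms non-negative, so restricting the sum to $|\sigma|=1$ hands you $\cZ^{\go,a}_{T,\beta}\ge e^{-\beta\kappa_a}\beta\sum_{(t,x,\ups)\in\go^{(a)}}\ups\,\rho_t(x)$ directly — the atom-by-atom increment induction and the careful passage through $f_n=\ind_{\{\|\varphi\|_\infty\le n\}}$ with the uniform pinned-bridge estimate are valid but unnecessary. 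Second, the final divergence step is genuinely different: the paper keeps only atoms with $t\in[1/2,1]$, bounds $\rho_t(x)\ge\pi^{-d/2}e^{-\|x\|^2}$, and shows $\sup\{\log\ups-\|x\|^2\}=\infty$ by an explicit Borel--Cantelli argument over dyadic annuli $\|x\|_\infty\in[2^{j-1},2^j)$; you instead invoke the general criterion that a Poisson sum $\sum_p g(p)$ of a non-negative $g$ diverges a.s.\ iff $\int\min(g,1)\,\dd\nu=\infty$, and check that the failure of~\eqref{hypolarge} makes $\int(\ups\rho_t(x)\wedge1)\,\dd t\,\dd x\,\gl(\dd\ups)$ infinite via the volume of the level set $\{\ups\rho_t(x)\ge1\}$. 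Your route proves the slightly stronger fact that the whole sum diverges (not merely that its largest term is unbounded), and cleanly isolates $(\log\ups)^{d/2}$ as the volume of the region an atom of size $\ups$ can influence; the paper's route is more elementary and self-contained, needing only independence of the annular events. Both are sound.
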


\begin{rem}
The condition~\eqref{hypolarge} arises from an entropy-energy comparison. We let~$X_{R,T}$ be the largest atom in the environment located within a distance $R$ in the time interval $[0,T]$, that is
$$ X_{R,T}:= \max\big\{ \, v \, : \ (t,x,v)\in \go, \ t\in [0,T], \  R/2 \le \|x\|\le R \big \}\, . $$
In the large $R$ limit, the probability of visiting the corresponding atom under the Wiener measure is of order $\exp(-c_T R^2)$. For any fixed $\delta, T>0$, the criterion \eqref{hypolarge} is equivalent (via a Borel-Cantelli argument) to having
$$    \sup_{R>0} ( X_{R,T}  \, e^{-\delta R^2} )<\infty.$$
From this, it is not difficult to deduce that the  contribution of trajectories that visit one large atom located far away remains finite. This provides a heuristic justification of the criterion \eqref{hypolarge}.
\end{rem}

 \noindent Using the partition function~\eqref{lafirstdeffpti}, we can define a probability measure $\bQ^{\go,a}_{T,\beta}$ on $C_0([0,T])$ in the same way as in \eqref{QTA}, \textit{i.e.}\ setting for any Borel set  $A\subset C_0([0,1])$ \,
 \[
 \bQ^{\go,a}_{T,\beta}(A):= \frac{\cZ^{\go,a}_{T,\beta}(\ind_{A})}{ \cZ^{\go,a}_{T,\beta} } \, .
 \] 
Note that $A\mapsto \cZ^{\go,a}_{T,\beta}(\ind_{A})$ defines a locally finite measure on $C_0([0,T])$ even when \eqref{hypolarge} is not satisfied,  cf.\ Proposition~\ref{prop:contiZfini}.
Our main result in this section is that the limit when $a$ goes to $0$ is non-degenerate if $\gl$ satisifes the following assumption:
\begin{equation}\label{hyposmall}
\begin{cases}
 \int_{(0,1)}  \ups^{2} \gl(\dd \ups)<\infty, \quad   &\text{ if } d=1.\\
 \int_{(0,1)}  \ups^{p} \gl(\dd \ups)<\infty \ \text{ for some } p<1+\frac{2}{d}, \quad &\text{ if } d\ge 2.
 \end{cases}
 \end{equation}

\begin{theorem}\label{thm:zalpha}
 Under the assumption \eqref{hyposmall},   for any fixed $f\in \cB_b$  the limit 
 \begin{equation}\label{convf}
  \lim_{a\to 0}\cZ^{\go,a}_{T,\beta}(f)=  \cZ^{\go}_{T,\beta}(f)
 \end{equation}
 exists almost surely  and is finite. We have  $\cZ^{\go}_{T,\beta}(f)>0$ if $f$ is non-negative and $\bQ(f)>0$.
Furthermore, if~\eqref{hypolarge} also holds  then  \eqref{convf} also holds for $f\in \mathcal B$. In particular, in the case $f\equiv 1$ we have,
 \begin{equation}\label{convparty}
  \lim_{a\to 0}\cZ^{\go,a}_{T,\beta}=  \cZ^{\go}_{T,\beta} \  \in (0,\infty).
 \end{equation}
The convergence holds in $\bbL_1$ if and only if $\int_{[1,\infty)} \ups \gl(\dd \ups)<\infty$.

\smallskip

\noindent Additionally,  $\bQ^{\go,a}_{T,\beta}$ converges weakly when $a\to 0$: There exists a probability measure  $\bQ^{\go}_{T,\beta}$ on $C_0([0,T])$ such that, almost surely, for every 
 $f\in \cC$ we have
 \begin{equation}\label{lescontis}
  \frac{ \cZ^{\go}_{T,\beta}(f)}{ \cZ^{\go}_{T,\beta}}\, =\,  \lim_{a\to 0}\bQ^{\go,a}_{T,\beta}(f)= \bQ^{\go}_{T,\beta}(f).
 \end{equation}
 \end{theorem}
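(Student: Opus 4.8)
\textbf{Proof strategy for Theorem~\ref{thm:zalpha}.}
The plan is to reduce everything to an $\bbL_2$ / martingale analysis of the chaos expansion~\eqref{lafirstdeffpti}, treating first the part of the noise coming from atoms of size $<1$ and then incorporating the large atoms separately. Write $\go = \go_{\sm} \cup \go_{\larg}$ according to whether $\ups<1$ or $\ups\ge 1$; the large-atom part contributes only finitely many Dirac masses in any bounded space-time region, so conditionally on $\go_{\larg}$ one may expand $\cZ^{\go,a}_{T,\beta}(f)$ as a finite sum of terms, each of which is (up to deterministic kernels built from $\varrho$ and the positions of the large atoms) a partition function driven purely by the small, truncated, \emph{centered} noise $\xi^{(a)}_{\go_{\sm}}$. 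Thus it suffices to prove the convergence for the small-noise partition function and then sum a (random but almost surely finite under~\eqref{hypolarge}) number of such contributions; this is where assumption~\eqref{hypolarge} enters, exactly as in the Remark following Proposition~\ref{prop:toinfinity}, via a Borel--Cantelli control of $\sup_R X_{R,T}e^{-\delta R^2}$.

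For the small-noise part, fix $f\in\cB_b$ and set $M^{(a)} := \cZ^{\go_{\sm},a}_{T,\beta}(f)$. The key point is that $a\mapsto M^{(a)}$, run as $a\downarrow 0$, is a martingale with respect to the filtration $\cF_a := \sigma(\go_{\sm}\cap\{\ups\ge a\})$: removing the centering counterterm makes each chaos integral $\int \varrho(\bt,\bx,f)\prod_i \xi^{(a)}_{\go_{\sm}}(\dd t_i,\dd x_i)$ a martingale in $a$ by the standard compensated-Poisson computation, and the series defining $M^{(a)}$ can be differentiated termwise because $f$ has bounded support (so all integrals live on a fixed compact set and Proposition~\ref{prop:contiZfini} gives summability). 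Next I would compute $\bbE[(M^{(a)})^2]$ explicitly: expanding the square and using the Poisson moment formula, the cross terms organize into a sum over pairings of the $k$ integration variables, and the diagonal contribution produces, for each coincidence, a factor $\int_{(0,1)} \ups^2\gl(\dd\ups)$ when $d=1$ and (after a more careful singularity analysis of $\int \rho_{t}(x)^2$, which behaves like $t^{-d/2}$) a factor controlled by $\int_{(0,1)}\ups^p\gl(\dd\ups)$ with $p<1+2/d$ when $d\ge 2$. Assumption~\eqref{hyposmall} is precisely what makes $\sup_{a>0}\bbE[(M^{(a)})^2]<\infty$; by Doob's $\bbL_2$ martingale convergence theorem, $M^{(a)}$ converges almost surely and in $\bbL_2$ to a limit $\cZ^{\go_{\sm}}_{T,\beta}(f)$. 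Recombining with the finitely many large-atom contributions gives~\eqref{convf}, and positivity of the limit when $f\ge0$, $\bQ(f)>0$ follows from Lemma~\ref{eazy} (each approximant dominates $\bQ(f)>0$ on the $k=0$ term and stays non-negative) together with a standard $0$--$1$ / non-degeneracy argument showing the limit is not $0$. The extension from $\cB_b$ to $\cB$ under~\eqref{hypolarge} is the monotone limit~\eqref{def:Zmonotone}, which commutes with $a\downarrow 0$ because both limits are monotone; and $\bbL_1$-convergence of $\cZ^{\go,a}_{T,\beta}$ holds iff $\bbE[\cZ^{\go,a}_{T,\beta}]<\infty$, i.e.\ iff $\int_{[1,\infty)}\ups\gl(\dd\ups)<\infty$ (the centering only compensates $\ups<1$), while almost-sure convergence persists regardless by the martingale argument applied to $f_n$'s and a diagonal extraction.

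For the weak convergence of the measures~\eqref{lescontis}, the observation is that $f\mapsto \cZ^{\go,a}_{T,\beta}(f)$ is linear and non-negative, hence given by a (random) locally finite measure on $C_0([0,T])$ by Proposition~\ref{prop:contiZfini}; applying the already-established convergence to a countable convergence-determining family of bounded continuous $f$ (e.g.\ finite-dimensional cylinder functions depending on $B$ at rational times, which are bounded and whose associated kernels $\varrho(\bt,\bx,f)$ are well-behaved), and using a standard separability/diagonal argument, one gets almost surely $\cZ^{\go,a}_{T,\beta}(f)\to \cZ^{\go}_{T,\beta}(f)$ simultaneously for all $f$ in that family, hence for all $f\in\cC$ by density together with the tightness that is automatic since each $\bQ^{\go,a}_{T,\beta}$ is absolutely continuous with respect to the fixed Wiener measure $\bQ$ with an $\bbL_1$-bounded density $\cZ^{\go,a}_{T,\beta}(\ind_{\cdot})/\bQ(\cdot)$ — more precisely tightness follows from $\cZ^{\go,a}_{T,\beta}(\ind_{K^c})\le \cZ^{\go,a}_{T,\beta}\cdot\bQ^{\go,a}_{T,\beta}(K^c)$ and the fact that the unnormalized masses converge. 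Dividing by $\cZ^{\go}_{T,\beta}\in(0,\infty)$ (valid under~\eqref{hypolarge}) yields~\eqref{lescontis}.

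\textbf{Main obstacle.} The delicate step is the second moment estimate in dimension $d\ge2$: the naive diagonal bound involves $\int_0^T \rho_t(0)\,\dd t = \int_0^T c\,t^{-d/2}\dd t$, which \emph{diverges} at $t=0$ for $d\ge2$, so one cannot simply bound $\bbE[(M^{(a)})^2]$ term by term by a geometric series as in $d=1$. The resolution — and the heart of the argument — is to exploit the fact that the small jumps carry a parameter $p<1+2/d$ strictly below the critical exponent: one must track how the $\ups$-integral $\int_{(0,a_0)}\ups^2\gl(\dd\ups)$ interacts with the time-singularity, splitting the $\ups$-range dyadically and matching scales $\ups\sim$ (spatial scale)$^{-d}$ against the heat-kernel singularity, so that the would-be divergent $t^{-d/2}$ is tamed by the sub-critical moment. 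This scale-matching (a continuum analogue of the computations in the companion paper and in the Gaussian $d\le2$ literature) is the technical core; once the uniform second-moment bound is in hand, the martingale machinery delivers the rest routinely.
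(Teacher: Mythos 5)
Your overall architecture (small/large atom split, martingale structure, reduction to a uniform moment bound, then tightness for the measures) matches the paper's, but there is a genuine gap at the central step. Your plan is to show $\sup_{a}\bbE\big[(M^{(a)})^2\big]<\infty$ for the small-noise partition function and invoke $\bbL_2$ martingale convergence. In dimension $d\ge 2$ this second moment is \emph{infinite for every} $a>0$, and the failure has nothing to do with the $\ups$-integral: under \eqref{hyposmall} one already has $\int_{(0,1)}\ups^2\gl(\dd\ups)<\infty$ for $d\ge 2$, yet the diagonal term of the second moment produces $\sum_m C^m\int_{\mathfrak X^m}\prod_{i}(t_i-t_{i-1})^{-d/2}\dd\bt$, which diverges because of the time singularity alone. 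So the "scale-matching between $\ups$ and the heat-kernel singularity" you propose in your obstacle paragraph cannot rescue a second-moment bound on $M^{(a)}$ itself — there is nothing left to match on the $\ups$ side. The paper's resolution is structurally different: one abandons the $\bbL_2$ bound on the object and instead constructs a restricted partition function $\hat\cZ^{\go,a}_{\beta,q}$ by keeping only collections $\sigma$ of atoms satisfying the multi-body constraint $\prod_j u'_j< q^{|\sigma'|}\prod_j(t'_j-t'_{j-1})^{\gamma}$ for \emph{every} sub-collection $\sigma'\subset\sigma$ (with $\gamma$ tuned to $p$ as in \eqref{lesdeuxconds}). One then proves (A) $\sup_a\bbE[|\cZ^{\go,a}_\beta-\hat\cZ^{\go,a}_{\beta,q}|]\to0$ as $q\to\infty$ and (B) $\sup_a\bbE[(\hat\cZ^{\go,a}_{\beta,q})^2]<\infty$ for each $q$, and deduces uniform integrability by Cauchy--Schwarz (Proposition~\ref{metaprop}). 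Both (A) and (B) rest on the tail estimates of Lemmas~\ref{lem:tailzzz1}--\ref{lem:tailzzz2}, which is where $p<1+\frac 2d$ actually enters. Without some such restriction scheme your positive martingale is merely $\bbL_1$-bounded, and Proposition~\ref{prop:tozero} shows that in that situation the limit can perfectly well be $0$; so the missing idea is not a technicality.

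Two smaller points. For positivity, your lower bound "each approximant dominates $\bQ(f)$ on the $k=0$ term" is $e^{-\beta\kappa_a}\bQ(f)$, which tends to $0$ when $\int_{(0,1)}\ups\gl(\dd\ups)=\infty$; the paper instead shows that the event $\{\cZ^{\go,[0,b)}_\beta(f)>0\}$ is independent of $b$ modulo null sets (using that $b\mapsto\cZ^{\go,[0,b)}_\beta(f)$ is a martingale in the large-atom filtration) and applies Kolmogorov's $0$--$1$ law for the germ $\sigma$-field at $0$. For tightness, absolute continuity of each $\bQ^{\go,a}_{T,\beta}$ with respect to $\bQ$ with an $\bbL_1$-bounded density does not by itself give tightness of the family; one needs $\sup_a\cZ^{\go,a}_{T,\beta}(\ind_{\cK_N^{\cc}})\to0$, which the paper obtains from Doob's maximal inequality applied to the (super)martingales $\cZ^{\go,[a,b)}_{\beta}(\ind_{\cK_N^{\cc}})$.
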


\begin{rem}
 Note that the conditions \eqref{hypolarge}-\eqref{hyposmall} are satisfied when $\gl(\dd \ups)= \alpha \ups^{-(1+\alpha)}\dd \ups$ for $\alpha\in (0,\alpha_c)$. When $\alpha\in [\alpha_c,\infty)$, Proposition \ref{prop:tozero}  below establishes that the limit is degenerate.
\end{rem}

\begin{rem}
When  \eqref{hypolarge} is not satisfied, 
it is not difficult to check from our proof that
almost surely, the convergence \eqref{convf} holds simultaneously for all $f\in \cC_b$, that is $\cZ^{\go,a}_{T,\beta}(\,\cdot\, \cap A)$ converges vaguely (as a measure), for any bounded set $A$.
\end{rem}

\begin{rem} 
A statement similar to the weak convergence of $\bQ^{\go,a}_{T,\beta}$ can also be made in the case when \eqref{hypolarge} does not hold.
In that case there exists a locally finite measure~$\bZ^{\go}_{T,\beta}$ on $C_0([0,T])$ (in the sense that it gives finite mass to the set $\{ \varphi  \colon \|\varphi\|_{\infty}<M\}$ for every~$M$) which is such that almost surely, for every $f\in \mathcal C_b$
$$ \lim_{a\to 0}\cZ^{\go,a}_{T,\beta}(f)= \bZ^{\go}_{T,\beta}(f).$$
\end{rem}

\noindent The condition \eqref{hyposmall}, which prevents 
 $\cZ^{\go,a}_{T,\beta}(f)$ from vanishing as $a$ tends to zero, is close to optimal.
 Let us introduce the following alternative and almost equivalent condition
\begin{equation}\label{hyposmall2}
\begin{cases}
 \int_{(0,1)}  \ups^{2} \gl(\dd \ups)<\infty, \quad   &\text{ if } d=1,\\
  \int_{(0,1)}  \ups^{2}|\log (\ups)| \gl(\dd \ups)<\infty, \quad   &\text{ if } d=2,\\
 \int_{(0,1)}  \ups^{1+\frac{2}{d}} \gl(\dd \ups)<\infty, \quad &\text{ if } d\ge 3.
 \end{cases}
 \end{equation}
Then we prove that the limit is degenerate as soon as \eqref{hyposmall2} is violated. 
In particular the following result ensures
that one cannot define the continuum polymer model when 
$\int_{(0,1)}  \ups^{2} \gl(\dd \ups) =\infty$,
in which case 
the noise $\xi_\go$ is itself not well-defined  (see Remark~\ref{rem:noiseL2}).

\begin{proposition}\label{prop:tozero}
 If the measure $\gl$ does \emph{not} satisfies \eqref{hyposmall2} then for any $f\in \cB_b$ we almost surely have
 \begin{equation}
  \lim_{a\to 0} \cZ^{\go,a}_{T,\beta}(f)=0\, 
 \end{equation}
 If \eqref{hypolarge} also holds, we have   $  \lim_{a\to 0} \cZ^{\go,a}_{T,\beta}=0.$ 
\end{proposition}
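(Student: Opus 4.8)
\emph{Proof strategy.} The plan is to first reduce to a nonnegative $f\in\cB_b$ with $\bQ(f)>0$: for a general $f\in\cB_b$, pick $M>0$ with $f(\varphi)=0$ whenever $\|\varphi\|_{\infty}>M$; then by the linearity of $\cZ^{\go,a}_{T,\beta}(\cdot)$ and the positivity statement of Lemma~\ref{eazy} one has $|\cZ^{\go,a}_{T,\beta}(f)|\le\|f\|_{\infty}\,\cZ^{\go,a}_{T,\beta}(g)$ with $g:=\ind_{\{\|\varphi\|_{\infty}\le M\}}$, so it suffices to treat $f=g$. Next I would observe that $\cZ^{\go,a}_{T,\beta}(f)$, as $a\downarrow 0$, is a nonnegative martingale once we condition on the atoms of size $\ge\gep_0$ (for an arbitrary fixed $\gep_0\in(0,1]$): writing $\xi^{(a)}_\go=\xi^{(\gep_0)}_\go+\mu$, where $\mu$ collects the jumps of sizes in $[a,\gep_0)$ compensated by $\int_{[a,\gep_0)}\ups\gl(\dd\ups)\,\cL$ and is independent of the atoms of size $\ge\gep_0$ with $\bbE[\mu]=0$, the multilinear expansion \eqref{lafirstdeffpti} together with the fact that the integration variables $t_1<\dots<t_k$ are a.s.\ distinct gives $\bbE\big[\cZ^{\go,a}_{T,\beta}(f)\,\big|\,\text{atoms of size}\ge\gep_0\big]=\cZ^{\go,\gep_0}_{T,\beta}(f)$, which is a.s.\ finite by Proposition~\ref{prop:contiZfini}. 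Consequently $\cZ^{\go,a}_{T,\beta}(f)$ converges almost surely to a finite limit $\cZ^{\go}_{T,\beta}(f)\ge 0$, and the task is to show this limit is a.s.\ zero.

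It then suffices to prove $\bbP\big(\cZ^{\go}_{T,\beta}(f)=0\big)>0$, because the event $\{\cZ^{\go}_{T,\beta}(f)=0\}$ obeys a $0$–$1$ law. To see the latter I would split $[0,T]$ at $T/2$, insert the Chapman--Kolmogorov decomposition into \eqref{lafirstdeffpti}, and restrict to trajectories pinned inside a fixed ball at the midpoint; this produces a submultiplicative lower bound relating a partition function on $[0,2T]$ to a product of two partition functions supported on disjoint time slabs, which are independent since $\go$ is a Poisson process. Iterating this, together with the ergodicity of $\go$ under time translations, forces the event $\{\cZ^{\go}_{T,\beta}(f)=0\}$ to be trivial, as in the white-noise case.

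The heart of the argument is to exhibit the vanishing on an event of positive probability, by isolating the small jumps. Fix $\gep\in(0,1]$ and let $\cZ^{(a,\gep)}_{T,\beta}(f)$ be the partition function \eqref{lafirstdeffpti} built from the \emph{compensated} restriction of $\xi_\go$ to jumps of size in $[a,\gep)$; since these atoms are bounded, this is a centered-noise partition function with $\bbE[\cZ^{(a,\gep)}_{T,\beta}(f)]=\bQ(f)$ and finite moments of all orders. Conditioning on the atoms of size $\ge\gep$ and using a composition formula in the spirit of Chapman--Kolmogorov, one checks that $\cZ^{\go,a}_{T,\beta}(f)\to 0$ a.s.\ as soon as $\cZ^{(a,\gep)}_{T,\beta}(f)\to 0$ a.s.\ for some (equivalently, all small enough) $\gep$. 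Now the moments of $\cZ^{(a,\gep)}_{T,\beta}(f)$ are controlled by iterated heat-kernel integrals weighted by $v^{(p)}_{a,\gep}:=\int_{[a,\gep)}\ups^{p}\gl(\dd\ups)$. For $d=1$ the second moment equals $\sum_{k\ge 0}\big(\gb^{2}c_{1}v^{(2)}_{a,\gep}\big)^{k}\int_{0<t_1<\dots<t_k<T}\prod_{i=1}^{k}(t_i-t_{i-1})^{-1/2}\,\dd\bt$, finite for every $a>0$ but diverging as $a\to 0$ exactly when $\int_{(0,1)}\ups^{2}\gl(\dd\ups)=\infty$; since the first moment stays equal to $\bQ(f)$, the nonnegative martingale $\cZ^{(a,\gep)}_{T,\beta}(f)$ then cannot be uniformly integrable and must converge to $0$. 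For $d\ge 2$ the second moment is already infinite (this is precisely the obstruction that makes the $L^{2}$ theory unavailable, and the reason the threshold in \eqref{hyposmall2} is below $2$), so one works with a fractional moment, or with a Lindeberg-type truncation inside the chaos expansion; the relevant weighted kernel sum then involves $v^{(2)}_{a,\gep}$ tempered by the space-time scale $\sim(\gb\ups)^{2/d}$ over which a jump of size $\ups$ is felt by the Brownian path, which is why the borderline integrability becomes $\int_{(0,1)}\ups^{1+2/d}\gl(\dd\ups)<\infty$ for $d\ge 3$ and $\int_{(0,1)}\ups^{2}|\log\ups|\gl(\dd\ups)<\infty$ for $d=2$. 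When \eqref{hyposmall2} fails, this quantity diverges as $a\to 0$, which drives $\cZ^{(a,\gep)}_{T,\beta}(f)$, hence $\cZ^{\go,a}_{T,\beta}(f)$, to $0$. Finally, for the last assertion, under the additional hypothesis \eqref{hypolarge} Proposition~\ref{prop:Zfinite} and a bound uniform in $n$ allow exchanging $\lim_{n}$ and $\lim_{a\to 0}$ in $\cZ^{\go,a}_{T,\beta}=\lim_{n}\cZ^{\go,a}_{T,\beta}(f_n)$, giving $\lim_{a\to 0}\cZ^{\go,a}_{T,\beta}=0$.

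The main obstacle I expect is the regime $d\ge 2$: with no $L^{2}$ theory available one must both make the fractional-moment (or truncation) scheme work and, above all, pin down the exact dependence on the jump size $\ups$ of the effective localization scale, so as to recover the sharp exponent $1+2/d$ (and the logarithmic correction at $d=2$) rather than some non-optimal exponent. A secondary technical point is that \eqref{hypolarge} allows very heavy large-jump tails, so none of the steps — the martingale convergence, the $0$–$1$ law, and the passage from $\cZ^{(a,\gep)}_{T,\beta}(f)$ back to $\cZ^{\go,a}_{T,\beta}(f)$ — may rely on integrability of $\cZ^{\go,a}_{T,\beta}(f)$ of any fixed positive order; they must be carried out using only the almost sure convergence of the conditional martingales above.
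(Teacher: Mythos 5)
There is a genuine gap at the heart of your argument. Your plan for $d=1$ is: the second moment of the small-jump partition function $\cZ^{(a,\gep)}_{T,\beta}(f)$ diverges as $a\to0$ when $\int_{(0,1)}\ups^{2}\gl(\dd\ups)=\infty$, hence the martingale ``cannot be uniformly integrable and must converge to $0$.'' Neither implication is valid. A family with divergent second moments can perfectly well be uniformly integrable (write $X_a=Y+R_a$ with $R_a\to0$ in $\bbL_1$ but $\bbE[R_a^2]\to\infty$), and a positive martingale that fails to be uniformly integrable need not have almost-sure limit zero --- to conclude that, you need at minimum a zero--one law for $\{\lim=0\}$ \emph{together with} a proof that $\bbE[\lim]<\bbE[\cZ^{(a,\gep)}]$, which is exactly the hard part and is not supplied by second-moment blow-up. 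The paper itself illustrates the failure of your implication: for $d\ge2$ under \eqref{hyposmall} the second moment of $\cZ^{\go,a}_{T,\beta}$ is infinite for every $a$, yet the limit is positive (Theorem~\ref{thm:zalpha}). For $d\ge2$ your proposal defers the entire argument to an unspecified ``fractional moment, or Lindeberg-type truncation,'' which identifies the right exponents heuristically but is not a proof.

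The paper's mechanism is different and worth internalizing. It reduces (for $\mu=\int_{[1,\infty)}\ups\gl(\dd\ups)<\infty$) the statement $\cZ^{\go,a}_{T,\beta}\to0$ in probability to showing that the total variation distance between $\bbP$ and the size-biased law $\tilde\bbP^a_\beta$ tends to $1$; by Lemma~\ref{spayne} the latter is the law of $\go$ with an extra Poisson cloud of intensity $\gb\ups\ind_{\{\ups\ge a\}}\gl(\dd\ups)\dd t$ sprinkled along an independent Brownian path. One then exhibits an explicit statistic $Y_a(\go)$ (a count of atoms in a suitable space--time--value region for $d\ge3$, a sum of small jump sizes near the origin for $d=1$, a weighted sum $\ups/(t\vee\ups)$ for $d=2$) whose means under the two laws separate faster than the square root of the variances, precisely when \eqref{hyposmall2} fails; this is where the exponents $2$, $2|\log\ups|$ and $1+2/d$ actually come from. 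The general case $\mu=\infty$ and the final claim about $\cZ^{\go,a}_{T,\beta}$ are then handled by truncating the large jumps and a Doob maximal inequality, much as you suggest for your last step. If you want to salvage your outline, the step that must be replaced is the passage from ``second moment diverges'' to ``limit is zero''; the size-biasing/distinguishing-statistic argument is one concrete way to do it.
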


\noindent
Note that this result proves the last statement of Theorem~\ref{thm:continuous}.

\begin{rem}
From a Borel-Cantelli consideration, we have (for any value of $d$)
\begin{equation}\label{atoumz}
\int_{(0,1)} \ups^{1+\frac{2}{d}} \gl(\dd \ups)<\infty \quad \Longleftrightarrow \quad
 \suptwo{(t,x,v)\in \go}{ t\in [0,T], v\in(0,1)} v\rho(t,x)<\infty \quad  \text{a.s.}
\end{equation}
The quantity $v\rho(t,x)$ corresponds  to the multiplicative weight gained by trajectories which visit the point 
$(t,x)$ multiplied by the entropic cost to visit it. 
On a heuristic level, if $v\rho(t,x)$ is unbounded, it means that there are atoms with arbitrarily small amplitude $z$ which have a large impact on the value $\log \cZ^{\go,a}_{T,\beta}$,   preventing the  convergence of $\cZ^{\go,a}_{T,\beta}$ to a non-zero value.
 In view of the conditions \eqref{hyposmall}-\eqref{hyposmall2} in
 Theorem~\ref{thm:zalpha} and Proposition~\ref{prop:tozero}, this criterion is not far from being sharp when $d\ge 2$.
When $d=1$ the condition~\eqref{atoumz} is less restrictive than the one necessary for the convergence of $\xi^{(a)}_{\go}$ (cf.\ Proposition~\ref{convergence2}). 
\end{rem}

\begin{rem}
The difference between the conditions \eqref{hyposmall} and  \eqref{hyposmall2}  when  $d\ge 2$ leaves a small family of Lévy measures for which the question whether $\cZ^{\go,a}_{T,\beta}$ converges to a positive limit or to zero  remains open. We do not believe that either condition \eqref{hyposmall} or \eqref{hyposmall2} are optimal. Although refinements of the proofs presented here could most likely yield slightly finer condition on both sides, finding the necessary and sufficient condition remains a challenging issue.
Even if the condition is not optimal, the $|\log \ups|$ factor present in \eqref{hyposmall2}
is of importance in dimension $d=2$, since it underlines that in any dimension $d\geq 2$, in contrast with the case $d=1$, there are some  (Poisson) noises for which the continuum polymer (and the noisy stochastic heat equation see Section \ref{sec:SHE} below) are not defined. 
\end{rem}

\begin{rem}
After a first draft of this manuscript was published, in a collaborative effort with C.\ Chong \cite{BCL21}, the condition \eqref{hyposmall} and 
\eqref{hyposmall2} have been improved. More precisely, it was shown that
Theorem \ref{thm:zalpha} holds under the condition
$\int_{(0,1)}  \ups^{1+\frac{2}{d}}|\log (\ups)| \gl(\dd \ups)<\infty$
when $d\ge 2$, identifying thus the necessary and sufficient condition for convergence when $d=2$. When $d\ge 3$ the condition \eqref{hyposmall2} for Proposition \ref{prop:tozero} has been improved to  
$$\int_{(0,e^{-e})}  \ups^{1+\frac{2}{d}}|\log (\ups)| (\log |\log (\ups)|)^{-c}\gl(\dd \ups)<\infty$$
with $c$ any constant greater than $5+\frac4d$,
leaving only a very narrow gap between the necessary and the sufficient condition.
\end{rem}

\subsection{Main properties of the Continuum Directed Polymer in L\'evy Noise}
\label{sec:mainprop}

Let us assume throughout the rest of this section that Assumptions \eqref{hypolarge}-\eqref{hyposmall} are satisfied.
We describe under these assumptions a few key properties of our polymer measure. First, we underline how $\bQ^{\go}_{T,\beta}$ is in some aspects very similar to the Wiener measure and is in others very singular with respect to it. 
Then, we provide an explicit expression for the finite-dimensional marginal density of the measure, via point-to-point partition functions.

\subsubsection{Basic properties of the continuum polymer in L\'evy noise}
\label{bazics}

Let us define $\bbP\rtimes\bQ^{\go}_{T,\beta}$ the averaged polymer measure as follows
\begin{equation}
\bbP\rtimes\bQ^{\go}_{T,\beta}(A):= \bbE[\bQ^{\go}_{T,\beta}(A) ]\, .
\end{equation}

\begin{proposition} \label{abzolute}
 The averaged polymer measure
$\bbP\rtimes \bQ^{\go}_{T,\beta}$ is  absolutely continuous with respect to $\bQ$.
\end{proposition}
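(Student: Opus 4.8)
The plan is to show that for every Borel set $A \subset C_0([0,T])$ with $\bQ(A) = 0$, we have $\bbE[\bQ^{\go}_{T,\beta}(A)] = 0$. Since $\bQ^{\go}_{T,\beta}(A) = \cZ^{\go}_{T,\beta}(\ind_A)/\cZ^{\go}_{T,\beta}$ and the partition function $\cZ^{\go}_{T,\beta}$ is almost surely a fixed positive number (independent of $A$), this is not simply a matter of taking expectations of the numerator; instead I would first establish the claim at the level of the truncated measures and then pass to the limit. Specifically, for fixed $a > 0$, I would compute $\bbE[\cZ^{\go,a}_{T,\beta}(f)]$ for non-negative $f \in \cB_b$. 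Using the expansion \eqref{lafirstdeffpti} and the fact that, for a Poisson process, the expectation of an integral against the \emph{centered} measure $\xi^{(a)}_{\go}$ vanishes while higher iterated integrals decouple via the multilinearity and the independence properties of Poisson integrals, one finds that only the $k=0$ term survives in expectation when the noise is genuinely centered. However, $\xi^{(a)}_{\go}$ is \emph{not} centered (the compensator only removes jumps below size one), so I would split $\xi^{(a)}_{\go} = \tilde\xi^{(a)}_{\go} + m_a \cL$ where $\tilde\xi^{(a)}_{\go}$ is the centered version and $m_a = \int_{[1,\infty)} \ups\, \gl(\dd\ups)$ is a (finite, under the $\bbL_1$ assumption or handled by truncation otherwise) constant, expand, and take expectations term by term.

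The key computation is then that $\bbE[\cZ^{\go,a}_{T,\beta}(f)]$ equals the expectation under a \emph{tilted} Brownian motion — concretely, one gets
\[
\bbE[\cZ^{\go,a}_{T,\beta}(f)] = \bQ\big[ f\big((B_t)_{t\in[0,T]}\big)\, e^{\beta m_a T} \big] = e^{\beta m_a T}\, \bQ(f),
\]
because the spatial integrals $\int_{(\bbR^d)^k} \varrho(\bt,\bx,f)\,\dd\bx$ reconstruct the Brownian expectation (each factor $\rho$ integrates to $1$ and the conditioning in \eqref{defvarro} is consistent), and the time integral over the simplex produces $(\beta m_a T)^k/k!$. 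In particular $\bbE[\cZ^{\go,a}_{T,\beta}(f)]$ is a constant multiple of $\bQ(f)$, hence vanishes whenever $\bQ(f) = 0$; approximating $\ind_A$ from above by functions in $\cB_b$ (using that $A$ can be taken bounded, or exhausting by bounded sets and monotone convergence) gives $\bbE[\cZ^{\go,a}_{T,\beta}(\ind_A)] = 0$ when $\bQ(A)=0$. Since $\cZ^{\go,a}_{T,\beta}(\ind_A) \ge 0$, this forces $\cZ^{\go,a}_{T,\beta}(\ind_A) = 0$ almost surely, and therefore $\bQ^{\go,a}_{T,\beta}(A) = 0$ almost surely for each fixed $a$.

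To conclude I would pass to the limit $a \to 0$: by Theorem~\ref{thm:zalpha} (or Theorem~\ref{thm:contmeasure} in the $\alpha$-stable case), $\bQ^{\go,a}_{T,\beta} \to \bQ^{\go}_{T,\beta}$ weakly, almost surely. Weak convergence alone does not preserve nullity of a fixed Borel set, so instead I would argue directly with $\cZ^{\go}_{T,\beta}(\ind_A)$: the convergence $\cZ^{\go,a}_{T,\beta}(f) \to \cZ^{\go}_{T,\beta}(f)$ holds for all $f \in \cB$ (under \eqref{hypolarge}), and combining this with Fatou's lemma gives $\bbE[\cZ^{\go}_{T,\beta}(\ind_A)] \le \liminf_a \bbE[\cZ^{\go,a}_{T,\beta}(\ind_A)] = 0$, whence $\cZ^{\go}_{T,\beta}(\ind_A) = 0$ almost surely and thus $\bbE[\bQ^{\go}_{T,\beta}(A)] = 0$. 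Taking Radon–Nikodym derivatives, $\bbP\rtimes\bQ^{\go}_{T,\beta} \ll \bQ$.

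\textbf{Main obstacle.} The delicate point is the bookkeeping in the expectation of the chaos expansion \eqref{lafirstdeffpti}: one must justify interchanging $\bbE$ with the infinite sum and the iterated integrals, control the contribution of the large atoms (which is where \eqref{hypolarge} and the $\bbL_1$ condition $\int_{[1,\infty)}\ups\,\gl(\dd\ups)<\infty$ enter — without the latter one works with a further truncation of large jumps and a limiting argument), and verify that the Poisson-integral cross terms vanish in expectation exactly as in the Gaussian Wick-exponential case. Handling the case where the $\bbL_1$ convergence fails (so $\cZ^{\go}_{T,\beta}$ is only an almost-sure limit, not an $\bbL_1$ limit) while still extracting the identity $\bbE[\cZ^{\go}_{T,\beta}(\ind_A)] = 0$ via Fatou is the second subtlety to get right.
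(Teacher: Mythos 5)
Your first two steps are fine and match the paper: the identity $\bbE[\cZ^{\go,a}_{T,\beta}(f)]=e^{\gb\mu}\bQ(f)$ is exactly \eqref{lamine}, and it does give $\cZ^{\go,a}_{T,\beta}(\ind_A)=0$ a.s.\ for each fixed $a$ whenever $\bQ(A)=0$ (with the same truncation-in-$b$ device as the paper when $\int_{[1,\infty)}\ups\,\gl(\dd\ups)=\infty$). The gap is in the very last step. You correctly observe that weak convergence does not preserve nullity of a fixed Borel set, but your fix does not close this: from $\cZ^{\go}_{T,\beta}(\ind_A)=0$ a.s.\ you conclude ``thus $\bbE[\bQ^{\go}_{T,\beta}(A)]=0$'', and that ``thus'' silently uses the identity $\bQ^{\go}_{T,\beta}(A)=\cZ^{\go}_{T,\beta}(\ind_A)/\cZ^{\go}_{T,\beta}$ for a general Borel set $A$. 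But $\bQ^{\go}_{T,\beta}$ is \emph{defined} as the weak limit of $\bQ^{\go,a}_{T,\beta}$, and weak convergence only yields this identity for $f\in\cC$; for merely measurable $\ind_A$ the two sides are a priori different objects. Controlling $\cZ^{\go}_{T,\beta}(\ind_A)=\lim_a\cZ^{\go,a}_{T,\beta}(\ind_A)$ only controls $\lim_a\bQ^{\go,a}_{T,\beta}(A)$, and in general $\lim_a\mu_a(A)=0$ does not force $\mu(A)=0$ for the weak limit $\mu$ (think of $\delta_{1/n}\Rightarrow\delta_0$ with $A=\{0\}$).

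This missing identification is precisely what the paper proves as Lemma~\ref{mostsurelemma} (equation \eqref{watacoincidence}), and it is the bulk of the paper's proof of Proposition~\ref{abzolute}: one uses the regularity of $\bQ$ to sandwich $A$ between closed sets $A^{(1)}_n$ and bounded open sets $A^{(2)}_n$ with $\bQ(A^{(2)}_n\setminus A^{(1)}_n)\to0$, applies the Portmanteau theorem to these, and then shows $\cZ^{\go}_{\beta}(\ind_{A^{(2)}_n\setminus A^{(1)}_n})\to0$ via Fatou and \eqref{lamine} (again with a $b$-truncation when $\mu=\infty$). For the special case $\bQ(A)=0$ a lighter version suffices — pick open $U_n\supset A$ with $\bQ(U_n)\le 1/n$, use Portmanteau for open sets to get $\bQ^{\go}_{T,\beta}(A)\le\liminf_a\bQ^{\go,a}_{T,\beta}(U_n)$, and then your Fatou argument applied to $\cZ^{\go}_{T,\beta}(\ind_{U_n})$ — but some argument of this Portmanteau/regularity type is indispensable, and your proposal does not contain it.
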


\noindent Proposition~\ref{abzolute} yields an important information concerning $\bQ^{\go}_{T,\beta}$ since it implies that  $\bbP$-almost surely $ \bQ^{\go}_{T,\beta}$ inherits  any given $\bQ$-almost sure property of the Brownian motion.

\begin{cor}
 If  $A$ is a Borel set of $C_0([0,T])$ such that $\bQ(A)=1$, then $\bbP$-a.s.\ we have $ \bQ^{\go}_{T,\beta}(A)=1$.
As an example, 
for almost every~$\go$, a trajectory $(B_t)_{t\in [0,T]}$
has $\bQ_{T,\gb}^{\go}$-a.s.\ a modulus of continuity 
given by $\sqrt{2h \log(1/h)}$: in other words,
\[
\bQ_{T,\gb}^{\go} \bigg( \Big\{ \varphi \in C_0([0,T]) \,; \,   \limsup_{h\downarrow 0} \sup_{ 0\leq t \leq T-h}  \frac{|\varphi(t+h) - \varphi(t)|}{ \sqrt{2h \log(1/h)}} =1 \Big\} \bigg) =1 \, .
\]
This implies in particular that for any $\gamma<1/2$, 
polymer trajectories are $\bQ_{T,\gb}^{\go}$-a.s.\ everywhere locally $\gamma$-H\"older continuous.
\end{cor}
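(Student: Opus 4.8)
\emph{Proof strategy.} The plan is to deduce the statement from Proposition~\ref{abzolute} in two steps: first the general transfer principle (any $\bQ$-almost sure event is $\bQ^{\go}_{T,\beta}$-almost sure for $\bbP$-a.e.\ $\go$), then the modulus-of-continuity example as a plain application of it. For the transfer principle I would argue by a first-moment estimate: given a Borel set $A$ with $\bQ(A)=1$ we have $\bQ(A^c)=0$, so by Proposition~\ref{abzolute} (which gives $\bbP\rtimes\bQ^{\go}_{T,\beta}\ll\bQ$) also $(\bbP\rtimes\bQ^{\go}_{T,\beta})(A^c)=0$; unwinding the definition of the averaged measure, $\bbE[\bQ^{\go}_{T,\beta}(A^c)]=0$, and since $\go\mapsto\bQ^{\go}_{T,\beta}(A^c)$ is a non-negative random variable it must vanish $\bbP$-a.s., i.e.\ $\bQ^{\go}_{T,\beta}(A)=1$ for $\bbP$-a.e.\ $\go$. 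No genuine obstacle arises here; all the substance is already contained in Proposition~\ref{abzolute}.

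For the concrete example, the plan is to take $A_0\subset C_0([0,T])$ to be the set of trajectories realising the displayed limit relation, check that $A_0$ is Borel, verify $\bQ(A_0)=1$, and then invoke the transfer principle with $A=A_0$. Measurability of $A_0$ is routine: by uniform continuity of each $\varphi$ on the compact $[0,T]$, the inner supremum over $t$ and the $\limsup$ over $h$ may be taken along rational values, so $A_0$ is built from the coordinate evaluations $\varphi\mapsto\varphi(t)$ by countably many measurable operations. The identity $\bQ(A_0)=1$ is Lévy's modulus of continuity theorem; I would either cite it or reproduce the one-dimensional Borel--Cantelli proof, noting that it carries over verbatim to a standard $d$-dimensional Brownian motion with the same normalisation $\sqrt{2h\log(1/h)}$, because for a standard Gaussian vector $Z$ the probabilities $\bbP(\|Z\|>R)$ and $\bbP(|Z_1|>R)$ share the exponential rate $e^{-R^2/2}$, the differing polynomial prefactors being immaterial for the argument.

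Finally, the local H\"older statement is a deterministic consequence of membership in $A_0$: if $\varphi\in A_0$ then the displayed $\limsup$ is finite, so there is $h_0(\varphi)>0$ with $|\varphi(t+h)-\varphi(t)|\le 2\sqrt{2h\log(1/h)}$ for all $t\in[0,T-h]$ and $0<h<h_0(\varphi)$; since $\sqrt{2h\log(1/h)}=o(h^{\gamma})$ as $h\downarrow 0$ for every $\gamma<1/2$, each such $\varphi$ is locally $\gamma$-H\"older continuous at every point of $[0,T]$. Thus this property too holds on a set of full $\bQ$-measure, and the transfer principle upgrades it to a $\bQ^{\go}_{T,\beta}$-almost sure property for $\bbP$-a.e.\ $\go$. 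The only place where one must be mildly careful is the passage from the one-dimensional Lévy modulus to its $d$-dimensional form used in the displayed formula; everything else is bookkeeping layered on top of Proposition~\ref{abzolute}.
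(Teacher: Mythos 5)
Your proposal is correct and follows exactly the route the paper intends: the corollary is stated as an immediate consequence of Proposition~\ref{abzolute}, and your first-moment argument ($\bbE[\bQ^{\go}_{T,\beta}(A^{\complement})]=(\bbP\rtimes\bQ^{\go}_{T,\beta})(A^{\complement})=0$ forces $\bQ^{\go}_{T,\beta}(A^{\complement})=0$ almost surely) together with the $d$-dimensional L\'evy modulus for $\bQ$ is precisely the intended justification. Your care about measurability of the event and about the dimension-independence of the constant in the L\'evy modulus is appropriate but does not change the substance.
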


On the other hand we have to mention that $\bQ^{\go}_{T,\beta}$ is very singular with respect to $\bQ$, most strikingly when  $\int_{(0,1)}\ups\gl(\dd \ups)=\infty$.
To illustrate this fact, given $\varphi \in  C_0([0,T])$ let us consider $\varDelta(\varphi,\go)$ the set of times at which the graph of $\varphi$ visits points of $\go$:
\begin{equation}
\label{def:Deltaphi}
\varDelta(\varphi,\go):= \{ t\in[0,T] \ : \exists \ups>0, \  (t , \varphi(t),\ups) \in \go\}\,.
\end{equation}
Let us set
  \begin{equation}\begin{split}
   \cA_{\mathrm{dense}}(\go)&:= \big\{ \varphi\in C_0([0,T]) \ : \   \varDelta(\varphi,\go) \text{ is dense in } [0,T] \big\}, \\
      \cA_{\mathrm{empty}}(\go)&:= \big\{ \varphi\in C_0([0,T]) \ : \   \varDelta(\varphi,\go)=\emptyset \big\},\\
          \cA_{\infty}(\go)&:= \big\{ \varphi\in C_0([0,T]) \ : \   \#\varDelta(\varphi,\go)=\infty \big\},
   \end{split}
  \end{equation}
\begin{proposition}\label{emptydance}
Under Assumptions \eqref{hypolarge}-\eqref{hyposmall}  the following statements hold.
\begin{itemize}
 \item [(i)] We have almost surely $\bQ(  \cA_{\mathrm{empty}})=1$.
 \item [(ii)]
 If $\int_{(0,1)}\ups\gl(\dd \ups)<\infty$ then $\bQ^{\go}_{T,\beta} (\cA_{\mathrm{empty}})\in(0,1)$ and $\bQ^{\go}_{T,\beta}(\cA_{\infty})= 0$ a.s.
 \item [(iii)] If $\int_{(0,1)}\ups\gl(\dd \ups)=\infty$ we have
$\bQ^{\go}_{T,\beta}(\cA_{\mathrm {dense}})= 1$ a.s.
\end{itemize}
\end{proposition}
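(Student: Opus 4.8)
The plan is to prove the three items in order, leveraging the absolute continuity statement of Proposition \ref{abzolute} for item (i) and then exploiting the explicit series representation of $\cZ^{\go}_{T,\beta}(\cdot)$ together with a fine analysis of the point process $\go$ for items (ii) and (iii).

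For item (i): the event $\cA_{\mathrm{empty}}(\go)$ is, for a fixed realization of $\go$, the set of continuous paths that avoid the (at most countable) projection of the atoms of $\go$ onto space-time. Since $\go$ restricted to $[0,T]\times\{R/2\le\|x\|\le R\}\times[a,\infty)$ is a.s.\ locally finite for each $a>0$, the set of ``bad'' space-time points $(t,x)$ with some atom above them is a.s.\ a countable set $D(\go)\subset[0,T]\times\bbR^d$; for a Brownian trajectory $B$, the probability that its graph hits any fixed point $(t,x)$ with $t\in(0,T]$ is zero, so by countable subadditivity $\bQ(\varDelta(B,\go)\ne\emptyset)=0$, i.e.\ $\bQ(\cA_{\mathrm{empty}})=1$ for a.e.\ $\go$. (One must be slightly careful at $t=0$ and $t=T$, but atoms there form a null set in time, so this contributes nothing.)

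For items (ii) and (iii) I would start from the decomposition of $\cZ^{\go}_{T,\beta}(f)$ according to whether or not a path visits any atom. Write, for $a>0$,
\[
\cZ^{\go,a}_{T,\beta}=\cZ^{\go,a}_{T,\beta}(\ind_{\cA_{\mathrm{empty}}})+\cZ^{\go,a}_{T,\beta}(\ind_{\cA_{\mathrm{empty}}^{\cc}}),
\]
where the first term is simply $\bQ(\cA_{\mathrm{empty}})$ deterministically (no atom contributes because all trajectories in $\cA_{\mathrm{empty}}$ avoid every atom, so all $k\ge1$ terms in \eqref{lafirstdeffpti} vanish for that indicator) — hence $\cZ^{\go}_{T,\beta}(\cA_{\mathrm{empty}})=\bQ(\cA_{\mathrm{empty}})$, which is a deterministic positive constant $c_0\in(0,1)$ since the complement has positive Wiener measure whenever $\go$ has at least one atom over a point $(t,x)$ with $t\in(0,T)$ (and $\bQ$ gives positive mass to paths that DO pass arbitrarily close, but here we need $\bQ(\cA_{\mathrm{empty}}^{\cc})>0$, which follows because $\bQ$ charges tubes around any chosen atom). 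Then $\bQ^{\go}_{T,\beta}(\cA_{\mathrm{empty}})=c_0/\cZ^{\go}_{T,\beta}$. To get $(0,1)$ we need $\cZ^{\go}_{T,\beta}\in(c_0,\infty)$ a.s.; finiteness is Proposition \ref{prop:Zfinite}/Theorem \ref{thm:zalpha}, and the strict inequality $\cZ^{\go}_{T,\beta}>c_0$ amounts to $\cZ^{\go}_{T,\beta}(\cA_{\mathrm{empty}}^{\cc})>0$ a.s., which holds because the $k=1$ contribution $\beta\int \varrho(t,x,\ind_{\cdot})\,\xi^{(a)}_{\go}(\dd t,\dd x)$ picks up a strictly positive mass from any single atom with $\ups\ge1$ and these survive the $a\to0$ limit. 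For $\bQ^{\go}_{T,\beta}(\cA_\infty)=0$ when $\int_{(0,1)}\ups\gl(\dd\ups)<\infty$: under this integrability $\kappa_0<\infty$, the noise is a genuine signed measure without renormalization, and the expected energy collected along a Brownian path from the \emph{full} $\go$ is finite; a Borel--Cantelli / first-moment computation on $\bbE^{\bQ}[\#\varDelta(B,\go)]$ — which factors through $\int_0^T\!\!\int\rho_t(x)\,\dd x\,\gl((0,\infty))\cdot$(mass of relevant atoms), controlled using $\int_{(0,1)}\ups\gl(\dd\ups)<\infty$ and \eqref{hypolarge} — shows paths visit only finitely many atoms $\bbP\rtimes\bQ$-a.s., hence $\bQ^{\go}_{T,\beta}(\cA_\infty)=0$ a.s.\ since $\bQ^{\go}_{T,\beta}\ll\bQ$ in the sense of Proposition \ref{abzolute} transferred to the $\go$-a.s.\ statement.

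For item (iii), when $\int_{(0,1)}\ups\gl(\dd\ups)=\infty$: the heuristic is that the small atoms are so numerous that the polymer measure, being reweighted by $\prod(1+\text{small rewards})$, is forced onto trajectories that pick up infinitely many of them, densely in time. I would argue by showing that for every nonempty open time interval $I=(s,s')\subset[0,T]$ and every $\epsilon>0$,
\[
\bQ^{\go}_{T,\beta}\big(\{\varphi:\varDelta(\varphi,\go)\cap I=\emptyset\}\big)=0\quad\text{a.s.,}
\]
and then intersect over a countable basis of intervals. Fix $I$; condition on the trajectory outside a thin sub-slab and on the endpoints $(s,\varphi(s)),(s',\varphi(s'))$. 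The contribution to $\cZ^{\go}_{T,\beta}(\cdot\cap\{\varDelta\cap I=\emptyset\})$ is bounded by the partition function of a \emph{bridge} that avoids all atoms in the time-slab $I\times\bbR^d$; because $\int_{(0,1)}\ups\gl(\dd\ups)=\infty$, the density of atoms near any fixed space point is so high that the "survival probability" of such an avoiding bridge, weighted appropriately, is zero — this is where one compares with the entropic cost and uses that $\sum$ over dyadic scales of the atom masses diverges. Concretely, I would lower-bound $\cZ^{\go}_{T,\beta}(\cdot\cap\{\varDelta\cap I\ne\emptyset\})$ by a sum over atoms in $I$ of single-atom-visiting bridge contributions and show this sum diverges (hence dominates the avoiding part, forcing the avoiding fraction to $0$), using the second-moment / $\bbL^1$ control from the proof of Theorem \ref{thm:zalpha} to ensure $\cZ^{\go}_{T,\beta}$ itself stays finite.

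The main obstacle is item (iii): unlike (i) and (ii), it is not a soft consequence of absolute continuity or a first-moment bound, but requires quantitatively showing that the polymer measure \emph{concentrates} on atom-rich trajectories. The delicate point is to make rigorous the claim that the avoiding-bridge partition function vanishes in the limit $a\to0$ while the full one stays positive and finite — this needs a careful two-sided estimate relating the divergence of $\int_{(0,1)}\ups\gl(\dd\ups)$ to a divergence of the expected log-reward restricted to a time slab, uniformly enough to survive the normalization by $\cZ^{\go}_{T,\beta}$, and then a Fubini/Borel--Cantelli argument over a countable family of time intervals to upgrade "for each $I$, a.s." to "a.s., for all $I$".
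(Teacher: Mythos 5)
Your item (i) matches the paper's argument and is fine. But items (ii) and (iii) contain genuine gaps. In (ii), the claim that $\cZ^{\go,a}_{T,\beta}(\ind_{\cA_{\mathrm{empty}}})=\bQ(\cA_{\mathrm{empty}})$ because ``all $k\ge 1$ terms in \eqref{lafirstdeffpti} vanish'' is wrong: the noise $\xi^{(a)}_{\go}$ contains the compensator $-\kappa_a\cL$, so the $k$-th chaos term restricted to $\cA_{\mathrm{empty}}$ contributes $(-\beta\kappa_a)^k/k!$, and one gets $\cZ^{\go}_{T,\beta}(\ind_{\cA_{\mathrm{empty}}})=e^{-\beta\kappa_0}$ (this is exactly the content of Lemma~\ref{eazy}, which the paper uses here). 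Your statement that this quantity equals a constant $c_0\in(0,1)$ because ``the complement has positive Wiener measure'' also contradicts your own item (i), which gives $\bQ(\cA_{\mathrm{empty}})=1$: a Brownian graph a.s.\ misses every atom exactly, and passing through a tube around an atom does not put the path in $\cA_{\mathrm{empty}}^{\cc}$. The reason $\bQ^{\go}_{T,\beta}(\cA_{\mathrm{empty}})<1$ is not a Wiener-measure fact but the singular part of the polymer measure: in the representation $\cZ^{\go}_{\beta}=\sum_{\sigma}w_{0,\beta}(\sigma)$ every nonempty $\sigma$ contributes positive mass to trajectories pinned exactly at atoms. For the same reason your argument for $\bQ^{\go}_{T,\beta}(\cA_\infty)=0$ via Proposition~\ref{abzolute} is invalid: absolute continuity of the averaged measure applies to \emph{fixed} Borel sets, not to the $\go$-dependent set $\cA_\infty(\go)$ — if it did apply, it would equally yield $\bQ^{\go}_{T,\beta}(\cA_{\mathrm{empty}})=1$, contradicting (ii). The correct route is again the sum representation: for each finite $\sigma$, the bridge through $\sigma$ a.s.\ hits no atom outside $\sigma$, so $w_{0,\beta}(\sigma,\ind_{\cA_\infty})=0$.

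For item (iii) your strategy (kill the event $\{\varDelta\cap I=\emptyset\}$ for every interval $I$ in a countable basis) is a legitimate variant of the paper's (which instead controls the maximal spacing of $\varDelta(\varphi,\go^{(a_n)})$ along a sequence $a_n$ with $\kappa_{a_n}\ge n$), but your sketch never identifies the mechanism that actually makes the avoiding mass vanish. It is not an entropy-versus-atom-density comparison for an avoiding bridge: it is simply that in the weight $w_{a,\beta}(\sigma)=e^{-\beta\kappa_a}\beta^{|\sigma|}\varrho(\bt,\bx)\prod u_i$, the prefactor $e^{-\beta\kappa_a}$ diverges to $0$ as $a\to0$ when $\kappa_0=\infty$, so any configuration with a macroscopic time gap free of atoms of $\go^{(a)}$ pays $e^{-\beta\kappa_a(\text{gap})}$ without compensation; a first-moment bound on $\bbE[\cZ^{\go,a_n}_{\beta}(A_n^{\cc})]$ (of order $e^{-\beta\sqrt{\kappa_n}}$ for gaps of size $\kappa_n^{-1/2}$), Markov and Borel--Cantelli, together with the fact that $\cZ^{\go,a}_{\beta}$ is bounded away from $0$, then closes the argument. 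Without this quantitative ingredient, and without handling the case $\int_{[1,\infty)}\ups\gl(\dd\ups)=\infty$ by truncation, the proof of (iii) is incomplete.
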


\begin{rem}\label{tvtrem}
 Note that when $\int_{(0,1)}\ups\gl(\dd \ups)<\infty$, we have in fact  $\bQ^{\go}_{T,\beta} (\cdot \ | \ \cA_{\mathrm{empty}})=\bQ$, so $\bQ^{\go}_{T,\beta}$ is in that case not singular with respect to the Wiener measure.
 The technique used for the proof of Proposition \ref{emptydance} can possibly be pushed a bit further to yield the following statement:
 \begin{itemize}
  \item When $\int_{(0,1)}\ups\gl(\dd \ups)<\infty$ then the convergence of $\bQ^{\go,a}_{T,\beta}$ towards $\bQ^{\go}_{T,\beta}$ holds also for the  total variation distance.
  \item When $\int_{(0,1)}\ups\gl(\dd \ups)=\infty$ then $\| \bQ^{\go,a}_{T,\beta}-\bQ^{\go}_{T,\beta}\|_{TV}=1$ for every $a>0$. 
 \end{itemize}

\end{rem}

\subsubsection{Point-to-point partition functions and finite dimensional marginals}\label{ptpmarginals}
The aim of this section is to give an explicit description of the finite-dimensional marginals of $\bQ^{\go}_{T,\beta}$.
If we fix $0<t_1<\dots<t_k\le T$, then the distribution of $(B_{t_1},\dots, B_{t_k})$ under $\bQ_{\gb,T}^{\go}$ is absolutely continuous with respect to the Lebesgue measure and its density can be expressed using the so-called point-to-point partition functions.
For any $a>0$,
define for all $t>0$ and $x\in \bbR^d$
the partition function from $(0,0)$ to $(t,x)$ as (recall the definition \eqref{def:rho})
\begin{equation}
\label{def:pointtopoint}
\cZ^{\go,a}_{\beta}(t,x) : 
=   \rho_t(x) +  \sum_{k=1}^\infty \beta^k\int_{0<t_1<\dots<t_k<t}\int_{(\bbR^d)^k}  \varrho(\bt,\bx) \rho_{t-t_k}(x-x_k)\prod_{i=1}^k  \xi_{\go}^{(a)} ( \dd t_i, \dd x_i) \,  ,
\end{equation}
if the integral is convergent (and set $\cZ^{\go,a}_{\beta}(t,x)=\infty$ if not).
The following proposition shows that the 
point-to-point partition function of our continuum model --- defined as the limit of $\cZ^{\go,a}_{\beta}(t,x)$ when $a$ tends to zero --- is well-defined, positive and finite.
\begin{proposition}\label{th:superlem}
Suppose that \eqref{hypolarge} holds, then given $a\in (0,1]$, $t>0$ and $x\in \bbR^d$ we have almost surely
\begin{equation}\label{lafinite}
 \cZ^{\go,a}_{\beta}(t,x)\in (0,\infty).
\end{equation}
If \eqref{hyposmall} also holds then given $t>0$ and $x\in \bbR^d$, we have almost surely
 \begin{equation}\label{lapazero}
  \cZ^{\go}_{\beta}(t,x)=\lim_{a\to 0}\cZ^{\go,a}_{\beta}(t,x) \,, \quad \text{ with }  \cZ^{\go}_{\beta}(t,x) \in (0,\infty) \, .
 \end{equation}
 If $\int_{[1,\infty)} \ups \gl(\dd \ups)<\infty$ then the convergence holds in $\bbL_1$. 
\end{proposition}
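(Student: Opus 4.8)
The plan is to mimic, at the level of a fixed endpoint $(t,x)$, the strategy used for $\cZ^{\go}_{T,\beta}(f)$ in Theorem~\ref{thm:zalpha}, exploiting the fact that the point-to-point partition function is essentially $\cZ^{\go}_{T,\beta}(f)$ for a degenerate choice of $f$ — namely $f$ concentrated on trajectories ending near $x$ at time $t$ — made rigorous by a disintegration argument. First I would establish \eqref{lafinite}: fix $a\in(0,1]$, $t>0$, $x\in\bbR^d$. Write $\cZ^{\go,a}_{\beta}(t,x) = \rho_t(x)\,\bQ^{0\to x}_t\big[\,\mathbf{:}e^{\beta H_\go(B)}\mathbf{:}\,\big]$ where $\bQ^{0\to x}_t$ is the Brownian bridge law from $(0,0)$ to $(t,x)$; the series \eqref{def:pointtopoint} is the chaos expansion of this expectation. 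Since $\xi^{(a)}_{\go}$ is, on any bounded space-time region, a finite signed measure with finitely many atoms of size $\geq a$ plus a bounded drift, each multiple integral in \eqref{def:pointtopoint} is a finite sum of genuine multiple integrals; positivity of the whole series follows exactly as in Lemma~\ref{eazy} (the bridge version of that lemma — the same telescoping/Feynman--Kac positivity argument applies verbatim with $\bQ$ replaced by $\bQ^{0\to x}_t$). Finiteness: the contribution of trajectories visiting atoms of size $\geq 1$ is controlled exactly as in Proposition~\ref{prop:Zfinite} under \eqref{hypolarge} — the bridge heat kernel $\rho_s(y)\rho_{t-s}(x-y)/\rho_t(x)$ is pointwise dominated by $C(t,x)\,\rho_s(y)$ times a bounded factor on the relevant region, so the entropy-energy bound behind \eqref{hypolarge} transfers with a modified constant. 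The one new point is that the bridge density blows up near $s=0$ and $s=t$; this is a harmless integrable singularity in the one-jump term ($\int_0^t \rho_s(\cdot)\rho_{t-s}(\cdot)\,ds$ converges), and for $k\geq 2$ it is tamed by the product structure, so summability in $k$ goes through as in Proposition~\ref{prop:contiZfini}.

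Next, for \eqref{lapazero}, I would run the same martingale/variance argument that proves convergence in Theorem~\ref{thm:zalpha}. The increments $\cZ^{\go,a}_{\beta}(t,x) - \cZ^{\go,a'}_{\beta}(t,x)$ for $a'<a$ form (in $a$, decreasing) a martingale with respect to the filtration generated by the atoms of size $\geq a$, by the Poisson structure and the compensation built into \eqref{def:omegabar}–\eqref{def:kappa}; one bounds its $\bbL_2$ norm (or a suitable truncated $\bbL_2$ / $\bbL_1$ quantity when the environment is very heavy-tailed, splitting off the atoms of size $\geq 1$ as in Theorem~\ref{thm:zalpha}) by the second-moment computation. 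The key estimate is the convergence of
\[
\sum_{k\geq 1}\beta^{2k}\int_{0<t_1<\cdots<t_k<t}\int_{(\bbR^d)^k} \Big(\tfrac{\varrho(\bt,\bx)\,\rho_{t-t_k}(x-x_k)}{\rho_t(x)}\Big)^2 \prod_i \ups_i^2\,\mathrm{small\ part} \; \dd\bt\,\dd\bx,
\]
which is finite precisely under \eqref{hyposmall}: the bridge kernel squared integrates to a bridge-kernel-type quantity at doubled time-resolution, giving the same power counting in $k$ that produces $\alpha_c = 1 + 2/d$, and the borderline $d=1,2$ behaviour is identical to the $f\equiv 1$ case treated earlier. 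Convergence a.s.\ then follows from the martingale convergence theorem; strict positivity of the limit uses that $\cZ^{\go}_{\beta}(t,x)\geq \rho_t(x)>0$ along the event that the bridge avoids all small atoms — more precisely one adapts the positivity/non-degeneracy argument of Theorem~\ref{thm:zalpha} (the limit dominates a Wiener-type contribution, hence cannot be $0$). Finally, the $\bbL_1$ convergence under $\int_{[1,\infty)}\ups\,\gl(\dd\ups)<\infty$: in that regime $\cZ^{\go,a}_{\beta}(t,x)/\rho_t(x)$ is a true (non-negative) martingale in $a$ with constant expectation $\bbE[\cZ^{\go,a}_{\beta}(t,x)] = \rho_t(x)$ (the first-moment computation, since the large jumps are now integrable and the small-jump compensator is exact), and uniform integrability follows from the $\bbL_2$-type bound above restricted to the small jumps plus the $\bbL_1$ control of the (now integrable) large-jump contribution, exactly as in the proof of the $\bbL_1$ statement in Theorem~\ref{thm:zalpha}.

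The main obstacle I anticipate is purely technical rather than conceptual: handling the bridge heat kernel's endpoint singularities $\rho_s(y)\rho_{t-s}(x-y)/\rho_t(x)$ uniformly inside the $k$-fold integrals — one must check that replacing, in every estimate from Sections on $\cZ^{\go}_{T,\beta}$, the free kernel $\varrho(\bt,\bx)$ by its pinned analogue does not destroy the integrability in the first and last time variables, nor the summability in $k$. The clean way around this is to integrate out $t_1$ (resp.\ $t_k$) first using $\int_0^{t_2}\rho_{t_1}(x_1-x_0)\rho_{t_2-t_1}(x_2-x_1)\,\dd t_1 \leq$ (finite, explicitly computable) and proceed inductively, so that the singularity is absorbed at the outermost layer and the remaining inner structure is identical to the already-established case $f\in\cB_b$. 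With this reduction in hand, Proposition~\ref{th:superlem} follows by the same arguments as Theorem~\ref{thm:zalpha} applied to the pinned model.
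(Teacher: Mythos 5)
Your overall route coincides with the paper's: represent $\cZ^{\go,a}_{\beta}(t,x)$ as a sum of positive weights (the pinned analogue \eqref{altexpp2p} of Lemma~\ref{eazy}), deduce finiteness from the entropy--energy bound behind \eqref{hypolarge} (Lemma~\ref{entross}, applied with $u_0=\delta_x$ in Proposition~\ref{polopopo}), and deduce convergence from the same uniform-integrability/second-moment machinery with $\varrho(\bt,\bx)$ replaced by $\varrho(\bt,\bx)\rho_{t-t_k}(x-x_k)$; the identity \eqref{inteinte} confirms your ``doubled time-resolution'' power counting, and in $d\ge 2$ the multi-body restriction $\cB_q$ is simply modified to include the final increment $(t-t_k)^{\gamma}$. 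One intermediate claim is inaccurate but repairable: the bridge kernel $\rho_s(y)\rho_{t-s}(x-y)/\rho_t(x)$ is \emph{not} pointwise dominated by $C(t,x)\rho_s(y)$, since $\rho_{t-s}(x-y)$ blows up like $(t-s)^{-d/2}$ near the endpoint; this is only tamed after integration in $y$ (the paper never normalizes by $\rho_t(x)$ and integrates the space variables first), and your alternative of integrating out the extremal time variable does work.

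The genuine gap is in your positivity mechanism, which fails whenever $\int_{(0,1)}\ups\,\gl(\dd\ups)=\infty$. In that regime $\kappa_a\to\infty$, so the weight of the empty configuration, $e^{-\beta\kappa_a}\rho_t(x)$, tends to zero: the limit does \emph{not} dominate $\rho_t(x)$, nor any ``Wiener-type contribution'', and the event that the path avoids all atoms carries no mass in the limit (compare Proposition~\ref{emptydance}(iii)). Positivity genuinely requires the argument of Proposition~\ref{lapositivity}: using the martingale property in the truncation level $b$ together with monotonicity, one shows that $\bbP(\cZ^{\go,[0,b)}_{\beta}(t,x)>0)$ does not depend on $b$, so that $\{\cZ^{\go}_{\beta}(t,x)>0\}$ is a tail event for the small-jump filtration $\cG_0=\bigcap_n\cG_{b_n}$; Kolmogorov's $0$--$1$ law combined with $\bbE[\cZ^{\go,[0,1)}_{\beta}(t,x)]=\rho_t(x)>0$ then forces probability one. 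You do defer to ``adapting the positivity argument of Theorem~\ref{thm:zalpha}'', which is the correct move and is exactly what the paper does, but the lower bound you offer in its place is not a valid substitute.
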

\noindent For $(s,y)\in \bbR\times \bbR^d$, let us define the shifted environment 
\[
\theta_{(s,y)}\go := \big\{  (t-s,x-y,\ups) \ : \ (t,x,\ups)\in \go \big\}.
\]
Then, we can define for $(t_1,x_1),(t_2,x_2) \in \bbR\times \bbR^d$, $t_1<t_2$ the partition function linking two arbitrary points:
\begin{equation}
 \cZ^{\go,a}_{\beta}[(t_1,x_1),(t_2,x_2)]:=  \cZ^{\theta_{(t_1,x_1)}\go,a}_{\beta}(t_2-t_1, x_2-x_1) \, .
\end{equation}
Finally we set 
\begin{equation}
\label{p2p}
  \cZ^{\go}_{\beta}[(t_1,x_1),(t_2,x_2)]:= \limsup_{a\to 0} \cZ^{\go,a}_{\beta}[(t_1,x_1),(t_2,x_2)],
\end{equation}
and we omit the first coordinate in the notation when it is equal to~$0$.
 We use a $\limsup$ instead of a limit in the definition only to make sure that  $\cZ^{\go}_{\beta}[(t_1,x_1),(t_2,x_2)]$ is defined simultaneously for all $(t_1,x_1)$ and $(t_2,x_2)$.
Note that Proposition~\ref{th:superlem}, together with translation invariance, shows that
for any fixed $(t_1,x_1)$ $(t_2,x_2)$
the $\limsup$ in~\eqref{p2p} can  almost surely be replaced by a limit (so the point-to-point partition function
$\cZ_{\gb}^{\go}[(t_1,x_1),(t_2,x_2)]$
is almost surely well-defined, positive and finite).

\begin{proposition}
\label{twomarg}
For any $0<t_1<\dots<t_k=T$,
 the set 
\[
\left\{ (x_1, \ldots, x_k) \ : \  \forall i\in \lint 1,k\rint,\  \cZ^{\go}_{\beta}[(t_{i-1},x_{i-1}),(t_i,x_{i})] =  \lim_{a\to 0} \cZ^{\go,a}_{\beta}[(t_{i-1},x_{i-1}),(t_i,x_{i})] \  \right\}
\]
has almost surely full Lebesgue measure.
Furthermore, the convergence  
 \begin{equation}
 \lim_{a\to 0}\prod_{i=1}^k \cZ^{\go,a}_{\beta}[(t_{i-1},x_{i-1}),(t_i,x_{i})]=\prod_{i=1}^k \cZ^{\go}_{\beta}[(t_{i-1},x_{i-1}),(t_i,x_{i})],
\end{equation}
 holds almost surely in $L_1( (\bbR^{d})^{k})$.
 Additionally, for almost every $\go$, the  $k$-marginals measure $\bQ^{\go}_{T,\beta}( (B_{t_1},\dots, B_{t_k})\in \cdot)$ is absolutely continuous with respect to the Lebesgue measure
and we have for any bounded measurable $g$ on $(\bbR^{d})^{k}$ 
\begin{equation}
 \bQ^{\go}_{T,\beta} \big( g(B_{t_1},\dots B_{t_k}) \big)= \frac{1 }{\mathcal Z^{\go}_{\beta,T}} \int_{(\bbR^d)^k} g(\bx )\prod_{i=1}^k \cZ^{\go}_{\beta}[(t_{i-1},x_{i-1}),(t_i,x_{i})] \dd \bx \, .
\end{equation}
\end{proposition}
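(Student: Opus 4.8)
\textbf{Proof strategy for Proposition~\ref{twomarg}.}

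The plan is to bootstrap from the single point-to-point statement of Proposition~\ref{th:superlem} together with the convergence of $\cZ^{\go,a}_{T,\beta}(f)$ from Theorem~\ref{thm:zalpha}, combined with a Fubini/Fatou argument to upgrade almost-sure convergence at fixed points to almost-sure convergence in $L_1$ of the full product over the $k$-tuple of space variables. First I would observe that by the translation invariance of the Poisson process $\go$ (the intensity $\dd t\otimes\dd x\otimes\gl(\dd\ups)$ is invariant under space-time shifts), Proposition~\ref{th:superlem} applied to $\theta_{(t_{i-1},x_{i-1})}\go$ gives, for each fixed $(x_1,\dots,x_k)$ and each $i$, that $\cZ^{\go,a}_{\beta}[(t_{i-1},x_{i-1}),(t_i,x_i)]\to\cZ^{\go}_{\beta}[(t_{i-1},x_{i-1}),(t_i,x_i)]\in(0,\infty)$ almost surely. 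Applying Fubini on the product space $\Omega\times(\bbR^d)^k$ then shows that for almost every $\go$, the convergence holds for Lebesgue-a.e.\ $(x_1,\dots,x_k)$ and for all $i$ simultaneously; this is exactly the first assertion about the set of full Lebesgue measure. Since each factor converges a.e.\ and stays positive and finite, the product $\prod_{i=1}^k\cZ^{\go,a}_\beta[(t_{i-1},x_{i-1}),(t_i,x_i)]$ converges a.e.\ on $(\bbR^d)^k$ to $\prod_{i=1}^k\cZ^\go_\beta[(t_{i-1},x_{i-1}),(t_i,x_i)]$.

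The key step is upgrading this pointwise-a.e.\ convergence to $L_1((\bbR^d)^k)$ convergence, for which I would use a uniform integrability / Scheffé-type argument. The natural route is to identify $\int_{(\bbR^d)^k}\prod_{i=1}^k\cZ^{\go,a}_\beta[(t_{i-1},x_{i-1}),(t_i,x_i)]\,\dd\bx$ with $\cZ^{\go,a}_{T,\beta}$ (or with $\cZ^{\go,a}_{T,\beta}(f)$ for a suitable $f\in\cB_b$ if one wants to stay in the bounded-support regime): indeed, concatenating the Markov decomposition of the Brownian bridge with the chaos expansion~\eqref{lafirstdef}, one has the chain rule $\int_{(\bbR^d)^k}\prod_{i=1}^k\cZ^{\go,a}_\beta[(t_{i-1},x_{i-1}),(t_i,x_i)]\,\dd\bx = \cZ^{\go,a}_{T,\beta}$ when $t_k=T$ (this is the continuum analogue of the discrete Markov/chain-rule identity for partition functions, and follows by Fubini from the definitions~\eqref{def:pointtopoint} and~\eqref{lafirstdef}). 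Then, since the integrands are non-negative, converge a.e., and their integrals $\cZ^{\go,a}_{T,\beta}$ converge a.s.\ to $\cZ^\go_{T,\beta}$ by Theorem~\ref{thm:zalpha} (which in turn, by Fatou applied to the a.e.\ limit, must equal $\int\prod_i\cZ^\go_\beta[\cdots]\,\dd\bx$), Scheffé's lemma gives $L_1((\bbR^d)^k)$ convergence of the products. The identity $\cZ^\go_{\beta,T}=\int_{(\bbR^d)^k}\prod_i\cZ^\go_\beta[(t_{i-1},x_{i-1}),(t_i,x_i)]\,\dd\bx$ obtained this way is also precisely what makes the final marginal formula consistent (the denominator is the total mass).

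For the last assertion, I would argue as follows. For $g$ bounded measurable on $(\bbR^d)^k$, from the definition of $\bQ^{\go,a}_{T,\beta}$ via~\eqref{QTA} and~\eqref{lafirstdeff}, decomposing the Brownian path at times $t_1<\dots<t_k$ and using the bridge representation~\eqref{defvarro}, one gets
\begin{equation}
\bQ^{\go,a}_{T,\beta}\big(g(B_{t_1},\dots,B_{t_k})\big)=\frac{1}{\cZ^{\go,a}_{T,\beta}}\int_{(\bbR^d)^k} g(\bx)\prod_{i=1}^k\cZ^{\go,a}_\beta[(t_{i-1},x_{i-1}),(t_i,x_i)]\,\dd\bx\,,
\end{equation}
which is the exact finite-$a$ analogue of the claimed formula; this identity is again just Fubini applied to the chaos expansion~\eqref{lafirstdeff}, grouping the jump-integration variables according to which sub-interval $(t_{i-1},t_i)$ they fall in. Now pass to the limit $a\to 0$: the denominator converges a.s.\ to $\cZ^\go_{\beta,T}\in(0,\infty)$ by Theorem~\ref{thm:zalpha}, and the numerator converges a.s.\ by the $L_1((\bbR^d)^k)$ convergence established above (since $g$ is bounded). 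On the other hand, by Theorem~\ref{thm:zalpha} (taking a sequence of bounded-support $f$'s increasing to $g$ when $g\geq 0$, then splitting a general $g$ into positive and negative parts), $\bQ^{\go,a}_{T,\beta}(g(B_{t_1},\dots,B_{t_k}))$ must converge to $\bQ^{\go}_{T,\beta}(g(B_{t_1},\dots,B_{t_k}))$; identifying the two limits gives the stated formula, and in particular shows the $k$-marginal of $\bQ^\go_{T,\beta}$ has density $\cZ^\go_{\beta,T}{}^{-1}\prod_{i=1}^k\cZ^\go_\beta[(t_{i-1},x_{i-1}),(t_i,x_i)]$ with respect to Lebesgue measure.

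\textbf{Main obstacle.} I expect the delicate point to be the passage to the $L_1((\bbR^d)^k)$-limit, i.e.\ justifying that no mass escapes as $a\to 0$. Pointwise a.e.\ convergence of the product is cheap, but to invoke Scheffé one genuinely needs the convergence of the \emph{integrals} $\cZ^{\go,a}_{T,\beta}\to\cZ^\go_{T,\beta}$ together with the a.e.\ identification of the limiting integrand with $\prod_i\cZ^\go_\beta[\cdots]$ — the latter requires knowing that Fatou is tight, i.e.\ that $\int\prod_i\cZ^\go_\beta[\cdots]\,\dd\bx$ is not strictly smaller than $\cZ^\go_{T,\beta}$. This in turn rests on the chain-rule identity $\int_{(\bbR^d)^k}\prod_i\cZ^{\go,a}_\beta[(t_{i-1},x_{i-1}),(t_i,x_i)]\,\dd\bx=\cZ^{\go,a}_{T,\beta}$ at the level $a>0$, which must be checked carefully from the chaos expansions (making sure the various Fubini exchanges are licit, using that everything is absolutely convergent for $a>0$ by Proposition~\ref{prop:contiZfini}); once that identity is in hand for all $a$, everything passes to the limit cleanly. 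A minor additional care is needed to make the ``full Lebesgue measure'' set of $\bx$'s work simultaneously for all $i$ and to handle the possibility that some intermediate $t_i$ coincide with endpoints, but these are routine.
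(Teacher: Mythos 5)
Your overall architecture coincides with the paper's: Fubini plus translation invariance for the full-measure set, the finite-$a$ identity for the $k$-marginal of $\bQ^{\go,a}_{T,\beta}$, the $L_1((\bbR^d)^k)$ convergence of the product of point-to-point partition functions as the crux, and passing to the limit in the quotient at the end. The gap lies in your resolution of what you correctly flag as the main obstacle. Scheffé's lemma requires knowing \emph{beforehand} that $\int_{(\bbR^d)^k}\prod_{i}\cZ^{\go}_{\beta}[(t_{i-1},x_{i-1}),(t_i,x_{i})]\,\dd\bx=\cZ^{\go}_{T,\beta}$, and you propose to deduce this from the chain-rule identity at level $a>0$ together with Fatou. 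But Fatou only yields the one-sided bound $\int\prod_{i}\cZ^{\go}_{\beta}[\cdots]\,\dd\bx\le\lim_{a\to0}\cZ^{\go,a}_{T,\beta}=\cZ^{\go}_{T,\beta}$, and the identity at positive $a$ does nothing to rule out mass escaping to infinity in the limit (compare $f_a=\ind_{[0,1]}+\ind_{[1/a,1/a+1]}$, which converges pointwise with constant integral yet loses half its mass). So ``everything passes to the limit cleanly'' is precisely the statement that still needs proof.

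The missing ingredient, which is how the paper closes this step, is that the factors $\cZ^{\go,a}_{\beta}[(t_{i-1},x_{i-1}),(t_i,x_{i})]$, $i\in\lint 1,k\rint$, are \emph{independent}, being measurable with respect to the restrictions of $\go$ to the disjoint time slabs $(t_{i-1},t_i)\times\bbR^d\times\bbR_+$. When $\int_{[1,\infty)}\ups\,\gl(\dd\ups)<\infty$, each factor converges in $\bbL_1(\bbP)$ by Proposition~\ref{th:superlem}, hence so does the product of independent factors, and the domination $\bbE\big[\big|\prod_{i}\cZ^{\go,a}_{\beta}[\cdots]-\prod_{i}\cZ^{\go}_{\beta}[\cdots]\big|\big]\le 2e^{\gb\mu}\varrho(\bt,\bx)$ (from~\eqref{zamine}) permits dominated convergence in $\bx$; this gives convergence in $\bbL_1(\bbP\otimes\mathrm{Leb})$, and the martingale property upgrades it to almost sure $L_1(\mathrm{Leb})$ convergence, which in particular identifies the limit of the integrals with the integral of the limit — exactly what Scheffé needed. (Equivalently: independence and $\bbL_1$ convergence give $\bbE[\int\prod_{i}\cZ^{\go}_{\beta}[\cdots]\,\dd\bx]=e^{\gb\mu}=\bbE[\cZ^{\go}_{T,\beta}]$, which combined with the a.s.\ Fatou inequality forces a.s.\ equality.) When $\int_{[1,\infty)}\ups\,\gl(\dd\ups)=\infty$ — a case your proposal does not address — these first moments do not exist, and one must first run the whole argument for the noise truncated at level $b$ and then let $b\to\infty$ using monotone convergence on both sides of the identity; this is an extra layer you would need to add.
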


\begin{rem}
For $k=1$ the above proposition states that the density of the distribution of $B_T$ under $\bQ_{T,\gb}^{\go}$ is given by $\cZ^{\go,a}_{\beta}(T,\cdot)/\cZ^{\go,a}_{\beta,T}\,$.
\end{rem}

%

\begin{rem}
Let us stress that in the above proposition,
we fix $0<t_1<\cdots <t_k=T$
before considering a realization of $\go$.
This is an important point since there are exceptional times for which $\bQ^{\go}_{T,\beta}(B_{t}\in \cdot )$
admits no density. In fact is not difficult to check that if $(t,x,\ups)\in \go$ then 
$\bQ^{\go}_{T,\beta}(B_{t}=x)>0$.
%
\end{rem}

\subsection{Connection with the Stochastic Heat Equation with multiplicative L\'evy noise}
\label{sec:SHE}

 In \cite{AKQ14b} the continuum directed polymer model with white noise is constructed directly from the solution of the Stochastic Heat Equation (SHE) with multiplicative Gaussian white noise. It is not possible to proceed in this way with a general L\'evy noise (simply because the solution is not regular enough) and our approach here is quite different.
The continuum model constructed in Theorem \ref{thm:zalpha} bears nonetheless a strong connection 
with the SHE with multiplicative L\'evy noise. We discuss here this connection in some more detail and compare our results with the existing ones concerning the SHE with L\'evy noise.
Our formal definition $\cZ^{\go}_{T,\beta} = \bQ\left[ \mathbf{:}e^{\beta H_\go(B)}\mathbf{:}   \right]$ (see \eqref{giancarlo}) corresponds to a Feynman--Kac formula associated with the following equation
\begin{equation}\label{levySHE}
\partial_t u:= \frac{1}{2d} \gD u + \beta \xi_{\go} \cdot u \, .
\end{equation}
More precisely, the point-to-point partition function  $\cZ^{\go}_{\beta}(t,x)$ defined in \eqref{def:pointtopoint}
formally corresponds to the solution of \eqref{levySHE} with
$\delta_0$ initial condition.
Starting from an arbitrary initial condition $u_0$ (a  locally finite signed measure), the solution of \eqref{levySHE} should take 
the form
 \begin{equation}\label{groove}
  u(t,x):= \int_{\bbR^d}  \cZ^{\go}_{\beta}[(0,y),(t,x)] \, u_0(\dd y) \, .
 \end{equation}
In the case $u_0(\dd y)  = g_0(y) \dd y$ for some bounded and measurable function $g_0$, 
the fact that~\eqref{groove}
is well-defined
derives from Proposition~\ref{twomarg} (in the case $k=1$), combined
with a time-reversal argument giving
\[
\big( \cZ^{\go}_{\beta}[(0,y),(t,x)] \big)_{y \in \bbR^d}\stackrel{(d)}{=} \big( \cZ^{\go}_{\beta}[(0,x),(t,y)]\big)_{y \in \bbR^d} \, ,
\]
that ensures that  $\cZ^{\go}_{\beta}[(0,\cdot),(t,x)]\in L_1(\bbR^d)$ 
almost surely.
For the general case where $u_0$ is a measure,
we refer to Proposition~\ref{prop:SHE} below 
for the well-posedness of~\eqref{groove}.

 The equation \eqref{levySHE} has been extensively studied (often under a more general form, see e.g.\ \cite{Mueller98, Loubert98}). To our knowledge the most complete results concerning the existence of  solutions have been given in~\cite{Chong17}.
More precisely, in \cite{Chong17} the existence of solutions in the integral form
\begin{equation}\label{integralz}
 u(t,x)= \int_{\bbR^d}  \rho_{t}(x-y)  u_0(\dd y) +\beta \int^t_0\int_{\bbR^d} \rho_{t-s}(x-y) u(s,y)
 \xi_{\go}(\dd s, \dd y) \,,
\end{equation}
 called \textit{mild solutions},
are obtained under the condition 
\begin{equation}\label{hypochong}
  \int_{(0,1)} \ups^p \gl(\dd\ups)<\infty  \quad \text{ and } \quad  \int_{[1,\infty)} \ups^q \gl(\dd\ups)<\infty,
\end{equation}
with  $p\in (0,1+\frac2d)$ and $ (2+\frac{2}{d}-p)^{-1} \le q\le p$ and  for $u_0=g_0(y) \dd y$ with $g_0$ bounded and measurable.
Uniqueness has been established earlier \cite{Loubert98} under the more stringent assumption 
$\int_{(0,\infty)} \ups^p \gl(\dd\ups)<\infty$ for  some $p\in [1,1+\frac2d)$, which for instance excludes $\ga$-stable noises.
Let us stress that the above is a very partial account of the results in  \cite{Chong17} since the existence results deal with  a more general class of equations and allows for a wider variety of noise (it allows for complex jumps and when $d=1$ for a Gaussian white noise part as well as for space-time inhomogeneities).

\smallskip

While our assumptions \eqref{hypolarge}-\eqref{hyposmall} are less restrictive than \eqref{hypochong},
we cannot prove that \eqref{groove} solves the equation \eqref{integralz} under these assumptions. However, we can show that the solution of the equation with a truncated noise converges almost surely when the truncation levels goes to zero and infinity respectively. Additionally, we keep quite a large freedom concerning the choice of the initial condition.
Let us write this result in full detail for completeness.
We are going to make the following assumption
on the initial condition~$u_0$:
\begin{equation}
\label{hypou0}
\limsup_{r\to \infty} r^{-2} \log \big( |u_0|( [-r,r]^d ) \big) 
 < \frac{1}{2T}   
\end{equation}
where 
$|u_0|$ is the total variation of the measure $u_0$.
This condition is present to ensure that~\eqref{groove} is well defined 
and almost surely finite on the interval $[0,T]$.
For $b>a$, let us introduce $\xi^{[a,b)}_\go$ the noise truncated at levels $a$ and $b$  (recall the notation~\eqref{def:kappa} for $\kappa_a$)
\begin{equation}\label{splitnoise}
\xi^{[a,b)}_\go:= \sum_{(t,x,\ups)\in \go} \ups \ind_{\{\ups \in[a,b)\}}\delta_{(t,x)}+(\kappa_b- \kappa_a) \cL \, .
\end{equation}  
Then, setting by convention the quantity to be $\infty$ when the integral is not well-defined, we set
\begin{equation}\label{defuab}
u^{[a,b)}(t,x) :=  \int_{\bbR^d} 
\cZ^{\go,[a,b)}_{\beta}[(0,y),(t,x)] \, u_0(\dd y )  \, ,
\end{equation}
where $\cZ^{\go,[a,b)}_{\beta}[(0,y),(t,x)]$ is defined as in \eqref{def:pointtopoint} with $\xi^{(a)}_{\go}$ replaced by $\xi^{[a,b)}_{\go}$.
By Theorem~1.2.1 in \cite{Loubert98}, if $u_0$ is absolutely continuous with bounded density  w.r.t.\ to the  Lebesgue measure, then $u^{[a,b)}$ is the unique solution (in some reasonable functional space) of \eqref{integralz} (with noise $\xi_{\go}^{[a,b)}$).
We first observe that $u^{[a,b)}$ converges when $b$ tends to infinity under very mild assumptions.

\begin{proposition}\label{lem:SHE}
Assume that~\eqref{hypolarge} holds,
and that $u_0$ satisfies~\eqref{hypou0}.
Then for any given  $t \in [0,T]$ and $x\in \bbR^d$,
for any $a >0$
\begin{equation}\label{lemilem}
u^a(t,x):=  \int_{\bbR^d} 
\cZ^{\go,a}_{\beta}[(0,y),(t,x)] \, u_0(\dd y )
\end{equation}
is almost surely finite.
\end{proposition}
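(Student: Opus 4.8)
The quantity to be shown finite is $\int_{\bbR^d}\cZ^{\go,a}_{\beta}[(0,y),(t,x)]\,|u_0|(\dd y)$ (this is what ``$u^{a}(t,x)$ is finite'' means); since every partition function occurring is nonnegative by Lemma~\ref{eazy}, all the exchanges of sums, integrals and expectations below are legitimate by Tonelli. The overall plan is to split the truncated noise $\xi^{(a)}_{\go}$ into a ``tame'' part with moments of all orders and a ``wild'' part made of the large atoms, to treat the wild part \emph{pathwise} via~\eqref{hypolarge}, and to estimate the remainder in expectation against $|u_0|$, using the strict inequality in~\eqref{hypou0}.

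First I would split at level $1$, writing $\xi^{(a)}_{\go}=\xi^{[a,1)}_{\go}+\xi^{[1,\infty)}_{\go}$ as in~\eqref{splitnoise}: the measure $\xi^{[a,1)}_{\go}$ has finite moments of every order and is \emph{centred} (since $\kappa_1=0$), while $\xi^{[1,\infty)}_{\go}=\sum_{(s,z,v)\in\go,\,v\ge1}v\,\delta_{(s,z)}$ is a bare sum of positive Dirac masses with no compensator. Inserting this decomposition into each factor of the chaos series~\eqref{def:pointtopoint} and collecting terms according to which of the atoms of $\xi^{[1,\infty)}_{\go}$ are integrated against --- the same reorganisation as in the proof of Proposition~\ref{th:superlem} --- one is led to the ``chain'' identity
\[
\cZ^{\go,a}_{\beta}[(0,y),(t,x)]=\sum_{m\ge0}\ \sumtwo{(s_1,z_1,v_1),\dots,(s_m,z_m,v_m)\in\go,\,v_l\ge1}{0<s_1<\dots<s_m<t}\ \Big(\prod_{l=1}^m\beta v_l\Big)\prod_{l=0}^m\cZ^{\go,[a,1)}_{\beta}[(s_l,z_l),(s_{l+1},z_{l+1})],
\]
with the conventions $(s_0,z_0)=(0,y)$, $(s_{m+1},z_{m+1})=(t,x)$, and with $\cZ^{\go,[a,1)}_{\beta}$ the point-to-point partition function built from $\xi^{[a,1)}_{\go}$; every summand is nonnegative. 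Because there are infinitely many large atoms, this rearrangement needs a word of justification; it is handled as in Proposition~\ref{th:superlem}, by first restricting the large atoms to a ball of radius $n$ and letting $n\to\infty$, using the a.s.\ finiteness proved there under~\eqref{hypolarge}.

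Next I would use~\eqref{hypolarge} to bound the atom sizes. By the Borel--Cantelli argument recalled in the remark following Proposition~\ref{prop:toinfinity}, \eqref{hypolarge} is equivalent to $\sup_{R>0}X_{R,T}\,e^{-\delta R^2}<\infty$ a.s.\ for every $\delta>0$; hence for a.e.\ $\go$ and each $\delta>0$ there is $C_\delta=C_\delta(\go)<\infty$ with $v\le C_\delta e^{\delta\|z\|^2}$ for every atom $(s,z,v)\in\go$ with $s\in[0,T]$. On the event $\{C_\delta(\go)\le K\}$ one may replace each $\beta v_l$ in the chain identity by $\beta Ke^{\delta\|z_l\|^2}$, obtaining $\cZ^{\go,a}_{\beta}[(0,y),(t,x)]\le\Psi_{K,\delta}(\go,y)$; setting $\Phi_{K,\delta}(\go):=\int_{\bbR^d}\Psi_{K,\delta}(\go,y)|u_0|(\dd y)$, it then suffices to show $\bbE[\Phi_{K,\delta}]<\infty$ for a well-chosen $\delta$, since $\{C_\delta(\go)<\infty\}$ has full measure and is exhausted by $\bigcup_K\{C_\delta\le K\}$. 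Here the point is that the factors $\cZ^{\go,[a,1)}_{\beta}[(s_l,z_l),(s_{l+1},z_{l+1})]$ depend on $\xi^{[a,1)}_{\go}$ over pairwise disjoint time slabs --- so they are mutually independent and independent of the large atoms --- and each has mean $\rho_{s_{l+1}-s_l}(z_{l+1}-z_l)$ because $\xi^{[a,1)}_{\go}$ is centred; taking expectations, using Campbell's formula for the large atoms (whose intensity in $v$ is $\gl([1,\infty))<\infty$), leaves
\[
\bbE[\Phi_{K,\delta}]=\sum_{m\ge0}\big(\beta K\gl([1,\infty))\big)^m\!\int_{0<s_1<\dots<s_m<t}\!\int_{(\bbR^d)^m}\Big(\prod_{l=1}^m e^{\delta\|z_l\|^2}\Big)\Big(\int_{\bbR^d}\rho_{s_1}(z_1-y)|u_0|(\dd y)\Big)\prod_{l=2}^{m}\rho_{s_l-s_{l-1}}(z_l-z_{l-1})\,\rho_{t-s_m}(x-z_m)\,\dd z_1\cdots\dd z_m\,\dd\bs.
\]

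Finally, the Gaussian bookkeeping: without the weights $e^{\delta\|z_l\|^2}$ the $y$- and $z$-integrals telescope by Chapman--Kolmogorov to $\int_{\bbR^d}\rho_t(x-y)|u_0|(\dd y)$, which is finite because $t\le T$ and~\eqref{hypou0} give $c_0:=\limsup_r r^{-2}\log|u_0|([-r,r]^d)<\tfrac1{2T}\le\tfrac1{2t}$, i.e.\ the Gaussian decay of $\rho_t$ beats the growth of $u_0$. Reinstating the $e^{\delta\|z_l\|^2}$ only inflates the effective Gaussian variances by an amount tending to $0$ with $\delta$, \emph{uniformly in $m$ and in $0<s_1<\dots<s_m<t$} (the total variance budget being the fixed $t\le T$); so, picking $c_0'\in(c_0,\tfrac1{2T})$ and $t'\in(t,\tfrac1{2c_0'})$ (a nonempty interval since $\tfrac1{2c_0'}>T\ge t$), for $\delta$ small enough the $y,z$-integral is at most $D(\delta)^m\int_{\bbR^d}\rho_{t'}(x-y)|u_0|(\dd y)<\infty$ with $D(\delta)<\infty$. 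Since $\int_{0<s_1<\dots<s_m<t}\dd\bs=t^m/m!$, the series is dominated by $\big(\int\rho_{t'}(x-y)|u_0|(\dd y)\big)\sum_m\big(\beta K\gl([1,\infty))D(\delta)t\big)^m/m!<\infty$, which gives $\Phi_{K,\delta}<\infty$ a.s.\ and concludes. The step I expect to be the real obstacle is this last one: establishing the bound on the $m$-fold Gaussian integral \emph{uniformly in $m$} once the weights $e^{\delta\|z_l\|^2}$ are present --- concretely, showing that the variance inflation caused by the potential $\delta\|\cdot\|^2$ over total time $t$ is of order $\delta t^2$ regardless of the discretisation (which amounts to comparing the discrete chain with the explicit Mehler-type kernel of the heat semigroup perturbed by $\delta\|\cdot\|^2$, finite for $\delta t$ small). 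The series rearrangement of the first step is the other place needing care, but it is routine given Proposition~\ref{th:superlem}.
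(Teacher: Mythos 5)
Your setup is sound up to the last step: the split of $\xi^{(a)}_{\go}$ at level $1$, the chain identity over the large atoms, the independence of the $[a,1)$-blocks over disjoint time slabs, their mean $\rho_{s_{l+1}-s_l}(z_{l+1}-z_l)$, the Mecke/Campbell computation, and the pathwise single-atom bound $v\le C_\delta(\go)e^{\delta\|z\|^2}$ coming from the remark after Proposition~\ref{prop:toinfinity} are all correct. But the proof fails exactly at the step you flagged, and the obstacle is not a technicality: the $m$-fold Gaussian integral with the weights $e^{\delta\|z_l\|^2}$ is \emph{not} bounded by $D(\delta)^m$ uniformly in $m$ — it is infinite as soon as $m$ is of order $1/(\delta t)$. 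Indeed the weighted integral equals $\rho_t(x-y)\,\bE\big[\exp\big(\delta\sum_{l=1}^m\|B_{s_l}\|^2\big)\big]$ for a Brownian bridge, and all the $\|B_{s_l}\|^2$ are driven by the same excursion: forcing every $z_l$ to lie near one point $z$ with $\|z\|=R$ costs entropy of order $R^2/t$ \emph{independently of $m$}, while the gain is $e^{\delta m R^2}$, so the $z$-integral diverges once $\delta m$ exceeds a constant times $1/t$. Equivalently, $\bE[e^{\lambda\sup_{s\le t}\|B_s\|^2}]=\infty$ for $\lambda$ above a threshold of order $1/t$, and your exponent $\delta\sum_l\|B_{s_l}\|^2$ behaves like $\delta m$ times a sup-type quantity, not like a Riemann sum of $\delta\int_0^t\|B_s\|^2\,\dd s$ (there are no $(s_{l+1}-s_l)$ weights), so the Mehler-kernel comparison you invoke is the wrong one. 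Consequently $\bbE[\Phi_{K,\delta}]=\infty$ for every fixed $\delta>0$ and $K$, and taking $\delta$ smaller only postpones the divergence to larger $m$; the strategy "single-atom pathwise bound, then remainder in expectation" cannot be completed.

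The missing idea is that the energy of a chain must be compared with its \emph{path entropy} $H(\sigma)=\sum_i\|x_i-x_{i-1}\|^2/(t_i-t_{i-1})$, jointly over all atoms visited, rather than with $\sum_i\|x_i\|^2$ atom by atom. This is precisely the content of the paper's Lemma~\ref{entross}: under~\eqref{hypolarge}, almost surely $\sup_{\sigma\in\cP(\go)}\{\sum_i\log u_i-\frac{\gep}{2}H(\sigma)\}<\infty$, and proving it genuinely requires a multi-atom, multi-scale Borel--Cantelli argument (Lemma~\ref{bcantelli}), which controls how many atoms of each dyadic size class a low-entropy chain can string together (via three consecutive points in a small space-time cylinder). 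With that estimate one bounds $\prod_i u_i\le e^{\cT(\go)}e^{\frac{\gep}{2}H(\sigma)}$, absorbs $e^{\frac{\gep}{2}H(\sigma)}$ into the heat kernels by widening their variances by $(1-\gep)^{-1}$, and then a single application of Mecke's formula together with~\eqref{hypou0} (choosing $\gep$ small enough that $\rho_{\vartheta t}$ is still integrable against $|u_0|$) yields finiteness; this is Proposition~\ref{polopopo}, from which Proposition~\ref{lem:SHE} follows by time reversal and translation invariance. Note that the paper itself warns that the single-atom criterion $\sup_{R}X_{R,T}e^{-\delta R^2}<\infty$ only \emph{heuristically} justifies~\eqref{hypolarge}, through trajectories visiting one far atom; your computation is a good illustration of why that heuristic does not upgrade to a proof without the uniform entropy--energy comparison.
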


\begin{rem}
When $\int_{[1,\infty)} \ups\gl(\dd \ups)<\infty$, and for bounded initial conditions,  applying Theorem~1.2.1 in \cite{Loubert98} we get that $u^a(t,x)$ is  the unique solution of  \eqref{integralz}.
For noise with heavier tails, $u^a(t,x)$ should also be a solution of \eqref{integralz} and coincide with the solution considered in \cite{Chong17} whenever it is well-defined. Since this is not the main focus of the paper we do not include a proof of this statement, which in any case would only provide a minor extension on the class of noises considered \cite{Chong17} which  includes $\int_{[1,\infty)} \ups^p\gl(\dd \ups)<\infty$ for all $p>0$. We do not have an argument establishing uniqueness in that case.
\end{rem}

Let us now present the result. It establishes the convergence of~$u^a$ when $a$ tends to~$0$. While the limit is the natural candidate to be a solution to \eqref{integralz} under less restrictive assumptions than those considered in \cite{Chong17}, we could not verify that~$u$ solves the equation.

\begin{proposition}
\label{prop:SHE}
Assume  that \eqref{hypolarge}-\eqref{hyposmall} are satisfied. 
Given $u_0$ a locally finite signed measure on $\bbR^d$
satisfying~\eqref{hypou0},
then  for $(t,x)\in [0,T]\times \bbR^d$ the integral defining 
$u(t,x)$ in~\eqref{groove} is almost surely finite (and well-defined)
and  for every fixed $t\in[0,T]$ we have
\[
\lim_{a\to 0} u^{a}(t,x)=u(t,x) \, ,
\]
except on a set  (of $\bbR^d$) of Lebesgue measure zero.
\end{proposition}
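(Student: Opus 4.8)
The plan is to bootstrap from Theorem~\ref{thm:zalpha} and Proposition~\ref{twomarg} (or rather their proofs, with the $\delta_0$ initial condition replaced by a general measure $u_0$) by a time-reversal argument combined with a Fubini exchange. First I would record the time-reversal identity: for fixed $t$, the field $\big(\cZ^{\go,a}_{\beta}[(0,y),(t,x)]\big)_{y\in\bbR^d}$ has the same law as $\big(\cZ^{\theta_{(0,x-y)}\go,a}_{\beta}(t,y)\big)$ read backwards, which lets me transfer the point-to-point convergence statement of Proposition~\ref{th:superlem} --- namely $\cZ^{\go,a}_{\beta}(t,y)\to\cZ^{\go}_{\beta}(t,y)$ a.s.\ and in $L_1$ under \eqref{hypolarge}--\eqref{hyposmall} --- into a statement about $\cZ^{\go,a}_{\beta}[(0,y),(t,x)]$ for a.e.\ fixed $y$ (and by Fubini, for a.e.\ $(y,\go)$ simultaneously). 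Proposition~\ref{lem:SHE} already gives that $u^a(t,x)=\int \cZ^{\go,a}_{\beta}[(0,y),(t,x)]\,u_0(\dd y)$ is a.s.\ finite under \eqref{hypolarge} and \eqref{hypou0}, so the only issue is passing the limit $a\to0$ inside the integral against $u_0$.

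The key step is a uniform-integrability / domination estimate controlling the tail contribution $\int_{\|y\|\ge r}\cZ^{\go,a}_{\beta}[(0,y),(t,x)]\,|u_0|(\dd y)$ uniformly in $a\in(0,1]$. Here I would use the decomposition of the environment into its ``far-away large atoms'' part (as in the Remark following Proposition~\ref{prop:toinfinity}) and the bulk; the Gaussian heat-kernel decay $\rho_t(x-y)\le C_t e^{-\|x-y\|^2/(2t)}$ against the subexponential growth \eqref{hypou0} of $|u_0|$ gives that, outside a ball of radius $r$, $\bbE\big[\cZ^{\go,a}_{\beta}[(0,y),(t,x)]\big]\le$ (something integrable) uniformly in $a$ when $\int_{[1,\infty)}\ups\gl(\dd\ups)<\infty$, and when this fails one replaces the expectation bound by an almost sure bound using the a.s.\ finiteness of $\sup_{R>0}X_{R,T}e^{-\delta R^2}$ for any $\delta<1/(2T)$, exactly as in Proposition~\ref{lem:SHE}. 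This tail control, together with the pointwise (in $y$) a.s.\ convergence $\cZ^{\go,a}_{\beta}[(0,y),(t,x)]\to\cZ^{\go}_{\beta}[(0,y),(t,x)]$ on a set of full Lebesgue$\times\bbP$ measure, yields by dominated convergence that $u^a(t,x)\to u(t,x)=\int \cZ^{\go}_{\beta}[(0,y),(t,x)]\,u_0(\dd y)$ for a.e.\ $x$ (the exceptional Lebesgue-null set in $x$ arising because the convergence of $\cZ^{\go,a}_{\beta}(t,x)$ for \emph{fixed} $x$ must be upgraded to a statement valid for a.e.\ $x$ via Fubini, since $\go$ has atoms whose locations are exceptional).

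The main obstacle I anticipate is handling the heavy-tail case $\int_{[1,\infty)}\ups\gl(\dd\ups)=\infty$, where one loses $L_1$ bounds and must argue entirely almost surely: the domination has to come from the pathwise bound on large atoms, and one must be careful that the ``dominating function'' $y\mapsto \sup_a \cZ^{\go,a}_{\beta}[(0,y),(t,x)]$ is genuinely in $L_1(|u_0|)$ almost surely and not merely in probability. I would address this by first conditioning on the finitely many atoms contributing near the relevant space-time region and running the convergence argument on the complement (a finite-intensity noise, where everything is under control), then summing the contributions. A secondary nuisance is the bookkeeping of the Lebesgue-null exceptional set in $x$: one proves the convergence for a.e.\ $(x,\go)$ by Fubini and then notes this is what the statement claims, so no genuine difficulty there, only care in the order of quantifiers.
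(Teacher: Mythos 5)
Your outline of the case $\int_{[1,\infty)}\ups\,\gl(\dd\ups)<\infty$ is essentially the paper's argument: by Fubini one bounds $\bbE[|u^a(t,x)-u(t,x)|]$ by $\int \bbE\big[|\cZ^{\go,a}_{\beta}[(0,y),(t,x)]-\cZ^{\go}_{\beta}[(0,y),(t,x)]|\big]\,|u_0|(\dd y)$, the integrand tends to $0$ by the $\bbL_1$ convergence in Proposition~\ref{th:superlem} and is dominated by $2e^{\gb\mu}\rho_t(x-y)$, which is $|u_0|$-integrable by~\eqref{hypou0}; this gives $\bbL_1$ convergence, and the almost sure statement then comes from the fact that $a\mapsto u^a(t,x)$ is a nonnegative reversed martingale (a point you should invoke explicitly, rather than a pathwise dominated convergence with the majorant $\sup_a\cZ^{\go,a}$, which even in this case is not known to be integrable: Doob's maximal inequality in $\bbL_1$ yields no such bound).

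The genuine gap is in the heavy-tail case $\int_{[1,\infty)}\ups\,\gl(\dd\ups)=\infty$. Your argument hinges on the dominating function $y\mapsto \sup_{a\in(0,1]}\cZ^{\go,a}_{\beta}[(0,y),(t,x)]$ being almost surely in $L_1(|u_0|)$, but nothing in the paper (nor in your proposal) provides such a majorant: the expectation bounds disappear entirely, and the pathwise entropy--energy bound of Lemma~\ref{entross} produces estimates like~\eqref{lastfinite} whose constant $e^{\beta[2^{d/2}\gl([a,\infty))-\kappa_a]}$ blows up as $a\to0$ whenever $\gl((0,1))=\infty$, so it is not uniform in $a$. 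Your proposed fix --- conditioning on ``the finitely many atoms near the relevant space-time region'' and treating the complement as a finite-intensity noise --- does not address the actual difficulty: there are infinitely many atoms with $\ups\ge 1$ in $[0,t]\times\bbR^d$, their total expected weight is infinite, and the problem is precisely the interplay between the $a\to0$ limit and these far-away heavy atoms, which are suppressed pathwise but not in expectation. The paper resolves this with a two-parameter truncation: for fixed $b$ the jump sizes are truncated above $b$, reducing to the integrable case and giving $\lim_{a\to0}u^{[a,b)}(t,x)=u^{[0,b)}(t,x)$ a.s.; then one shows $\lim_{b\to\infty}\bbP\big(\sup_{a\in(0,1]}(u^a(t,x)-u^{[a,b)}(t,x))>\gep\big)=0$ by applying Doob's maximal inequality to the $\gep$-capped nonnegative supermartingale $\gep\wedge\big(u^{[a,b')}(t,x)-u^{[a,b)}(t,x)\big)$, letting $b'\to\infty$ by monotone convergence and $b\to\infty$ by dominated convergence, the latter using the a.s. finiteness of $u^1(t,x)$ from Proposition~\ref{lem:SHE}. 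Some device of this kind (capping at $\gep$ so that expectations of non-integrable differences can still be used) is needed; without it your dominated-convergence step does not go through.
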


\begin{rem}
When \eqref{hypochong} is satisfied and the initial condition has a bounded density w.r.t\ to the Lebesgue measure, it follows from results of \cite{Chong17} and \cite{Loubert98} that $u$ is the solution of \eqref{integralz} constructed in \cite[Theorem 3.1]{Chong17}. 
\end{rem}

\subsection{Further discussion on the results}
\label{sec:comments}

Let us now comment further on our results,
and explain how they compare with the literature,
how they can be extended and what interesting open questions remain to be solved.

\subsubsection{Scaling properties in the case of $\ga$-stable L\'evy noise}
Let us come back further on the case
of the $\alpha$-stable noise,
that is when $\lambda(\dd \ups) = \ga  \ups^{-(1+\ga)} \dd \ups$, with $\ga\in (0,2)$.
We have already seen that in that case Assumptions~\eqref{hypolarge}-\eqref{hyposmall} are satisfied provided that $\ga\in (0,\ga_c)$, so Theorem~\ref{thm:zalpha} and more importantly~\eqref{lescontis} holds, so that $\bQ_{T,\gb}^{\go}$ is well-defined.
Now, notice that in the $\ga$-stable case the Poisson point process $\go$ has the following scaling property
$\go \stackrel{(d)}{=} \{ (r t, s x, (rs^d)^{1/\ga} \ups) ;\, (t,x,\ups)\in \go  \}$ for any $r,s>0$.
Using additionally the Brownian scaling,
one can then check that
the continuum polymer in $\alpha$-stable L\'evy environment
satisfies the following
scaling property: if $\ga\in (0,\ga_c)$, for all $r>0$ 
\begin{equation}\label{scaling}
\bQ^{\go }_{T, \beta} (A)  \stackrel{(d)}{=}  \bQ_{r T, r^{-\zeta}\gb }^{\go} ( A_r ) \, , \quad \text{ with }\ \zeta=\tfrac{d}{2\ga} ( 1 +\tfrac{2}{d} - \ga) \, .
\end{equation}
where $A_r := \{ \varphi_r\colon t\mapsto \frac{1}{\sqrt{r}} \varphi(\frac{t}{r}) , \varphi\in A \}$.

\subsubsection{SHE with L\'evy noise: advantages and disadvantages of our method}
Let us now compare our Proposition~\ref{prop:SHE}
with the results of Chong~\cite{Chong17}.
First of all, as we already stressed in Section~\ref{sec:SHE},
our Proposition~\ref{prop:SHE} gives a weaker
notion of solution to the SHE~\eqref{levySHE}
than existence of solutions in the integral form~\eqref{integralz}, as proven in~\cite{Chong17}.
Additionally, 
Chong's results allows to deal with a larger class of integral equations 
\begin{equation}
\label{SHEchong}
Y(t,x)  =  \int_{\bbR^d} G(0,y; t,x)Y_0(y)\dd y  + \int_0^t \int_{\bbR^d}  G(s,y ; t,x ) \sigma( Y(s,y) )  M(\dd s,\dd y) \, ,
\end{equation}
where: (i) $M$ is a noise that can include a white noise part when $d=1$ and a (signed) pure jump component;
(ii) $\sigma$ is a globally Lipschitz function;
(iii) $G(t,x; s,y)$ is measurable and dominated by a constant times the heat kernel $\rho_{t-s}(x-y)$.

We have presented our results in the case where
$M=\xi_{\go}$ (\textit{i.e.}~has no white noise component and only positive jumps), $\sigma(Y)=Y$ and $G(t,x ;s,y) =\rho_{t-s}(x-y)$.
Let us now present the advantages of our method, and in which directions it can be generalized.

\smallskip
\textit{a)} First and foremost, our method enables us to make sense of Feynman--Kac formulas containing a functional $f$ of the Brownian Motion, that is $\cZ^\go_{\beta}(f)$ (see Theorem~\ref{thm:zalpha}-\eqref{convf}). This is something absolutely required to be able to define the continuum model.

\smallskip
\textit{b)}  Our  tail assumptions~\eqref{hypolarge}-\eqref{hyposmall} on the L\'evy  measure are less restrictive
than those~\eqref{hypochong} which are used in~\cite{Chong17}.  In particular our method allows to treat the integrability issues at $0$ and $\infty$ separately.
Note also that in view of Propositions~\ref{prop:toinfinity} and~\ref{prop:tozero}, our assumptions~\eqref{hypolarge}-\eqref{hyposmall} are close to being optimal.

\smallskip
\textit{c)} We are able to deal with more singular initial conditions
than in~\cite{Chong17}. For the application we have in mind, it is of the utmost importance to be able to deal with Dirac initial condition, which corresponds to the point-to-point partition function $\cZ_{\gb}^{\go}(t,x)$ and appears to be excluded in~\cite{Chong17}.

\smallskip
\textit{d)}  We can easily adapt our proof to the case 
of an arbitrary  kernel $\rho_t$ (in particular, not only the ones dominated by Gaussians), but this would require to adapt
the conditions~\eqref{hyposmall}-\eqref{hypolarge}
In particular, we could replace the Laplacian $\gD$ with more general operators. For instance, in dimension $1$, we could replace the Brownian Motion by a L\'evy process,
see the paragraph below for further discussion.

\smallskip
\textit{e)}   While our method does not seem to allow to treat the case of general Lipshitz $\sigma$ (for which we lose the existence of a Feynman-Kac representation of the solution), let us mention, that it should extend without much problem to the case where $\sigma(u)=au+b$ with $a,b>0$ (that is, considering a mixture of additive and multiplicative noise).

\smallskip
\textit{f)} 
To conclude, we stress that maybe
the most problematic part
would be to extend our results to a more general noise.
In particular, our method does not allow to deal with
general complex (or signed) noise:
the issue essentially arises in the proof of Proposition~\ref{thm:continuous2}, which shows that $(\cZ_{\gb}^{\go,a})_{a\in (0,1]}$ is uniformly integrable (if $\int_{[1,\infty)} \ups \lambda(\dd \ups)<\infty$); all the other points extend quite easily.
In view of our techniques (in particular Sections~\ref{sec:dim1}-\ref{sec:contd>2}), this appears to be manageable in dimension $d=1$, but it is possibly more problematic in dimension $d\ge 2$ (the truncation we use is based on a multi-body functional that needs to be adapted in the case of a complex or signed noise).
Similarly, in accordance with the literature on directed polymer models,
adding a white-noise component should be feasible
in dimension $d=1$, but it is likely that in dimension $d\geq 2$ it would make the limit degenerate
(in analogy with the SHE with multiplicative white noise in dimension $d=2$, see~\cite{CSZ18scaling, gu2019}).

\subsubsection{Applications of our method to other disordered systems}
\label{sec:extensions}

Our method appears robust enough to be adapted to 
the setting of other models with heavy-tail disorder.
In particular, in analogy with \cite{CSZ13}, one should be able to consider several (discrete) models, 
and construct their continuum counterpart with L\'evy noise.
This includes for instance:
\begin{itemize}
\item[(A)] the $(1+d)$-dimensional long-range directed polymer, see~\cite{Comets07,Wei16} for the case of dimension $d=1$, where the underlying random walk $(S_n)_{n\ge 0}$ is in the $\gamma$-stable domain of attraction, with $\gamma \in (0,2)$;
\item[(B)] the disordered pinning model, see~\cite{GB10} for an overview  (it has been studied in~\cite{LS17} in the case of a heavy-tail noise).
\end{itemize}
We could also
consider other disordered models, such as the copolymer model (see \cite[Ch.~6]{GB07} for an overview and \cite{BdH97,CG10} for the question of the scaling limit) or the random field Ising model (see~\cite[Ch.~7]{Bov06} for an overview and \cite{CSZ13} for the question of the scaling limit). We however chose to focus on the two examples (A)-(B) above, which might provide a sufficient illustration on how general our construction is.
In both cases (A) and (B) we only briefly present the models and discuss how the assumptions~\eqref{hypolarge}-\eqref{hyposmall} have to be adapted to ensure the convergence of the partition function.  In order to be fully understood, the discussion below requires to be familiarized with the proof of our main result. It can be thus be skipped during the first reading.

\medskip
\noindent
(A) \textit{The continuum $(1+d)$-dimensional long-range directed polymer in L\'evy noise}.
The idea is to replace in the definitions the
Brownian motion $(B_t)_{t\geq 0}$
by a $d$-dimensional $\gamma$-stable process $(X_t)_{t\geq 0}$ with $\gamma\in (0,2)$, that we suppose centered and isotropic for simplicity. More precisely, we can define, analogously to~\eqref{lafirstdef}, the partition function
\begin{equation}
\label{longrange}
\cZ_{\gb,{\rm long}}^{\go,a} := 1+\sum_{k=1}^\infty \beta^k\int_{0<t_1<\dots<t_k<T}\int_{(\bbR^d)^k}  \prod_{i=1}^k \rho^{(\gamma)}_{t_i-t_{i-1}} (x_i-x_{i-1}) \prod_{i=1}^k \xi_{\go}^{(a)} ( \dd t_i, \dd x_i)\, \, ,
\end{equation}
where $\rho_{t}^{(\gamma)}(x)$ is the transition kernel 
of our $\gamma$-stable process, and is defined by
\[
\rho_t^{(\gamma)}(x) := \frac{1}{(2\pi)^d} \int_{\R^d} e^{-t \|z\|^{\gamma}} \cos( x \cdot z) \dd z .
\] 
While $\rho_t^{(\gamma)}$ does not admit a closed expression, its asymptotic properties are  well known  (dating back to~\cite{K70}, see also~\cite[Ch.~2]{ST94}).
It is a bounded radial function and has 
the following asymptotic behavior 
\begin{equation}\label{thetail}
\rho_1^{(\gamma)} (x)  \sim c_{d,\gamma}  \|x\|^{-(d+\gamma)}\,,
\qquad \text{ as }
\| x\|\to \infty\,.
\end{equation}
The scaling relation 
$\rho_{t}^{(\gamma)}(x) = t^{-d/\gamma} \rho_1( t^{-1/\gamma} x)$
also implies that $\|\rho_{t}^{(\gamma)}  \|_{\infty}=  c'_{d,\gamma} t^{-d/\gamma}.$

\smallskip

Now let us discuss under which condition on the L\'evy measure $\gl$ the partition function in~\eqref{longrange} remains finite.  Note first that if $\int_{[1,\infty)} \ups \gl(\ups)<\infty$, then we have $\bbE [ \cZ_{\gb,{\rm long}}^{\go,a} ]<\infty$ from Lemma \ref{prop:contiZfini} (or rather its straightfoward adaptation to this case) and a discussion is necessary only for the integrability of heavier-tailed noises.
In analogy with~\eqref{hypolarge}, we want to make sure that 
the weight of Poisson points with large intensity is compensated by the cost of making a long jump to visit them, which by \eqref{longrange} is of order  
 $\|x\|^{-(d+\gamma)}$. Hence we need a condition that ensures that
 \begin{equation}\label{lesupp}
  \sup \big\{ \ups (1+ \|x\|)^{-(d+\gamma)}  \,\colon  (t,x,\ups) \in \go, \ t\in[0,T]      \big\} <\infty.
 \end{equation}
 We should require in fact a bit more than \eqref{lesupp} but not much more (we opt not to stretch the discussion any further) and we believe
 a condition that ensures that $\cZ_{\gb,{\rm long}}^{\go,a}<\infty$ and thus replaces \eqref{hypolarge} in this case is
\begin{equation}
\label{hypolargelongrange}
\int_{[1,\infty)} \ups^{q} \lambda(\dd \ups) <\infty  
\  \text{ for some } q> \tfrac{d}{d+\gamma}. 
\end{equation}

On the other hand, the condition~\eqref{hyposmall}
prevents the possible accumulation of small weights that would make the limiting partition function degenerate.
It is intimately related to the local limit behavior of $\rho_t^{(\gamma)}(x)$ at small times, more precisely to
$\int_{\bbR^d} (\rho_t^{(\gamma)}(x))^2 \dd x$ which by scaling is equal to  $t^{-d/\gamma} \int_{\bbR^d} (\rho_1^{(\gamma)}(x))^2 \dd x.$
In analogy with~\eqref{hyposmall}, a (near-optimal)  condition that ensures that $ \lim_{a\to 0}\cZ_{\gb, {\rm long}}^{\go,a}$ is non-degenerate should therefore be 
\begin{equation}
\label{hyposmalllongrange}
\int_{(0,1)} \ups^{p} \lambda(\dd \ups) <\infty \, ,\quad \text{ for some } p< \min \big( 1+\tfrac{\gamma}{d}, 2 \big)\, .
\end{equation}

We therefore conjecture that if~\eqref{hypolargelongrange}-\eqref{hyposmalllongrange} hold,
then the partition function~$\cZ_{\gb,{\rm long}}^{\go,a}$ defined in~\eqref{longrange} converges a.s.\ to a non-degenerate limit and that one can construct a continuum measure corresponding to  the $(1+d)$-dimensional long-range directed polymer in L\'evy noise.
Let us stress that in the case of an $\ga$-stable noise (\textit{i.e.} $\lambda(\dd \ups) = \ga \ups^{-(1+\ga)} \dd \ups$), 
the conditions~\eqref{hypolargelongrange}-\eqref{hyposmalllongrange} translate into the condition $\frac1\gamma < \ga < \min \big( 1+\tfrac{d}{\gamma}, 2 \big)$.
Additionally, in analogy with Theorem~\ref{thm:conv}, the continuum long-range directed polymer model in $\ga$-stable noise should appear
as the scaling limit of the long-range
directed polymer model, defined as in~\eqref{def:Zn}
with a random walk $(S_n)_{n\geq 0}$
in the domain of attraction of a $\gamma$-stable law
and heavy tailed  disorder satisfying~\eqref{def:eta}.

\medskip
\noindent
(B) \textit{The continuum disordered pinning model in L\'evy noise}.
The disordered pinning model describes a renewal process $\tau = \{\tau_0=0,\tau_1,\tau_2,\ldots\}$ on $\bbN$ (representing contact points) interacting with an inhomogeneous defect line.
In the case of a heavy tailed environment
$(\eta_x)_{x\in \bbN}$, 
it is convenient to write the partition function of the model as follows,
see~\cite{LS17}:
\begin{equation}
\label{def:pinning}
Z_{N,\gb,h}^{\eta} := \bE\Big[ \prod_{n=1}^N e^{h \ind_{\{n\in\tau \}}} \big( 1+ \gb \eta_{n} \ind_{\{n\in \tau\}}  \big) \Big] \, ,
\end{equation}
where $h$ in an additional (homogeneous) pinning parameter.
A standard (and natural) assumption in the literature is that 
$\bP(\tau_1= n)  = (1+o(1)) c n^{-(1+\gamma)}$ as $n$ goes to infinity, for some $\gamma>0$.  Under this assumption, if $\gamma\in (0,1)$, then the set of contact
points $\tau \cap [0,N]$, properly scaled,
converges to what is called the regenerative set of index~$\gamma$.
This leads us to make the following definition
for the truncated partition function of the continuum 
disordered pinning model: for $\gb>0$ and $h\in \bbR$,
\begin{equation}
\label{pinning}
\cZ_{\gb,h,{\rm pin}}^{\go,a} := 1+\sum_{k=1}^\infty \beta^k\int_{0<t_1<\dots<t_k<1}  \prod_{i=1}^k u_{\gamma} (t_i-t_{i-1}) \prod_{i=1}^k (\xi_{\go}^{(a)} + h\cL ) ( \dd t_i)\,,
\end{equation}
where $u_{\gamma}(t) := c_{\gamma} t^{-(1-\gamma)}$
is the transition kernel of the regenerative set of index $\gamma$.

Here, no condition analogue to~\eqref{hypolarge} is needed to keep $\cZ_{\gb,h,{\rm pin}}^{\go,a} $ a.s.\ finite, since there is no spatial dimension.
On the other hand, in analogy with~\eqref{hyposmall}, in view of the form $u_{\gamma}(t) = c_{\gamma} t^{-(1-\gamma)}$  and since there is no spatial dimension, 
a (near-optimal) condition that ensures that
$\lim_{a\to 0}\cZ_{\gb,h, {\rm pin}}^{\go,a}$ is non-degenerate should therefore be 
\begin{equation}
\label{hyposmallpinning}
\int_{(0,1)} \ups^{p} \lambda(\dd \ups) <\infty \, ,\quad \text{ for some } p<  \min\big( \tfrac{1}{1-\gamma} , 2 \big)\, .
\end{equation}

Hence, we conjecture that if~\eqref{hyposmallpinning} holds, the partition function~$\cZ_{\gb,h,{\rm pin}}^{\go,a}$ defined in~\eqref{longrange} converges a.s.\ to a non-degenerate limit, and that one can construct a continuum measure corresponding to the disordered pinning model in L\'evy noise.
In the case of an $\ga$-stable noise (\textit{i.e.} $\lambda(\dd \ups) = \ga \ups^{-(1+\ga)} \dd \ups$), 
the condition~\eqref{hyposmallpinning} translates into $\ga < \min \big( \tfrac{1}{1-\gamma}, 2 \big)$,
which corresponds to the disorder relevance condition found in~\cite{LS17} (where the roles of $\gamma$ and $\alpha$ are exchanged).
Additionally, in analogy with Theorem~\ref{thm:conv}, the continuum pinning model in  $\ga$-stable noise should then appear
as the scaling limit of the disordered pinning model defined
above in~\eqref{def:pinning}
and heavy tailed disorder satisfying~\eqref{def:eta}.

\subsubsection{Other open questions}

To conclude this section, we present a brief list of interesting open questions.

\smallskip
\textit{a)} A first question that we already raised is that of
considering a more general noise. 
We leave as an open problem the
issue of adding a Gaussian white-noise component to  $\xi_{\go}$.  We believe that 
in dimension $d=1$
the partition function converges to a non-degenerate limit even when a Gaussian component is added to the noise and that consequently on can  define a continuum polymer in that case.

\smallskip
\textit{b)} Another natural question is that of the $\bbL_p$ convergence in Theorem~\ref{thm:zalpha}.
It is natural to expect some $\bbL_p$ convergence to hold, 
but this appears to be technically challenging.
We leave as an open problem to show that,
if $\int_{(0,1)} \ups^{p} \lambda(\dd \ups) < \infty$
for some $p < \min(1+\frac2d,2)$ and 
 $\int_{[1,\infty)} \ups^{q} \lambda(\dd \ups) < \infty$
for some $q \geq 1$,
then $\cZ_{\gb,T}^{\go,a}$ converges to $\cZ_{\gb}^{\go}$
in~$\bbL_{\min(p,q)}$.

\smallskip
\textit{c)} To conclude, let us mention an important and challenging question.
In the case of an $\alpha$-stable noise, 
we have treated the case $\ga < \ga_c = \min(1+\frac{2}{d},2)$; in particular, $\ga_c <2$ in dimension $d\geq 3$.
It would then be an interesting question to investigate the
case $\ga = \ga_c$, called \emph{marginal} (in particular in the case where $\ga_c <2$, since marginal behavior may depend on $\ga_c$).
In analogy with other marginally relevant
disordered systems (see~\cite{CSZ15} in the context of scaling limits),
one
 would then expect that  one should find a non-trivial limit in distribution by allowing $\beta$ to depend on $a$ in such a way that $\lim_{a\to 0}\beta_a=0$ (recall that from Proposition \ref{prop:tozero} we have $\lim_{a\to 0}\cZ_{\gb,T}^{\go,a}=0$ in that case). It seems likely (again considering the analogy with~\cite{CSZ15}) that the right scaling for $\beta_a$ should be of the form $\beta_a= c(\log a)^{-\nu}(1+o(1))$ where $c$ and $\nu$ are positive constant. 
 
 \begin{rem}
  Since the first version of this paper appeared as a draft, progress has been made in \cite{BCL21} on several fronts including a  few of the open questions mentionned above. We refer to the introduction of \cite{BCL21} for a full account.
 \end{rem}

\subsection{Organisation of the rest of the paper}

Let us briefly present how the rest of the paper is organized and outline the ideas of the proofs of Theorems~\ref{thm:continuous}
and~\ref{thm:zalpha}.

\smallskip
\textbullet\
 In Section~\ref{sec:conti} we present preliminary
results concerning the partition function with truncated noise $\cZ_{T,\gb}^{\go,a}$  that are needed in the rest of the paper.
We prove in particular its well-posedness (Proposition~\ref{prop:contiZfini}), its positivity (Lemma~\ref{eazy}, which provides an important alternative representation for the partition function),
and a martingale property (under suitable integrability condition, see Lemma~\ref{lem:martingale}).
We also give an enlightening representation of
the size-biased  law of the environment (\textit{i.e.}\ its law biased by the partition function, see the definition~\eqref{tildemeasure})
and we recall Mecke's multivariate equation for Poisson point processes, which is used throughout the paper.

\smallskip
\textbullet\
In Section~\ref{sec:UI}, we prove our main result, that is, Theorem~\ref{thm:zalpha}
(Theorems~\ref{thm:continuous} and~\ref{thm:contmeasure} being only particular cases).
The proof needs to be decomposed in several steps, a detailed account of which is given in Section \ref{sec:orga}.  Most of the proofs of this section can be adapted to control the point-to-point
partition function, and thus we prove along the way Proposition~\ref{th:superlem} and Proposition \ref{lem:SHE}.

\smallskip
\textbullet\
In Section~\ref{sec:trivial}, we study the cases where the limiting partition function degenerates either to zero or infinity, that is we prove  Proposition~\ref{prop:toinfinity} and 
 Proposition~\ref{prop:tozero}.

\smallskip
\textbullet\
In Section~\ref{sec:properties}, we prove the various properties of the continuum directed polymer in L\'evy noise
that are gathered in Section~\ref{sec:mainprop},
that is 
Proposition~\ref{abzolute},
 Proposition~\ref{emptydance}
and 
Proposition~\ref{twomarg}.

\textbullet\
In Section \ref{sec:propSHE}, 
 we prove our statement concerning the convergence of the solution of the SHE with truncated noise, Proposition~\ref{prop:SHE}.

\smallskip

\noindent Finally,  we collect in the appendix several technical results that are used along the paper.

\medskip
\noindent
{\bf Notational warning.}
For simplicity we assume in the rest of the paper that $T=1$, and we drop the dependence in~$T$ in all notations.

\section{Preliminaries: some properties of $\cZ_{\gb}^{\go,a}$}
\label{sec:conti}

We let $|\xi_{\go}^{(a)}|$ denote the total variation associated with the locally finite signed measure~$\xi_{\go}^{(a)}$ defined in~\eqref{petitnoise},
and we  let
 $$
 \mathfrak{X}^k:=\big\{  \bt = (t_1, \ldots, t_k) \in \bbR^k \ : \  0<t_1<\dots<t_k<1  \big\},
 $$
denote the open simplex.

\subsection{Well-posedness}

Our first task is to check that our definitions $\cZ^{\go,a}_{\beta}$ in \eqref{lafirstdef} and $\cZ^{\go,a}_{\beta}(f)$ in~\eqref{lafirstdeffpti} are well posed.  This is given by the following result.
\begin{proposition}
\label{prop:contiZfini}
 For any choice of  $\gl$ satisfying $\lambda([a,\infty)) <+\infty$ for every $a>0$,
for any~$f\in \cB_b$,
the function 
$\varrho(\bt,\bx,f)$ defined on $\mathfrak{X}^k\times (\bbR^d)^k$
is almost surely integrable with respect to the product measure~$|\xi_{\go}^{(a)}|^{\otimes k}$.
Moreover we almost surely have, for any $\beta>0$,
\begin{equation}\label{finitesum2}
 \sum_{k=0}^\infty \beta^k\int_{\mathfrak{X}^k \times (\bbR^d)^k} \varrho(\bt,\bx,f)  \prod_{i=1}^k |\xi_{\go}^{(a)}|(\dd t_i ,\dd x_i)<\infty \, .
\end{equation}
If $\mu:=\int_{[1,\infty)} \ups \gl(\dd \ups) <\infty$, then $\varrho(\bt,\bx)$ is integrable  with respect to the product measure~$|\xi_{\go}^{(a)}|^{\otimes k}$ and 
\begin{equation}\label{finitesum}
 \sum_{k=0}^\infty \beta^k\int_{\mathfrak{X}^k \times (\bbR^d)^k} \varrho(\bt,\bx) \prod_{i=1}^k |\xi_{\go}^{(a)}|(\dd t_i,\dd x_i)<\infty \, .
\end{equation}
Furthermore, we have for all $f\in \cB_b$
 \begin{equation}\label{lamine}
 \forall\, a\in (0,1], \, \quad \bbE \big[ \cZ^{\go,a}_{\beta}(f) \big] =  e^{\gb \mu} \bQ(f)\, .
 \end{equation}
\end{proposition}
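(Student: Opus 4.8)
The plan is to exploit two features. First, $f\in\cB_b$ forces the atoms of $\go$ that can contribute into a compact spatial region; second, after integrating out the Lebesgue variables the expansion in \eqref{lafirstdeffpti} reorganizes into a clean Poissonian series to which Mecke's formula applies. Throughout I write $|\xi_{\go}^{(a)}|=\nu_a+\kappa_a\cL$, with $\nu_a:=\sum_{(s,y,\ups)\in\go}\ups\,\ind_{\{\ups\ge a\}}\delta_{(s,y)}$ the (positive) atomic part. Since $\lambda([a,\infty))<\infty$, the constant $\kappa_a$ is finite, $\nu_a$ is a.s.\ locally finite, its atoms have pairwise distinct time coordinates (the time-intensity $\dd t$ being diffuse), and $\nu_a$ has intensity measure $c_a\cL$ on $\bbR\times\bbR^d$ with $c_a:=\int_{[a,\infty)}\ups\,\lambda(\dd\ups)=\kappa_a+\mu$, finite exactly when $\mu<\infty$. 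Fix $M$ with $f(\varphi)=0$ whenever $\|\varphi\|_\infty>M$. Since a Brownian bridge pinned at a point of norm $>M$ has supremum $>M$, the conditional expectation in \eqref{defvarro} vanishes as soon as some $\|x_i\|>M$, so
\begin{equation*}
|\varrho(\bt,\bx,f)|\;\le\;\|f\|_\infty\,\varrho(\bt,\bx)\prod_{i=1}^k\ind_{\{\|x_i\|\le M\}}\,,
\end{equation*}
which reduces the statements about $\varrho(\bt,\bx,f)$ to estimates for the nonnegative $\varrho(\bt,\bx)$ on the bounded region $\{\|x\|\le M\}$.

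For the a.s.\ finiteness \eqref{finitesum2}, I expand each factor $|\xi_{\go}^{(a)}|=\nu_a+\kappa_a\cL$; since everything is nonnegative, Tonelli lets me write the integral as a sum over the subset $I\subseteq\{1,\dots,k\}$ of coordinates carrying $\nu_a$, with weight $\kappa_a^{k-|I|}$. In each term I drop the ball constraint on the coordinates $i\notin I$ (an upper bound) and integrate those $x_i$ out; by Chapman--Kolmogorov $\int_{\bbR^d}\rho_u(z)\rho_v(w-z)\,\dd z=\rho_{u+v}(w)$ this collapses $\varrho(\bt,\bx)$ to $\varrho(\bs,\by)$, where $(s_j,y_j,\ups_j)_{j\le n}$ ($n=|I|$, $0<s_1<\dots<s_n<1$) are the selected atoms, the ball constraint surviving only on them. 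The remaining $k-n$ Lebesgue times lie ordered within the $n+1$ gaps cut out by $s_1<\dots<s_n$ in $[0,1]$, and summing the corresponding time-integrals over all $I$ with $|I|=n$ produces the factor $1/(k-n)!$ because these gap lengths sum to $1$. Resumming over $k$ bounds the left-hand side of \eqref{finitesum2} by
\begin{equation*}
\|f\|_\infty\,e^{\beta\kappa_a}\sum_{n\ge0}\beta^n\sumtwo{(s_1,y_1,\ups_1),\dots,(s_n,y_n,\ups_n)\in\go\ \text{distinct}}{\ups_j\ge a,\ \|y_j\|\le M,\ 0<s_1<\dots<s_n<1}\Big(\prod_{j=1}^n\ups_j\Big)\,\varrho(\bs,\by)\,.
\end{equation*}
A.s.\ only finitely many atoms of $\go$ lie in $[0,1]\times\{\|x\|\le M\}\times[a,\infty)$ (a Poisson count of finite mean), so the $n$-sum terminates; and each $\varrho(\bs,\by)$ is finite because the $s_j$ are a.s.\ distinct and positive. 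This yields \eqref{finitesum2} and the $\bbL_1(|\xi_{\go}^{(a)}|^{\otimes k})$-integrability of $\varrho(\bt,\bx,f)$.

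Assume now $\mu<\infty$. Running the same expansion but taking expectations, Tonelli together with the multivariate Mecke (Campbell) formula --- whereby the $n$-th factorial moment measure of the weighted sum $\nu_a$ restricted to distinct points is $c_a^{\,n}\cL^{\otimes n}$ --- gives, for the $k$-th term of \eqref{finitesum},
\begin{equation*}
\bbE\Big[\beta^k\!\!\int_{\mathfrak{X}^k\times(\bbR^d)^k}\!\!\varrho(\bt,\bx)\prod_{i=1}^k|\xi_{\go}^{(a)}|(\dd t_i,\dd x_i)\Big]=\beta^k\!\!\sum_{I\subseteq\{1,\dots,k\}}\!\!\kappa_a^{k-|I|}c_a^{|I|}\!\int_{\mathfrak{X}^k}\!\!\Big(\int_{(\bbR^d)^k}\!\!\varrho(\bt,\bx)\,\dd\bx\Big)\dd\bt=\frac{(\beta(\kappa_a+c_a))^k}{k!}\,,
\end{equation*}
using $\int_{(\bbR^d)^k}\varrho(\bt,\bx)\,\dd\bx=1$ and $|\mathfrak{X}^k|=1/k!$; summing in $k$ gives $\bbE$ of the series in \eqref{finitesum} equal to $e^{\beta(\kappa_a+c_a)}<\infty$, proving \eqref{finitesum} (a.s.\ and in $\bbL_1$) and the integrability of $\varrho(\bt,\bx)$. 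The same bound, with the harmless factor $\prod_i\ind_{\{\|x_i\|\le M\}}\le1$ inserted, shows the left-hand side of \eqref{finitesum2} has finite expectation, so the series \eqref{lafirstdeffpti} defining $\cZ^{\go,a}_{\beta}(f)$ is dominated in $\bbL_1$ and Fubini applies: $\bbE[\cZ^{\go,a}_{\beta}(f)]=\bQ(f)+\sum_{k\ge1}\beta^k\bbE\big[\int_{\mathfrak{X}^k\times(\bbR^d)^k}\varrho(\bt,\bx,f)\prod_i\xi_{\go}^{(a)}(\dd t_i,\dd x_i)\big]$. For \eqref{lamine} I expand $\xi_{\go}^{(a)}=\nu_a-\kappa_a\cL$; Mecke gives $\bbE[\prod_{i\in I}\nu_a(\dd t_i,\dd x_i)]=c_a^{|I|}\prod_{i\in I}\dd t_i\,\dd x_i$ on the simplex, while $\int_{(\bbR^d)^k}\varrho(\bt,\bx,f)\,\dd\bx=\bQ(f)$ (integrating the conditional expectation in \eqref{defvarro} against the joint density of $(B_{t_1},\dots,B_{t_k})$, i.e.\ the tower property) and $|\mathfrak{X}^k|=1/k!$, so the $k$-th term equals $\beta^k(c_a-\kappa_a)^k\bQ(f)/k!=\beta^k\mu^k\bQ(f)/k!$, the cancellation $c_a-\kappa_a=\mu$ being precisely the effect of the centering $\kappa_a$ in \eqref{petitnoise}. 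Summing yields $\bbE[\cZ^{\go,a}_{\beta}(f)]=\bQ(f)\sum_{k\ge0}(\beta\mu)^k/k!=e^{\beta\mu}\bQ(f)$. (If $\mu=\infty$, testing $\cZ^{\go,a}_{\beta}(f)$ against a single atom of $\go$ shows its expectation is $+\infty$ whenever $f\ge0$ and $\bQ(f)>0$, so \eqref{lamine} persists in $[0,\infty]$.)

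The point I expect to require the most care is the combinatorial reorganization in the second step: justifying the Chapman--Kolmogorov collapse of $\varrho(\bt,\bx)$ under partial Lebesgue integration, and checking that summing over the $2^k$ choices of $I$ makes the gap-allocation factors telescope to $1/(k-n)!$ thanks to the gap lengths summing to $1$ --- together with a careful application of the multivariate Mecke equation to the \emph{weighted} Poisson sum $\nu_a$ rather than to $\go$ itself. One should also keep track of the a.s.\ distinctness of the atoms' time coordinates, which is what simultaneously keeps every $\varrho(\bs,\by)$ finite and makes the atom sum effectively finite once $f$ has bounded support.
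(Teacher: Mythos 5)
Your proof is correct. For \eqref{finitesum} and \eqref{lamine} you follow essentially the same route as the paper: compute the expectation of the chaos series term by term via Mecke's formula, obtaining $(\beta C_a)^k/k!$ with $C_a=\kappa_a+\int_{[a,\infty)}\ups\,\lambda(\dd\ups)$, and for the signed version observe the cancellation $\int_{[a,\infty)}\ups\,\lambda(\dd\ups)-\kappa_a=\mu$ produced by the centering. The one genuine divergence is in the proof of \eqref{finitesum2} when $\int_{[1,\infty)}\ups\,\lambda(\dd\ups)=\infty$. The paper handles this by truncating the noise at a level $b$: since $f$ has bounded support only the region $[0,1]\times[-M,M]^d$ matters, and for $b$ larger than some random $b_0(M,\go)$ the noises $\xi_\go^{(a)}$ and $\xi_\go^{[a,b)}$ coincide there, so the finite-$\mu$ moment bound applies verbatim. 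You instead argue pathwise: you integrate out the Lebesgue part of $|\xi_\go^{(a)}|$ via Chapman--Kolmogorov and the gap-allocation identity, reducing the series to $e^{\beta\kappa_a}$ times a sum over finite collections of atoms of $\go^{(a)}$ in the compact region, which is a.s.\ a finite sum of finite terms. This is in effect the content of the paper's Lemma~\ref{eazy} (proved in Appendix~\ref{app:eazy}) applied to the total-variation measure, so you are front-loading that lemma; the benefit is that you never need to take expectations to get \eqref{finitesum2}, the cost is carrying out the combinatorial resummation, which you do correctly (the allocations of the $k-n$ Lebesgue times into the $n+1$ gaps telescope to $1/(k-n)!$ by the multinomial theorem since the gap lengths sum to $1$). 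Both arguments hinge on the same two facts: the spatial localization coming from $f\in\cB_b$, and the a.s.\ finiteness of the number of atoms of $\go^{(a)}$ in a compact space-time box, guaranteed by $\lambda([a,\infty))<\infty$.
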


\begin{proof}

Let us start with the case
$\int_{[1,\infty)} \ups \gl(\dd \ups) <\infty$.
It is sufficient to check that the expectation of the l.h.s.\ in \eqref{finitesum} is finite.
Now using the definition~\eqref{petitnoise} for  $\xi_{\go}^{(a)}$ we have on $\mathfrak{X}^k\times (\bbR^d)^k$, 
\[
\bbE\left[\otimes_{i=1}^k |\xi_{\go}^{(a)}|(\dd t_i,\dd x_i)\right]= \Big(\int_{[a,\infty)} \ups \gl(\dd \ups)  +\kappa_a\Big)^k\prod_{i=1}^k \dd t_i\dd x_i \, .
\]
Letting $C_a:= \int_{[a,\infty)} \ups \gl(\dd \ups)  +\kappa_a <\infty$, we therefore get that
\[
\bbE \bigg[ \int_{\mathfrak{X}^k \times (\bbR^d)^k}  \varrho(\bt,\bx) \prod_{i=1}^k |\xi_{\go}^{(a)}| ( \dd t_i, \dd x_i) \bigg]
=    C_a^k  
\int_{\mathfrak{X}^k \times (\bbR^d)^k}   \varrho(\bt,\bx)  \prod_{i=1}^k \dd t_i \dd x_i =   \frac{  C_a^k }{k!} \, . 
\]
This implies both the convergence of the integral and
the summability in $k$.

The fact that $\bbE[\cZ^{\go,a}_{\beta}(f)]= e^{\gb \mu} \bQ(f)$  directly follows from the definition~\eqref{def:kappa}-\eqref{lafirstdeffpti}
and Fubini,
using that $\prod_{i=1}^k\xi_{\go}^{(a)} ( \dd t_i, \dd x_i)$  has mean $(\mu \cL)^{\otimes k}$.

\smallskip
Now let us prove \eqref{finitesum2} when  $\int_{[1,\infty)} \ups \gl(\dd \ups) =\infty$. 
For this we first consider a truncated version of the noise to place ourselves back in the integrable case, and then let the truncation threshold go to infinity. This procedure is going to be used repeatedly in the paper.
For $b>a$, recall the definition~\eqref{splitnoise}
of $\xi^{[a,b)}_\go$.
 Using the assumption $f\in \cB_b$, we let $M>0$ be such that $f(\varphi)=0$ if $\|\varphi\|_{\infty}\ge M$. Then $\varrho(\bt,\bx,f)=0$ if $\max_{i=1}^k \|x_i\|_{\infty} \ge M$.
Therefore,  since $\lambda([a,\infty))<\infty$, there exists $b_0(M,\go)$ such that for every $b>b_0$, the restriction of $\xi^{(a)}_{\go}$ on $[0,1]\times [-M,M]^{d}$ coincides with that of $\xi^{[a,b)}_{\go}$. Hence it is sufficient to show that \eqref{finitesum2} holds for $\xi^{[a,b)}_{\go}$ for every~$b>1$, which we can do by repeating the proof of \eqref{finitesum}.
\end{proof}

\begin{rem}
\label{rem:contiZfini}
Notice that we have the analogous result for the point-to-point partition function $\cZ_{\gb}^{\go,a} (t,x)$.
For any $x\in \bbR^d$,
the function  $\varrho (\bt,\bx)  \rho_{t-t_k}(x-x_k)$
is almost surely integrable on $\mathfrak{X}^k \times [-M,M]^k$ for any $M>0$
 with respect to the product measure $|\xi_{\go}^{(a)}|$,
 and it is integrable on $\mathfrak{X}^k \times (\bbR^d)^k$ if $\int_{[1,\infty)} \ups \lambda(\dd \ups) <\infty$.
 We also have 
 \begin{equation}\label{zamine}
   \forall\, a\in (0,1], \, \quad \bbE \big[ \cZ^{\go,a}_{\beta}(t,x) \big] =  e^{\gb \mu t} \rho_t(x).
 \end{equation}
 \end{rem}

\subsection{The partition function $\cZ_{\gb}^{\go,a}$ as a sum}

Let us now present an alternative expression for~$\cZ^{\go,a}_{\beta}$,
from which it will be clear that $\cZ^{\go,a}_{\gb}$ is positive.
 Indeed, it  is not obvious that the chaos decomposition~\eqref{lafirstdeffpti} is non-negative,
due to the term  $-\kappa_a \cL$ in the definition~\eqref{petitnoise} of $\xi_{\go}^a$.
The idea of Lemma~\ref{eazy} below is to integrate out this ``Lebesgue part'' of $\xi_{\go}^a$. This operation has the effect of giving  rise to a prefactor  $e^{-\gb \kappa_a}$.

We let  $\cP_{[0,t]}(\go)$ denote the set of finite collections of points in $\go$ whose time coordinates belong to the interval $[0,t]$. When $t=1$ we simply write $\cP(\go)$. We define similarly $\cP_{[0,t]}(\go^{(a)})$ and $\cP(\go^{(a)})$ the sets of finite collections of points in  $\go^{(a)} = \{(t,x,\ups)\in \go \colon \ups\ge a\}$ (as defined above~\eqref{def:omegabar}).
For $\sigma \in \cP(\go)$ we let $|\sigma|$ denote its cardinality 
and we use the notation $(t_i,x_i,u_i)_{i=1}^{|\sigma|}$ to denote the points in $\sigma$ ordered in increasing time.
Given $a>0$ we define the following weight function $w_{a,\beta}(\sigma)$ on $\cP(\go)$
\begin{equation}\label{weights}
 w_{a,\beta}(\sigma):= e^{- \beta  \kappa_a } \gb^{|\sigma|}   \varrho(\bt,\bx) \prod_{i=1}^{|\sigma|}  u_i\ind_{\{u_i\geq a\}} \, ,
 \end{equation}
  with $\kappa_a =\int_{[a,1)} \ups \lambda (\dd \ups)$ as defined in~\eqref{def:kappa}.
 Let us stress here that $w_{a,\gb}$ puts a positive weights only on 
 elements of $\cP(\go^{(a)})$. By convention, we say that 
 the empty set belongs to~$\cP(\go)$ and we set 
$w_{a,\beta}(\emptyset):=e^{-\beta \kappa_a }$.
 Similarly, for any $f\in \cB$, we define
 \begin{equation}\label{weightsf}
 \begin{split}
 w_{a,\beta}(\sigma,f)&:= e^{- \beta  \kappa_a }  \gb^{|\sigma|}  \varrho(\bt,\bx,f) \prod_{i=1}^{|\sigma|}  u_i\ind_{\{u_i\geq a\}}, \\ w_{a,\beta}(\emptyset,f)&:= e^{-\gb \kappa_a} \bQ(f).
 \end{split}
 \end{equation}

 The following lemma provides an alternative formulation of the partition function which is convenient to assert positivity.
  Its proof is straightforward (one only needs to integrate out the Lebesgue part of $\xi_{\go}^a$) and is presented in Appendix~\ref{app:eazy} for completeness.

 \begin{lemma}\label{eazy}
Given $a>0$, $\beta>0$ and $f\in \cB_b$ we have 
\begin{equation}\label{altexpf}
 \cZ^{\go,a}_{\beta} (f) = \sum_{\sigma \in \cP(\go)} w_{a,\beta}(\sigma,f) \, .
\end{equation}
In particular $\cZ_{\gb}^{\go,a}(f) > 0$ if $f\ge 0$ and $\bQ(f)>0$.
Also, by monotone convergence, recalling the definition~\eqref{def:Zmonotone}, we have
\begin{equation}\label{altexp}
 \cZ^{\go,a}_{\beta}= \sum_{\sigma \in \cP(\go)} w_{a,\beta}(\sigma) \, .
\end{equation}
\end{lemma}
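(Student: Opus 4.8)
\emph{Proof plan.} The plan is exactly the one indicated in the statement: write $\xi^{(a)}_{\go}=\nu_a-\kappa_a\cL$, where $\nu_a:=\sum_{(t,x,\ups)\in\go}\ups\ind_{\{\ups\ge a\}}\delta_{(t,x)}$ is the atomic part, and ``integrate out'' the $-\kappa_a\cL$ part term by term in the chaos expansion~\eqref{lafirstdeffpti}. Expanding each $k$-fold product by multilinearity, the order-$k$ term of~\eqref{lafirstdeffpti} becomes a sum over the set $I\subseteq\lint1,k\rint$ of ``atomic'' coordinates of
\begin{equation*}
\beta^{k}(-\kappa_a)^{k-|I|}\int_{\mathfrak{X}^{k}\times(\bbR^d)^{k}}\varrho(\bt,\bx,f)\Big(\prod_{i\in I}\nu_a(\dd t_i,\dd x_i)\Big)\Big(\prod_{i\notin I}\dd t_i\,\dd x_i\Big).
\end{equation*}
First I would record that all the rearrangements below are legitimate: since $f\in\cB_b$ forces $\varrho(\bt,\bx,f)=0$ unless every $x_i$ lies in a fixed cube $[-M,M]^d$, and $\nu_a$ is almost surely finite on $[0,1]\times[-M,M]^d$ (because $\lambda([a,\infty))<\infty$), only finitely many atoms ever contribute; combined with the absolute summability~\eqref{finitesum2} from Proposition~\ref{prop:contiZfini}, this allows reorganizing the double sum at will.

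Next I would regroup these terms according to the finite configuration $\sigma\in\cP(\go^{(a)})$ of atoms that actually appear, with points $(s_1,y_1,u_1),\dots,(s_j,y_j,u_j)$ listed in increasing time (set $s_0=0$, $s_{j+1}=1$). For a fixed $\sigma$, an order-$k$ term contributes exactly when $j$ of the coordinates carry these atoms (necessarily in time order) and the remaining $m=k-j$ are Lebesgue coordinates; the ordering constraint defining $\mathfrak{X}^k$ then amounts to a choice of how many Lebesgue coordinates, say $m_\ell\ge0$ with $\sum_{\ell=0}^{j}m_\ell=m$, fall in each gap $(s_\ell,s_{\ell+1})$, and this data determines $I$. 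For such a term I would first integrate out the Lebesgue space variables using the semigroup identity $\int_{\bbR^d}\varrho(\bt,\bx,f)\,\dd x_i=\varrho(\bt^{(i)},\bx^{(i)},f)$ (which is just the statement that $\varrho(\bt,\bx)$ is the joint density of $(B_{t_1},\dots,B_{t_k})$ under $\bQ$ and that Brownian bridges concatenate); iterating over all Lebesgue indices leaves $\varrho(\bs,\by,f)$ with $\bs=(s_1,\dots,s_j)$, $\by=(y_1,\dots,y_j)$, which no longer depends on the Lebesgue times, so integrating those out over $\{s_\ell<r_1<\dots<r_{m_\ell}<s_{\ell+1}\}$ in each gap produces a factor $\prod_{\ell=0}^{j}(s_{\ell+1}-s_\ell)^{m_\ell}/m_\ell!$.

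Summing over $k$ and over the multiplicities $(m_\ell)$, the contribution of $\sigma$ factorizes into
\begin{equation*}
\beta^{j}\varrho(\bs,\by,f)\prod_{i=1}^{j}u_i\cdot\prod_{\ell=0}^{j}\Big(\sum_{m_\ell\ge0}\frac{(-\beta\kappa_a(s_{\ell+1}-s_\ell))^{m_\ell}}{m_\ell!}\Big)=e^{-\beta\kappa_a}\beta^{j}\varrho(\bs,\by,f)\prod_{i=1}^{j}u_i=w_{a,\beta}(\sigma,f),
\end{equation*}
where we used $\sum_{\ell=0}^{j}(s_{\ell+1}-s_\ell)=1$; the empty configuration similarly gives $\sum_{m\ge0}\beta^m(-\kappa_a)^m\bQ(f)/m!=e^{-\beta\kappa_a}\bQ(f)=w_{a,\beta}(\emptyset,f)$. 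Summing over all $\sigma\in\cP(\go)$ (the weight vanishes off $\cP(\go^{(a)})$) yields~\eqref{altexpf}. Positivity is then immediate, since $w_{a,\beta}(\sigma,f)\ge0$ whenever $f\ge0$ (all $u_i>0$ and $\varrho(\bs,\by,f)$ is a conditional expectation of $f\ge0$) and $w_{a,\beta}(\emptyset,f)=e^{-\beta\kappa_a}\bQ(f)>0$; finally~\eqref{altexp} follows by taking $f_n\uparrow1$ with $f_n\in\cB_b$ and applying monotone convergence to both sides of~\eqref{altexpf} (using $\varrho(\bt,\bx,f_n)\uparrow\varrho(\bt,\bx)$ and the definition~\eqref{def:Zmonotone}). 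I expect the only genuinely delicate point to be the combinatorial bookkeeping in the middle step---matching the subsets $I$ with the gap-multiplicities $(m_\ell)$ and checking that telescoping the space integrals via Chapman--Kolmogorov really leaves exactly $\varrho(\bs,\by,f)$; once that is in place, the rest is Fubini and a short resummation.
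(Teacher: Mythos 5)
Your proof is correct and follows essentially the same route as the paper's (Lemma~\ref{lemma:claim} in Appendix~\ref{app:eazy}): you decompose the measure $\xi^{(a)}_{\go}$ into its atomic and Lebesgue parts and regroup by the configuration $\sigma$ of atoms, whereas the paper equivalently partitions the integration domain into the sets $\cE^k(\sigma)$, but the core computation --- collapsing the Lebesgue space integrals by Chapman--Kolmogorov and resumming the gap contributions $(-\beta\kappa_a(s_{\ell+1}-s_\ell))^{m_\ell}/m_\ell!$ into $e^{-\beta\kappa_a}$ --- is identical. Your handling of $f\in\cB_b$ via the bounded-support/finitely-many-atoms argument and the final monotone-convergence step also match the paper's treatment.
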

\noindent
Let us stress that the representations~\eqref{altexpf}
 and~\eqref{altexp} are valid without any assumption
 on the intensity measure $\gl$,
 since all the terms in the sums are positive, but it may be the case that both sides of \eqref{altexp} are infinite.

\begin{rem}
Similarly, for the point-to-point partition function, the following identity holds 
\begin{equation}
\label{altexpp2p}
\cZ^{\go,a}_{\beta}(t,x) = \sum_{\sigma \in \cP_{[0,t]}(\go)}
w_{a,\gb}(\sigma, (t,x)) \, ,
\end{equation}
with
$w_{a,\gb}(\sigma,(t,x)) := e^{-\gb\kappa_a} \gb^{|\sigma|} \varrho(\bt,\bx) \rho_{t-t_k}(x-x_k)  \prod_{i=1}^{|\sigma|} u_i \ind_{\{u_i\geq a\}}$,
as long as the r.h.s.\ is finite.
Note that by Fubini's theorem this gives that, for any $a>0$
\begin{equation}
\label{integreZ}
\int_{\R^d} \cZ^{\go,a}_{\beta}(t,x)  \dd x=  \cZ_{\gb,t}^{\go,a} \, .
\end{equation}
\end{rem}

\subsection{Martingale property}

In the case $\int_{[1,\infty)} \ups \lambda(\dd \ups)<\infty$,
the convergence of $\cZ^{\go,a}_{\beta}(f)$ as $a\downarrow 0$ is an immediate consequence of the following observation.

\begin{lemma}
\label{lem:martingale}
Let $\cF = (\cF_a)_{a \in (0,1]}$ be the filtration where $\cF_a$ is the $\sigma$-field generated by~$\go^{(a)}$.
If the measure $\gl$ satisfies $\mu:=\int_{[1,\infty)} \ups \gl(\ups)\dd \ups<\infty$, then the following processes
are (time-reversed) martingales  for the filtration~$\cF$:
\begin{itemize}
\item   $(\cZ^{\go,a}_{\beta}(f))_{a\in (0,1]}$ for any  $f\in \cB$, and in particular $(\cZ^{\go,a}_{\beta})_{a\in(0,1]}$;
\item $(\cZ^{\go,a}_{\beta}(t,x))_{a\in (0,1]}$ for any  $(t,x)\in \bbR_+^*\times  \bbR^d$.
\end{itemize}
The mean of these martingales are $\bbE[\cZ^{\go,a}_{\beta}(f))] =e^{\gb \mu} \bQ(f)$ 
and $\bbE[ \cZ^{\go,a}_{\beta} (t,x)]= e^{\gb \mu t}\rho_t(x)$.

Moreover, if $g$ is a bounded measurable function
of $\varphi$ and $\go$ and $g(\varphi,\go)$ is $\cF_{a_0}$-measurable for every $\varphi$,
then $\big(\cZ^{\go,a}_{\beta}(g(\cdot, \go))\big)_{a\in(0,a_0]}$ 
 is a (time-reversed) martingale. 
\end{lemma}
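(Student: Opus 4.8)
The plan is to establish, for all $0<a\le b\le 1$, the (time-reversed) martingale identity $\bbE[\cZ^{\go,a}_{\beta}(f)\mid\cF_b]=\cZ^{\go,b}_{\beta}(f)$. The remaining ingredients are soft: $\cZ^{\go,a}_{\beta}(f)$ is $\cF_a$--measurable because, by \eqref{lafirstdeffpti}, it is a measurable functional of $\go^{(a)}$ (the deterministic counterterm $\kappa_a\cL$ causing no issue), and it is integrable since $|\cZ^{\go,a}_{\beta}(f)|\le\|f\|_\infty\cZ^{\go,a}_{\beta}$ with $\bbE[\cZ^{\go,a}_{\beta}]=e^{\gb\mu}<\infty$ by \eqref{lamine} and monotone convergence; the announced means are exactly \eqref{lamine} and, in the point--to--point case, \eqref{zamine}. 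I would first prove the identity for $f\in\cB_b$ and then pass to arbitrary $f\in\cB$ by writing $f=f^+-f^-$ and invoking the conditional monotone convergence theorem along $f^{\pm}\ind_{\{\|\varphi\|_\infty\le n\}}\uparrow f^{\pm}$; the point--to--point process $(\cZ^{\go,a}_{\beta}(t,x))_a$ is handled in the same way, replacing $\varrho(\bt,\bx,f)$ by $\varrho(\bt,\bx)\rho_{t-t_k}(x-x_k)$ and integrating over the simplex of $[0,t]$ (its well-posedness being Remark~\ref{rem:contiZfini}).

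The heart of the argument is the additive decomposition $\xi^{(a)}_{\go}=\xi^{(b)}_{\go}+\xi^{[a,b)}_{\go}$ valid for $0<a\le b\le 1$, with $\xi^{[a,b)}_{\go}$ as in \eqref{splitnoise}. I would record two properties of the increment. First, $\xi^{[a,b)}_{\go}$ is \emph{centered}: $\bbE[\xi^{[a,b)}_{\go}(A)]=0$ for every bounded Borel set $A$, because by Campbell's formula the mean of its jump part is $\big(\int_{[a,b)}\ups\gl(\dd\ups)\big)\cL$, which is exactly cancelled by the counterterm $(\kappa_b-\kappa_a)\cL$ of \eqref{def:kappa}. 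Second, $\xi^{[a,b)}_{\go}$ is \emph{independent of $\cF_b$}, since it is a functional of the atoms of $\go$ with mark in $[a,b)$, which form a Poisson process on a region of the mark space disjoint from the one generating $\go^{(b)}$. Inserting the decomposition into the chaos expansion \eqref{lafirstdeffpti} and expanding each product of $k$ signed measures multilinearly, $\prod_{i=1}^{k}(\xi^{(b)}_{\go}+\xi^{[a,b)}_{\go})(\dd t_i,\dd x_i)=\sum_{I\subseteq\{1,\dots,k\}}\big(\bigotimes_{i\in I}\xi^{[a,b)}_{\go}\big)\otimes\big(\bigotimes_{j\notin I}\xi^{(b)}_{\go}\big)$, one writes $\cZ^{\go,a}_{\beta}(f)$ as a sum, over $k$ and over $I\subseteq\{1,\dots,k\}$, of mixed chaos integrals. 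These are all absolutely convergent, and the rearrangements and Fubini below are legitimate, because the total variation of the $k$--fold product is dominated by $\big(|\xi^{(b)}_{\go}|+|\xi^{[a,b)}_{\go}|\big)^{\otimes k}=|\xi^{(a)}_{\go}|^{\otimes k}$, against which $|\varrho(\bt,\bx,f)|$ is integrable on $\mathfrak{X}^k\times(\bbR^d)^k$ by Proposition~\ref{prop:contiZfini}.

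It remains to apply $\bbE[\,\cdot\mid\cF_b]$ term by term. Fixing $I$ and conditioning on $\cF_b$, the factor $\varrho(\bt,\bx,f)\prod_{j\notin I}\xi^{(b)}_{\go}(\dd t_j,\dd x_j)$ is frozen while $\bigotimes_{i\in I}\xi^{[a,b)}_{\go}$ is independent of $\cF_b$, so by Fubini the conditional expectation of that term is the frozen factor integrated against the moment measure $\bbE[\bigotimes_{i\in I}\xi^{[a,b)}_{\go}]$. The crucial computation is that, \emph{restricted to the set where the time coordinates are pairwise distinct}, this moment measure vanishes whenever $I\neq\emptyset$: writing $\xi^{[a,b)}_{\go}=\tilde\zeta-c\,\cL$ with $c=\int_{[a,b)}\ups\gl(\dd\ups)$ and $\tilde\zeta$ its positive jump part, the multivariate Mecke equation gives that the off--diagonal part of $\bbE[\bigotimes_{i\in J}\tilde\zeta]$ equals $c^{|J|}\bigotimes_{i\in J}\cL$ for every $J$, whence $\bbE[\bigotimes_{i\in I}\xi^{[a,b)}_{\go}]=\sum_{J\subseteq I}(-c)^{|I\setminus J|}c^{|J|}\bigotimes_{i\in I}\cL=(c-c)^{|I|}\bigotimes_{i\in I}\cL=0$ off the diagonal when $|I|\ge1$. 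Since the chaos integrals in \eqref{lafirstdeffpti} run over the open simplex $\mathfrak{X}^k$, the time coordinates are automatically distinct, so only the term $I=\emptyset$ survives and $\bbE[\cZ^{\go,a}_{\beta}(f)\mid\cF_b]=\bQ(f)+\sum_{k\ge1}\gb^k\int_{\mathfrak{X}^k\times(\bbR^d)^k}\varrho(\bt,\bx,f)\prod_i\xi^{(b)}_{\go}(\dd t_i,\dd x_i)=\cZ^{\go,b}_{\beta}(f)$, which is the desired identity.

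For the last assertion, fix $a_0\in(0,1]$ and let $0<a\le b\le a_0$. Because $g(\varphi,\cdot)$ is $\cF_{a_0}$--measurable for each $\varphi$ and $\cF_{a_0}\subseteq\cF_b$, the function $\varphi\mapsto g(\varphi,\go)$ is $\cF_b$--measurable, hence frozen once we condition on $\cF_b$; moreover it depends only on atoms of $\go$ with mark $\ge a_0$, so it is independent of $\xi^{[a,b)}_{\go}$, whose atoms all have mark $<b\le a_0$. The computation of the previous two paragraphs therefore applies verbatim with $\varrho(\bt,\bx,f)$ replaced by $\varrho(\bt,\bx,g(\cdot,\go))$ (first for $g$ with bounded support in $\varphi$, then in general by the same monotone limit as above), showing that $\big(\cZ^{\go,a}_{\beta}(g(\cdot,\go))\big)_{a\in(0,a_0]}$ is a time-reversed martingale for $\cF$. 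The step that I expect to demand the most care is making rigorous the interchange of $\bbE[\,\cdot\mid\cF_b]$ with the signed chaos integrals and the identification of the off--diagonal moment measures of $\xi^{[a,b)}_{\go}$ via Mecke --- in other words, turning the heuristic ``$\xi^{[a,b)}_{\go}$ has zero mean and assigns independent masses to disjoint cells'' into a clean statement valid on the simplex.
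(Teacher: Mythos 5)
Your proof is correct, and its core mechanism is the same as the paper's: write $\xi^{(a)}_\go=\xi^{(b)}_\go+\xi^{[a,b)}_\go$, observe that the increment is centered and independent of $\cF_b$ (points with marks in $[a,b)$ versus marks $\geq b$), and interchange the conditional expectation with the chaos integrals. The difference is purely in the combinatorial bookkeeping. The paper telescopes, writing $\prod_{i=1}^k\xi^{(a)}_\go-\prod_{i=1}^k\xi^{(b)}_\go$ as a sum of $k$ mixed products each containing exactly \emph{one} increment factor, so it only needs the first moment of $\xi^{[a,b)}_\go$ to vanish together with the independence of the restrictions of the Poisson process to the disjoint space--time cells around the $(t_i,x_i)$. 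You instead expand the product fully over subsets $I$, which forces you to prove the stronger statement that the off-diagonal moment measures $\bbE\big[\bigotimes_{i\in I}\xi^{[a,b)}_\go\big]$ vanish for every $|I|\ge 1$; your Mecke-plus-binomial cancellation $\sum_{J\subseteq I}(-c)^{|I\setminus J|}c^{|J|}=0$ does establish this correctly (the restriction to the open simplex $\mathfrak{X}^k$ keeps you off the diagonal, as you note). What your route buys is a more explicit justification of the Fubini interchanges, via the domination $|\xi^{(b)}_\go|+|\xi^{[a,b)}_\go|=|\xi^{(a)}_\go|$ and Proposition~\ref{prop:contiZfini}, which the paper leaves implicit; what it costs is the extra (though elementary) moment-measure computation that the telescoping identity renders unnecessary. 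The handling of the means, of the point-to-point case, and of the $\cF_{a_0}$-measurable random functional $g$ matches the paper.
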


\begin{proof}
Using the expression \eqref{lafirstdeffpti} 
 (or \eqref{def:pointtopoint} for the point-to-point partition function), the result follows from the fact that the sequence of measures $( \prod_{i=1}^k\xi_{\go}^{(a)} ( \dd t_i, \dd x_i) )_{a\in (0,1]}$ on $\mathfrak{X}^k \times (\bbR^d)^k$ is a martingale. 
Indeed 
for $b< a\leq 1$ we have
\begin{multline*}
 \bbE \bigg[ \prod_{i=1}^k\xi_{\go}^{(b)} ( \dd t_i, \dd x_i)-\prod_{i=1}^k\xi_{\go}^{(a)} ( \dd t_i, \dd x_i)  \,\Big|\, \cF_a \bigg]\\
 = \sum_{i=1}^k  \bbE \bigg[ \Big(\prod_{j=1}^{i-1}\xi_{\go}^{(b)}  ( \dd t_j, \dd x_j) \Big)(\xi_{\go}^{(b)}-\xi_{\go}^{(a)})( \dd t_i, \dd x_i)
 \Big(\prod_{j=i+1}^{k}\xi_{\go}^{(a)} ( \dd t_j, \dd x_j) \Big)  \,\Big|\,  \cF_a \bigg]=0,
\end{multline*}
where in the last equality we used that by construction $(\xi_{\go}^{(b)}-\xi_{\go}^{(a)})( \dd t_i, \dd x_i)$ is of zero average, independent of $\cF_a$ and conditionally independent of 
$\prod_{j=1}^{i-1}\xi_{\go}^{(b)} ( \dd t_j, \dd x_j)$.
The proof for
a random function  follows the same line, using that 
$\varrho(\bt,\bx,g(\cdot, \go))$ is $\cF_{a_0}$-measurable for all $\bt$ and $\bx$.
 \end{proof}

Since $\cZ_{\gb}^{\go,a}\geq 0$, this directly implies
in the case $\int_{[1,\infty)} \ups \gl(\ups)\dd \ups<\infty$  that  $\lim_{a\downarrow 0}\cZ^{\go,a}_{\beta}$ exists almost surely. 
We will show that if additionally assumption~\eqref{hyposmall} holds,
the martingale is uniformly integrable.

 \begin{rem}
 In the case $\int_{[1,\infty)} \ups \gl(\ups)\dd \ups=\infty$,
we will consider the
truncated  partition function
$\cZ_{\gb}^{[a,b)}(f)$ defined as in \eqref{lafirstdeff} but with $\xi_{\go}^{(a)}$ replaced by the truncated noise $\xi_{\go}^{[a,b)}$ defined in~\eqref{splitnoise} ---~this corresponds to considering
the intensity measure $\lambda^{(0,b)} (\dd \ups) := \ind_{\{\ups < b\}} \lambda(\dd \ups)$.
Then, for any $b>0$, Lemma~\ref{lem:martingale} shows that $(\cZ_{\gb}^{[a,b)}(f))_{a\in (0,b]}$ is a martingale.
 \end{rem}


\subsection{A representation for the size-biased measure}
When $\mu:=\int_{[1,\infty)} \ups \gl(\ups)\dd \ups<\infty$,  since  
\[
\bar \cZ^{\go,a}_{\beta}:= e^{-\beta \mu} \cZ_{\gb}^{\go,a}
\]
 is non-negative and of average one, one can define 
an alternative measure $\tilde \bbP^a_{\beta}$  for the environment, defined by 
\begin{equation}
\label{tildemeasure}
\tilde \bbP^a_{\beta}(\go\in A):= \bbE\big[ \bar \cZ_{\gb}^{\go,a} \ind_{\go\in A}\big] \, .
\end{equation}

The measure $\tilde \bbP^a_{\beta}$ is often referred to as the size-biased measure
---~the probability of an event is biased by the ``size'' of the partition function.
Convenient representations of the size-biased measure have been given for directed polymers \cite[Lemma 1]{Bir04} and similar models such as branching random walks (see~\cite[Ch.~4]{ShiBRW} and references therein) or the disordered pinning model \cite[\S\,5.2]{LS17}.
The size-biased measure for all these models is obtained by tilting the distribution
of the environment along a randomly chosen trajectory.
The result we present below is a strict analog in a continuous setup. 

\smallskip

We let~$\bbP'_a$ be the distribution of a Poisson point process $\go'_a$ on $[0,1]\times\bbR_+$ whose intensity is $\dd t \otimes \beta   \ups  \ind_{\{\ups \geq a\}}\gl(\dd \ups)$, (that is $\dd t \otimes \beta\alpha \ups^{-\alpha}\ind_{\{\ups \geq a\}} \dd \ups$ in the $\alpha$-stable case)
and we recall that $\mathbf{Q}$ is the distribution 
of a standard Brownian motion.
We then introduce the random set of points $\hat \go(\go, \go_a',B)$ in $\bbR\times \bbR^{d}\times \bbR_+$ defined by 
\begin{equation}
\label{def:hatomega}
 \hat \go:= \go \cup  \big\{ (t,B_t,\ups) \ : \ (t,\ups)\in \go'_a \big\}  \, .
\end{equation}
Then, the distribution of $\omega$ under the measure $\tilde \bbP^a_{\beta}$ can be described as follows.

\begin{lemma}\label{spayne}
Suppose that $\mu:=\int_{[1,\infty)} \ups \gl(\ups)\dd \ups<\infty$. 
Then with the notation defined above, for any measurable bounded function $g$ we have
\begin{equation}\label{sides}
 \tilde \bbP^a_{\beta}\left[ g(\go)\right]= \bbP\otimes \bbP'_a\otimes \bQ\left[ g(\hat \go(\go,\go'_a,B))\right]
\end{equation}
In other words, the distribution of $\go$ under  $\tilde \bbP^a_{\beta}$ is obtained by adding to the original point process an independent Poisson process of intensity 
 $\dd t \otimes  \beta\ups    \ind_{\{\ups \geq a\}} \gl(\dd \ups)$ drawn on the trajectory a Brownian Motion.
\end{lemma}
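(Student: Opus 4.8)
The plan is to prove \eqref{sides} by unfolding both sides using the series representation of $\cZ_\gb^{\go,a}$ and Mecke's multivariate equation for Poisson point processes (recalled earlier in the paper). First I would recall that by Lemma~\ref{eazy}, $\bar\cZ_\gb^{\go,a} = e^{-\gb\mu}\cZ_\gb^{\go,a} = e^{-\gb\mu}\sum_{\sigma\in\cP(\go^{(a)})} w_{a,\gb}(\sigma)$, where each $\sigma$ is a finite collection of points of $\go^{(a)}$. Hence, for a bounded measurable $g$,
\[
\tilde\bbP_\gb^a[g(\go)] = e^{-\gb\mu}\,\bbE\Bigg[ g(\go) \sum_{\sigma\in\cP(\go^{(a)})} w_{a,\gb}(\sigma)\Bigg] = e^{-\gb\mu}\sum_{k=0}^\infty \frac{1}{k!}\,\bbE\Bigg[ \sum_{\substack{(t_i,x_i,u_i)_{i=1}^k \\ \text{distinct pts of }\go}} g(\go)\, e^{-\gb\kappa_a}\gb^k \varrho(\bt,\bx)\prod_{i=1}^k u_i\ind_{\{u_i\ge a\}} \Bigg],
\]
where the $\frac{1}{k!}$ accounts for passing from ordered $k$-tuples in the simplex to unordered collections. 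Here I am using $\kappa_a + \mu \cdot(\text{appropriate book-keeping})$; more precisely one must be slightly careful: $e^{-\gb\kappa_a}$ sits inside $w_{a,\gb}(\sigma)$ while the $e^{-\gb\mu}$ is the global normalization, and one should check that together they produce the correct constant. I would keep track of this constant explicitly throughout.

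Next I would apply the multivariate Mecke equation to the inner expectation over the $k$-tuple of distinct points of $\go$: for a Poisson process with intensity $\nu(\dd t,\dd x,\dd\ups) = \dd t\otimes\dd x\otimes\gl(\dd\ups)$,
\[
\bbE\Bigg[\sum_{\substack{(p_1,\dots,p_k)\\ \text{distinct}\in\go}} F(p_1,\dots,p_k;\go)\Bigg] = \int \bbE\big[ F(p_1,\dots,p_k;\go\cup\{p_1,\dots,p_k\})\big]\, \nu(\dd p_1)\cdots\nu(\dd p_k).
\]
With $p_i=(t_i,x_i,u_i)$ and $F$ as above, the $u_i$-integrals factor out: $\int_{[a,\infty)} u_i\,\gl(\dd u_i)$ appears $k$ times — but crucially the trajectory weight $\varrho(\bt,\bx)$ forces a specific structure. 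The key observation is that $\gb^k\varrho(\bt,\bx)\prod_i u_i\ind_{\{u_i\ge a\}}\,\dd t_i\,\dd x_i\,\gl(\dd u_i)$, after integrating $\varrho$ against Lebesgue measure in $\bx$ with the Brownian-bridge interpretation of \eqref{defvarro}, becomes exactly the law of placing $k$ Poisson points $(t_i,u_i)$ with intensity $\gb u\ind_{\{u\ge a\}}\gl(\dd u)\otimes\dd t$ on the graph of an independent Brownian motion $B$ — this is the definition \eqref{def:hatomega} of $\hat\go$. So I would rewrite $\int_{(\bbR^d)^k}\varrho(\bt,\bx)\,\dd\bx$ as the Brownian expectation $\bQ[\prod_i \delta\text{-localization at }B_{t_i}]$ and then recognize the $k$-fold $t_i$-$u_i$ integral as the $k$-point intensity of $\go'_a$, summing over $k$ with the $1/k!$ to recover the Poisson law $\bbP'_a$. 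The $e^{-\gb\kappa_a}$ and $e^{-\gb\mu}$ factors must then conspire to equal $1$ against the total-mass exponential $\exp(\int \gb u\ind_{\{u\ge a\}}\gl(\dd u))$ coming from the Poisson normalization on the finite time interval $[0,1]$ — this is the routine but essential constant-chasing step, and I would verify it using $\kappa_a = \int_{[a,1)}u\,\gl(\dd u)$ and $\mu = \int_{[1,\infty)}u\,\gl(\dd u)$, so that $\kappa_a+\mu = \int_{[a,\infty)}u\,\gl(\dd u)$ is exactly the intensity mass (per unit time) of the added process.

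Putting these together yields $\tilde\bbP_\gb^a[g(\go)] = \bbP\otimes\bbP'_a\otimes\bQ[g(\go\cup\{(t,B_t,u):(t,u)\in\go'_a\})] = \bbP\otimes\bbP'_a\otimes\bQ[g(\hat\go)]$, which is \eqref{sides}. Finally, I would note that all interchanges of sum, expectation, and integral are justified by the nonnegativity established in Lemma~\ref{eazy} together with the finiteness $\bbE[\cZ_\gb^{\go,a}] = e^{\gb\mu}<\infty$ from Proposition~\ref{prop:contiZfini} (using $T=1$), so the whole computation can be carried out first for $g\ge 0$ and then extended by linearity to bounded measurable $g$. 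The main obstacle I anticipate is not conceptual but bookkeeping: correctly matching the combinatorial factor $1/k!$ (ordered tuples versus unordered collections, versus the ordered simplex $\mathfrak X^k$ in the definition of $\varrho$) and verifying that the three exponential prefactors $e^{-\gb\mu}$, $e^{-\gb\kappa_a}$, and the Poisson normalization $e^{\int\gb u\ind_{\{u\ge a\}}\gl(\dd u)}$ cancel exactly — a sign error or a factor misplacement there would be easy to make and would break the identity.
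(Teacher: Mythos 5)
Your proof is correct, but it follows a genuinely different route from the paper's. The paper first reduces the claim to showing that the two point processes (restricted to $[0,1]\times\bbR^d\times[a,\infty)$) have the same law, characterizes that law by the factorial moments $\bbE[\cN_A(\cN_A-1)\cdots(\cN_A-k)]$ of counting variables over bounded sets $A$ (which requires the additional check that these moments grow at most exponentially, so that they determine the distribution), and then computes both sides via the restricted Mecke formula of Proposition~\ref{OKLMecke}. On the size-biased side this forces a rather heavy combinatorial decomposition over how the collection $\sigma$ in the expansion of $\cZ^{\go,a}_\beta$ intersects the $k$-tuple of test points (the sets $\cP_{I,\ell}$ and the factors $G^a_\beta$, $H^a_\beta$). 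You instead work directly with an arbitrary bounded measurable functional $g$ of the whole configuration and apply the \emph{general} multivariate Mecke equation, in which the summand is allowed to depend on the configuration and the point process on the right-hand side is augmented by the integration points $\go\cup\{p_1,\dots,p_k\}$; this is Theorem~4.4 of the cited reference, strictly stronger than the special case stated in Proposition~\ref{OKLMecke}, and you state it correctly. This buys you two simplifications at once: no moment-determinacy argument is needed, and the intersection bookkeeping disappears because $g$ absorbs the rest of the process as a black box. Your constant-chasing is also right: $e^{-\gb\mu}e^{-\gb\kappa_a}=e^{-\gb\bar\kappa_a}$ with $\bar\kappa_a=\int_{[a,\infty)}\ups\,\gl(\dd\ups)$ is exactly the normalization $e^{-\Lambda(\bbX)}$ of the finite-intensity Poisson process $\go'_a$ on $[0,1]\times[a,\infty)$, and the $1/k!$ matches the passage from unordered collections to the ordered simplex $\mathfrak{X}^k$. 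The only cosmetic remark is that you do not need the Brownian-bridge interpretation of~\eqref{defvarro} here: since $g$ depends only on $\go$ and not on the path, the finite-dimensional density $\varrho(\bt,\bx)$ of $(B_{t_1},\dots,B_{t_k})$ suffices.
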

The proof, though elementary, requires some cumbersome computation. We present it in Appendix~\ref{App:spayne} for completeness.

\subsection{An important tool: Mecke's multivariate equation}

Let us recall here a classical formula for Poisson point processes which we will repeatingly use in our computations.
It is a particular case of Mecke's multivariate equation
 (see e.g.\ \cite[Theorem 4.4]{PoiBook}).

 \begin{proposition}\label{OKLMecke}
 Given $\gl$ a sigma-finite measure  on a measurable space $(\bbX,\mathcal{X})$, and $\go$ 
a Poisson point process with intensity $\gl$, then
 for any $k\in \bbN$ and any measurable function $g: \bbX^k \to \bbR_+$ such that  $g (x_1,\dots, x_k)=0 $ as soon as $x_i=x_j$ for some $i\ne j$ then 
\begin{equation}
\bbE \bigg[ \sum_{(x_1,\dots, x_k)\in \go^k} g(x_1,\dots, x_k) \bigg]= \int_{\bbX^k} g(x_1,\dots,x_k) \gl^{\otimes k}(\dd x_1,\dots ,\dd x_k). 
\end{equation}
  
 \end{proposition}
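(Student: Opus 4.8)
The plan is to prove the statement first for \emph{finite} intensity measures and then pass to the general $\sigma$-finite case by an exhaustion argument. Observe first that the hypothesis $g(x_1,\dots,x_k)=0$ whenever $x_i=x_j$ for some $i\neq j$ means that the sum over $\go^k$ coincides with the sum over ordered $k$-tuples of pairwise \emph{distinct} points of $\go$, so there is no diagonal contribution to worry about. The case $k=1$ is essentially the definition of the intensity measure: for $g=\ind_A$ one has $\sum_{x\in\go}\ind_A(x)=\go(A)$, whose expectation is $\gl(A)$ by definition of a Poisson point process with intensity $\gl$; the general non-negative measurable case then follows by linearity on simple functions and monotone convergence.

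For $k\geq 2$ and $\gl$ finite, I would use the explicit construction of $\go$: writing $N$ for the total number of points, $N$ is Poisson distributed with parameter $\gl(\bbX)$ and, conditionally on $\{N=n\}$, the points of $\go$ are $n$ i.i.d.\ random variables $X_1,\dots,X_n$ with common law $\gl(\,\cdot\,)/\gl(\bbX)$. Hence, conditionally on $\{N=n\}$,
$$\sum_{(x_1,\dots,x_k)\in\go^k}g(x_1,\dots,x_k)=\sum_{\substack{(i_1,\dots,i_k)\in\{1,\dots,n\}^k\\ i_1,\dots,i_k\text{ pairwise distinct}}}g(X_{i_1},\dots,X_{i_k}),$$
and there are $n(n-1)\cdots(n-k+1)$ such index tuples, each producing a $k$-tuple $(X_{i_1},\dots,X_{i_k})$ with law $\big(\gl/\gl(\bbX)\big)^{\otimes k}$. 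Taking expectations and then averaging over $N$ yields
$$\bbE\Big[\sum_{(x_1,\dots,x_k)\in\go^k}g\Big]=\bbE\big[N(N-1)\cdots(N-k+1)\big]\,\frac{1}{\gl(\bbX)^k}\int_{\bbX^k}g\,\dd\gl^{\otimes k},$$
and the conclusion follows from the elementary identity $\bbE[N(N-1)\cdots(N-k+1)]=\gl(\bbX)^k$ for a Poisson variable of mean $\gl(\bbX)$.

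For general $\sigma$-finite $\gl$, I would fix an increasing sequence of measurable sets $\bbX_M\uparrow\bbX$ with $\gl(\bbX_M)<\infty$. By the restriction property of Poisson point processes, $\go\cap\bbX_M$ is a Poisson point process with finite intensity $\gl(\,\cdot\cap\bbX_M)$, so applying the previous step to the function $g\,\ind_{\bbX_M^k}$ gives
$$\bbE\Big[\sum_{(x_1,\dots,x_k)\in\go^k}g(x_1,\dots,x_k)\,\ind_{\bbX_M^k}(x_1,\dots,x_k)\Big]=\int_{\bbX^k}g\,\ind_{\bbX_M^k}\,\dd\gl^{\otimes k}.$$
Since $g\geq0$, both sides increase to the desired quantities as $M\to\infty$ by monotone convergence, which proves the claim.

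The argument is elementary, so there is no genuine obstacle; the only points requiring a little care are the justification of the i.i.d.\ representation of a finite Poisson point process (which one may equivalently take as its definition), the factorial-moment identity for the Poisson law, and the measurability of the random sums involved, all of which are routine.
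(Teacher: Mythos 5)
Your proof is correct. Note, however, that the paper does not actually prove this proposition: it is stated as a special case of the multivariate Mecke equation and dispatched with a citation to Last--Penrose (Theorem 4.4 there), so there is no internal argument to compare against. What you give is the standard self-contained derivation of that special case, namely the identity between the $k$-th factorial moment measure of a Poisson process and $\gl^{\otimes k}$: reduce to finite intensity, use the mixed binomial representation ($N\sim\mathrm{Poisson}(\gl(\bbX))$ and, given $N=n$, i.i.d.\ points of law $\gl/\gl(\bbX)$), sum over tuples of pairwise distinct indices, and invoke $\bbE[N(N-1)\cdots(N-k+1)]=\gl(\bbX)^k$; the passage to $\sigma$-finite $\gl$ via restriction to $\bbX_M\uparrow\bbX$ and monotone convergence (legitimate since $g\ge 0$) is also fine. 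This buys self-containedness at the cost of generality: the cited Mecke equation allows the integrand to depend on the whole configuration $\go$, which your argument does not cover, but the deterministic-integrand version is all the paper ever uses (including in the variable-length form \eqref{nonfixedcard}). One small point of care: if $\gl$ has atoms the process need not be simple, and the sum over $\go^k$ must then be read as a sum over $k$-tuples of distinct \emph{points of the process} (counted with multiplicity, i.e.\ the factorial measure) rather than distinct locations -- this is exactly what your index-based computation produces, and since $g$ vanishes on location-diagonals the two readings only differ in the atomic case, which never arises for the space--time noises used in the paper.
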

Of course we are going to apply this formula for the Poisson process $\go$. In our applications we mostly deal with sums running on subsets of $\go$ whose cardinality is not fixed, see the expression \eqref{altexp} for the partition function above. Hence, in practice, the formula  we will use is rather 
\begin{equation}\label{nonfixedcard}
\bbE \bigg[ \sum_{k\ge 1} \sum_{(x_1,\dots, x_k)\in \go^k} g_k(x_1,\dots, x_k) \bigg]= \sum_{k\ge 1} \int_{\bbX^k} g_k(x_1,\dots,x_k) \gl^{\otimes k}(\dd x_1,\dots ,\dd x_k), 
\end{equation}
where $g_k$ is a sequence of positive functions on $\bbX^k$.

 \section{Convergence of the partition function and of the measure: proof of Theorem~\ref{thm:zalpha}}
 \label{sec:UI}

 \subsection{Organization of the section}
 \label{sec:orga}

We decompose the proof of the theorem  in several steps. We provide the  details of this decomposition before going to the core of the proof.

\smallskip
\noindent
{\it First step.}
Our first and main task is to prove the convergence of the partition function under the additional assumption $\int_{[1,\infty)} \ups \gl( \dd \ups)<\infty$.

\begin{proposition}
\label{thm:continuous2}
If the measure $\gl$ satisfies $\int_{[1,\infty)} \ups \gl( \dd \ups)<\infty$ and \eqref{hyposmall}, then the martingale 
$(\cZ^{\go,a }_{\beta})_{a\in(0,1]}$
 is uniformly integrable.
As a consequence there exists $\cZ^{\go }_{ \beta}$ such that the following convergence holds holds almost surely and in $\bbL_1$, 
\begin{equation*}
 \lim_{a \to 0}\cZ^{\go,a }_{\beta}=\cZ^{\go }_{ \beta}
\end{equation*}

\end{proposition}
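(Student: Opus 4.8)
The plan is to deduce uniform integrability of the nonnegative martingale $(\cZ^{\go,a}_{\beta})_{a\in(0,1]}$ from a moment bound on a \emph{truncated} version, and then to recover the general case by monotonicity. Recall that a nonnegative martingale with constant mean is uniformly integrable (hence converges in $\bbL_1$ to a limit with the same expectation) as soon as its almost sure limit has that same expectation. By Lemma~\ref{eazy}, the representation~\eqref{altexp} shows that for every $b>1$ one has $\cZ^{\go,a}_{\beta}\ge \cZ^{[a,b)}_{\beta}$ (the sum defining $\cZ^{[a,b)}_{\beta}$ runs over a sub-collection of configurations with smaller weights, the centering $\kappa_a$ being unchanged). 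The first step is thus to prove, for each \emph{fixed} $b<\infty$, that $\sup_{a\in(0,b]}\bbE[(\cZ^{[a,b)}_{\beta})^{q}]<\infty$ for a suitable exponent $q=q(d)\in(1,2]$; since $(\cZ^{[a,b)}_{\beta})_{a\in(0,b]}$ is a martingale with constant mean $e^{\gb\mu_b}$, where $\mu_b:=\int_{[1,b)}\ups\gl(\dd\ups)$, this gives $\lim_{a\to0}\cZ^{[a,b)}_{\beta}=:\cZ^{(0,b)}_{\beta}$ in $\bbL_1$. Letting then $b\to\infty$: from $\cZ^{\go,a}_{\beta}\ge \cZ^{[a,b)}_{\beta}$ we get $\cZ^{\go}_{\beta}:=\lim_{a\to0}\cZ^{\go,a}_{\beta}\ge \sup_{b}\cZ^{(0,b)}_{\beta}$ almost surely, hence $\bbE[\cZ^{\go}_{\beta}]\ge\lim_b e^{\gb\mu_b}=e^{\gb\mu}$, which combined with the reverse inequality $\bbE[\cZ^{\go}_{\beta}]\le e^{\gb\mu}$ coming from Fatou yields $\bbE[\cZ^{\go}_{\beta}]=e^{\gb\mu}=\bbE[\cZ^{\go,a}_{\beta}]$, and we are done.

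For the moment bound at fixed $b$, the amplitudes are bounded, so $c_q:=\int_{(0,b)}\ups^{q}\gl(\dd\ups)<\infty$ whenever $q\ge\max(p,1)$ with $p$ as in~\eqref{hyposmall}; we take $q=2$ when $d=1$ and any $q\in(\max(p,1),1+\tfrac2d)$ when $d\ge2$. Writing $\xi^{[a,b)}_{\go}=\hat\xi^{[a,b)}_{\go}+\mu_b\cL$ with $\hat\xi^{[a,b)}_{\go}$ the compensated (centered) Poisson measure, and resumming the Lebesgue contributions via Chapman--Kolmogorov ($\rho_s\!*\!\rho_t=\rho_{s+t}$), one obtains $\cZ^{[a,b)}_{\beta}=e^{\gb\mu_b}\sum_{k\ge0}\beta^{k}\hat Y_k$ with $\hat Y_k=\int_{\mathfrak X^{k}\times(\bbR^d)^{k}}\varrho(\bt,\bx)\prod_{i=1}^{k}\hat\xi^{[a,b)}_{\go}(\dd t_i,\dd x_i)$. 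Estimating $\hat Y_k$ by iterating Burkholder--Davis--Gundy over the latest time coordinate $t_k$: the inner integral is a purely discontinuous martingale in $t_k$ with predictable integrand $g_k$, and since $q\le2$ one has $\bbE[|\hat Y_k|^{q}]\le C_q\,\bbE\big[[\,\cdot\,]^{q/2}\big]\le C_q\,\bbE\big[\sum_{\mathrm{jumps}}|g_k(t,x)\ups|^{q}\big]=C_q\,c_q\int_0^1\!\!\int_{\bbR^d}\bbE[|g_k(t_k,x_k)|^{q}]\dd t_k\dd x_k$ (using subadditivity of $x\mapsto x^{q/2}$ and Mecke's formula~\eqref{nonfixedcard} for the compensator). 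Recursing down to the deterministic $g_1=\rho_{t_1}(x_1)$ gives $\bbE[|\hat Y_k|^{q}]\le (C'_q c_q)^{k}\int_{\mathfrak X^{k}}\prod_{i=1}^{k}\|\rho_{t_i-t_{i-1}}\|_{L^q(\bbR^d)}^{q}\,\dd\bt=(C'_q c_q)^{k}\|\rho_1\|_q^{qk}\int_{\mathfrak X^{k}}\prod_{i=1}^{k}(t_i-t_{i-1})^{-d(q-1)/2}\dd\bt$, the last integral being the Dirichlet integral $\Gamma(1-\theta)^{k}/\Gamma(k(1-\theta)+1)$ with $\theta=\tfrac{d(q-1)}{2}$, which is \emph{finite precisely because $q<1+\tfrac2d$}. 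Hence $\|\hat Y_k\|_q\le (c_q^{1/q}C)^{k}\,\Gamma(k(1-\theta)+1)^{-1/q}$ and $\sum_k\beta^{k}\|\hat Y_k\|_q<\infty$ uniformly in $a$ (only $c_q\le\int_{(0,b)}\ups^{q}\gl<\infty$ enters), which closes the argument. For $d=1$ one may alternatively bypass this and argue by plain $\bbL_2$-orthogonality of the chaoses, using $\int_{\bbR}\rho_s(y)^2\dd y=(4\pi s)^{-1/2}$ and $\int_{\mathfrak X^{k}}\prod(t_i-t_{i-1})^{-1/2}\dd\bt=\pi^{k/2}/\Gamma(\tfrac k2+1)$.

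The main obstacle is the $d\ge2$ part of the second paragraph. Since then $q<2$ is forced --- in $d\ge2$ the partition function genuinely fails to be in $\bbL_2$, the simplex integral $\int_{\mathfrak X^{k}}\prod(t_i-t_{i-1})^{-d/2}\dd\bt$ diverging --- one cannot use a clean orthogonality computation and must run a \emph{fractional}-moment argument; the delicate points are to set up the stochastic integration against $\hat\xi^{[a,b)}_{\go}$ rigorously (predictability of the integrands $g_k$ on the space-time filtration, justification of the compensation identity, and upgrading the relevant local martingales to true martingales via the a priori finiteness of Proposition~\ref{prop:contiZfini}). In practice this is conveniently organized by restricting the sum in~\eqref{altexp} to configurations $\sigma$ on which a suitable multi-body functional stays bounded, proving the moment bound for this truncated object --- where all the estimates above become deterministic a priori bounds --- and then removing the truncation, the negligibility of the discarded part being exactly where assumption~\eqref{hyposmall} is used in its sharp form. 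Essentially the same scheme, carried out for the kernel $\varrho(\bt,\bx)\rho_{t-t_k}(x-x_k)$ instead of $\varrho(\bt,\bx,f)$, also yields Proposition~\ref{th:superlem} and Proposition~\ref{lem:SHE}.
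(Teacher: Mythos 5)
Your argument is correct, but it takes a genuinely different route from the paper's. The paper proves uniform integrability via the criterion of Proposition~\ref{metaprop}: in $d\ge 2$ it restricts the sum \eqref{altexp} to the multi-body good event $\cB_q$, controls the discarded mass in $\bbL_1$ uniformly in $a$ (Proposition~\ref{lesper}, via Lemma~\ref{lem:tailzzz1}) and the restricted partition function in $\bbL_2$ (Proposition~\ref{llavar}, via Lemma~\ref{lem:tailzzz2}), using nothing beyond Mecke's formula and first/second moments. You instead truncate only the large jumps at a level $b$, and prove that $\cZ^{\go,[a,b)}_{\beta}$ is bounded in $\bbL_q$ uniformly in $a$ for some $q\in(\max(p,1),1+\tfrac2d)$, by resumming into chaoses against the compensated noise and iterating Burkholder--Davis--Gundy, subadditivity of $x\mapsto x^{q/2}$, and the compensation formula; this is precisely the fractional-moment technique of the SHE-with-L\'evy-noise literature, and in fact it is the alternative proof attributed to C.~Chong in Remark~\ref{chongsremark} (there the bound $\sup_{a}\bbE[(\cZ^{\go,[a,q)}_{\beta})^{p}]<\infty$ is imported from Saint Loubert Bi\'e, whereas you prove it directly, chaos by chaos). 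Your exponent bookkeeping is right ($\theta=d(q-1)/2<1$ exactly when $q<1+\tfrac2d$, and the Dirichlet/Gamma asymptotics close the series uniformly in $a$), and your passage from fixed $b$ to the full noise --- monotonicity in $b$ of the representation of Lemma~\ref{eazy}, identification of $\bbE[\cZ^{\go}_{\beta}]=e^{\gb\mu}$, then Scheff\'e and the conditional-expectation argument for uniform integrability --- is sound and plays the role of the paper's first-moment estimate \eqref{complea}. What each approach buys: yours gives the stronger by-product that the truncated partition function lies in some $\bbL_q$, $q>1$, at the price of setting up predictable stochastic integration and BDG for Poisson random measures; the paper's is elementary and self-contained (no stochastic calculus), transfers verbatim to the point-to-point partition function, and adapts directly to the other models of Section~\ref{sec:comments} and to the discrete proof in the companion paper.

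Two minor inaccuracies, neither fatal. First, your closing suggestion that the rigour of the BDG step is best handled by restricting to configurations where a multi-body functional is bounded conflates the two methods: under the restriction $\cB_q$ the paper works with plain first and second moments, not fractional ones, and conversely your scheme needs no configuration restriction at all --- BDG holds for local martingales, so the localisation issue you flag is harmless, and predictability of $g_k$ holds because it only involves the noise strictly before the outer time. Second, the final claim that the same scheme yields Proposition~\ref{lem:SHE} overreaches: that statement assumes only \eqref{hypolarge}, allows $\int_{[1,\infty)}\ups\,\gl(\dd\ups)=\infty$ (so all moments are infinite), and in the paper rests on the entropy--energy Lemma~\ref{entross}, which is not reachable by these moment bounds.
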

Since from Lemma \ref{lem:martingale}  we know that $(\cZ^{\go,a}_{\beta})_{a\in(0,1]}$ is a positive martingale ,
 it is sufficient to show that $(\cZ^{\go,a}_{\beta})_{a\in(0,1]}$ is uniformly integrable. 
 Our strategy consists in considering a sequence of approximation $(\hat \cZ_{\gb,q}^{\go,a})_{q\ge 1}$ of $\cZ^{\go,a}_{\beta}$,
 obtained by somehow restricting the partition function 
 to ``not-too-large'' weights.
 We choose our restriction so that two key properties are satisfied
 \begin{itemize}
  \item [(A)] For large $q$'s, $\hat \cZ_{\gb,q}^{\go,a}$ is a good approximation of $\cZ^{\go,a}_{\beta}$ in~$\bbL_1$, uniformly in $a$.
  \item [(B)] For any $q$, $(\hat \cZ_{\gb,q}^{\go,a})_{a\in(0,1]}$ is bounded in $\bbL_2$.
 \end{itemize}
 We refer to Section~\ref{sec:uniformcriterion} for 
 a more detailed description of this strategy, which is then implemented 
in Section~\ref{sec:dim1} in dimension $d=1$ and in Section~\ref{sec:contd>2} in dimension $d\geq 2$, where the restriction strategy is more subtle.

\medskip

\noindent Notice that from Lemma \ref{eazy} (in particular \eqref{altexp}),
we have  that 
$\cZ^{\go,a }_{\beta}(f)\le \|f\|_{\infty} \cZ^{\go,a }_{\beta}$ for any $f\in \cB$. Hence an immediate consequence of 
Proposition \ref{thm:continuous2} is the following.

\begin{cor}\label{toutlesf}
 If the measure $\gl$ satisfies $\int_{[1,\infty)} \ups \gl( \dd \ups)<\infty$ and \eqref{hyposmall}, then for every $f\in \cB$ the martingale 
 $(\cZ^{\go,a }_{\beta}(f))_{a\in(0,1]}$ is uniformly integrable, and
 the following convergence holds almost surely and in $\bbL_1$
 \begin{equation*}
 \lim_{a \to 0}\cZ^{\go,a }_{\beta}(f)=\cZ^{\go }_{ \beta}(f).
\end{equation*}
\end{cor}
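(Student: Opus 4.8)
The statement follows at once from Proposition~\ref{thm:continuous2} by a domination argument, so the plan is short. The first step I would carry out is to record the elementary pointwise bound $|\cZ^{\go,a}_{\beta}(f)|\le \|f\|_{\infty}\,\cZ^{\go,a}_{\beta}$ for every $a\in(0,1]$. By linearity of $f\mapsto\cZ^{\go,a}_{\beta}(f)$ (splitting $f=f^+-f^-$) it suffices to check $0\le\cZ^{\go,a}_{\beta}(f)\le\|f\|_{\infty}\cZ^{\go,a}_{\beta}$ when $0\le f\le\|f\|_{\infty}$. Here is the one place where one must use the \emph{positive} sum representation of Lemma~\ref{eazy} rather than the (signed) chaos expansion~\eqref{lafirstdeffpti}: from~\eqref{defvarro} one gets $0\le\varrho(\bt,\bx,f)\le\|f\|_{\infty}\varrho(\bt,\bx)$ because the conditional expectation $\bQ[f\mid B_{t_i}=x_i,\ i\le k]$ lies in $[0,\|f\|_{\infty}]$; this gives term by term $0\le w_{a,\beta}(\sigma,f)\le\|f\|_{\infty}w_{a,\beta}(\sigma)$, and summing over $\sigma\in\cP(\go)$ via~\eqref{altexpf}--\eqref{altexp} yields the bound.

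Next I would invoke Proposition~\ref{thm:continuous2}: the family $(\cZ^{\go,a}_{\beta})_{a\in(0,1]}$ is uniformly integrable, hence so is $(\|f\|_{\infty}\cZ^{\go,a}_{\beta})_{a\in(0,1]}$, and therefore so is $(\cZ^{\go,a}_{\beta}(f))_{a\in(0,1]}$, being dominated in absolute value by a uniformly integrable family. By Lemma~\ref{lem:martingale}, $(\cZ^{\go,a}_{\beta}(f))_{a\in(0,1]}$ is a time-reversed $\cF$-martingale, so a uniformly integrable reverse martingale; by the standard (reverse) martingale convergence theorem it converges almost surely and in $\bbL_1$ as $a\downarrow 0$, and we denote the limit by $\cZ^{\go}_{\beta}(f)$. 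Passing to the limit in the linearity identities and in $\cZ^{\go,a}_{\beta}(f)\ge0$ for $f\ge0$ shows that $f\mapsto\cZ^{\go}_{\beta}(f)$ is again linear and non-negative on $\cB$; taking $f\equiv 1$ recovers the convergence $\cZ^{\go,a}_{\beta}\to\cZ^{\go}_{\beta}$ of Proposition~\ref{thm:continuous2}.

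I do not expect any genuine obstacle here beyond Proposition~\ref{thm:continuous2} itself, which contains all the work; the only point needing a moment of care is the pointwise domination bound, which must be obtained from the positive representation~\eqref{altexp} of Lemma~\ref{eazy} (the chaos expansion~\eqref{lafirstdeffpti} is not manifestly positive and would not give a monotone comparison between $\cZ^{\go,a}_{\beta}(f)$ and $\cZ^{\go,a}_{\beta}$).
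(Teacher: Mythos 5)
Your proposal is correct and follows exactly the paper's route: the domination $|\cZ^{\go,a}_{\beta}(f)|\le \|f\|_{\infty}\,\cZ^{\go,a}_{\beta}$ obtained from the positive sum representation of Lemma~\ref{eazy}, combined with the uniform integrability of $(\cZ^{\go,a}_{\beta})_{a\in(0,1]}$ from Proposition~\ref{thm:continuous2} and reverse martingale convergence. The paper states this in one line before the corollary; your write-up just makes the same argument explicit.
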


\smallskip
\noindent
{\it Second step.} Our second task is to  remove the assumption $\int_{[1,\infty)} \ups \gl( \dd \ups)<\infty$ from Proposition~\ref{thm:continuous2} (and Corollary~\ref{toutlesf}), \textit{i.e.}\ to prove Proposition \ref{prop:Zfinite}.  This is done in Section~\ref{sec:generalnoiseproof}. Along the way we also prove Proposition \ref{lem:SHE} and the first part of Proposition \ref{th:superlem}, that is \eqref{lafinite}.
We then use Proposition~\ref{prop:Zfinite} to prove the following lemma (which corresponds to \eqref{convf}-\eqref{convparty}),  in Section \ref{sec:lemmaconv}.

\begin{lemma}\label{lemmaconv}
 Under the assumption \eqref{hyposmall}, for every $f\in \cB_b$
 the following convergence holds almost surely  and the limit is finite
 \begin{equation*}
 \lim_{a \to 0}\cZ^{\go,a }_{\beta}(f)=\cZ^{\go }_{ \beta}(f) .
\end{equation*}
 Furthermore if \eqref{hypolarge} also holds  then the statement is valid for $f\in \mathcal B$. In particular, we have
  \begin{equation}
   \label{eq:lemmaconv}
 \lim_{a \to 0}\cZ^{\go,a }_{\beta}=\cZ^{\go }_{ \beta}  <+\infty \, .
\end{equation}
\end{lemma}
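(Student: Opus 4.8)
The plan is to bootstrap from Proposition~\ref{thm:continuous2} and Corollary~\ref{toutlesf}, which already settle the statement for L\'evy measures satisfying $\int_{[1,\infty)}\ups\,\gl(\dd\ups)<\infty$, to the general case, using the two-sided truncation $\xi^{[a,b)}_\go$ together with Proposition~\ref{prop:Zfinite}.

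\textbf{Step 1 ($f\in\cB_b$).} Fix $f\in\cB_b$ supported on $\{\|\varphi\|_\infty\le M\}$, so that $\varrho(\bt,\bx,f)=0$ unless every $x_i\in[-M,M]^d$. Almost surely $\go$ has only finitely many atoms with time coordinate in $[0,1]$ and space coordinate in $[-M,M]^d$; let $b_0(\go,M)$ exceed all their intensities. Then for every integer $b\ge b_0\vee 1$ the measures $\xi^{(a)}_\go$ and $\xi^{[a,b)}_\go$ agree on $[0,1]\times[-M,M]^d$, whence $\cZ^{\go,a}_\beta(f)=\cZ^{[a,b)}_\beta(f)$ for all $a\in(0,1]$. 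Since $\gl^{(0,b)}:=\ind_{\{\ups<b\}}\gl$ satisfies $\int_{[1,\infty)}\ups\,\gl^{(0,b)}(\dd\ups)\le b\,\gl([1,b))<\infty$ and still satisfies \eqref{hyposmall}, Corollary~\ref{toutlesf} applied with intensity $\gl^{(0,b)}$ gives the a.s.\ convergence of $\cZ^{[a,b)}_\beta(f)$ as $a\to0$; alternatively one only needs that $(\cZ^{[a,b)}_\beta(f))_{a\in(0,b]}$ is a martingale (Lemma~\ref{lem:martingale}) bounded in $\bbL_1$ by $\|f\|_\infty e^{\beta\mu_b}$ (by \eqref{lamine}), where $\mu_b:=\int_{[1,b)}\ups\,\gl(\dd\ups)$. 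Intersecting over $b\in\bbN$ and using the a.s.\ finite coincidence level, $\cZ^{\go,a}_\beta(f)$ converges a.s.\ to a finite limit, which we call $\cZ^\go_\beta(f)$.

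\textbf{Step 2 ($f\in\cB$ under \eqref{hypolarge}, the crux being $f\equiv1$).} Writing $f_n:=f\,\ind_{\{\|\varphi\|_\infty\le n\}}\in\cB_b$ and using $|\cZ^{\go,a}_\beta(f)-\cZ^{\go,a}_\beta(f_n)|\le\|f\|_\infty\,\cZ^{\go,a}_\beta(\ind_{\{\|\varphi\|_\infty>n\}})$ (Lemma~\ref{eazy}), the whole statement — in particular \eqref{eq:lemmaconv} — reduces, together with Step~1, to the single claim
\begin{equation*}
\lim_{n\to\infty}\ \sup_{a\in(0,1]}\ \cZ^{\go,a}_\beta\big(\ind_{\{\|\varphi\|_\infty>n\}}\big)=0 \qquad\text{a.s.};
\end{equation*}
indeed, by linearity this sandwiches $\cZ^{\go,a}_\beta$ between $\cZ^{\go,a}_\beta(f_n)$ and $\cZ^{\go,a}_\beta(f_n)+\sup_a\cZ^{\go,a}_\beta(\ind_{\{\|\varphi\|_\infty>n\}})$, so letting $a\to0$ then $n\to\infty$ identifies the limit as $\cZ^\go_\beta:=\lim_n\cZ^\go_\beta(\ind_{\{\|\varphi\|_\infty\le n\}})$ and forces it to be finite. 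To prove the claim I would use the sum representation of Lemma~\ref{eazy} and split the configurations $\sigma\in\cP(\go)$ contributing to $\cZ^{\go,a}_\beta(\ind_{\{\|\varphi\|_\infty>n\}})$ into: (i) those all of whose atoms lie within distance $n/2$ of the origin, whose contribution carries a Brownian-bridge factor $\bQ(\|B\|_\infty>n\mid B_{t_i}=x_i,\,i\le k)\le e^{-cn^2}$ and is thus bounded by $e^{-cn^2}$ times a partition function whose atoms are restricted to a bounded spatial box; and (ii) those visiting some atom at distance $\ge n/2$, which I would decompose according to the \emph{far} atoms visited, the weight between consecutive such atoms being exactly a small-atom point-to-point partition function $\cZ^{[a,b)}_\beta[(s,y),(t,z)]$ (with $b>1$ fixed), which is an $\bbL_1$-bounded martingale in $a$ of conditional mean $e^{\beta\mu_b(t-s)}\rho_{t-s}(z-y)$.

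\textbf{Uniformity in $a$ and the main obstacle.} The key point, and the place where the argument genuinely re-uses the machinery of Proposition~\ref{prop:Zfinite}, is that both bounds above must be \emph{uniform in the truncation level $a\in(0,1]$}. For (i) this holds because the box-restricted partition function built from the small-atom noise $\xi^{[a,b)}_\go$ is a nonnegative $\bbL_1$-bounded martingale in $a$ (its mean $e^{\beta\mu_b}$ being independent of the box), so $\sup_{a\in(0,1]}$ is a.s.\ finite by Doob's maximal inequality, while the finitely many large atoms inside the box constitute a finite modification controlled by Proposition~\ref{prop:Zfinite}. For (ii), after the far-atom decomposition one replaces each small-atom point-to-point factor by a deterministic multiple of its mean via a Borel--Cantelli / union-bound estimate — using a.s.\ that the largest atom at spatial distance $\sim R$ over $t\in[0,1]$ is $\le e^{\delta R^2}$ for large $R$ (as in the Remark following Proposition~\ref{prop:toinfinity}) against an entropic cost $e^{-cR^2}$ with $c>\delta$ — and the resulting deterministic sum over far atoms, governed by $\gl$ through Mecke's equation (Proposition~\ref{OKLMecke}), is exactly the quantity shown to be a.s.\ finite under \eqref{hypolarge} in the proof of Proposition~\ref{prop:Zfinite}; its tail vanishes as $n\to\infty$. \textbf{The main obstacle is precisely this upgrade of the entropy–energy estimates behind Proposition~\ref{prop:Zfinite} to hold simultaneously for all truncation levels $a$}; once it is in place, the sandwich above, together with Lemma~\ref{eazy} and the already-treated case $f\equiv1$, yields the convergence for every $f\in\cB$ and in particular \eqref{eq:lemmaconv}.
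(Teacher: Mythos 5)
Your Step 1 is exactly the paper's argument: truncate the noise at level $b$, note that for $f\in\cB_b$ the two partition functions $\cZ^{\go,a}_{\beta}(f)$ and $\cZ^{\go,[a,b)}_{\beta}(f)$ coincide for all $a$ once $b$ exceeds the (a.s.\ finitely many) atom intensities over the relevant bounded space-time box, and invoke Corollary~\ref{toutlesf} for the intensity $\gl_b$. Nothing to add there.

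For Step 2 your reduction is sound: by additivity of $f\mapsto\cZ^{\go,a}_\beta(f)$ (Lemma~\ref{eazy}), everything does follow from $\lim_{n\to\infty}\sup_{a\in(0,1]}\cZ^{\go,a}_\beta(\ind_{\{\|\varphi\|_\infty>n\}})=0$ a.s.; this is a spatial-tail truncation, where the paper instead truncates in the intensity variable and proves $\lim_{b\to\infty}\sup_{a\in(0,1]}(\cZ^{\go,a}_\beta-\cZ^{\go,[a,b)}_\beta)=0$ a.s. Both decompositions can be made to work. The genuine gap is in how you propose to prove your uniform estimate. You correctly identify that the entropy--energy machinery behind Proposition~\ref{prop:Zfinite} does not, as written, give bounds uniform in $a$ (the expectation bound \eqref{lastfinite} carries a factor $e^{\gb[2^{d/2}\gl([a,\infty))-\kappa_a]}$ that blows up as $a\to0$ when $\int_{(0,1)}\ups\,\gl(\dd\ups)=\infty$), but you then leave this ``main obstacle'' unresolved and sketch a heavy near/far-atom path decomposition that is not carried out. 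The point you are missing is that no upgrade of the entropy--energy estimates is needed: the uniformity in $a$ comes for free from the (reversed) supermartingale structure. For fixed $b'>b\ge1$ the process $\gep\wedge(\cZ^{\go,[a,b')}_\beta-\cZ^{\go,[a,b)}_\beta)$, indexed by $a$, is a positive reversed supermartingale (Lemma~\ref{lem:martingale}), so Doob's maximal inequality gives
\begin{equation*}
\bbP\Big(\sup_{a\in(0,1]}\big(\cZ^{\go,[a,b')}_\beta-\cZ^{\go,[a,b)}_\beta\big)>\gep\Big)\le\frac1\gep\,\bbE\Big[\gep\wedge\big(\cZ^{\go,[1,b')}_\beta-\cZ^{\go,[1,b)}_\beta\big)\Big]\,,
\end{equation*}
and letting $b'\to\infty$ (monotone convergence) reduces the whole uniform-in-$a$ control to the \emph{single} level $a=1$, where Proposition~\ref{prop:Zfinite} gives $\cZ^{\go,1}_\beta<\infty$ and dominated convergence finishes the job as $b\to\infty$. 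The identical device (applied to $\gep\wedge\cZ^{\go,[a,b)}_\beta(\ind_{\{\|\varphi\|_\infty>n\}})$, then $b\to\infty$, then dominated convergence in $n$ using $\cZ^{\go,1}_\beta<\infty$) would also rescue your spatial-tail claim --- this is precisely how the paper handles the analogous statements in the proofs of Proposition~\ref{proptight} and of the last part of Proposition~\ref{prop:tozero}. As submitted, however, your argument for the crucial uniform estimate is incomplete.
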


\smallskip
\noindent
{\it Third step.} Our third task, which is crucial for the convergence of $\bQ^{\go,a}_{\beta}$, is to ensure that the limiting partition function is positive (let us record the statement as a proposition).
 This is done in Section~\ref{sec:positivity}.

\begin{proposition}\label{lapositivity}
 If $\gl$ satisfies \eqref{hyposmall}, then for any non-negative $f\in \cB_b$ with $\bQ(f)>0$,
 we have almost surely 
 \begin{equation}
 \cZ^{\go }_{\beta}(f)>0.
 \end{equation}
As a consequence, if $\gl$ satisfies  \eqref{hypolarge}-\eqref{hyposmall} then we have almost surely
\begin{equation}
 \label{eq:lapositivity}
 \cZ^{\go }_{\beta} \in (0,\infty)\, .
\end{equation}
\end{proposition}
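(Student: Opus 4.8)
The plan is to deduce strict positivity of the limit from the a.s.\ convergence already established, ruling out degeneration to $0$ by a zero--one law. Since by Lemma~\ref{lemmaconv} the limit $\cZ^{\go}_{\beta}(f)=\lim_{a\to0}\cZ^{\go,a}_{\beta}(f)$ already exists a.s.\ and is finite, only its positivity is at stake. First I would reduce to the case $\mu:=\int_{[1,\infty)}\ups\gl(\dd\ups)<\infty$: for $b>1$ the measure $\gl^{(0,b)}:=\ind_{\{\ups<b\}}\gl$ still satisfies~\eqref{hyposmall} and has $\int_{[1,\infty)}\ups\gl^{(0,b)}(\dd\ups)<\infty$, while comparing the weights in Lemma~\ref{eazy} gives $\cZ^{\go,[a,b)}_{\beta}(f)\le\cZ^{\go,a}_{\beta}(f)$ for $f\ge0$ (the truncated-at-$b$ partition function keeps the same kernels but sums over fewer configurations). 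Letting $a\to0$ and applying Lemma~\ref{lemmaconv} to $\gl^{(0,b)}$ yields $\cZ^{\go}_{\beta}(f)\ge\cZ^{\go,[0,b)}_{\beta}(f)$, so it suffices to prove positivity when $\mu<\infty$.

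\emph{Positivity with positive probability.} When $\mu<\infty$, Lemma~\ref{lem:martingale} and Corollary~\ref{toutlesf} give the $\bbL_1$-convergence $\cZ^{\go,a}_{\beta}(f)\to\cZ^{\go}_{\beta}(f)$, hence $\bbE[\cZ^{\go}_{\beta}(f)]=e^{\beta\mu}\bQ(f)>0$ and therefore $\bbP(\cZ^{\go}_{\beta}(f)>0)>0$. (When additionally $\int_{(0,1)}\ups\gl(\dd\ups)=\infty$ one may instead use the $\bbL_2$-bounded approximations $\hat\cZ^{\go,a}_{\beta,q}(f)$ built in the proof of Proposition~\ref{thm:continuous2}: by Paley--Zygmund, $\bbP(\hat\cZ^{\go}_{\beta,q}(f)>0)\ge\bbE[\hat\cZ^{\go}_{\beta,q}(f)]^2/\bbE[\hat\cZ^{\go}_{\beta,q}(f)^2]>0$ for $q$ large, and $\cZ^{\go}_{\beta}(f)\ge\hat\cZ^{\go}_{\beta,q}(f)$.)

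\emph{Upgrading to probability one.} The heart of the argument is a zero--one law: I would show that $\{\cZ^{\go}_{\beta}(f)>0\}$ does not depend, modulo $\bbP$-null sets, on the restriction of $\go$ to a time-slab $[0,\gep]\times\bbR^d\times\bbR_+$. Decomposing the chaos expansion~\eqref{lafirstdeffpti} at time $\gep$ via the Markov property of the Brownian bridge, one writes (first for $f$ depending only on the path after time $\gep$, the general case following by a conditioning/approximation step) $\cZ^{\go,a}_{\beta}(f)=\int_{\bbR^d}\cZ^{\go,a}_{\beta}(\gep,y)\,G_a(y,\go)\,\dd y$, where $G_a(y,\cdot)\ge0$ depends only on the atoms of $\go$ with time coordinate in $[\gep,1]$. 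Passing to the limit $a\to0$ — using the $\bbL_1(\bbR^d)$-convergence of point-to-point partition functions (as in Proposition~\ref{twomarg}) together with the point-to-point positivity~\eqref{lapazero}, i.e.\ $\cZ^{\go}_{\beta}(\gep,\cdot)>0$ Lebesgue-a.e.\ a.s.\ (established by the same scheme) — one gets $\cZ^{\go}_{\beta}(f)=\int_{\bbR^d}\cZ^{\go}_{\beta}(\gep,y)\,G(y,\go)\,\dd y$, so that $\{\cZ^{\go}_{\beta}(f)>0\}=\{\,\mathrm{Leb}(\{y:G(y,\go)>0\})>0\,\}$ is measurable with respect to the atoms of $\go$ with time in $[\gep,1]$. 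As this holds for every $\gep\in(0,1)$, the event lies in the germ $\sigma$-field of $\go$ at time $1$, which is trivial by Kolmogorov's zero--one law; combined with the previous paragraph, $\bbP(\cZ^{\go}_{\beta}(f)>0)=1$. Finally, applying this to $f=\ind_{\{\|\varphi\|_{\infty}\le M\}}\in\cB_b$ and letting $M\to\infty$ (monotone convergence, cf.~\eqref{def:Zmonotone}), together with the finiteness~\eqref{eq:lemmaconv} valid under~\eqref{hypolarge}-\eqref{hyposmall}, gives $\cZ^{\go}_{\beta}\in(0,\infty)$ a.s.

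\emph{Main obstacle.} The delicate part is the third paragraph. The point-to-point positivity~\eqref{lapazero} in the limit $a\to0$ and the zero--one law are intertwined (the zero--one law for $\cZ^{\go}_{\beta}(f)$ uses the a.e.-positivity of $\cZ^{\go}_{\beta}(\gep,\cdot)$, whose proof is of the same nature) and must be carried out together; moreover this is exactly where~\eqref{hyposmall} is used in an essential way rather than merely for convergence, since the naive lower bound $\cZ^{\go,a}_{\beta}(t,x)\ge e^{-\beta\kappa_a}\rho_t(x)$ degenerates as $a\to0$ and one is forced to use the $\bbL_2$-bounded truncations from the proof of Proposition~\ref{thm:continuous2}. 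One must also justify passing to the limit inside the spatial integral in the time-decomposition and handle a general $f\in\cB_b$, which depends on the whole trajectory and not only on its values after time $\gep$.
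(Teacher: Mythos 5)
Your first two paragraphs are fine (the reduction via truncation of large jumps and the positive-probability step via the mean, or Paley--Zygmund with the $\bbL_2$-bounded truncations, are both sound), but the third paragraph, which you yourself identify as the heart of the matter, contains a genuine gap rather than a technical loose end. The zero--one law you propose runs in the \emph{time} direction and takes as input the almost sure Lebesgue-a.e.\ positivity of the point-to-point partition function $\cZ^{\go}_{\beta}(\gep,\cdot)$ in the limit $a\to 0$, i.e.\ precisely the statement \eqref{lapazero}. But in the logical structure of the paper this is not available: \eqref{lapazero} is \emph{deduced from} Proposition~\ref{lapositivity} (the proof of the second part of Proposition~\ref{th:superlem} consists in rerunning the scheme of Propositions~\ref{thm:continuous2} and~\ref{lapositivity} for the point-to-point object), and your sketch of how to establish it ``by the same scheme'' is the same time-decomposition argument, which at time $\gep$ would again require positivity of point-to-point partition functions on $[0,\gep]$. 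You acknowledge the circularity but do not break it, so the argument as written does not close. A second unresolved point is the reduction of a general $f\in\cB_b$, which depends on the whole trajectory, to the case where $f$ depends only on the path after time $\gep$: for such $f$ the factorization $\cZ^{\go,a}_{\beta}(f)=\int_{\bbR^d}\cZ^{\go,a}_{\beta}(\gep,y)\,G_a(y,\go)\,\dd y$ simply fails (the conditional expectation defining $\varrho(\bt,\bx,f)$ couples the atoms before and after $\gep$ through $f$), and the ``conditioning/approximation step'' is not spelled out.

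For comparison, the paper's proof avoids both issues by running the zero--one law in the \emph{jump-size} variable instead of time. One shows that the events $\{\cZ^{\go,[0,b)}_{\beta}(f)>0\}$ coincide modulo null sets for all $b\in(0,1]$: one inclusion follows from the monotonicity $\cZ^{\go,[a,b)}_{\beta}(f)\ge e^{-\gb(\kappa_{b'}-\kappa_b)}\cZ^{\go,[a,b')}_{\beta}(f)$ for $b'\le b$ (cf.\ \eqref{eq:ZaZab}), the other from the fact that $(\cZ^{\go,[0,b)}_{\beta}(f))_b$ is a martingale in $b$ for the filtration $\cG_b=\sigma(\{(t,x,\ups)\in\go:\ups<b\})$, so that $\cZ^{\go,[0,b')}_{\beta}(f)=\bbE[\cZ^{\go,[0,b)}_{\beta}(f)\,|\,\cG_{b'}]$ forces the reverse inclusion. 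Hence the event lies in $\cG_0=\bigcap_n\cG_{b_n}$ for any sequence $b_n\downarrow 0$, and this germ field of small marks is trivial by Kolmogorov's zero--one law (it is independent of each $\cF_{b_n}=\sigma(\{\ups\ge b_n\})$, and these generate everything); since $\bbE[\cZ^{\go,[0,1)}_{\beta}(f)]=\bQ(f)>0$, the probability must be one, and $\cZ^{\go}_{\beta}(f)\ge e^{-\gb\kappa_b}\cZ^{\go,[0,b)}_{\beta}(f)$ concludes. No point-to-point positivity, no Markov decomposition in time, and no restriction on how $f$ depends on the path are needed. If you want to salvage your time-direction approach, you would have to first prove \eqref{lapazero} independently (e.g.\ by exactly this mark-variable argument applied to $f$ replaced by the terminal heat kernel), at which point the time decomposition becomes redundant.
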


\smallskip
\noindent
{\it Fourth step.}
 Finally we complete the proof of Theorem \ref{thm:zalpha} by proving the convergence of~$\bQ^{\go,a}_{\beta}$. Note that  Lemma~\ref{lemmaconv} and Proposition \ref{lapositivity} imply for any given  $f\in \cB_b$ the almost sure convergence of  $\bQ^{\go,a}_{\beta}(f)$. Hence we only need to prove tightness.

\begin{proposition}\label{proptight}
 If  $\gl$ satisfies  \eqref{hypolarge}-\eqref{hyposmall},
 then for almost every $\go$,  the family of measures $(\bQ^{\go,a}_{\beta})_{a\in(0,1]}$ is tight
 with respect to the topology of weak convergence on $\cM_T$,
the set of probability measures on $C_0([0,1])$.
\end{proposition}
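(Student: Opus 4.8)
The plan is to control, uniformly in $a\in(0,1]$, the mass that $\bQ^{\go,a}_{\beta}$ places on irregular trajectories, exploiting that $\bQ^{\go,a}_{\beta}$ is the Wiener measure $\bQ$ tilted by the nonnegative density $\cZ^{\go,a}_{\beta}(\cdot)/\cZ^{\go,a}_{\beta}$. For $\varphi\in C_0([0,1])$ write $\varpi_\varphi(\delta):=\sup_{|t-s|\leq\delta}\|\varphi(t)-\varphi(s)\|$ for its modulus of continuity. By the Arzel\`a--Ascoli characterisation of compact subsets of $C_0([0,1])$ --- and since every $\varphi$ satisfies $\varphi(0)=0$, so that no separate control of the starting point is needed --- any set $K=\{\varphi\colon \varpi_\varphi(1/j)\leq\eta_j \text{ for all }j\geq1\}$ with $\eta_j\to0$ is compact, and by Prokhorov's theorem it suffices to show that for almost every $\go$ and every $\varepsilon>0$ there is such a $K$ with $\sup_{a\in(0,1]}\bQ^{\go,a}_{\beta}(K^c)<\varepsilon$. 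Since $\bQ^{\go,a}_{\beta}(K^c)=\cZ^{\go,a}_{\beta}(\ind_{K^c})/\cZ^{\go,a}_{\beta}$, this splits into: (a) a uniform lower bound $c(\go):=\inf_{a\in(0,1]}\cZ^{\go,a}_{\beta}>0$ almost surely, and (b) an upper bound on $\sup_{a\in(0,1]}\cZ^{\go,a}_{\beta}(\ind_{K^c})$ that can be made arbitrarily small by enlarging $K$. Task (a) is immediate: by \eqref{altexp} all summands are nonnegative, so $\cZ^{\go,a}_{\beta}\geq w_{a,\beta}(\emptyset)=e^{-\beta\kappa_a}$, and since $\kappa_a$ is nonincreasing in $a$ this gives $\inf_{a\in[a_0,1]}\cZ^{\go,a}_{\beta}\geq e^{-\beta\kappa_{a_0}}>0$ for every $a_0>0$; on the other hand Lemma~\ref{lemmaconv} and Proposition~\ref{lapositivity} give $\cZ^{\go,a}_{\beta}\to\cZ^{\go}_{\beta}\in(0,\infty)$ almost surely, so almost surely $\inf_{a\in(0,a_0)}\cZ^{\go,a}_{\beta}\geq\cZ^{\go}_{\beta}/2>0$ for $a_0=a_0(\go)$ small enough. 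Combining the two bounds yields $c(\go)>0$.

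For task (b), suppose first the additional integrability $\mu:=\int_{[1,\infty)}\ups\,\gl(\dd\ups)<\infty$. Then $\ind_{K^c}\in\cB$, so by Lemma~\ref{lem:martingale} the process $(\cZ^{\go,a}_{\beta}(\ind_{K^c}))_{a\in(0,1]}$ is a nonnegative time-reversed martingale of constant mean $e^{\beta\mu}\bQ(K^c)$. Reparametrising $a=e^{-t}$ and applying Doob's maximal inequality, we get for every $\lambda>0$
\[
\bbP\Big(\sup_{a\in(0,1]}\cZ^{\go,a}_{\beta}(\ind_{K^c})>\lambda\Big)\leq \frac{e^{\beta\mu}\,\bQ(K^c)}{\lambda}\,.
\]
Using the standard small-time estimate for the Brownian modulus of continuity (which lets one make $\bQ(\varpi_B(1/j)>\eta_j)$ as small as desired while keeping $\eta_j\to0$), one can, for each $m\geq1$, choose a compact $K_m$ of the form above with $\bQ(K_m^c)\leq e^{-2\beta\mu}\,4^{-m}$. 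Taking $\lambda=\bQ(K_m^c)^{1/2}$ makes the right-hand side of the above at most $2^{-m}$, hence summable in $m$; by Borel--Cantelli, almost surely $\sup_{a\in(0,1]}\cZ^{\go,a}_{\beta}(\ind_{K_m^c})\leq \bQ(K_m^c)^{1/2}$ for all $m$ large, so that $\sup_{a\in(0,1]}\bQ^{\go,a}_{\beta}(K_m^c)\leq \bQ(K_m^c)^{1/2}/c(\go)\to0$ as $m\to\infty$. Together with task (a), this proves the proposition (via Prokhorov) under the extra hypothesis $\mu<\infty$.

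It remains to drop the hypothesis $\int_{[1,\infty)}\ups\,\gl(\dd\ups)<\infty$, keeping only \eqref{hypolarge}. As in the second step of the proof of Theorem~\ref{thm:zalpha} (see Section~\ref{sec:generalnoiseproof}), one truncates the large jumps: replacing $\gl$ by $\gl\ind_{[0,b)}$ one has $\int_{[1,b)}\ups\,\gl(\dd\ups)<\infty$, so the previous step controls the doubly truncated partition functions $\cZ^{[a,b)}_{\beta}(\cdot)$, and one lets $b\to\infty$ using the monotone convergences $\cZ^{[a,b)}_{\beta}\uparrow\cZ^{\go,a}_{\beta}$ and $\cZ^{[a,b)}_{\beta}(\ind_{K^c})\uparrow\cZ^{\go,a}_{\beta}(\ind_{K^c})$, whose limits are finite by Proposition~\ref{prop:Zfinite}. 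I expect this passage to be the main obstacle: the mean of the truncated martingale is $e^{\beta\mu_b}\bQ(K^c)$ with $\mu_b:=\int_{[1,b)}\ups\,\gl(\dd\ups)\to\infty$, so the naive bound from the previous paragraph degrades as $b\to\infty$, and one must instead show that the part of $\cZ^{\go,a}_{\beta}(\ind_{K^c})$ coming from trajectories that visit some very large atom is small uniformly in $a$. This is exactly where \eqref{hypolarge} enters: by the entropy--energy comparison in the remark following Proposition~\ref{prop:toinfinity}, beyond any given height only finitely many atoms are entropically reachable, so the large-jump part can be absorbed at arbitrarily small cost. Granting this uniform control, Prokhorov's theorem concludes the proof.
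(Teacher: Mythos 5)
Your treatment of the case $\int_{[1,\infty)}\ups\,\gl(\dd\ups)<\infty$ is correct and is in substance the paper's argument: Doob's maximal inequality for the nonnegative reversed martingale $(\cZ^{\go,a}_{\beta}(\ind_{K^c}))_{a\in(0,1]}$ of constant mean $e^{\gb\mu}\bQ(K^c)$, a sequence of compacts with $\bQ(K_m^c)$ summably small, and Borel--Cantelli (the paper phrases this with an increasing family of H\"older-ball compacts $\cK_N$ and the monotonicity of $f\mapsto\cZ^{\go,a}_{\beta}(f)$, but the mechanism is identical). Your separate lower bound $\inf_{a\in(0,1]}\cZ^{\go,a}_{\beta}>0$, obtained from $\cZ^{\go,a}_{\beta}\ge e^{-\gb\kappa_{a_0}}$ on $[a_0,1]$ together with the a.s.\ positive limit as $a\to0$, is also fine and is slightly more careful than the paper's one-line appeal to ``converges to a positive limit''.

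The gap is where you flag it. In the case $\int_{[1,\infty)}\ups\,\gl(\dd\ups)=\infty$ you correctly reduce the problem to showing that $\sup_{a\in(0,1]}\bigl(\cZ^{\go,a}_{\beta}(\ind_{K^c})-\cZ^{\go,[a,b)}_{\beta}(\ind_{K^c})\bigr)$ tends to $0$ as $b\to\infty$, but then only assert this via an entropy--energy heuristic and write ``granting this uniform control''; as it stands that step is not proved. The point you are missing is that the required estimate is exactly \eqref{labellobis}, which is already established in Section~\ref{sec:lemmaconv} in the course of proving Lemma~\ref{lemmaconv}: apply Doob's inequality to the capped nonnegative supermartingale $\bigl(\gep\wedge(\cZ^{\go,[a,b')}_{\beta}-\cZ^{\go,[a,b)}_{\beta})\bigr)_{a\in(0,1]}$, let $b'\to\infty$ by monotone convergence, and use $\cZ^{\go,1}_{\beta}<\infty$ (Proposition~\ref{prop:Zfinite}, which is indeed where \eqref{hypolarge} and the entropy--energy Lemma~\ref{entross} enter) to conclude by dominated convergence. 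Since all weights in the representation \eqref{altexpf} are nonnegative and $\ind_{K^c}\le 1$, one has the pointwise domination
\[
0\;\le\;\cZ^{\go,a}_{\beta}(\ind_{K^c})-\cZ^{\go,[a,b)}_{\beta}(\ind_{K^c})\;\le\;\cZ^{\go,a}_{\beta}-\cZ^{\go,[a,b)}_{\beta}\,,
\]
uniformly over all sets $K$ and all $a\in(0,1]$, so \eqref{labellobis} closes the argument in one line. No new entropy--energy input is needed beyond what already went into Proposition~\ref{prop:Zfinite}; replacing your heuristic paragraph by this citation makes your proof complete and coincident with the paper's.
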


\noindent The reader can then check that combining all the statements above yields the complete proof of Theorem~\ref{thm:zalpha}.

\medskip

Let us finally comment on how the proof of the second part of Proposition \ref{th:superlem} (the convergence \eqref{lapazero} to a positive limit) is completed.
We simply need to show that Proposition~\ref{thm:continuous2} and Proposition \ref{lapositivity} (that is Equation \eqref{eq:lapositivity})
remain valid for the point-to-point partition function.
Since the proofs are nearly identical we will point at the end of the various proofs which modifications are required, when there are any.

 \subsection{A uniform integrability criterion}
 \label{sec:uniformcriterion}
As outlined above, our proof of uniform integrability is going to rely on a second moment computations. This requires to overcome some subtleties since the second moment of $\cZ^{\go,a}_{\beta}$  might be infinite for every~$a>0$ (this is for instance the case when $d\ge 2$).
 We follow an approach similar to the one used in \cite{ber2017} for the proof of the convergence of Gaussian Multiplicative Chaos. We look for a family of restrictions of the partition functions which is bounded in~$\bbL_2$ but does not produce any loss of mass at infinity. 
 Let us summarize our approach in the form of a proposition.

 \begin{proposition}\label{metaprop}
 Consider $(X_a)_{a\in (0,1]}$ a collection of positive random variables.
 Assume that there exists  
 $X^{(q)}_a$ a sequence of approximation of $X_a$, indexed by $q\ge 1$, which satisfies:
\begin{align*}
\mathrm{(A)} \qquad  &  \lim_{q\to \infty}  \sup_{a\in(0,1]} \bbE\big[ |X^{(q)}_a-X_a|\big]=0\,  ; \\
 \mathrm{(B)}  \qquad  & \sup_{a\in(0,1]}   \bbE\big[ (X^{(q)}_a)^2\big]<\infty \quad \text{ for every $q\geq 1$}.
\end{align*}
Then $(X_a)_{a\in(0,1]}$ is uniformly integrable.
 \end{proposition}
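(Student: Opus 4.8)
The plan is to prove this general uniform integrability criterion directly from the definition of uniform integrability, using the two hypotheses $\mathrm{(A)}$ and $\mathrm{(B)}$ in tandem. Recall that $(X_a)_{a\in(0,1]}$ is uniformly integrable if and only if $\lim_{K\to\infty}\sup_{a\in(0,1]}\bbE[X_a\ind_{\{X_a>K\}}]=0$. Fix $\gep>0$. By hypothesis $\mathrm{(A)}$, choose $q$ large enough that $\sup_{a\in(0,1]}\bbE[|X_a^{(q)}-X_a|]\leq \gep/3$. With this $q$ now fixed, hypothesis $\mathrm{(B)}$ provides a constant $C_q:=\sup_{a\in(0,1]}\bbE[(X_a^{(q)})^2]<\infty$.

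The key estimate is to split $X_a\ind_{\{X_a>K\}}$ using the approximation. First I would bound
\[
\bbE[X_a\ind_{\{X_a>K\}}]\leq \bbE[|X_a-X_a^{(q)}|\ind_{\{X_a>K\}}]+\bbE[X_a^{(q)}\ind_{\{X_a>K\}}]\leq \frac{\gep}{3}+\bbE[X_a^{(q)}\ind_{\{X_a>K\}}]\, .
\]
For the remaining term, the event $\{X_a>K\}$ is contained in $\{X_a^{(q)}>K/2\}\cup\{|X_a-X_a^{(q)}|>K/2\}$, so that
\[
\bbE[X_a^{(q)}\ind_{\{X_a>K\}}]\leq \bbE[X_a^{(q)}\ind_{\{X_a^{(q)}>K/2\}}]+\bbE[X_a^{(q)}\ind_{\{|X_a-X_a^{(q)}|>K/2\}}]\, .
\]
The first summand is controlled by Cauchy--Schwarz and Markov: $\bbE[X_a^{(q)}\ind_{\{X_a^{(q)}>K/2\}}]\leq \bbE[(X_a^{(q)})^2]^{1/2}\, \bbP(X_a^{(q)}>K/2)^{1/2}\leq C_q^{1/2}\cdot (2C_q/K)^{1/2}$, which is at most $\gep/3$ once $K$ is large (depending only on $q$, hence on $\gep$). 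The second summand is more delicate since $X_a^{(q)}$ need not be in $\bbL_2$ uniformly on the bad event; but here one can again use Cauchy--Schwarz to write $\bbE[X_a^{(q)}\ind_{\{|X_a-X_a^{(q)}|>K/2\}}]\leq C_q^{1/2}\,\bbP(|X_a-X_a^{(q)}|>K/2)^{1/2}\leq C_q^{1/2}\,(2\gep/(3K))^{1/2}$ using Markov's inequality together with hypothesis $\mathrm{(A)}$, which is again $\leq \gep/3$ for $K$ large enough depending only on $\gep$ (note $q$ was fixed in terms of $\gep$). Combining the three bounds gives $\sup_{a\in(0,1]}\bbE[X_a\ind_{\{X_a>K\}}]\leq \gep$ for all $K$ large, which is the desired uniform integrability.

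The main point requiring care — and the only genuine obstacle — is the handling of the cross term $\bbE[X_a^{(q)}\ind_{\{|X_a-X_a^{(q)}|>K/2\}}]$: one must not accidentally demand a uniform $\bbL_2$ bound on $X_a$ itself (which may fail), so the argument must keep $X_a^{(q)}$ under the $\bbL_2$ control from $\mathrm{(B)}$ while pushing all the ``smallness'' onto the probability of the rare event via $\mathrm{(A)}$ and Markov. Everything else is a routine three-$\gep$ argument. An alternative, essentially equivalent route would be to verify the de la Vallée-Poussin-type characterization or to note that $\mathrm{(B)}$ makes each family $(X_a^{(q)})_a$ uniformly integrable and $\mathrm{(A)}$ says $X_a$ is an $\bbL_1$-limit (uniform in $a$) of such families, and uniform integrability is preserved under uniform $\bbL_1$ approximation; I would present whichever is shortest, but the direct splitting above is self-contained and avoids invoking additional lemmas.
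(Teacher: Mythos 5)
Your proof is correct and follows essentially the same route as the paper's: approximate $X_a$ by $X^{(q)}_a$ in $\bbL_1$ via $\mathrm{(A)}$, then control the tail contribution of $X^{(q)}_a$ by Cauchy--Schwarz with the uniform second-moment bound $\mathrm{(B)}$ together with Markov's inequality. The only difference is cosmetic — you split the event $\{X_a>K\}$ into $\{X^{(q)}_a>K/2\}\cup\{|X_a-X^{(q)}_a|>K/2\}$, whereas the paper bounds $\bbP(X_a>M)$ directly using the uniform $\bbL_1$ bound on $(X_a)$ implied by $\mathrm{(A)}$--$\mathrm{(B)}$; the slight constant slip in your estimate $\bbP(X^{(q)}_a>K/2)^{1/2}\le (2C_q/K)^{1/2}$ is immaterial, since the term still vanishes as $K\to\infty$ with $q$ fixed.
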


 \begin{proof}
  We may write, for any $M>0$ and $a>0$, 
\begin{align*}
\bbE\big[ |X_a| \ind_{\{|X_a|>M\}}  \big] \leq \bbE\big[  |X^{(q)}_a-X_a| \big] + 
  \bbE\big[ (X^{(q)}_a)^2\big]^{1/2} \bbP\big(|X_a|>M \big)^{1/2},
\end{align*}
where we have used Cauchy--Schwarz inequality for the second term.
Applying Markov's inequality and taking the supremum over $a\in(0,1]$,
we therefore get 
\begin{align*}
\sup_{a\in(0,1]} \bbE\big[ |X_a| \ind_{\{|X_a|>M\}}  \big] 
& \leq \sup_{a\in(0,1]} \bbE\big[  |X^{(q)}_a-X_a| \big]  \\
&\qquad  \qquad + M^{-1/2} \sup_{a\in(0,1]}\bbE\big[ (X^{(q)}_a)^2\big]^{1/2}  \sup_{a\in(0,1]} \bbE[|X_a|]^{1/2} \, .
\end{align*}
The first term can be made smaller than $\gep/2$ by choosing $q$ sufficiently large. Then once $q$ is fixed,  we can make the second term smaller than $\gep/2$ by choosing $M$ large (our assumptions imply that $(X_a)_{a\in(0,1]}$ is bounded in $\bbL_1$).
 \end{proof}

Our idea is now to apply Proposition~\ref{metaprop} to variables  $X^{(q)}_a$ which are obtained by considering the sum of the weights $w_{a,\beta}(\sigma)$ on a strict subset of $\cP(\go)$ (recall the representation of $\cZ_{\gb}^{\go,a}$ in Lemma~\ref{eazy}).

\subsection{Proof of Proposition~\ref{thm:continuous2} in dimension $d=1$}
\label{sec:dim1}

The case $d=1$ gives us the occasion to apply  Proposition \ref{metaprop} with a relatively simple setup. In this case, the only thing that prevents $\cZ^{\go,a}_{\beta}$ from being bounded in $\bbL_2$ are the large values of $u_i$. The modified partition function obtained by 
ignoring these points in the Poisson point process $\go$ turns out to be bounded in $\bbL_2$.
 The idea is thus to apply Proposition \ref{metaprop} for partition functions  with truncated environment, taking 
$X^{(q)}_a= \cZ^{\go,[a,q)}_{\beta}$ (recall the definition  after \eqref{defuab}).
We then prove the following.

\begin{proposition}\label{d11}
Suppose that $\lambda$ satisfies $\mu:=\int_{[1,\infty)} \ups \lambda (\dd \ups) <\infty$ and that~\eqref{hyposmall} holds.
For every $d\ge 1$, we have
\begin{equation}\label{complea}
  \lim_{q\to \infty}  \sup_{a\in (0,1]} \bbE \Big[    \big|  \cZ^{\go,a}_{\beta}-  \cZ^{\go,[a,q)}_{\beta}    \big|  \Big]=0.
\end{equation}
 Moreover, when $d=1$, we additionally have that for every $q\ge 1$
   \begin{equation}\label{sedstuf}
   \sup_{a\in(0,1]}   \bbE\Big[ \big( \cZ^{\go,[a,q)}_{\beta} \big)^2\Big]<\infty.
 \end{equation}
 \end{proposition}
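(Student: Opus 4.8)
The plan is to prove the two statements of Proposition~\ref{d11} separately, both relying on the positive-weight representation $\cZ^{\go,a}_{\beta} = \sum_{\sigma\in\cP(\go)} w_{a,\beta}(\sigma)$ from Lemma~\ref{eazy}, together with Mecke's formula~\eqref{nonfixedcard}. For the $\bbL_1$-approximation~\eqref{complea}, observe that $\cZ^{\go,a}_{\beta} - \cZ^{\go,[a,q)}_{\beta}$ is a sum of (positive) weights over configurations $\sigma\in\cP(\go^{(a)})$ that contain \emph{at least one} atom with $u_i \ge q$, since weights with all atoms below $q$ cancel (modulo the deterministic normalizing factor $e^{-\beta\kappa_a}$ versus $e^{-\beta\kappa_a^{[a,q)}}$ — here one must be slightly careful, but since $\kappa_a$ is the same as long as $a<1\le q$, the two prefactors agree). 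Taking expectations via~\eqref{nonfixedcard}, the spatial/temporal integrals of $\varrho(\bt,\bx)$ contribute the usual $\frac{1}{k!}$ simplex factor and the disorder integrals factorize; the constraint ``some $u_i\ge q$'' produces a factor $\mu_{\ge q} := \int_{[q,\infty)}\ups\gl(\dd\ups)$ in place of one of the $k$ disorder-integral factors $C_a = \int_{[a,\infty)}\ups\gl(\dd\ups)+\kappa_a$. Summing over $k$ yields a bound of the form $\mu_{\ge q}\, e^{\beta C_a}$ times a constant; but $C_a$ is unbounded as $a\to 0$, so one must instead split off the \emph{small} atoms first. The right move is: conditionally on $\go^{(1)}$ (atoms above $1$), the contribution of configurations with a large atom is controlled by $\mu_{\ge q}$ times the expected partition function of the remaining (small-atom) noise, which by~\eqref{lamine} has $\bbL_1$-norm bounded \emph{uniformly in $a$} (since the small-jump part $\int_{(a,1)}\ups\gl(\dd\ups)$ is compensated by $\kappa_a$, the mean is $e^{\beta\mu}\bQ(1)=e^{\beta\mu}$). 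Thus $\sup_a \bbE[|\cZ^{\go,a}_{\beta}-\cZ^{\go,[a,q)}_{\beta}|] \le C(\beta,\gl)\,\mu_{\ge q}\to 0$ as $q\to\infty$, giving~\eqref{complea}.

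For the $\bbL_2$ bound~\eqref{sedstuf} in dimension $d=1$, I would compute $\bbE[(\cZ^{\go,[a,q)}_{\beta})^2]$ directly from the representation as a double sum $\sum_{\sigma,\sigma'} w(\sigma)w(\sigma')$ over pairs of finite point configurations, and apply Mecke's formula to the \emph{union} point process. Atoms appearing in both $\sigma$ and $\sigma'$ (``shared'' atoms) contribute a factor $\int_{[a,q)} \ups^2\gl(\dd\ups)$ per shared atom, which is finite by~\eqref{hyposmall} in dimension $d=1$ (this is precisely where the $d=1$ hypothesis $\int_{(0,1)}\ups^2\gl(\dd\ups)<\infty$ is used, and why the truncation at $q$ handles the large atoms); atoms appearing in only one of the two configurations contribute the usual $C_a' := \int_{[a,q)}\ups\gl(\dd\ups)+\kappa_a$ factor, again uniformly bounded as $a\to 0$ (small jumps compensated by $\kappa_a$). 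The combinatorial/analytic heart is then to show that the resulting series — a sum over the number $j$ of shared atoms and the numbers $k,\ell$ of private atoms in each configuration, weighted by products of heat kernels $\varrho$ along the two ``interleaved'' paths — converges. In $d=1$ the key estimate is that the heat kernel $\rho_t(x)$ satisfies the Chapman--Kolmogorov-type bound that makes $\int \rho_{s}(y)^2\,\dd y = \rho_{2s}(0) = c\, s^{-1/2}$ integrable near $s=0$; summing the geometric-type series in $j$ uses that $\int_0^1 s^{-1/2}\,\dd s <\infty$. One organizes the computation by conditioning on the common atoms (their times $0<r_1<\dots<r_j<1$ and positions), so that between consecutive common atoms the two configurations evolve independently, each contributing a factor bounded by $\bbE[\cZ^{\go,[a,q)}_\beta]<\infty$ on the corresponding sub-interval, while the ``meeting'' at a common atom costs $\rho_{\Delta r}(0)$-type factors; the total is then bounded by $\sum_j (q^2\,\|\gl\|_{2,[a,q)})^j \int_{0<r_1<\dots<r_j<1}\prod \rho_{\text{gap}}(0)$-style integrals, which converges.

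The main obstacle, and the place requiring genuine care, is the second-moment computation~\eqref{sedstuf}: organizing the expansion of $\bbE[(\cZ^{\go,[a,q)}_\beta)^2]$ so that the shared-atom structure is transparent, and then bounding the resulting multi-index series. One has to be careful that the Lebesgue-counterterm $-\kappa_a\cL$ in the definition of $\xi^{(a)}_\go$ interacts correctly with the squaring — this is cleanly handled by working with the positive representation from Lemma~\ref{eazy}, where the $-\kappa_a\cL$ has already been integrated out into the prefactor $e^{-\beta\kappa_a}$ and into the fact that it is the \emph{atoms} of $\go$, not a signed measure, that are being summed, so Mecke's formula applies verbatim. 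A secondary subtlety is ensuring all the constants are genuinely uniform in $a\in(0,1]$: the quantities that could blow up as $a\to0$ are $\int_{(a,1)}\ups\gl(\dd\ups)$ and $\int_{(a,1)}\ups^2\gl(\dd\ups)$, but the first is exactly $\kappa_a$ (which cancels against $e^{-\beta\kappa_a}$ in the normalization, or equivalently against the centering), and the second is bounded by $\int_{(0,1)}\ups^2\gl(\dd\ups)<\infty$ in $d=1$; so both are fine. Finally, I would remark (as the paper does elsewhere) that the identical argument with $\varrho(\bt,\bx)$ replaced by $\varrho(\bt,\bx)\rho_{t-t_k}(x-x_k)$ gives the analogous statement for the point-to-point partition function $\cZ^{\go,a}_\beta(t,x)$.
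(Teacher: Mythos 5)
Your overall strategy is the paper's: the positive representation of Lemma~\ref{eazy} plus Mecke's formula for the first moment of the difference, and a shared/private-atom decomposition of the pair $(\sigma_1,\sigma_2)$ for the second moment, with each shared atom contributing $\int\ups^2\gl(\dd\ups)$ together with $\int\rho_s(y)^2\dd y=c\,s^{-1/2}$, and the private stretches integrating out to annealed heat kernels. Two remarks. For \eqref{complea}, your detour (conditioning on the atoms above $1$ and invoking \eqref{lamine}) is unnecessary: the Mecke computation closes exactly, because the disorder integrals give $(\kappa_a+\mu)^k/k!$ resp.\ $(\kappa_a+\mu_q)^k/k!$ and the prefactor $e^{-\gb\kappa_a}$ of $w_{a,\gb}$ cancels the $\kappa_a$, yielding $\bbE[\cZ^{\go,a}_{\gb}-\cZ^{\go,[a,q)}_{\gb}]=e^{\gb\mu}-e^{\gb\mu_q}$, independent of $a$ and $\le \gb e^{\gb\mu}\int_{[q,\infty)}\ups\gl(\dd\ups)$. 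Your worry about a factor $e^{\gb C_a}$ blowing up is resolved by exactly this cancellation; for the same reason, your claim that $C_a'=\int_{[a,q)}\ups\gl(\dd\ups)+\kappa_a$ is ``uniformly bounded as $a\to0$'' is false as literally stated (it diverges when $\int_{(0,1)}\ups\gl(\dd\ups)=\infty$) — it is the $e^{-\gb\kappa_a}$ prefactor, which you do invoke correctly elsewhere, that keeps the private-stretch contributions of order $e^{\gb\mu}$.

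The one genuine soft spot is the convergence of the series over the number of shared atoms in \eqref{sedstuf}. You justify it by ``summing the geometric-type series in $j$ uses that $\int_0^1 s^{-1/2}\dd s<\infty$''. Bounding each gap integral separately in this way produces a truly geometric series whose ratio is proportional to $\gb e^{2\gb\mu}V_q$ with $V_q=\int_{(0,q)}\ups^2\gl(\dd\ups)$; this converges only when that ratio is small, whereas \eqref{sedstuf} is asserted for every $q\ge1$ and arbitrary fixed $\gb$. The correct reason, and the one the paper uses, is the ordered time structure: $\int_{\mathfrak{X}^m}\prod_{i=1}^m(t_i-t_{i-1})^{-1/2}\dd\bt\le \gG(1/2)^{m+1}/\gG((m+1)/2)$, which decays super-exponentially in $m$, so the series has infinite radius of convergence. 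Your final displayed bound does retain the simplex integral $\int_{0<r_1<\dots<r_j<1}\prod(\text{gap})^{-1/2}$, so the repair is only to replace the ``geometric series'' justification by this Gamma-function (Dirichlet integral) estimate; without it the argument as written covers only small $\gb$ or small $V_q$.
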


These statements imply that both requirement of Proposition~\ref{metaprop} are satisfied and therefore that $(\cZ^{\go,a}_{\beta})_{a\in(0,1]}$ is uniformly integrable and converges in $\bbL_1$. As can be checked from the proof, \eqref{sedstuf} is false when $d\ge 2$ and in that case we will need a more subtle restriction for the set of trajectories (developed in the next subsection). While the latter restriction also covers the $d=1$ case, the proof presented in this section is considerably simpler,
and may prepare the reader for the more involved proof in dimension $d\geq 2$.
Additionally note that \eqref{complea} is valid when $d\ge 2$; it will be used in Section \ref{sec:contd>2}.

\begin{proof}[Proof of Proposition \ref{d11}]
To compute the expectation in \eqref{complea} we use
Proposition \ref{OKLMecke}:
recalling the definitions~\eqref{weights} of $w_{a,\gb}(\sigma)$  and~\eqref{def:kappa} of $\kappa_a$, we obtain that 
 $ \cZ^{\go,a}_{\beta}-  \cZ^{\go,[a,q)}_{\beta} \geq 0$ and that
\begin{multline}\label{originn}
 \bbE \Big[ \cZ^{\go,a}_{\beta}-  \cZ^{\go,[a,q)}_{\beta} \Big] = 
 \bbE \bigg[ \sum_{\sigma \in  \cP(\go)}  w_{a,\beta}(\sigma) \ind_{\{\exists i\in \lint 1, |\sigma|\rint,\, u_i\ge  q\}} \bigg]\\
 =   e^{-\beta \kappa_a}   \sum_{k\ge 1}  \beta^k \int_{\mathfrak{X}^k\times (\bbR^d)^k \times (0,\infty)^k}\ind_{\big\{ \max\limits_{1\le i\le k} u_i \geq q\big\}} \varrho(\bt,\bx)  \dd \bt \dd \bx  \prod_{i=1}^{k} u_i \ind_{\{u_i\ge a\}}   \lambda(\dd u_i).
\end{multline}
Note that the integral in $\bx$ and $\bt$ readily simplifies since we have (recall that $\rho_t(x)$ is a probability density)
\begin{equation}\label{denzit}
\int_{(\bbR^d)^k}\varrho(\bt,\bx)  \dd \bx = 1
\quad \text{and} \quad
\int_{\mathfrak{X}^k}  \dd \bt= \frac{1}{k!}.
\end{equation}
Hence, setting $\mu:=\int_{[1,\infty)} \ups \lambda(\dd \ups)$
 and  $ \mu_q:=\int_{[1,q)} \ups \lambda(\dd \ups)$, 
the r.h.s.\ of \eqref{originn} is equal to
\begin{equation}
 \label{kalkul}
\begin{split}
 e^{-\beta \kappa_a}   \sum_{k\ge 1}  \frac{\beta^k }{k!}
   & \int_{[a,\infty)^k}  \ind_{\big\{ \max\limits_{1\le i\le k} u_i\geq q\big\}}  \prod_{i=1}^{k}  u_i \lambda(\dd u_i)  \\
  & =  e^{-\beta \kappa_a}   \sum_{k\ge 1}   \frac{\beta^k }{k!}
  \bigg[\int_{[a,\infty)^k} \prod_{i=1}^{k}   u_i \lambda(\dd u_i)  -\int_{[a,q)^k} \prod_{i=1}^{k}   u_i \lambda(\dd u_i)\bigg] \\
& = e^{-\beta \kappa_a}  \sum_{k\ge 1}  \frac{\beta^k }{k!} \left[( \kappa_a+\mu)^k-(\kappa_a+\mu_q)^k\right]  =  e^{\gb \mu} -e^{\gb \mu_q}   \,.
\end{split}
\end{equation}
Hence we have
$
 \bbE \big[ \cZ^{\go,a}_{\beta}-  \cZ^{\go,[a,q)}_{\beta} \big]  = e^{\gb \mu} -e^{\gb \mu_q},
$
which does not depend on $a$, and converges to $0$ as $q\to\infty$ (notice that this is true even for $d\geq 2$).

To check \eqref{sedstuf}, set 
$\cA_q:=\{ (t_i,x_i,u_i)^k_{i=1} \, : \,  \forall i\in \lint 1,k\rint,\,  u_i\le q \}$.
By Lemma~\ref{eazy} we have
 \begin{equation}
    \bbE \Big[  \big( \cZ^{\go,[a,q)}_{\beta} \big)^2 \Big]= \bbE \bigg[  \sum_{\sigma_1, \sigma_2 \in  \cP(\go)}  w_{a,\beta}(\sigma_1)w_{a,\beta}(\sigma_2) \ind_{\cA_q}(\sigma_1)\ind_{\cA_q}(\sigma_2) \bigg].
 \end{equation}
In order to facilitate the of use Mecke's multivariate equation (Proposition~\ref{OKLMecke}) we set
 \begin{equation}\label{lanotat}
\varsigma=\sigma_1 \cap \sigma_2 \quad \text{ and } \quad  \varsigma_i=\sigma_i\, \setminus\, \varsigma \ \text{ for } i=1,2.
 \end{equation} 
 By removing the constraint that $u\leq q$ on  $\varsigma_i$, $i=1,2$, we obtain
\begin{equation}\label{abovz}
    \bbE \left[  (\hat \cZ^{\go,a}_{\beta,q})^2 \right]\le 
\bbE \bigg[  \sum_{\varsigma_1, \varsigma_2, \varsigma\in  \cP(\go) \text{ disjoints}}   w(\varsigma_1\cup \varsigma)w(\varsigma_2\cup \varsigma) \ind_{\cA_q}(\varsigma) \bigg] \, .
\end{equation}
Now we can apply 
Proposition \ref{OKLMecke}. To do so, we split the sum according to the cardinality of $\varsigma$ ($=\{ (t_i,x_i,u_i)\}^m_{i=1}$), and also according to the number of points in $\varsigma_1$ and $\varsigma_2$
in each of the intervals $(t_{i-1},t_i)$, $i\in\lint 1, m+1\rint$ ($t_0=0$ $t_{m+1}=1$).
After factorizing
we obtain that the r.h.s.\ in \eqref{abovz} is equal to
\begin{multline}\label{zruc}
\sum_{m \ge 0} \int_{0<t_1<\dots<t_m<1} \int_{ (\bbR^d)^m} \int_{[a,q)^m} \gb^{m} 
  \prod_{i=1}^{m} u_i^2 \,  z_{\beta,a}\big( (t_{i-1},x_{i-1}), (t_{i},x_{i}) \big)^2  
  \\ \Big( \int_{\bbR^d}   z_{\beta,a}\big( (t_m,x_m), (1,x) \big) \dd x  \Big)^2  \prod_{i=1}^m \dd t_i \dd x_i  \lambda(\dd u_i) ,
\end{multline}
where $z_{\beta,a}\big( (t,x), (t',x') \big)$ is the expected value of the point-to-point partition function for the polymer in the environment $\go^{(a)}$. With the convention $s_0=t$, $s_{\ell+1}=t'$ and $y_0=x$, $y_{\ell+1} =x'$, it is given by
\begin{multline}\label{zruc2}
 z_{\beta,a} ( (t,x), (t',x'))
 :=   e^{ -\gb \kappa_a (t'-t)}
\bigg[ \rho_{t'-t}(x'-x)\\+ \sum_{\ell=1}^{\infty}  \beta^{\ell} \int_{ t< s_1<\dots<s_{\ell}<t' }  \int_{(\bbR^d)^{\ell}} \int_{[a,\infty)^\ell}  \prod_{j=1}^{\ell+1} \rho_{s_j-s_{j-1}} (y_j-y_{j-1}) \prod_{j=1}^{\ell} 
\dd s_j \dd y_j   v_j \lambda(\dd v_j) \bigg].
\end{multline}
To see that \eqref{zruc} holds, observe that expanding all the products we obtain a sum over the indices $m$ (standing for the number of points in $\varsigma$) and $\ell^{(1)}_i$, $\ell^{(2)}_i$ (we need two indices to expand $z^2_{\beta,a}$) which stands for 
the number of points of $\varsigma_1$ and $\varsigma_2$ in the time interval $(t_{i-1},t_{i})$
(the term $\rho_{t'-t}(x'-x)$ in \eqref{zruc2} corresponding to $\ell=0$).
The expression of  $z_{\beta,a} ( (t,x), (t',x'))$ simplifies after integration over all intermediate variables
\begin{multline}
 z_{\beta,a} ( (t,x), (t',x')) \\= e^{ -\gb \kappa_a (t'-t)} \rho_{t'-t}(x'-x) \sum_{\ell=0}^{\infty}  \frac{[\gb (\kappa_a +\mu) (t'-t)]^{\ell}}{ \ell !}=  e^{ \gb \mu (t'-t)}
\rho_{t'-t}(x'-x).
\end{multline}
Reinjecting this into \eqref{zruc} and performing the integral over $u_i\in (0,q)$ instead of $[a,q)$ ---~this yields an upper bound which is uniform in $a$ ---,  we obtain that 
\begin{equation}
\label{eq:secondmomentZq}
     \bbE \left[  (\hat \cZ^{\go,a}_{\beta,q})^2 \right]
     \le \sum_{m \ge 0} \left( \gb  e^{2\gb \mu} V_q \right)^{m}  \int_{ \mathfrak{X}^m \times (\bbR^d)^m}
   \varrho(\bt,\bx)^2  
     \dd \bx \dd \bt \, ,
\end{equation}
where we also used that $(\int_{\bbR^d} \rho_{1-t_m}(x-x_m) \dd x)^2=1$, and defined $V_q := \int_{(0,q)} \ups^2 \lambda(\dd \ups)  <\infty$ (recall~\eqref{hyposmall}).
Now using the definition \eqref{defrhotx} of $\rho_t(x)$, we have
\begin{equation}\label{lekalek}
  \int_{\bbR^d}(\rho_{t}(x) )^2\dd x
= 2^{-d}(\pi t)^{-d/2}\, .
\end{equation}
Hence, in dimension $d=1$ we get that
\begin{equation}\label{bigsum}
      \bbE \left[  (\hat \cZ^{\go,a}_{\beta,q})^2 \right]\le \sum_{m \ge 0} \left( 2^{-1}\pi^{-1/2} \beta e^{2\gb \mu}  V_q \right)^{m}  \int_{\mathfrak{X}^m} \prod_{i=1}^m
      \frac{ \dd t_i}{\sqrt {t_i-t_{i-1}}} \, .
\end{equation}
To conclude, notice that (with the convention $t_{m+1}=1$)
\[
a_m= \int_{\mathfrak{X}^m} \prod_{i=1}^m
      \frac{ \dd t_i}{\sqrt {t_i-t_{i-1}}}
      \leq a_m' := \int_{\mathfrak{X}^m} 
      \prod_{i=1}^{m+1}
      \frac{ \dd t_i}{\sqrt {t_i-t_{i-1}}} 
= \frac{\gG(1/2)^{m+1} }{\gG( (m+1)/2)} 
\]
where the last identity is a standard calculation, see e.g.~\cite[Lemma A.3]{BL20_disc}.
Hence $(a_m)_{m\geq 0}$ decays super-exponentially,
 so the r.h.s.\ of \eqref{bigsum} is finite for every value of~$q$.
\end{proof}

\begin{rem}
\label{rem:p2p1}
In the case of the point-to-point partition function
$\cZ_{\gb}^{\go,a}(t,x)$,
one uses $X_a^{(q)} = \cZ_{\gb}^{\go,[a,b)}(t,x)$ instead of $Z_{\gb}^{\go, [a,b) }$ and  the representation~\eqref{altexpp2p} instead of~\eqref{altexp} to compute the first an second moment.
The proof is carried out in an identical manner as above, 
replacing $\varrho(\bt,\bx)$ with $\varrho(\bt,\bx)\rho_{t-t_k}(x-x_k)$;
whose integral on $(\bbR^d)^k$ is $\rho_t(x)$.
 The main difference is in~\eqref{zruc} where the variable $x$ in $z_{\beta,a}((t_m,x_m),(1,x))$ is
no longer integrated, 
which leads to having  an extra $\rho_{t-t_m}(x-x_m)^2$ in the last integral in~\eqref{eq:secondmomentZq}.
An easy induction on $m$ yields that
\begin{equation}\label{inteinte}
\int_{(\bbR^d)^m}
   \varrho(\bt,\bx)^2 \rho_{t-t_m}(x-x_m)^2 \dd x=  
      \frac{e^{-\|x\|^2 / t}}{  \pi^{\frac{m+2}{2}}  \prod_{i=1}^{m+1} \sqrt {t_i-t_{i-1}}}
      \end{equation}
This leads us to having $ a_m'$ instead of $a_m$ in the series~\eqref{bigsum}  (with an extra $t^m$ if $t\neq 1$) and does not change the conclusion.
\end{rem}

\subsection{Proof of Proposition~\ref{thm:continuous2} in dimension $d\geq 2$}
\label{sec:contd>2}

The proof of the previous section cannot apply to higher dimension. From a purely technical point of view the reason for this is that the r.h.s.\ of \eqref{lekalek} is not integrable in $t$ when $d\ge 2$, making the r.h.s.\ of \eqref{bigsum} infinite.

To circumvent this problem we need to refine our selection of trajectories. As the divergence in \eqref{lekalek} comes from small values of $t$, we want to add a restriction that forbids favorable sites to have an abnormaly high concentration in a small time frame. Our selection scheme presents some formal analogy and was inspired by the multibody techniques used in \cite{BL18, GLT11} in the very different context of  disordered pinning models at marginality.

Fine tuning our parameters, under the assumption~\eqref{hyposmall} we find a restriction of the trajectories based on this idea which allows to apply Proposition \ref{metaprop}.

\begin{rem}\label{chongsremark}
An alternative proof of the uniform integrability of $\cZ^{\go,a}_{\beta}$ for $d\ge 2$ was brought to our attention by C.\ Chong. 
Once \eqref{complea} has been proven, it is sufficient to show that 
\begin{equation}\label{lpbounded}
\sup_{a\in(0,1]}\bbE\left[(\cZ^{\go,[a,q)}_{\beta})^p\right]<\infty \, ,
\end{equation}
for some $p>1$ (the conclusion of Proposition~\ref{metaprop} remains valid, using H\"older's inequality instead of Cauchy--Schwarz).
This last bound can be extracted from  \cite[Theorem 1.3.1]{Loubert98} after observing that $\cZ^{\go,[a,q)}_{\beta}$ is the solution at time $1$ and coordinate $0$ of the Stochastic Heat Equation with initial condition $u_0\equiv 1$. To extend this argument to the point-to-point partition function, some more care is required since in this case one has to consider the solution of the SHE with Dirac initial condition (not treated in \cite{Loubert98}) but the argument should in principle also work.

However, our argument presents a few advantages.
Firstly, it does not rely on any tool of stochastic integration and only marginally on the properties of the heat-kernel: it is therefore easily adaptable to the context of other disordered systems  presented in Section~\ref{sec:comments}.
Also, our proof of Theorem \ref{thm:conv} in \cite{BL20_disc} relies on a similar strategy and we believe that the proof in the continuum setup (which is much simpler than that in the discrete one) could be of use for potential readers of  
\cite{BL20_disc}.
\end{rem}

\subsubsection{A finer restriction of the set of trajectories}

Let us now consider the  restriction of the partition function to ``good trajectories'' $\sigma$.
Thanks to assumption~\eqref{hyposmall}, we can fix some $p\in (1,1+\frac 2d)$ with $p<2$ such that $\int_{(0,1)} \ups^{p} \lambda(\dd \ups)<\infty$.
We then fix for the rest of this section a parameter $\gamma>0$ which satisfies
\begin{equation}
\label{lesdeuxconds}
\frac{d-2}{2(2-p)} < \gamma < \frac{1}{p-1} \qquad \big(i.e.\ \ \gamma(p-1)<1 \ \text{ and } \  \frac d2 -\gamma(2-p) <1 \big).
\end{equation}
The assumption $p \in (1, 1+\frac 2d)$ entails that $\gamma=d/2$ is always a valid choice, but
we prefer to write the two separate conditions we have on $\gamma$ to make the requirements more transparent.
Then, for any $q \geq 1$, we define $ \cB_q$ as 
\begin{equation}
\label{def:BNcont}
 \cB_q :=\bigg\{ \sigma \in \cP(\go) \ : \
 \forall \sigma'\subset \sigma, \  \prod_{j=1}^{|\sigma'|} u'_j < q^{|\sigma'|} \prod_{j=1}^{|\sigma'|} (t_j'-t_{j-1}')^{\gamma} \bigg\}
 \end{equation}
 where  $t'$ and $u'$ is used to denote the coordinates of time ordered points in $\sigma'$ (with $t'_0=0$ by convention).
We set in this section.
\begin{equation}
 \hat \cZ^{\go,a}_{\beta,q}:=  \sum_{\sigma \in  \cP(\go)}  w_{a,\beta}(\sigma) \ind_{ \cB_q}(\sigma).
\end{equation}
 Note that $\sigma\in \cB_q$ implies in particular that $u_i\le q$ for $i\in \lint 1, |\sigma| \rint$ and hence $\hat \cZ^{\go,a}_{\beta,q}\le \hat \cZ^{\go,[a,q)}_{\beta}$.
Now, Theorem \ref{thm:continuous} is a consequence of Propositions \ref{lesper} and \ref{llavar} below (which allow to control respectively the first and second moment of $\hat \cZ^{\go,a}_{\beta,q}$) and of Proposition \ref{metaprop}.

\begin{proposition}\label{lesper}
Assuming that  $\gamma < \frac{1}{p-1}$ we have
 \begin{equation}\label{unif1}
 \lim_{q \to \infty} \sup_{a\in(0,1]} \bbE \Big[   \big| \cZ^{\go,a}_{\beta}- \hat \cZ^{\go,a}_{\beta,q}   \big|  \Big]=0 \, .
\end{equation}
\end{proposition}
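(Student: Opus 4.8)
The plan is to combine the bound~\eqref{complea}, already established in Proposition~\ref{d11}, with a first‐moment computation of the same type as the one carried out in~\eqref{originn}--\eqref{kalkul}. The role of~\eqref{complea} is to reduce to configurations whose Poisson values all lie in $[a,q)$: applying the defining inequality of~\eqref{def:BNcont} to the singletons $\sigma'\subseteq\sigma$ (and recalling that time coordinates are $<1$) shows that $\sigma\in\cB_q$ forces $u_i<q$ for all $i$, hence $\hat \cZ^{\go,a}_{\beta,q}\le\cZ^{\go,[a,q)}_{\beta}$, so that
\begin{equation*}
\bbE\big[\cZ^{\go,a}_{\beta}-\hat \cZ^{\go,a}_{\beta,q}\big]=\bbE\big[\cZ^{\go,a}_{\beta}-\cZ^{\go,[a,q)}_{\beta}\big]+\bbE\big[\cZ^{\go,[a,q)}_{\beta}-\hat \cZ^{\go,a}_{\beta,q}\big] \, ,
\end{equation*}
and the first term tends to $0$ as $q\to\infty$, uniformly in~$a$, by~\eqref{complea}. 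It thus remains to bound the second term, uniformly in~$a$.

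By Lemma~\ref{eazy}, $\cZ^{\go,[a,q)}_{\beta}=\sum_{\sigma\in\cP(\go)}w_{a,\beta}(\sigma)\ind_{\{\forall i,\,u_i<q\}}$, so the second term equals $\bbE\big[\sum_{\sigma}w_{a,\beta}(\sigma)\ind_{\{\forall i,\,u_i<q\}}\ind_{\{\sigma\notin\cB_q\}}\big]$. Since $\sigma\notin\cB_q$ means that the condition in~\eqref{def:BNcont} fails for some nonempty $\sigma'\subseteq\sigma$, a union bound over these witness sets $\sigma'$ bounds this by $\bbE[\sum_{\sigma}\sum_{\emptyset\ne\sigma'\subseteq\sigma}w_{a,\beta}(\sigma)\ind_{\{\forall i,\,u_i<q\}}\ind_{\{\sigma'\text{ violates}\}}]$. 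Applying Mecke's equation (Proposition~\ref{OKLMecke}) to the pair $(\sigma',\sigma\setminus\sigma')$ and running the same manipulations as in~\eqref{originn}--\eqref{kalkul} — the violation event for $\sigma'$ depends only on its times and values, so every heat kernel integrates out via $\int_{(\bbR^d)^k}\varrho(\bt,\bx)\dd\bx=1$, and resumming the points of $\sigma\setminus\sigma'$ interval by interval produces a factor $e^{\beta(\kappa_a+\mu)}$ cancelling $e^{-\beta\kappa_a}$ — yields
\begin{equation*}
\bbE\big[\cZ^{\go,[a,q)}_{\beta}-\hat \cZ^{\go,a}_{\beta,q}\big]\le e^{\beta\mu}\sum_{m\ge1}\beta^m\int_{\mathfrak{X}^m}\bigg(\int_{[a,q)^m}\ind_{\big\{\prod_{j}v_j\ge q^m\prod_j(s_j-s_{j-1})^{\gamma}\big\}}\prod_{j=1}^m v_j\,\gl(\dd v_j)\bigg)\dd\bs \, .
\end{equation*}

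To control the inner integral, I would use the elementary inequality $\ind_{\{A\ge R\}}\le(A/R)^{p-1}$ (valid since $p>1$) with $A=\prod_j v_j$, $R=q^m\prod_j(s_j-s_{j-1})^{\gamma}$, which replaces $\prod_j v_j\ind_{\{A\ge R\}}$ by $q^{-m(p-1)}\prod_j(s_j-s_{j-1})^{-\gamma(p-1)}\prod_j v_j^{p}$; the value integral then factorizes into $\big(\int_{[a,q)}\ups^{p}\gl(\dd\ups)\big)^{m}\le\bar A_q^{m}$, where $\bar A_q:=\int_{(0,1)}\ups^{p}\gl(\dd\ups)+\int_{[1,q)}\ups^{p}\gl(\dd\ups)$ is finite by~\eqref{hyposmall} and independent of~$a$. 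The decisive point is that
\begin{equation*}
\delta_q:=\bar A_q\,q^{-(p-1)}\longrightarrow 0\qquad (q\to\infty)\, ,
\end{equation*}
which holds because $q^{-(p-1)}\int_{[1,q)}\ups^{p}\gl(\dd\ups)=\int_{[1,q)}(\ups/q)^{p-1}\ups\,\gl(\dd\ups)\to0$ by dominated convergence (the integrand is dominated by $\ups\ind_{[1,\infty)}(\ups)$, which is $\gl$-integrable since $\mu<\infty$, and tends to $0$ pointwise), while $q^{-(p-1)}\int_{(0,1)}\ups^{p}\gl(\dd\ups)\to0$ trivially. For the time integral, the standing assumption $\gamma<\tfrac1{p-1}$ ensures $\theta:=1-\gamma(p-1)\in(0,1)$; bounding $(1-s_m)^{-\gamma(p-1)}\ge1$ and using the Dirichlet integral formula (as in the Remark after Proposition~\ref{d11}, see~\cite[Lemma~A.3]{BL20_disc}),
\begin{equation*}
\int_{\mathfrak{X}^m}\prod_{j=1}^m(s_j-s_{j-1})^{-\gamma(p-1)}\,\dd\bs\le\int_{\mathfrak{X}^m}\prod_{j=1}^{m+1}(s_j-s_{j-1})^{\theta-1}\,\dd\bs=\frac{\gG(\theta)^{m+1}}{\gG\big((m+1)\theta\big)} \, ,
\end{equation*}
which decays super-exponentially in~$m$. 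Collecting the estimates gives
\begin{equation*}
\sup_{a\in(0,1]}\bbE\big[\cZ^{\go,[a,q)}_{\beta}-\hat \cZ^{\go,a}_{\beta,q}\big]\le e^{\beta\mu}\,\gG(\theta)\sum_{m\ge1}\frac{\big(\beta\,\delta_q\,\gG(\theta)\big)^{m}}{\gG\big((m+1)\theta\big)}\, ,
\end{equation*}
and since the right‐hand side is an entire function of $\delta_q$ vanishing at $\delta_q=0$, it tends to $0$ as $q\to\infty$. Together with the reduction of the first paragraph, this proves~\eqref{unif1}.

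The bulk of the work is the Mecke bookkeeping sketched in the second paragraph — checking that the violation indicator involves no spatial variable, so that all heat kernels disappear and $\sigma\setminus\sigma'$ resums to the harmless constant $e^{\beta\mu}$ — but this merely reproduces the computation behind~\eqref{kalkul}. The one genuinely new ingredient, and the reason the preliminary truncation at level~$q$ cannot be skipped, is that $\int_{[1,\infty)}\ups^{p}\gl(\dd\ups)$ need not be finite for $p>1$: one first removes (via~\eqref{complea}) all configurations containing a value $\ge q$, and only then can one extract the gain $\int_{[1,q)}\ups^{p}\gl(\dd\ups)=o(q^{p-1})$ that produces the smallness in~$q$.
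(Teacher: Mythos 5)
Your proof is correct, and it follows the paper's skeleton exactly up to the key estimate: the same reduction via~\eqref{complea}, the same union bound over nonempty witness subsets $\sigma'$, and the same Mecke computation resumming the points of $\sigma\setminus\sigma'$ interval by interval to cancel $e^{-\beta\kappa_a}$ and arrive at the bound~\eqref{trukkz}. Where you genuinely diverge is in controlling the inner integral over the values $u_i$. The paper invokes Lemma~\ref{lem:tailzzz1}, which rests on the comparison with Lebesgue measure (Proposition~\ref{prop:compare1}), an inductive computation producing logarithmic factors, and a Taylor-expansion trick that costs an $\gep$ in the exponent of $h$ (whence the requirement $\gamma(p+\gep-1)<1$). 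You instead use the one-line Markov bound $\ind_{\{A\ge R\}}\le (A/R)^{p-1}$, which immediately factorizes the value integral into $\big(\int_{[a,q)}\ups^p\gl(\dd\ups)\big)^m \le \bar A_q^m$ and yields the clean exponent $h^{1-p}$ with no $\gep$-loss; the smallness $\delta_q=\bar A_q q^{-(p-1)}\to0$ then follows from dominated convergence using $\mu<\infty$. Your route is more elementary (it bypasses Lemma~\ref{lem:tailzzz1} and Proposition~\ref{prop:compare1} entirely for this step), and it makes explicit why the hypothesis $\int_{[1,\infty)}\ups\,\gl(\dd\ups)<\infty$ and the preliminary truncation at level $q$ are both needed here. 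The trade-off is marginal: the paper's sharper tail lemma would give slightly better quantitative control in $h$, but for the purpose of~\eqref{unif1} nothing is lost, since the super-exponential decay of $\gG((m+1)\theta)$ makes the series an entire function of $\delta_q$ vanishing at $0$.
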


\begin{proposition}\label{llavar}
 Assuming that $\gamma>\frac{d-2}{2(2-p)}$,
for every $q\ge 1$ we have
\begin{equation}\label{unif2}
\sup_{a\in(0,1]} \bbE \Big[  \big( \hat \cZ^{\go,a}_{\beta,q} \big)^2 \Big]<\infty \, .
\end{equation}
\end{proposition}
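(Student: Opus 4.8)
I would start from the same second–moment computation as in the case $d=1$. By Lemma~\ref{eazy} and Mecke's equation (Proposition~\ref{OKLMecke}, applied in the form~\eqref{nonfixedcard}),
\[
\bbE\big[(\hat\cZ^{\go,a}_{\beta,q})^2\big]=\bbE\bigg[\sum_{\sigma_1,\sigma_2\in\cP(\go)}w_{a,\beta}(\sigma_1)\,w_{a,\beta}(\sigma_2)\,\ind_{\cB_q}(\sigma_1)\,\ind_{\cB_q}(\sigma_2)\bigg],
\]
and I would decompose every pair $(\sigma_1,\sigma_2)$ into the shared part $\varsigma=\sigma_1\cap\sigma_2$ and the disjoint parts $\varsigma_i=\sigma_i\setminus\varsigma$, exactly as in~\eqref{lanotat}. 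Since $\cB_q$ is stable under taking subsets, $\ind_{\cB_q}(\sigma_1)\ind_{\cB_q}(\sigma_2)\le\ind_{\cB_q}(\varsigma)$: I would keep this single constraint and drop all the others (this only increases the bound). Throughout I use that every time coordinate lies in $[0,1]$, so each ``gap from $0$'' occurring in the definition~\eqref{def:BNcont} of $\cB_q$ is $\le 1$.

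Next I would run the reduction leading to~\eqref{eq:secondmomentZq}: integrate out the spatial variables (using $\int_{\bbR^d}\rho_t(x)^2\dd x=2^{-d}(\pi t)^{-d/2}$ and Chapman--Kolmogorov for the positions of the unconstrained points of $\varsigma_1,\varsigma_2$) and sum out those unconstrained points, which produces the harmless factors $e^{2\gb\mu}$. Since $\ind_{\cB_q}(\varsigma)$ depends only on the times and amplitudes of the points of $\varsigma=(t_i,x_i,u_i)_{i=1}^m$, this leaves, with $t_0=0$,
\[
\bbE\big[(\hat\cZ^{\go,a}_{\beta,q})^2\big]\le\sum_{m\ge0}C^m\int_{\mathfrak{X}^m}\int_{[a,\infty)^m}\ind_{\cB_q}(\varsigma)\prod_{i=1}^m u_i^2\,(t_i-t_{i-1})^{-d/2}\,\lambda^{\otimes m}(\dd\bu)\,\dd\bt ,
\]
with $C=C(\beta,\mu)$. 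For $d=1$ this is~\eqref{bigsum} and we are done; for $d\ge2$ the factor $(t_i-t_{i-1})^{-d/2}$ is not $\bt$-integrable, so the constraint $\cB_q$ must be exploited.

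This is the heart of the matter. On $\{\varsigma\in\cB_q\}$ I would use two of the inequalities encoded in~\eqref{def:BNcont}: the one for the subset $\varsigma$ itself, $\prod_{i=1}^m u_i<q^m\prod_{i=1}^m(t_i-t_{i-1})^\gamma$, and the one-point inequalities $u_i<q\,t_i^\gamma\le q$. Raising the first to the power $2-p\in(0,1)$ gives $\prod_{i=1}^m(t_i-t_{i-1})^{-\gamma(2-p)}\le q^{m(2-p)}\prod_{i=1}^m u_i^{-(2-p)}$, hence $\prod_i u_i^2(t_i-t_{i-1})^{-d/2}\le q^{m(2-p)}\prod_i u_i^p\,(t_i-t_{i-1})^{\gamma(2-p)-d/2}$; the one-point bounds then reduce each $u_i$–integral to $\int_{(0,q)}u^p\lambda(\dd u)$, which is finite by~\eqref{hyposmall} (recall $p<1+\tfrac2d$ and $\lambda([1,\infty))<\infty$) and, crucially, independent of $a$. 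The hypothesis $\gamma>\tfrac{d-2}{2(2-p)}$ from~\eqref{lesdeuxconds} is exactly $\gamma(2-p)-d/2>-1$, so $\int_{\mathfrak{X}^m}\prod_{i=1}^m(t_i-t_{i-1})^{\gamma(2-p)-d/2}\dd\bt$ is finite and decays super-exponentially in $m$ by the Dirichlet--Gamma computation used for~\eqref{bigsum}. Absorbing $q^{m(2-p)}$ and the $u$–integrals into a new $q$–dependent constant and summing over $m$ yields a bound independent of $a\in(0,1]$, which is~\eqref{unif2}. (The point-to-point version is identical, cf.\ Remark~\ref{rem:p2p1}: one replaces $\varrho(\bt,\bx)$ by $\varrho(\bt,\bx)\rho_{t-t_k}(x-x_k)$, the only change being an extra $\rho_{t-t_m}(x-x_m)^2$ whose $\bx$–integral is controlled by~\eqref{inteinte} and does not affect the conclusion.)

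The delicate point — and the reason the scheme~\eqref{def:BNcont} constrains \emph{all} subsets rather than consecutive pairs only — is that the entropic budget is exactly balanced: each weight $u_i^2$ can give up an exponent of precisely $2-p$ and remain $\lambda$–integrable, while the $m$ gaps require precisely $m\gamma(2-p)$ of donated exponent in total. Using only pairwise constraints $u_{i-1}u_i<q^2(t_i-t_{i-1})^\gamma$ would charge every interior weight twice and demand the strictly stronger condition $\gamma>\tfrac{d-2}{2-p}$, whereas the single ``multi-body'' constraint for the whole backbone (in the spirit of~\cite{BL18,GLT11}) spreads the cost evenly. Designing $\cB_q$ so that this balance holds, and keeping every estimate uniform in the cutoff $a$, is where the real work lies; the remainder is the bookkeeping of Mecke's formula and the Gamma-function tail bound already used for $d=1$.
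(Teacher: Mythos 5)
Your proof is correct, and it follows the paper's replica skeleton (Lemma~\ref{eazy} plus Mecke's formula, decomposition into $\varsigma=\sigma_1\cap\sigma_2$ and $\varsigma_1,\varsigma_2$, relaxation of the constraint to the intersection only, integration of the unconstrained points into $e^{2\gb\mu}$ factors, and the spatial integral $\int\rho_t(x)^2\dd x=2^{-d}(\pi t)^{-d/2}$ leading to the analogue of~\eqref{lasdx}). Where you genuinely diverge is in the treatment of the resulting $u$-integral. The paper proves and invokes Lemma~\ref{lem:tailzzz2}: a tail bound for $\int_{(0,q)^m}\ind_{\{\prod u_j\le hq^m\}}\prod u_i^2\,\lambda(\dd u_i)$ obtained by comparing $\lambda$ to a Lebesgue-type measure (Proposition~\ref{prop:compare2}), an induction on $m$, and an $\gep$-loss converting the logarithmic factors into $h^{-\gep}$, which is why the paper works with the exponent $\gamma(2-p-\gep)-d/2$ and needs $\gep$ small. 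You instead use the constraint pointwise: writing $u_i^2=u_i^p\cdot u_i^{2-p}$ and bounding $\prod u_i^{2-p}\le q^{m(2-p)}\prod(t_i-t_{i-1})^{\gamma(2-p)}$ directly from the multi-body inequality in~\eqref{def:BNcont}, after which the $u$- and $t$-integrals factor cleanly into $(\int_{(0,q)}u^p\lambda(\dd u))^m$ (finite and $a$-independent by~\eqref{hyposmall} and $\lambda([1,\infty))<\infty$) and the Dirichlet integral of $\prod(t_i-t_{i-1})^{\gamma(2-p)-d/2}$, convergent precisely because $\gamma(2-p)>\frac{d-2}{2}$. This is shorter, avoids the $\gep$-bookkeeping and the comparison lemma entirely, and makes the exponent-balancing mechanism transparent; what it buys the paper to keep Lemma~\ref{lem:tailzzz2} (and its companion Lemma~\ref{lem:tailzzz1}, which is still needed for the first-moment bound in Proposition~\ref{lesper}) is a uniform technology for both moment estimates, but for the second moment your interpolation argument is a legitimate and arguably cleaner substitute. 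Your closing remark about why subset-wise constraints are needed (rather than pairwise ones) is a correct reading of the design of $\cB_q$, although for this particular proposition only the full-set inequality and the one-point bounds $u_i<q$ are actually used.
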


The proof of Propositions \ref{lesper} and \ref{llavar} are technically more involved than that of Proposition \ref{d11}. This is in particular because the restriction $\cB_q$ produces an integral that does not factorize as well as the one obtained when only $\cA_q$ is considered. We first need to introduce some technical bounds on some type of multivariate integrals which appear in our first and second moment computations respectively.

\subsubsection{Technical preliminaries: an upper bound on multivariate integrals}

The following upper bounds are the key ingredients in  the proof of  Proposition \ref{lesper} and \ref{llavar} as they allow to control the multivariate integrals produced by the application of Mecke's multivariate equation (Proposition \ref{OKLMecke}).

\begin{lemma}
\label{lem:tailzzz1}
Assume that  $\int_{(0,1)} \ups^p \lambda(\dd \ups) <\infty$
for some $p \in (1,2)$ and also that $\int_{[1,\infty)} \ups \lambda(\dd \ups) <\infty$. Then for any $q\geq 1$ there is
a constant $\bc_q$, verifying $\lim_{q\to\infty} \bc_q =0$,
such that for every $m\geq 1$ and every $h\in (0,1)$, we have
\begin{align}
\label{firstline1}
\int_{(0,q)^m}  \ind_{\big\{\prod^m_{j=1}u_j \ge  h\, q^m\big\}} \prod_{i=1}^m u_i \lambda( \dd u_i ) \le   \frac{1}{p-1}
\frac{  (\bc_q)^m  h^{1-p} }{(m-1)!} \big[ \log (  2^m/h)\big]^{m-1} \, .
\end{align}
In particular, for any $0< \gep <  1$, we have
\begin{equation}
\label{secondline}
\int_{(0,q)^m}  \ind_{\big\{\prod^m_{j=1}u_j \ge  h \, q^m\big\}} \prod_{i=1}^m u_i \lambda( \dd u_i ) \le \frac{ (  2^{\gep}\bc_q /\gep)^{m}  }{p-1} \, h^{1-p-\gep}  \, .
\end{equation}
\end{lemma}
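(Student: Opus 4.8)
The plan is to reduce \eqref{firstline1} to a purely one‑dimensional estimate on $\lambda$ by an elementary pointwise bound on the integrand, and then to deduce \eqref{secondline} from \eqref{firstline1} by a Stirling‑type inequality. First I would exploit the constraint directly: on the set $\{\prod_{j=1}^m u_j\ge hq^m\}$, since $1-p<0$ the map $x\mapsto x^{1-p}$ is decreasing, so $\big(\prod_j u_j\big)^{1-p}\le (hq^m)^{1-p}$ and therefore $\ind_{\{\prod_j u_j\ge hq^m\}}\prod_i u_i\le (hq^m)^{1-p}\prod_i u_i^p$ pointwise. Integrating with Tonelli (non‑negative integrand, product measure) gives
\[
\int_{(0,q)^m}\ind_{\{\prod_j u_j\ge hq^m\}}\prod_i u_i\,\lambda(\dd u_i)\le h^{1-p}q^{m(1-p)}\Big(\int_{(0,q)}u^p\,\lambda(\dd u)\Big)^{m}= h^{1-p}\,\tau_q^{\,m},
\]
where $\tau_q:=q^{1-p}\int_{(0,q)}u^p\,\lambda(\dd u)$; this is finite since $\int_{(0,1)}u^p\lambda(\dd u)<\infty$ and $\int_{[1,q)}u^p\lambda(\dd u)\le q^{p-1}\int_{[1,\infty)}u\lambda(\dd u)<\infty$.

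The crux is to check $\lim_{q\to\infty}\tau_q=0$, and this is where both tail hypotheses enter. The contribution $q^{1-p}\int_{(0,1)}u^p\lambda(\dd u)$ vanishes because $p>1$. For $q^{1-p}\int_{[1,q)}u^p\lambda(\dd u)$ I would split at an auxiliary radius $R\ge1$, writing $u^p=u^{p-1}u$ and setting $\mu:=\int_{[1,\infty)}u\,\lambda(\dd u)$:
\[
q^{1-p}\int_{[1,q)}u^p\lambda(\dd u)\le q^{1-p}R^{p-1}\int_{[1,R)}u\,\lambda(\dd u)+\int_{[R,q)}u\,\lambda(\dd u)\le (R/q)^{p-1}\mu+\int_{[R,\infty)}u\,\lambda(\dd u),
\]
and conclude by first choosing $R$ large so the last term is $<\gep/2$ (using $\mu<\infty$), then letting $q\to\infty$. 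So $\tau_q\to0$, and I set $\bc_q:=2\tau_q$.

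Then \eqref{firstline1} reduces to the elementary inequality $(p-1)(m-1)!\le 2^m[\log(2^m/h)]^{m-1}$, valid for all $m\ge1$, $h\in(0,1)$ and $p<2$: the quantity on the right is increasing as $h\downarrow0$, so the worst case is $h\uparrow1$, where $2^m(m\log2)^{m-1}\ge 2m^{m-1}\ge 2(m-1)!>(p-1)(m-1)!$ (using $(m-1)!\le m^{m-1}$ and $2\log2>1$). Rewriting this as $2^{-m}\le [\log(2^m/h)]^{m-1}/\big((p-1)(m-1)!\big)$ and multiplying the bound $h^{1-p}\tau_q^m=h^{1-p}(\bc_q/2)^m$ by it yields exactly \eqref{firstline1}. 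Finally, \eqref{secondline} is obtained from \eqref{firstline1} by absorbing the combinatorial factor into $h^{-\gep}$: for $\gep\in(0,1)$, maximizing $x\mapsto x^{m-1}e^{-\gep x}$ at $x=(m-1)/\gep$ and using $(m-1)!\ge((m-1)/e)^{m-1}$ gives $[\log(2^m/h)]^{m-1}/(m-1)!\le(2^m/h)^{\gep}\gep^{-(m-1)}\le(2^{\gep}/\gep)^m h^{-\gep}$, which turns \eqref{firstline1} into \eqref{secondline} with the same $\bc_q$.

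I expect the only genuinely delicate point to be the uniformity in $h$ and $m$ when chasing constants in the last step — in particular checking $(p-1)(m-1)!\le 2^m[\log(2^m/h)]^{m-1}$ over the entire parameter range and making sure the single $q$‑dependent constant $\bc_q$ works simultaneously for all $m$, $h$ and $\gep$; by contrast the analytic content (the pointwise bound reducing the $m$‑fold integral to $h^{1-p}\tau_q^m$, and the proof that $\tau_q\to0$ from the two integrability assumptions) is short. The same argument applies verbatim to the point‑to‑point variant, since the extra heat‑kernel factors integrate out to probability densities and do not affect the $\lambda$‑integrals.
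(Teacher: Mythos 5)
Your proof is correct, and it takes a genuinely different route from the paper's. The paper proceeds in two steps: it first replaces $\lambda$ by a pure power law via the comparison estimate \eqref{plusdidee} (Proposition~\ref{prop:compare1}, proved by integration by parts from the tail bound $\int_{(u,q)}\ups\,\lambda(\dd \ups)\le c_q u^{1-p}$ with $c_q q^{1-p}\to 0$), and then evaluates the resulting Lebesgue integral essentially exactly by an induction on $m$; this induction is what produces the factor $[\log(2^m/h)]^{m-1}/(m-1)!$ in \eqref{firstline1}, and \eqref{secondline} follows by the same exponential trick you use. You instead use the pointwise inequality $\ind_{\{\prod_j u_j\ge hq^m\}}\prod_i u_i\le (hq^m)^{1-p}\prod_i u_i^p$, which factorizes the $m$-fold integral at once and gives the log-free bound $h^{1-p}\tau_q^m$ with $\tau_q=q^{1-p}\int_{(0,q)}u^p\lambda(\dd u)$; your verification that $\tau_q\to 0$ uses exactly the two moment hypotheses in the right places, and your elementary check $(p-1)(m-1)!\le 2^m[\log(2^m/h)]^{m-1}$ is valid, so \eqref{firstline1} holds with $\bc_q=2\tau_q$ — legitimately, since the lemma only asserts the existence of some $\bc_q\to0$, even though in your argument the log and factorial factors are absorbed rather than exploited. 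Your bound is in fact slightly stronger than what is needed downstream: in \eqref{avantgamma} it would allow $\gep=0$, removing the need to choose $\gep$ with $\gamma(p+\gep-1)<1$. What the paper's longer route buys is the genuine $1/(m-1)!$ decay, which is sharper when $m$ is large compared with $\log(1/h)$, but that extra strength is never used beyond deriving \eqref{secondline}. One small remark: your closing sentence about heat-kernel factors is irrelevant here, since this lemma involves only the measure $\lambda$ and no kernels (the adaptation to the point-to-point partition function is handled elsewhere in the paper); it is harmless but could be dropped.
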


\begin{lemma}
\label{lem:tailzzz2}
Assume that $\int_{(0,1)} \ups^p \lambda(\dd \ups) <\infty$
for some $p \in (1,2)$. Then for any $q\geq 1$ there is
a constant $\bC_q$
such that for every $m\geq 1$ and every $h\in (0,1)$, we have
\begin{align}
\label{firstline2}
\int_{(0,q)^m}  \ind_{\big\{\prod^m_{j=1}u_j \le  h\, q^m\big\}} \prod_{i=1}^m u_i^2 \lambda( \dd u_i ) \le  
 h^{2-p} \, (\bC_q)^{m}\sum_{\ell=1}^{m}\frac{(2-p)^{-\ell}}{(m-\ell)!}  \big[\log (1/h)\big]^{m-\ell} \, .
\end{align}
In particular, for any $0< \gep < 2-p$, we have
\begin{equation}
 \label{thirdline}
 \int_{(0,q)^m}  \ind_{\big\{\prod^m_{j=1}u_j \le  h\, q^m\big\}} \prod_{i=1}^m u^2_i \lambda(\dd u_i ) \le \frac{ (\bC_q/\gep)^m  }{2- p-\gep} \,  h^{2-p-\gep}  \, .
\end{equation}
\end{lemma}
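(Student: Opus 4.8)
The plan is to establish the cleaner inequality
\begin{equation}\label{cleanbound}
\int_{(0,q)^m}  \ind_{\big\{\prod^m_{j=1}u_j \le  h\, q^m\big\}} \prod_{i=1}^m u_i^2 \lambda( \dd u_i ) \ \le\  h^{2-p} (\bC_q)^m \, ,
\end{equation}
with $\bC_q := q^{2-p}\int_{(0,q)}\ups^{p}\lambda(\dd \ups)$, and then to deduce both \eqref{firstline2} and \eqref{thirdline} from it. The first thing I would record is that $\bC_q<\infty$: splitting the integral at $\ups=1$, the contribution of $(0,1)$ is at most $\int_{(0,1)}\ups^p\lambda(\dd \ups)<\infty$ by hypothesis, and that of $[1,q)$ is at most $q^{p}\lambda([1,\infty))<\infty$, using $\lambda([a,\infty))<\infty$ for every $a>0$.

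To prove \eqref{cleanbound} I would rewrite the constraint as a large-deviation event for a sum of non-negative terms. Setting $w_j:=u_j/q\in(0,1)$, one has $\prod_j u_j\le h q^m$ if and only if $\sum_{j=1}^m\log(1/w_j)\ge L$, where $L:=\log(1/h)>0$. The elementary exponential Markov (Chernoff) bound then gives, for every $\theta\ge0$,
\[
\ind_{\{\sum_j\log(1/w_j)\ge L\}}\ \le\ e^{-\theta L}\prod_{j=1}^m (u_j/q)^{-\theta}\, .
\]
Inserting this into the integral and using that $\lambda^{\otimes m}$ is a product measure, the left-hand side of \eqref{cleanbound} is bounded by $e^{-\theta L}\big(q^{\theta}\int_{(0,q)}\ups^{2-\theta}\lambda(\dd \ups)\big)^m$. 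Choosing $\theta=2-p\in(0,1)$, which is legitimate since $p\in(1,2)$, turns $e^{-\theta L}$ into $h^{2-p}$ and the bracket into exactly $\bC_q$, yielding \eqref{cleanbound}.

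It then remains to compare \eqref{cleanbound} with the target inequalities. For \eqref{firstline2}, the sum on its right-hand side contains the $\ell=m$ term, equal to $(2-p)^{-m}$, and $(2-p)^{-m}\ge1$ because $0<2-p<1$; hence $h^{2-p}(\bC_q)^m$ is already bounded by the right-hand side of \eqref{firstline2}. For \eqref{thirdline}, I would use that $h\in(0,1)$ gives $h^{2-p}=h^{2-p-\gep}h^{\gep}\le h^{2-p-\gep}$, together with $(\bC_q)^m\le(\bC_q/\gep)^m/(2-p-\gep)$, which holds because $\gep^m(2-p-\gep)<1$ whenever $0<\gep<2-p<1$.

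I do not expect a real obstacle here; the only two points needing attention are the finiteness of $\bC_q$ and the observation that a single choice of $\theta$, independent of $m$ and of $h$, already produces a bound uniform in both — this is exactly what lets us bypass the more delicate induction on $m$ that would otherwise generate the sum appearing in \eqref{firstline2}. (The companion Lemma~\ref{lem:tailzzz1} can be handled by the dual version of the same scheme, replacing the bound above by $\ind_{\{\sum_j\log(1/w_j)\le L\}}\le e^{\theta L}\prod_j(u_j/q)^{\theta}$ and taking $\theta=p-1$; there the additional input is that the resulting constant tends to $0$ as $q\to\infty$, which follows by splitting $[1,\infty)$ and using $\int_{[1,\infty)}\ups\lambda(\dd\ups)<\infty$.)
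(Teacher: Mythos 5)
Your proof is correct, and it takes a genuinely different route from the paper's. The paper first replaces $u^2\lambda(\dd u)$ by a constant times $u^{1-p}\dd u$ via an integration-by-parts comparison (Proposition~\ref{prop:compare2}), and then evaluates the resulting Lebesgue integral by an explicit induction on $m$; that induction is what produces the sum over $\ell$ with the $[\log(1/h)]^{m-\ell}$ factors in \eqref{firstline2}, and \eqref{thirdline} is then extracted with $e^{\gep t}\ge(\gep t)^{m-\ell}/(m-\ell)!$. You instead apply a Chernoff tilt with the single exponent $\theta=2-p$ directly to the measure $\lambda$: the constraint factorizes and you get the cleaner bound $h^{2-p}(\bC_q)^m$ with $\bC_q=q^{2-p}\int_{(0,q)}\ups^p\lambda(\dd\ups)$ in one line, after which \eqref{firstline2} and \eqref{thirdline} follow from the elementary observations you list (the $\ell=m$ term of the sum equals $(2-p)^{-m}\ge1$ since $2-p\in(0,1)$; and $h^{\gep}\le1$ together with $\gep^m(2-p-\gep)<1$). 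This is shorter, bypasses the induction entirely, and even sharpens the stated bound by removing the logarithmic factors; what the paper's route buys is a single template shared with the companion Lemma~\ref{lem:tailzzz1}. One caution on your closing parenthetical only (it does not affect the lemma under review): for Lemma~\ref{lem:tailzzz1} the tilt $\theta=p-1$ yields the constant $q^{1-p}\int_{(0,q)}\ups^{p}\lambda(\dd\ups)$, and a single split of the integral at $\ups=1$ only shows this is bounded (since $q^{1-p}\int_{[1,q)}\ups^{p}\lambda(\dd\ups)\le\int_{[1,\infty)}\ups\,\lambda(\dd\ups)$); to get the required decay to $0$ as $q\to\infty$ one needs a two-scale splitting of $[1,q)$, e.g.\ at $\sqrt q$.
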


For both lemmas, the idea is to compare the integrals with some integrals  with respect to the Lebesgue measure: we postpone to the Appendix the proof of the two following claims (presented as  Propositions~\ref{prop:compare1} and~\ref{prop:compare2} respectively).

\smallskip
\noindent
\textit{{\bf Claim 1.}
Under the assumptions of Lemma~\ref{lem:tailzzz1},
there exists a constant $c_q$ verifying 
$\lim_{q\to\infty} c_q q^{1-p} =0$ such that}
\begin{equation}
\label{plusdidee}
\int_{(0,q)^m}  \ind_{\big\{\prod^m_{j=1}u_j \geq   h\, q^m\big\}} \prod_{i=1}^m u_i \lambda( \dd u_i ) \le   (c_q)^m \int_{(0,2q)^m}  \ind_{\big\{\prod^m_{j=1}u_j \ge  h\, q^m\big\}} \prod_{i=1}^m u_i^{-p} \dd u_i  \, .
\end{equation}

\smallskip
\noindent
\textit{{\bf Claim 2.}
Under the assumptions of Lemma~\ref{lem:tailzzz2},
there exists a constant $C_q$ such that}
\begin{equation}
\label{plusdidee2}
\int_{(0,q)^m}  \ind_{\big\{\prod^m_{j=1}u_j \le  h\, q^m\big\}} \prod_{i=1}^m u_i^2 \lambda( \dd u_i ) \le  (C_q)^m
\int_{(0,q)^m}  \ind_{\big\{\prod^m_{j=1} u_j \le  h\, q^m\big\}} \prod_{i=1}^m u_i^{1-p} \dd u_i  \, .
\end{equation}

\begin{proof}[Proof of Lemma~\ref{lem:tailzzz1}]
Thanks to~\eqref{plusdidee}, 
we only have to prove that for any $h\in (0,1)$
\begin{equation}
\label{fortailzzz1}
\int_{(0,2q)^m}  \ind_{\big\{\prod^m_{j=1}u_j \ge  h\, q^m\big\}} \prod_{i=1}^m u_i^{-p} \dd u_i  
\leq  \frac{h^{1-p}}{p-1}
\frac{    q^{(1-p)m}   }{(m-1)!} \big[ \log (  2^m/h)\big]^{m-1} 
\end{equation}
and then set $\bc_q :=  c_q \, q^{1-p}$ (which verifies $\lim_{q\to \infty} \bc_q =0$).
First of all, notice that by a change of variable $v=u/2q$ it is sufficient to prove~\eqref{fortailzzz1} only in the case  $q=1/2$.
We set, for all $h\in (0,1)$,
\[
p_k(h):= (p-1) h^{p-1} \int_{(0,1)^k}  
\ind_{\big\{   \prod^k_{j=1}u_j \ge  h\big\}} \prod_{i=1}^k u^{-p}_i \dd u_i \, ,
\]
 so to obtain~\eqref{fortailzzz1} with $q=1/2$ we need to show that for $k\geq 1$
\begin{equation}
\label{fortailzzzzzzzzz}
 p_k(h)\le \frac{1}{(k-1)!}(\log (1/h))^{k-1}  \qquad \forall\, h \in (0,1)
 \end{equation}
and apply it to $2^{-m}h$ instead of $h$.
By a direct calculation, we have  $p_1(h) = 1-h^{p-1} \leq 1$ for all $h\in(0,1)$, which gives the result for $k=1$.
Then we can proceed by induction. Integrating with respect to the value of $u_1$ and using the change of variable $v=\log(u_1/h)$, we obtain
\begin{equation*}
p_{k+1}(h)= h^{p-1}\int_{h}^1 u_1^{-p} \left(\frac{h}{u_1}\right)^{1-p} p_{k}(h/u_1) \dd u_1
= \int_{0}^{\log (1/h)}  p_{k}(e^{-v}) \dd v.
\end{equation*}
From this and $p_1(h)\le 1$  we easily obtain~\eqref{fortailzzzzzzzzz} by induction.

To obtain \eqref{secondline} from \eqref{firstline1},  
we just use that $e^{\gep t} \geq  \frac{1}{(m-1)!} (\gep t)^{m-1}$  for any $t>0$ and any $m\geq 1$,   by a Taylor expansion: applying this to $t= \log(2^m/h)$, we get the bound~\eqref{secondline}.
\end{proof}

\begin{proof}[Proof of Lemma~\ref{lem:tailzzz2}]
Similarly to Lemma~\ref{lem:tailzzz1},
thanks to \eqref{plusdidee2} we only have to prove
\begin{equation}
\label{fortailzzz2}
\int_{(0,q)^m}  \ind_{\big\{\prod^m_{j=1}u_j \le  h\, q^m\big\}} \prod_{i=1}^m u^{1-p}_i \dd u_i \le   h^{2-p} \, q^{(2-p)m}\sum_{\ell=1}^{m}\frac{(2-p)^{-\ell}}{(m-\ell)!}  \big[\log (1/h)\big]^{m-\ell} 
\end{equation}
 and then set $\bC_q := q^{2-p}$.
Again, by a change of variable it is sufficient to prove~\eqref{fortailzzz2} only in the case $q=1$.
We set, for all $h\in (0,1)$,
\begin{equation*}
\tilde p_k(h):= h^{p-2}\int_{(0,1)^k} 
 \ind_{\big\{\prod^k_{j=1}u_j \le  h\big\}} \prod_{i=1}^k u^{1-p}_i \dd u_i ,
\end{equation*}
and a direct calculation gives that $\tilde p_1(h)=(2-p)^{-1}$,
which yields the result for $m=1$. 
For the induction step, decomposing according  to whether $u_1 \leq h$ or $u_1>h$, we get that 
\begin{align*}
\tilde p_{k+1}(h) &= h^{p-2} \left[ \int^{h}_0 u_1^{1-p}\dd u_1 \left(\int_{0}^1 v^{1-p}\dd v\right)^k +  \int^1_{h} u_1^{1-p}\left(\frac{h}{u_1} \right)^{2-p} \tilde p_k(h/u_1) \dd u_1  \right] \\
& = \left(\frac{1}{2-p}\right)^{k+1}+  \int_0^{\log(1/h)}   \tilde p_k(e^{-v}) \dd v,
\end{align*}
where we also used a change of variable $v=\log(u_1/h)$ for the last identity.
From this we easily obtain by induction
that
\begin{equation*}
\tilde p_k(h) \le \sum_{\ell=1}^{k} \frac{(2-p)^{-\ell}}{(k-\ell)!} (\log (1/h))^{k-\ell} \, ,
\end{equation*}
which is the desired result.
Now, to obtain~\eqref{thirdline} from~\eqref{firstline2}, we use the inequality  $e^{\gep t} \geq \frac{(\gep t)^{m-\ell}}{(m-\ell)!}$ with $t=\log (1/h)$ to get that for any $0<\gep<2-p$
\begin{equation}
\sum_{\ell=1}^{m}\frac{(2-p)^{-\ell}}{(m-\ell)!}  \big[\log (1/h)\big]^{m-\ell} \le \sum_{\ell=1}^{ m} \left(\frac{\gep}{2-p}\right)^{\ell} \gep^{ -m} h^{-\gep}  \le  \frac{\gep}{ 2-p-\gep}\,  \gep^{ -m} h^{-\gep} .
\end{equation}
This concludes the proof of \eqref{thirdline}.
\end{proof}

\subsubsection{Proof of Proposition \ref{lesper}}

Note that as we have already proven \eqref{complea} and since $\cZ^{\go,[a,q)}_{\beta} \geq \hat \cZ^{\go,a}_{\beta,q}$, it is sufficient to prove that 
 \begin{equation}
 \lim_{q \to \infty} \sup_{a\in(0,1]} \bbE\Big[  \cZ^{\go,[a,q)}_{\beta} -\hat \cZ^{\go,a}_{\beta,q} \Big]=0
\end{equation}
 Decomposing over the cardinality of $\sigma$, using Proposition \ref{OKLMecke} as in \eqref{originn} and integrating over the space variable (recall \eqref{denzit}) we obtain
\begin{equation}\label{bbq}
  \bbE\Big[  \cZ^{\go,[a,q)}_{\beta} -\hat \cZ^{\go,a}_{\beta,q} \Big]
= e^{- \gb \kappa_a} \sum_{k\ge 0} \beta^k \int_{\mathfrak{X}^k \times [a,q)^k} \ind_{\cB^{\cc}_q}(\bt,\bu)  \prod_{i=1}^k    u_i \lambda( \dd u_i )  \dd t_i \, ,
\end{equation}
 where we recall that $\cB_q$ has been defined in~\eqref{def:BNcont}.
Here,  with some abuse of notation, we identified $\cB^{\cc}_q$ and its image by the projection $(\bt,\bx,\bu)\mapsto (\bt,\bu)$; 
note that $\cB_q$ does not involve the space variable.
To estimate the above integral, we use a union bound for  $\ind_{(\cB_q)^{\complement}}(\bt,\bu)$. When the value of $k$ is fixed we have
\begin{equation*}
 \ind_{(\cB_q)^{\complement}}(\bt,\bu)
\le  \sum_{m=1}^{k} \sum_{1\leq i_1<\dots <i_m \leq k } \ind_{\left\{  \prod_{\ell=1}^{m} u_{i_\ell} \ge q^{m} \prod_{\ell=1}^{m} (t_{i_\ell}-t_{i_{\ell-1}})^{\gamma}    \right\} }.
\end{equation*}
With this done, we can perform first the integral with respect to $u_j$ and $t_j$ for $j\notin \{i_1, \ldots, i_m\}$:
summing over the number of points $k_i$ that can be fitted between two points $i_\ell< i_{\ell+1}$, we obtain after factorization that the r.h.s.\ in \eqref{bbq}
is smaller than 
\begin{multline}
 e^{- \beta  \kappa_a }  \sum_{m\ge 1} \int_{0<t_1<\dots<t_m<1}  \int_{[a,q)^m}
 \beta^{m} \ind_{\big\{  \prod_{i=1}^{m} u_{i} \ge q^m \prod_{i=1}^{m} (t_{i}-t_{i-1})^{\gamma}\big\}}  \label{eq:resteZ2a}\\
  \times \prod_{i=0}^m \left(  \sum_{k_i=0}^\infty  \beta^{k_i}  \int_{ t_{i}< t^{(i)}_1<\dots<t^{(i)}_{k_i}< t_{i+1}   } \int_{[a,q)^{k_i}}
 \prod_{j=1}^{k_i} 
u^{(i)}_j \lambda( \dd u^{(i)}_j )\dd t^{(i)}_j \right)
\prod_{i=1}^m u_i \lambda(\dd u_i) \dd t_i  \,,
\end{multline}
where we used the convention $t_0=0$ and $t_{m+1}=1$.
Now we can compute explicitly each term of the product in the second line above  
(as in \eqref{kalkul}).
Replacing $q$ by $\infty$ in the domain of integration, which yields an upper bound and makes the computation simpler,
we get, recalling that $\int_{[a,\infty)}  u\lambda(\dd u) = \kappa_a+\mu$,
\begin{equation*}
\sum_{k_i=0}^\infty  \beta^{k_i} \int_{ t_{i}< t^{(i)}_1<\dots<t^{(i)}_{k_i}< t_{i+1}   }\int_{[a,\infty)^{k_i}}
 \prod_{j=1}^{k_i} 
u^{(i)}_j \lambda( \dd u^{(i)}_j ) \dd t^{(i)}_j 
= e^{ \beta (\kappa_a+\mu) (t_{i+1}-t_i)}.
\end{equation*}
 The product of these terms
gives a factor $e^{\gb(\kappa_a+\mu)}$
and we therefore get the inequality
\begin{multline}\label{trukkz}
  \bbE\Big[  \cZ^{\go,[a,q)}_{\beta} -\hat \cZ^{\go,a}_{\beta,q} \Big]\\
  \le e^{\gb \mu}  \sum_{m\ge 1} \int_{\mathfrak{X}^m\times [a,q)^m}
\beta^{m} \ind_{\big\{  \prod_{i=1}^{m} u_{i} \ge q^m \prod_{i=1}^{m} (t_{i}-t_{i-1})^{\gamma}\big\}} 
\prod_{i=1}^m u_i \lambda(\dd u_i) \dd t_i  \,,
\end{multline}
The r.h.s.\ in \eqref{trukkz}  can be bounded above using Lemma~\ref{lem:tailzzz1}. More specifically we use \eqref{secondline} to bound the integral over $u_i$, setting $a$ to $0$ to obtain an upper bound that does not depend on~$a$. We fix $\gep$ small enough so that $\gamma(p+\gep-1)<1$ (recall that we have $\gamma(p-1)<1$ by assumption) and by~\eqref{secondline} we obtain 
\begin{equation}
\label{avantgamma}
\int_{[a,q)^m} \ind_{\big\{  \prod_{i=1}^{m} u_{i} \ge q^m \prod_{i=1}^{m} (t_{i}-t_{i-1})^{\gamma}\big\}} 
\prod_{i=1}^m u_i \lambda (\dd u_i )  \leq \frac{ (\bc_q/\gep)^m}{p-1}    \prod_{i=1}^{m} (t_{i}-t_{i-1})^{\gamma(1-p-\gep)} \, .
\end{equation}
We therefore obtain that \eqref{trukkz} is smaller than

\begin{multline}
 \frac{e^{\gb\mu}}{p-1} \sum_{m\ge 1}  \big( \beta  \bc_q/\gep \big)^{m} 
\int_{\mathfrak{X}^m}  \prod_{i=1}^m (t_{i}-t_{i-1})^{\gamma(1-p-\gep)}\dd t_i \\
 =   \frac{e^{\gb\mu}}{p-1}  \sum_{m\ge 1} \big( \beta  \bc_q/\gep \big)^{m}    \frac{\Gamma(1-\gamma(p+\gep-1))^m}{ \gG( m(1-\gamma(p+\gep-1))+1)} \, ,
\label{lafonctiongamma}
\end{multline}
where in the last equality we used that $\gamma(p+\gep-1)<1$.
The sum in the r.h.s.\ of \eqref{lafonctiongamma} is finite for any value of $q\ge 1$, and can be made arbitrarily small by choosing $q$ large (with $\gep$ and $\gamma$ fixed),
since Lemma~\ref{lem:tailzzz1} ensures that the constant $\bc_q$
goes to $0$ as $q\to\infty$.
\qed

\subsubsection{Proof of Proposition \ref{llavar}}
 
 We have 
 \begin{equation}
 \label{eq:Zcarre}
    \bbE \Big[  \big(\hat \cZ^{\go,a}_{\beta,q} \big)^2 \Big]=\bbE \bigg[  \sum_{\sigma_1, \sigma_2 \in  \cP(\go)}  w_{a,\beta}(\sigma_1)w_{a,\beta}(\sigma_2) \ind_{\cB_q}(\sigma_1)\ind_{\cB_q}(\sigma_2) \bigg].
 \end{equation}
We  use again the notation $\varsigma= \sigma_1\cap \sigma_2$ and $\varsigma_i=\sigma_i\setminus \varsigma$. Let us relax the condition $\sigma_1,\sigma_2\in \cB_q$ to obtain something which is easier to handle in the computation.  Formally the divergence of the second moment of the (unrestricted) partition is obtained when integrating the contribution of the environment at the points in the replica intersection $\sigma_1\cap \sigma_2$, hence we should not be losing too much if we restrict our constraint to $\varsigma$.
With this in mind, we set
\[
\cD_q = \Big\{ \varsigma = (t_i,x_i,u_i)_{i=1}^{|\varsigma|}  \, \colon \max_{1\leq i\leq |\varsigma|}u_i <  q  \ , \   \prod_{i=1}^{|\varsigma|} u_i \leq  q^{|\varsigma|}\prod_{i=1}^{|\varsigma|} (t_{i}-t_{i-1})^{\gamma} \Big\} \, .
\]
and observe that 
\begin{equation}
\bbE \left[  (\hat \cZ^{\go,a}_{\beta,q})^2 \right]\le\bbE \bigg[  \sum_{\varsigma_1, \varsigma_2,\varsigma \in  \cP(\go) \text{ disjoints}}  w_{a,\beta}(\varsigma_1\cup \varsigma)w_{a,\beta}(\varsigma_2\cup\varsigma) \ind_{\cD_q}(\varsigma) \bigg].
\end{equation}
Using again Proposition \ref{OKLMecke} as in \eqref{zruc} and using \eqref{zruc2} 
to integrate over all the variables associated with $\xi_1$ and $\xi_2$ we obtain 
\begin{multline}
\label{avantlasdx}
\bbE \bigg[  \sum_{\varsigma_1, \varsigma_2,\varsigma \in  \cP(\go) \text{ disjoints}}  w_{a,\beta}(\varsigma_1\cup \varsigma)w_{a,\beta}(\varsigma_2\cup\varsigma) \ind_{\cD_q}(\varsigma) \bigg]\\
=   \sum_{m \ge 0}  (\beta e^{2\gb \mu} )^{m}  \int_{\mathfrak{X}^m \times (\bbR^d)^m \times [a,\infty)^m} \ind_{\cD_q}(\bt,\bx,\bu)
  \varrho(\bt,\bx)^2   \prod_{i=1}^m
    u_i^2 \lambda(\dd u_i)  \dd t_i  \dd x_i  \, .
 \end{multline}
Now integrating over $\bx$ and using \eqref{lekalek}, we obtain that the r.h.s.\ above is equal to 
\begin{equation}\label{lasdx}
  \sum_{m \ge 0} \left(  \frac{\gb e^{2\gb \mu}}{ 2^d\pi^{d/2}}  \right)^{m}  \int_{ \mathfrak{X}^m \times [a,q)^m}  \ind_{\big\{\prod_{i=1}^m u_i\le  q^m \prod_{i=1}^m (t_i-t_{i-1})^{\gamma}\big\}}\prod_{i=1}^m
     \frac{\dd t_i}{(t_i-t_{i-1})^{d/2}}  u^2_i \lambda(\dd u_i)\, .
\end{equation}

To estimate the integral over $u_1, \ldots, u_m$, we use  \eqref{thirdline}  in Lemma~\ref{lem:tailzzz2}. We integrate over $(0,q)$ to get an upper bound which is uniform in $a$.
We fix $\gep$ such that $\frac d2- (2-p-\gep)\gamma<1$ (recall that $\frac d2- (2-p)\gamma<1$ by assumption) and by~\eqref{thirdline} we obtain 
\[
\int_{[a,q)^m} \ind_{\big\{\prod_{i=1}^m u_i\le  q^m \prod_{i=1}^m (t_i-t_{i-1})^{\gamma}\big\}}\prod_{i=1}^m u^2_i \lambda( \dd u_i )  
\le  \frac{ (\bC_q/\gep)^m }{2- p-\gep}  \prod_{i=1}^m (t_i-t_{i-1})^{  \gamma  (2-p-\gep) } \, .
\]
Reinjected in \eqref{lasdx}, this yields
\begin{equation}
\label{secondmomentlabel}
    \bbE \Big[  \big( \hat \cZ^{\go,a}_{\beta,q} \big)^2 \Big]
\le \sum_{m \ge 0} \frac{ \big(  2^{-d} \pi^{-d/2} \beta  e^{\gb\mu}   \bC_q/\gep\big)^m }{2-p-\gep} \int_{\mathfrak{X}^m} \prod_{i=1}^m 
     \frac{\dd t_i}{(t_i-t_{i-1})^{ \frac d2- (2-p- \gep)\gamma} } \, .
     \end{equation}
To conclude we just need to show that the above sum is finite. 
To check this, we simply observe that, thanks to the fact that $\frac{d}{2} - \gamma(2-p -\gep) <1$, the integral in $\bt$  is equal to 
\[
\frac{\gG(1- (2-p-\gep)\gamma+d/2)^m}{\Gamma(m[1- (2-p-\gep) \gamma+d/2]+1)} \, ,
\]
and that the corresponding series in $m$ has an infinite radius of convergence.
\qed

\begin{rem}
For the proof of Propositions~\ref{lesper}-\ref{llavar}
in the case of the point-to-point partition function $\cZ_{\gb}^{\go,a} (t,x)$,
we need to slightly change the definition
of $\cB_q$ to take care of the end point, setting 
\[
\cB_q:=\Big\{ \sigma \in \cP(\go) \ : 
 \forall \sigma'\subset \sigma, \  \prod_{j=1}^{|\sigma'|} u'_j < q^{|\sigma'|}\prod_{j=1}^{|\sigma'|+1} (t'_j-t'_{j-1})^{\gamma} \Big\},
\]
with the convention that $t'_{|\sigma'|+1} =t$.
For the proof of Proposition~\ref{lesper},
there is an additional term $\rho_t(x)$
in~\eqref{bbq}, coming from the integration of
$\varrho(\bt,\bx) \rho_{t-t_k}(x-x_k)$ over the space variable, but the main
difference comes in~\eqref{avantgamma} when applying Lemma~\ref{lem:tailzzz1}.
The computation in~\eqref{lafonctiongamma} is different
(we have the integral of $\prod_{i=1}^{m+1} (t_i-t_{i-1})^{\gamma(1-p-\gep)}$ over~$\mathfrak{X}^m$
after scaling by $t$ if $t\neq 1$)
but the conclusion is identical.
For the proof of Proposition~\ref{llavar},
there is an additional term $\rho_{t-t_m}(x-x_m)^2$
in~\eqref{avantlasdx}. We proceed as in \eqref{inteinte} when integrating on $x_1,\dots,x_m$
and this yields only an extra multiplicative term $C e^{- \|x\|^2/t} (t-t_m)^{-d/2}$.
Then, the integral in~\eqref{secondmomentlabel}
is different
(we have the integral of $\prod_{i=1}^{m+1} (t_i-t_{i-1})^{ -\frac d2 +\gamma(p-2+\gep)}$ over~$\mathfrak{X}^m$) but
this does not change the conclusion.
\end{rem}

\subsection{Finiteness of partition functions}
\label{sec:generalnoiseproof}

We are going to prove here simultaneously Proposition \ref{prop:Zfinite}, the first part of Proposition \ref{th:superlem}, \eqref{lafinite} and Proposition \ref{lem:SHE}.
Note that the fact that $\cZ^{\go,a}_{\beta}$ and other partition functions are positive is a direct consequence of the rewriting given in  \eqref{altexp}. It remains to prove that under assumption~\eqref{hypolarge} they are almost surely finite. 
This is the following statement.
\begin{proposition}\label{polopopo}
If \eqref{hypolarge} is satisfied, then for every $u_0$ satisfying  \eqref{hypou0}  (with $T=1$) we have  for any $t\in [0,1]$, almost surely
 \begin{equation}
  \int_{\bbR^d} \cZ^{\go,a}_{\beta}(t,x)|u_0|(\dd x)<\infty \, .
 \end{equation}
In particular the cases $u_0=\delta_x$ and $u_0(\dd x)=\dd x$ respectively give
\begin{equation}\label{deuduncou}
 \cZ^{\go,a}_{\beta}(t,x)<\infty \quad \text{ and } \quad  \cZ^{\go,a}_{\beta}<\infty.
\end{equation}
\end{proposition}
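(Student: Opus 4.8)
The plan is to isolate the atoms of small amplitude, handled by a first--moment bound, from those of large amplitude, handled by conditioning; assumption~\eqref{hypolarge} enters only at the very end.

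Write $\go=\go^{\sm}\sqcup\go^{\larg}$, where $\go^{\sm}$ consists of the atoms $(t,x,\ups)\in\go$ with $\ups<1$ and $\go^{\larg}$ of those with $\ups\geq 1$. Since $\lambda([a,1))\leq\lambda([a,\infty))<\infty$, the Lévy measure governing $\go^{\sm}$ has a vanishing first moment at infinity, so Proposition~\ref{prop:contiZfini} and Remark~\ref{rem:contiZfini} apply to the environment $\go^{\sm}$: for all $0\leq s<s'\leq1$ and $y,y'\in\bbR^d$ the point--to--point partition function $\cZ^{\go^{\sm},a}_{\beta}[(s,y),(s',y')]$ is a.s.\ finite, with $\bbE\big[\cZ^{\go^{\sm},a}_{\beta}[(s,y),(s',y')]\big]\leq C_a\,\rho_{s'-s}(y'-y)$ for some finite $C_a$. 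I would then use the sum--over--configurations representation~\eqref{altexpp2p}: splitting $\sigma\in\cP_{[0,t]}(\go)$ into its large atoms and the stretches of small atoms between consecutive large atoms, and using that both $\varrho$ and the product $\prod_i u_i\ind_{\{u_i\geq a\}}$ factorize over these stretches, one obtains (with $(s_0,y_0)=(0,0)$, $(s_{k+1},y_{k+1})=(t,x)$, all terms non--negative, $c_a<\infty$)
\[
\cZ^{\go,a}_{\beta}(t,x)=\sum_{k\geq0}c_a^{\,k}\sumtwo{(s_j,y_j,v_j)_{j=1}^{k}\subset\go^{\larg}}{0<s_1<\dots<s_k<t}\Big(\prod_{j=1}^{k}v_j\Big)\prod_{j=1}^{k+1}\cZ^{\go^{\sm},a}_{\beta}\big[(s_{j-1},y_{j-1}),(s_j,y_j)\big].
\]
Multiplying by $|u_0|(\dd x)$, integrating, and taking the conditional expectation given $\go^{\larg}$ (Tonelli, then the first--moment bound), the claim reduces to the a.s.\ finiteness, in the realization of $\go^{\larg}$, of
\[
\Psi:=\sum_{k\geq0}\hat c_a^{\,k}\sumtwo{(s_j,y_j,v_j)_{j=1}^{k}\subset\go^{\larg}}{0<s_1<\dots<s_k<t}\Big(\prod_{j=1}^{k}v_j\Big)\rho_{s_1}(y_1)\prod_{j=2}^{k}\rho_{s_j-s_{j-1}}(y_j-y_{j-1})\,g(s_k,y_k),
\]
where $g(s,y):=\int_{\bbR^d}\rho_{t-s}(x-y)\,|u_0|(\dd x)$ and $\hat c_a<\infty$. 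The special cases $u_0=\delta_x$ (giving $g(s,y)=\rho_{t-s}(x-y)$, i.e.\ $\cZ^{\go,a}_\beta(t,x)$ itself) and $u_0=\dd x$ (giving $g\equiv1$, i.e.\ $\cZ^{\go,a}_\beta$) are then covered, yielding~\eqref{deuduncou}.

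The decisive input is that, by the Borel--Cantelli argument recorded in the Remark following Proposition~\ref{prop:toinfinity}, condition~\eqref{hypolarge} ensures that for \emph{every} $\delta>0$ there is an a.s.\ finite $X_\delta$ with $v\leq X_\delta\,e^{\delta(1+\|y\|^2)}$ for all atoms $(s,y,v)\in\go^{\larg}$ with $s\in[0,1]$; moreover condition~\eqref{hypou0} gives that $g$ is finite with $g(s,y)\leq C\,e^{\delta'\|y\|^2}$ for some $\delta'<\tfrac{1}{2T}$. I would fix $\delta$ small enough that $\delta+\delta'<\tfrac{1}{2t}$ and work on the event $\{X_\delta\leq m\}$, on which $v_j\leq m\,e^{\delta(1+\|y_j\|^2)}$, so $\Psi\,\ind_{\{X_\delta\leq m\}}$ is bounded by the same expression with every $v_j$ replaced by $m\,e^{\delta(1+\|y_j\|^2)}$. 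The $\ups$--dependence having disappeared, Mecke's formula (Proposition~\ref{OKLMecke}) applies to $\go^{\larg}$, whose value--intensity $\ind_{\{\ups\geq1\}}\lambda(\dd\ups)$ has \emph{finite} mass; this turns $\bbE[\Psi\,\ind_{\{X_\delta\leq m\}}]$ into a series whose $k$--th term is a constant to the power $k$ times $\int_{0<s_1<\dots<s_k<t}\int_{(\bbR^d)^k}\prod_{j=1}^k e^{\delta\|y_j\|^2}\rho_{s_j-s_{j-1}}(y_j-y_{j-1})\,g(s_k,y_k)\,\dd\by\,\dd\bs$. The point is that, because $\delta+\delta'<\tfrac{1}{2t}$, the spatial integral still telescopes in Chapman--Kolmogorov fashion up to a mild Gaussian perturbation of the kernels --- the weights $e^{\delta\|y_j\|^2}$ are absorbed when integrating successive variables, so in particular no $(s_j-s_{j-1})^{-d/2}$ divergence appears --- leaving a series with infinite radius of convergence. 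Since $\{X_\delta\leq m\}\uparrow\Omega$ as $m\to\infty$, this gives $\Psi<\infty$ a.s.

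I expect the main obstacle to be precisely this last estimate: controlling the simultaneous accumulation, along a chain of many visited large atoms, of the Gaussian rewards $e^{\delta\|y_j\|^2}$ against the iterated heat kernels. A crude application of the reward bound at each visited atom degrades the effective Gaussian decay of the chain and fails once the chain is long, so the quantitative form of the telescoping must be tracked with care; this is feasible only because~\eqref{hypolarge} lets us take $\delta$ arbitrarily small (in particular $<\tfrac{1}{2t}$), reflecting that a large atom of value $v$ is entropically relevant only within distance $O(\sqrt{\log v})$ of the visited region --- exactly what the exponent $(\log\ups)^{d/2}$ encodes. Under the stronger assumption~\eqref{hypochong} one could instead deduce $\bbL_p$--bounds for $\cZ^{\go,a}_\beta$ from results on the SHE in~\cite{Loubert98,Chong17} (cf.\ Remark~\ref{chongsremark}), but the argument above uses only~\eqref{hypolarge} and adapts to the other models of Section~\ref{sec:comments}.
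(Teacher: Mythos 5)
There is a genuine gap at the decisive estimate, i.e.\ precisely at the step you flag as the ``main obstacle''. The per-atom Borel--Cantelli bound $v\le X_\delta e^{\delta(1+\|y\|^2)}$ is indeed a correct consequence of \eqref{hypolarge} (it is the heuristic recorded in the remark after Proposition~\ref{prop:Zfinite}), but it is too weak to close the argument: after replacing each $v_j$ by $m\,e^{\delta(1+\|y_j\|^2)}$ and applying Mecke's formula (Proposition~\ref{OKLMecke}), the $k$-th term of your series contains the spatial integral of $\prod_{j=1}^k e^{\delta\|y_j\|^2}\rho_{s_j-s_{j-1}}(y_j-y_{j-1})$. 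Integrating the variables backwards with the identity $\int_{\bbR^d}\rho_s(y-x)e^{\gamma\|y\|^2}\dd y=(1-2\gamma s)^{-d/2}\exp\bigl(\gamma\|x\|^2/(1-2\gamma s)\bigr)$ (finite only when $2\gamma s<1$), the accumulated tilt increases by at least $\delta$ at each step, so it reaches order $k\delta$ at the first variable. Consequently, as soon as $k\gtrsim 1/(2\delta t)$ there is a positive-measure set of time configurations on which the $y$-integral is $+\infty$, and the $k$-th Mecke term is infinite. Taking $\delta$ small only postpones this to longer chains; since the sum runs over all $k$, the bound $\bbE[\Psi\,\ind_{\{X_\delta\le m\}}]<\infty$ you claim is false as stated. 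The root cause is that bounding the energy atom by atom yields $\sum_j\log v_j\le k\log X_\delta+\delta\sum_j\|y_j\|^2$, and $\sum_j\|y_j\|^2$ is not comparable to the path entropy $\sum_j\|y_j-y_{j-1}\|^2/(s_j-s_{j-1})$ uniformly in $k$ (one loses a factor of the chain length), so the Gaussian decay of long chains is destroyed. (A secondary slip: for $u_0=\delta_x$ the function $g(s,y)=\rho_{t-s}(x-y)$ is not bounded by $Ce^{\delta'\|y\|^2}$ uniformly in $s<t$, though this is repairable.)

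The paper's proof supplies exactly the missing ingredient: instead of a pointwise bound at each atom, Lemma~\ref{entross} gives an a.s.\ comparison valid simultaneously for \emph{all} finite chains $\sigma\subset\go$, namely $G(\sigma)=\sum_j\log u_j\le \cT(\go)+\tfrac{\gep}{2}H(\sigma)$ with $H(\sigma)$ the total increment entropy and $\cT(\go)$ a single a.s.\ finite random constant not multiplied by $|\sigma|$. This lets one absorb the energy into an inflation of the heat kernels' variance by $(1-\gep)^{-1}$, which telescopes exactly (no per-atom loss), after which Mecke's formula and \eqref{hypou0} give a convergent series. Proving this uniform entropy--energy comparison is the real content: it requires the multi-point Borel--Cantelli argument of Lemma~\ref{bcantelli} (grouping atoms by $\log\ups\in[e^{k-1},e^k)$, covering by cylinders, and ruling out $n$-point paths of entropy below $Kn^\theta e^k$), not the single-atom estimate you invoke. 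Your small/large-atom splitting and conditioning on $\go^{\larg}$ is a reasonable reorganization of the first-moment part, but without a chain-level substitute for your step 6 --- essentially Lemma~\ref{entross} restricted to atoms with $\ups\ge1$ --- the proof does not go through.
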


\begin{rem}\label{proprop}
Proposition \ref{prop:Zfinite} and  \eqref{lafinite} are direct consequences of \eqref{deuduncou}.
Proposition~\ref{lem:SHE} also follows by observing that by time reversal
and translation invariance we have the following identity in distribution
 $$ \int_{\bbR^d} \cZ^{\go,a}_{\beta}[(0,y),(t,x)]u_0(\dd y) \stackrel{(d)}{=} \int_{\bbR^d} \cZ^{\go,a}_{\beta}(t,y-x)u_0(\dd y) $$
  and thus we just need to apply the result to $u_0$ translated by $x$.
\end{rem}

\begin{proof}[Proof of Proposition \ref{polopopo}]
For $\sigma = (t_i,x_i,u_i)_{i=1}^{|\sigma|} \in \cP(\go)$
 (with $t_k\leq 1$),
we define 
\begin{equation}
\label{ent-ener}
 H(\sigma):= \sum_{i=1}^{|\sigma|} \frac{\|x_i-x_{i-1}\|^2}{ t_i-t_{i-1} } 
 \quad \text{ and } \quad     G(\sigma):= \sum_{i=1}^{|\sigma|} \log (u_i) \, .
\end{equation}
The quantity $H(\sigma)$ and $G(\sigma)$ corresponds roughly to the cost and gains
at the exponential level to visit all the points in $\sigma$. We refer to $H(\sigma)$ as the entropy of the path.
Our statement is an almost direct consequence of the following lemma.
 \begin{lemma}\label{entross}
If \eqref{hypolarge} holds, for any fixed  $\gep>0$, we have almost surely
\begin{equation}
\cT(\go) := \sup_{\sigma\in \cP(\go)} \big\{  G(\sigma) - \tfrac \gep  2   H(\sigma) \big\} <\infty.
\end{equation}
 \end{lemma}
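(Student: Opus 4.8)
The plan is to prove Lemma~\ref{entross} by a Borel--Cantelli / union bound argument over a suitable dyadic decomposition of space-time, combined with the entropy--energy heuristic already foreshadowed in the Remark after Proposition~\ref{prop:toinfinity}. The key observation is that $\cT(\go)$ is a supremum over \emph{finite} subsets $\sigma$ of $\go$, but because both $G$ and $H$ are additive over points, the supremum over collections decomposes into a supremum over individual points: if we define, for a single atom $(t,x,u)\in\go$,
\begin{equation}
 \phi_\gep(t,x,u) := \log u - \tfrac{\gep}{2}\,\frac{\|x\|^2}{t_{\mathrm{next}}-t_{\mathrm{prev}}}\,,
\end{equation}
then we need to be a bit more careful since the ``entropy'' $H(\sigma)$ couples consecutive points. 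The cleaner route is: for \emph{any} $\sigma\in\cP(\go)$ and any point $(t_i,x_i,u_i)\in\sigma$, one has $\frac{\|x_i-x_{i-1}\|^2}{t_i-t_{i-1}}\geq 0$, so discarding all but one point can only decrease $H$; but we want an \emph{upper} bound on $G-\frac\gep2 H$, so we cannot simply drop points. Instead I would argue that $G(\sigma)-\frac\gep2 H(\sigma) \leq \sum_{i} \bigl(\log u_i - \frac\gep4 \frac{\|x_i\|^2}{t_i} - \frac\gep4\frac{\|x_{i-1}\|^2}{t_i}\bigr)$ is \emph{not} quite right either; rather, using $\|x_i-x_{i-1}\|^2 \geq \frac12\|x_i\|^2 - \|x_{i-1}\|^2$ repeatedly one can telescope and bound $H(\sigma) \geq c\sum_i \|x_i\|^2/t_{i}$ up to boundary terms — but the honest and robust approach is the following.

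The main step is to show that almost surely only finitely many atoms $(t,x,u)\in\go$ with $t\in[0,1]$ satisfy $\log u \geq \frac\gep4 \|x\|^2$ (equivalently $u\geq e^{\gep\|x\|^2/4}$), and that moreover $\sup\{\log u - \frac\gep4\|x\|^2 : (t,x,u)\in\go, t\in[0,1]\}<\infty$. Indeed, by Mecke's formula (Proposition~\ref{OKLMecke}, in the form~\eqref{nonfixedcard} with $k=1$) the expected number of atoms with $t\in[0,1]$, $\|x\|\in[R,R+1]$ and $u\geq e^{\gep R^2/4}$ is, up to a dimensional constant, $C R^{d-1}\,\gl\bigl([e^{\gep R^2/4},\infty)\bigr)$. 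Summing over integer $R$ and using the layer-cake identity $\int_{[1,\infty)} (\log v)^{d/2}\gl(\dd v) = \tfrac{\gep^{d/2}}{(2d)^{d/2}}\cdot\text{const}\cdot \sum_R R^{d-1}\gl([e^{\gep R^2/4},\infty)) +O(1)$, the assumption~\eqref{hypolarge} ensures this sum is finite, so by Borel--Cantelli only finitely many such atoms exist; since each atom contributes a finite value, the supremum of $\log u - \frac\gep4\|x\|^2$ over $\{t\in[0,1]\}$ is a.s.\ finite. Call this finite random constant $K_\gep(\go)$.

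It then remains to deduce Lemma~\ref{entross} from the bound $\log u \leq \frac\gep4\|x\|^2 + K_\gep(\go)$ valid for every atom with $t\in[0,1]$. Given $\sigma=(t_i,x_i,u_i)_{i=1}^{|\sigma|}\in\cP(\go)$, we have $G(\sigma) \leq \frac\gep4\sum_{i=1}^{|\sigma|}\|x_i\|^2 + |\sigma| K_\gep(\go)$. The difficulty is the term $|\sigma| K_\gep$, which is unbounded in $|\sigma|$; to absorb it, I would \emph{also} require (by the same Borel--Cantelli argument applied with a stronger threshold $u\geq e^{\gep\|x\|^2/8}$ plus the summability of $\gl$ near $\infty$, or simply noting that all but finitely many atoms with $t\in[0,1]$ and $u\geq 1$ lie in a compact region while atoms with $u<1$ give $\log u<0$) that in fact $\sum_i \log u_i \leq \frac\gep4 H(\sigma) + K'_\gep(\go)$ directly, using $\|x_i\|^2 \leq \text{const}\cdot\bigl(\sum_{j\leq i}\|x_j-x_{j-1}\|^2\bigr)\cdot i \leq \text{const}\cdot i \cdot t_i \cdot \frac{H(\sigma)}{\cdots}$ — this is getting delicate, so the cleanest finish is: discard from $\sigma$ the finitely many atoms with $u\geq 1$ lying outside a fixed compact ball $B$ (there are a.s.\ finitely many, contributing a bounded total $\log$-value, say $\leq K_\gep$), and for the remaining atoms either $u<1$ (so $\log u\leq 0$) or $x\in B$ with $u<e^{\gep\,\mathrm{diam}(B)^2/4}$, and among \emph{these} there are a.s.\ only finitely many with $u\geq 1$ since $\gl([1,\infty))<\infty$ and $[0,1]\times B$ has finite measure. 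Hence $G(\sigma)$ is bounded by a $\sigma$-independent constant on the sub-collection of ``positive'' atoms, and $-\frac\gep2 H(\sigma)\leq 0$, giving $\cT(\go)\leq K''_\gep(\go)<\infty$ a.s. The main obstacle is bookkeeping the interplay between the additive entropy $H(\sigma)$ (which genuinely couples consecutive points) and the need for a bound uniform over the cardinality $|\sigma|$; I expect the clean resolution is to separate atoms with $u<1$ (harmless) from the a.s.-finitely-many atoms with $u\geq 1$ (handled by Mecke + Borel--Cantelli using precisely~\eqref{hypolarge}), rather than trying to track the $H$-$G$ balance point by point.
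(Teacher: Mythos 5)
There is a genuine gap, and it sits exactly where you flag the difficulty. Your single-point Borel--Cantelli step (finitely many atoms with $t\in[0,1]$ and $\log u\ge \tfrac{\gep}{4}\|x\|^2$, hence $\log u\le \tfrac{\gep}{4}\|x\|^2+K_\gep(\go)$ for all atoms) is correct and is indeed the content of the heuristic remark in the paper, but it cannot by itself yield the lemma: the additive error $|\sigma|K_\gep(\go)$ is unbounded in $|\sigma|$, and $\sum_i\|x_i\|^2$ is \emph{not} controlled by $H(\sigma)$ when atoms cluster. For instance, $n$ atoms all located near a single distant point at distance $R$ give $H(\sigma)\approx R^2$ (one big jump, then tiny ones) while $\sum_i\|x_i\|^2\approx nR^2$, so even with the single-atom bound one could a priori have $G(\sigma)\approx n\tfrac{\gep}{4}R^2\gg\tfrac{\gep}{2}H(\sigma)$; ruling this out requires a \emph{multi-point} estimate, not a per-atom one. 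Your proposed finish then rests on two incorrect moves: (i) the claim that a.s.\ only finitely many atoms with $u\ge1$, $t\in[0,1]$ lie outside a fixed compact ball $B$ is false (their number is Poisson with infinite mean, since the spatial intensity is Lebesgue on all of $\bbR^d$ and $\gl([1,\infty))>0$); and (ii) you then bound $-\tfrac{\gep}{2}H(\sigma)\le 0$ and discard the entropy term altogether, but that term is precisely what must compensate the infinitely many moderate atoms ($1\le u<e^{\gep\|x\|^2/4}$) far from the origin, which your "sub-collection of positive atoms" argument never handles. So the supremum is not reduced to a bounded quantity, and the proof does not close.

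For comparison, the paper's proof is built around the multi-point estimate you are missing. It slices the atoms into classes $\go_k=\{\log \ups\in[e^{k-1},e^k)\}$, whose intensities $\gl_k$ satisfy $\sum_k\gl_k e^{dk/2}<\infty$ (this is exactly \eqref{hypolarge}), and proves (Lemma \ref{bcantelli}) that a.s.\ for all large $k$ or $n$, \emph{every} path of $n$ points of $\go_k$ has entropy at least $Kn^{\theta}e^{k}$ with some $\theta>1$; the proof is a pigeonhole argument (a low-entropy $n$-point path must put three consecutive points in a small space-time cylinder, an event whose probability is cubic in $\gl_k e^{dk/2}$ and summable). Feeding this superlinear-in-$n$ entropy lower bound into $G(\sigma)\le\sum_k e^k n_k(\sigma)$ gives $n_k(\sigma)\lesssim (K^{-1}H(\sigma)e^{-k})^{1/\theta}$ and hence $G(\sigma)\le C(\go)+cK^{-1}H(\sigma)$, which yields the lemma upon choosing $K$ large. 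Some device of this kind — controlling collections of atoms jointly, with the entropy cost growing faster than linearly in the number of visited atoms of a given amplitude class — is unavoidable here, and it is the step your proposal would need to supply.
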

 \noindent
We postpone the proof of this lemma and first deduce the proposition from it.

 \smallskip
 
Using the representation \eqref{altexp} of the partition function (recall that $\go^{(a)}$ denote the set of points in the environment with  jump size larger than $a$)
and applying Lemma~\ref{entross},
we can write for $t\le 1$
\begin{align*}
e^{\beta \kappa_a}\cZ^{\go,a}_{\beta}&(t,x)=\sum_{\sigma\in \cP_{[0,t]}(\go^{(a)})}\beta^{|\sigma|} e^{G(\sigma)} \varrho(\bt,\bx)\rho_{t-t_k}(x-x_k)\\
&\le  \sum_{\cP_{[0,t]}(\go^{(a)})}\beta^{|\sigma|} e^{\cT+ \frac \gep 2 H(\sigma)} \prod_{i=1}^{|\sigma|} \varrho(\bt,\bx)\rho_{t-t_k}(x-x_k)\\
&= e^{\cT} \sum_{\sigma\in \cP_{[0,t]}(\go^{(a)})} \Big( \frac{\beta}{(1-\gep)^{d/2}} \Big)^{|\sigma|} \rho_{t-t_k}(x-x_k)  \prod_{i=1}^{ |\sigma|} \frac{e^{- (1-\gep)\frac{\|x_i-x_{i-1} \|^2}{2\left(t_i-t_{i-1}\right)}}}{ \big( 2\pi (t_i-t_{i-1}) /(1-\gep)\big)^{d/2}}  \, ,
 \end{align*}
 so that setting   $\vartheta =\vartheta(\gep): = (1-\gep)^{-1}$  and assuming that  $\gep<1/2$ 
 \begin{equation}
e^{-\cT} \cZ^{\go,a}_{\beta}(t,x)  \leq   e^{ -\gb \kappa_a} \sum_{\sigma\in \cP_{[0,t]}( \go^{(a)})}( 2^{d/2}\beta)^{|\sigma|} \varrho(  \vartheta  \bt,\bx) \vartheta^{d/2} \rho_{  \vartheta  (t-t_k)}(x-x_k)\,.
 \end{equation}
By Mecke's formula (Proposition~\ref{OKLMecke}) we conclude 
that
\begin{equation}
\label{lastfinite}
 \bbE\left[e^{-\cT(\go)}\int_{\bbR^d} \cZ^{\go, a}_{\beta}(t,x) u_0(\dd x)\right]\le   2^{d/2} e^{\beta [ 2^{d/2} \gl([a,\infty))-\kappa_a ]}\int_{\bbR^d } \rho_{  \vartheta t}(x) u_0(\dd x) \, .
\end{equation}
Using assumption \eqref{hypou0} on $u_0$ and
fixing $\gep$ sufficiently small  (\textit{i.e.}\ $\vartheta$ close enough to $1$) so that $\rho_{ \vartheta t}(x)$
is integrable w.r.t.\ $|u_0|$ (recall $t\leq T =1$),
we get that~\eqref{lastfinite} is finite.
This proves that  $\int_{\bbR^d} \cZ^{\go,a}_{\beta}(t,x)u_0(\dd x)<\infty$ almost surely  thanks to Lemma~\ref{entross}.
\end{proof}

\begin{proof}[Proof of Lemma \ref{entross}]
First of all, notice that if $\sigma = (t_i,x_i,u_i)_{i=1}^{|\sigma|}$
has a point with $u_i < 1$, 
then by removing this point from $\sigma$
we obtain a set $\sigma'$ with (strictly) smaller entropy $H(\sigma') < H(\sigma)$ and (strictly) higher energy $G(\sigma') > G(\sigma)$.
In the supremum, we therefore can restrict ourselves to points
 $(t,x,\ups) \in \go$ with $\ups\geq 1$. 
Let us now separate points according to  the size of their jump.
For each $k\ge 1$ we define
\[
\go_k := \big\{   (t,x, \ups)\in \go  \colon   t\in[0,1]  \text{ and } \log \ups \in [ e^{k-1} , e^{k}  )  \big\} \, ,
\]
and we let $\pi(\go_k)$ be its projection on the first two coordinates.
Note that the $\pi(\go_k)$'s are independent Poisson processes on  $[0,1]\times \bbR^d$ with respective intensity $\lambda_k \dd t\otimes \dd x$, where
\[
\gl_k:= \gl\big( \big[\exp(e^{k-1}), \exp(e^k) \big) \big) \, .
\]
One can then easily see that our assumption \eqref{hypolarge}
is equivalent to 
having
\begin{equation}
\label{eq:condition}
 \sum_{k\ge 1} \gl_k e^{ \frac{dk}{2}}<\infty \, .
\end{equation}
Our proof is based on the following statement, proven below. 
\begin{lemma}\label{bcantelli}
Fix $\theta\in \left(1, 1+\frac{1}{d}\right)$ and let $K>0$ be arbitrary.
Let $A_{k,n}$ be the event that there exists a path of $n$ points in $\go_k$ whose entropy is smaller than  $K n^{\theta} e^{k}$,
\textit{i.e.}
\[
A_{k,n}  = \bigcup_{  \sigma \in \cP(\go_k),\, |\sigma|=n }  \big\{  H(\sigma)   \leq K n^{\theta} e^k \big\} \, .
\]
Then, assuming~\eqref{hypolarge}, we have $\sum_{k,n} \bbP(A_{k,n})<\infty$.
\end{lemma}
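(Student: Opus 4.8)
The plan is to prove Lemma~\ref{bcantelli} by a union bound over the (random) number of points in $\go_k$ combined with an explicit estimate on the probability that $n$ given space-time points can be connected by a path of small entropy. Fix $k$ and $n$. Conditionally on the event $\{\#\go_k = N\}$ (where $N$ has Poisson law with parameter $\gl_k$), the space-time locations of the points of $\go_k$ are $N$ i.i.d.\ uniform points in $[0,1]\times \bbR^d$; however, since $\R^d$ has infinite volume this conditioning is not literally available, so instead I would work directly with the Poisson structure and Mecke's formula (Proposition~\ref{OKLMecke}). Concretely, $\bbP(A_{k,n})$ is bounded by the expected number of $n$-tuples of distinct points of $\go_k$ forming a path of entropy at most $K n^\theta e^k$, i.e.\ by
\begin{equation*}
\bbP(A_{k,n}) \le \frac{\gl_k^n}{n!} \int_{\mathfrak{X}^n} \int_{(\bbR^d)^n} \ind_{\big\{ \sum_{i=1}^n \frac{\|x_i-x_{i-1}\|^2}{t_i-t_{i-1}} \le K n^\theta e^k \big\}} \, \dd \bx \, \dd \bt \, ,
\end{equation*}
where $\dd \bx$ is Lebesgue measure on $(\bbR^d)^n$ and the $1/n!$ accounts for the ordering (and $x_0=0$, $t_0=0$). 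The key is thus to estimate the volume of the set of $\bx$ with bounded ``bridge entropy'' $\sum_i \|x_i-x_{i-1}\|^2/(t_i-t_{i-1})$ for fixed $\bt$, and then integrate over $\bt\in\mathfrak{X}^n$.

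First I would bound the inner space integral. After the linear change of variables $y_i = (x_i - x_{i-1})/\sqrt{t_i-t_{i-1}}$, which has Jacobian $\prod_{i=1}^n (t_i-t_{i-1})^{d/2}$, the constraint becomes $\sum_{i=1}^n \|y_i\|^2 \le K n^\theta e^k$, i.e.\ $\by$ lies in a Euclidean ball of radius $(K n^\theta e^k)^{1/2}$ in $\R^{dn}$. The volume of that ball is $\frac{\pi^{dn/2}}{\Gamma(dn/2+1)} (K n^\theta e^k)^{dn/2}$. Hence the inner integral equals $\frac{\pi^{dn/2}}{\Gamma(dn/2+1)} (K n^\theta e^k)^{dn/2} \prod_{i=1}^n (t_i-t_{i-1})^{d/2}$. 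Integrating over $\mathfrak{X}^n$ and using $\int_{\mathfrak{X}^n}\prod_{i=1}^n (t_i-t_{i-1})^{d/2}\,\dd\bt = \Gamma(1+d/2)^n/\Gamma(1+n(1+d/2))$ (the standard Dirichlet-type computation already invoked in the excerpt), we obtain
\begin{equation*}
\bbP(A_{k,n}) \le \frac{1}{n!}\, \frac{\pi^{dn/2} \Gamma(1+d/2)^n}{\Gamma(dn/2+1)\,\Gamma(1+n(1+d/2))}\, \big(K n^\theta\big)^{dn/2}\, \gl_k^n\, e^{dnk/2} \, .
\end{equation*}
Now I would analyze the large-$n$ behavior: by Stirling, $n!\,\Gamma(dn/2+1)\,\Gamma(1+n(1+d/2))$ grows like $C^n (n!)^{2+d}$ up to polynomial factors, while the numerator contributes $C^n n^{\theta dn/2} = C^n (n!)^{\theta d/2}\cdot(\text{poly})$. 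Since $\theta < 1+\frac1d$, we have $\theta d/2 < (2+d)/2 < 2+d$ is too crude; the right comparison is $\theta d/2$ against $1 + d/2 + 1$ coming from the two Gamma factors plus the $1/n!$ — more carefully, the net power of $(n!)$ in the denominator is $1 + 1 + 1 = 3$? Let me instead just note the denominator behaves like $(n!)^{1 + d/2 + (1+d/2)}=(n!)^{2+d}$ and the numerator like $(n!)^{\theta d/2}$, so the ratio is summable in $n$ (for fixed $k$) as soon as $\theta d/2 < 2+d$, which holds since $\theta<1+\tfrac1d<\tfrac{4+2d}{d}$; in fact the decay is super-exponential in $n$. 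So for each fixed $k$, $\sum_n \bbP(A_{k,n})$ converges, and moreover I would extract from the above that $\sum_n \bbP(A_{k,n}) \le C' \gl_k e^{dk/2}$ for some constant $C'$ depending only on $d,\theta,K$, by bounding the $n=1$ term (which is $\gl_k e^{dk/2}$ times a constant, since a single point has entropy $\|x_1\|^2/t_1$ and the volume of $\{\|x_1\|^2/t_1\le Ke^k\}$ integrated in $t_1\in(0,1)$ is $\asymp e^{dk/2}$) and showing the sum over $n\ge 2$ is dominated by the $n=1$ term up to a constant — this uses that the ratio of consecutive terms is small for large $k$, or simply that the whole series in $n$ is a convergent power series in $\gl_k e^{dk/2}$ with zero constant term. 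Summing over $k$ and using~\eqref{eq:condition}, which is exactly $\sum_k \gl_k e^{dk/2}<\infty$, gives $\sum_{k,n}\bbP(A_{k,n})<\infty$.

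The main obstacle I anticipate is getting the dependence on $k$ right: one needs the bound on $\sum_n \bbP(A_{k,n})$ to be summable in $k$, and the natural estimate produces the factor $e^{dk/2}$ precisely matching condition~\eqref{eq:condition} — so the delicate point is to make sure that (i) the $n\ge 2$ terms do not contribute an extra factor that grows in $k$ (they don't, because each additional point brings another factor $\gl_k e^{dk/2}$ times something summable, so the tail is controlled by the $n=1$ term uniformly once $\gl_k e^{dk/2}$ is, say, bounded — and for the finitely many $k$ where it is not, finiteness of each individual series suffices), and (ii) the constant $K$ and the exponent $\theta$ are genuinely free, which they are since $\theta<1+\frac1d$ gives enough room in the Stirling comparison. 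A clean way to organize (i) is to write $\sum_{n\ge1}\bbP(A_{k,n}) \le \sum_{n\ge1} a_n (\gl_k e^{dk/2})^n$ with $\sum_n a_n r^n<\infty$ for all $r>0$ (an entire function of $r$), and then use that an entire function vanishing at $0$ satisfies $f(r)\le C_R r$ for $r\in[0,R]$; choosing $R=\sup_k \gl_k e^{dk/2}<\infty$ (finite by~\eqref{eq:condition}) yields $\sum_{k,n}\bbP(A_{k,n}) \le C_R \sum_k \gl_k e^{dk/2}<\infty$, as desired. This completes the plan; the remaining steps (Stirling asymptotics, the ball-volume formula, the Dirichlet integral) are routine.
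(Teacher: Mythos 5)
Your argument is correct in substance, but it takes a genuinely different route from the paper. The paper's proof is geometric: for $n\ge 3$ it shows (Claim 1) that a low-entropy path stays within distance $\sqrt{K}\,n^{\theta/2}e^{k/2}$ of the origin and (Claim 2) that it must place three consecutive points in one of $O(n^{1+d})$ small space-time cylinders of volume $O(n^{\frac d2\theta-(d+1)}e^{kd/2})$; a union bound over cylinders then gives $\bbP(A_{k,n})\le C\,(\gl_k e^{kd/2})^3\, n^{\frac{3d}{2}\theta-2(d+1)}$, and the hypothesis $\theta<1+\frac1d$ is exactly what makes the exponent of $n$ summable. You instead compute the first moment of the number of bad $n$-tuples exactly via Mecke's formula, evaluate the ellipsoid volume and the Dirichlet integral in closed form, and obtain a bound of the type $a_n(\gl_k e^{dk/2})^n$ with $a_n$ decaying super-exponentially; the entire-function device $f(r)\le C_R\,r$ on $[0,R]$ with $R=\sup_k\gl_k e^{dk/2}<\infty$ then cleanly reduces the double sum to $\sum_k\gl_k e^{dk/2}<\infty$. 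Your route is more computational but also more robust: the Stirling comparison goes through for any $\theta<2+\frac2d$, so the restriction $\theta<1+\frac1d$ is not even needed at this step (it is only the paper's covering argument that uses it here), whereas the paper's proof avoids Gaussian volume computations entirely and needs only a counting argument.

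One slip to fix: in your first displayed bound the factor $1/n!$ is spurious. Restricting the time integral to the ordered simplex $\mathfrak{X}^n$ already accounts for the identification of an unordered set $\sigma$ with a time-ordered tuple, so the correct Mecke bound is $\bbP(A_{k,n})\le \gl_k^n\int_{\mathfrak{X}^n\times(\bbR^d)^n}\ind\,\dd\bt\,\dd\bx$ without the $1/n!$ (dividing by $n!$ on top of ordering the times is double counting, and the resulting inequality is not justified as stated). This costs you one power of $n!$ in the denominator, so the relevant Stirling condition becomes $\theta d/2<1+d$ rather than $\theta d/2<2+d$; since $\theta<1+\frac1d$ gives $\theta d/2<\frac{d+1}{2}<d+1$, the conclusion is unaffected, but the hesitant bookkeeping in your Stirling paragraph should be cleaned up accordingly.
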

\noindent Given $\sigma$, we let $n_k(\sigma)$ denote the number of points that the path displays in~$\go_k$.
We have 
\begin{equation}
 G(\sigma)\le \sum_{k=1}^{\infty} e^{k} n_k(\sigma). 
\end{equation}
We let $k_0(\go)$, $n_0(\go)$ be such that $A_{k,n}^{\cc}$ holds for every $k\ge k_0$ (for every $n$), and for every $n\ge n_0$ (for every $k$).
For every $k$, we have
\begin{equation}
  n_k(\sigma)\le (K^{-1} H(\sigma)e^{-k})^{ 1/\theta} +n_0\ind_{\{k\le k_0\}} \,,
\end{equation}
 where we have  used that $H(\sigma) \geq H( \sigma_k)$, with $\sigma_k \in \cP(\go_k)$ the set obtained by removing all points in $\sigma$ which are not in $\go_k$.
In particular, we have $n_k(\sigma)=0$ for $k\ge 1+ \max(k_0 ,  \log ( K^{-1} H(\sigma) ))$.
This yields 
\begin{align*}
  G(\sigma) &\le \sum_{k=1}^{k_0} e^k n_0
  + (K^{-1} H(\sigma))^{1/\theta} \sum_{k=0}^{\lfloor \log ( K^{-1}H(\sigma)) \rfloor}
  e^{k(\theta-1) /\theta} \\
  & \le C(\go)+  (1-e^{\frac{1-\theta}{\theta}})^{-1}  K^{-1} H(\sigma) \, .
\end{align*}
Since $K$  is arbitrary, fixing  $K \geq  \frac{2}{\gep} (1-e^{(1-\theta)/\theta})^{-1}$ yields that 
$G(\sigma) \leq C(\go) + \frac{\gep}{2} H(\sigma)$ almost surely, which concludes the proof.
\end{proof}

\begin{proof}[Proof of Lemma \ref{bcantelli}]
Let us start with the case $n=1$. 
Using that $H(\sigma) \geq  \|x\|^2$ if $\sigma =(t,x,\ups)$ is reduced to one point (recall $t\leq 1$), we get that 
$A_{k,1} \subset  \bigcup_{(t,x,\ups) \in \go_k}  \{ \|x\|^2 \leq  Ke^{k} \}$; in other words, if $A_{k,1}$ is verified then there
is a point in $\go_k$ within a distance  $\sqrt{K} e^{k/2}$ from the origin.
The probability
of $A_{k,1}$ is therefore smaller than  a constant times $ K^{d/2}  e^{ kd /2}  \gl_k$ and this is summable over $k$ thanks to~\eqref{eq:condition}. The case $n=2$ can be treated similarly.

\smallskip

When $n\ge 3$, let $\sigma = (t_i,x_i,u_i)_{i=1}^n$ be a path of points in
$\go_k$ staisfying our event, \textit{i.e.} $H(\sigma) \leq K n^{\theta} e^{k}$.
We make the two following claims. 

\smallskip
\noindent
\textit{{\bf Claim 1.} The path cannot venture too far:}
\begin{equation}\label{claim1}
\max_{i\in \lint 1, n\rint} \|x_i\| \le  \sqrt{K} n^{\theta/2} e^{k/2}\, .
\end{equation}

\smallskip
\noindent
\textit{{\bf Claim 2.} There are three consecutive points in our path in a relatively small cylinder:
there exists  $i \in \lint 1,n-2\rint$ satisfying} 
\begin{equation}\label{claim2}
 t_{i+2}-t_i\le \frac{4}{n-2} \quad \text{ and } \quad 
\|x_{i+1}-x_i \|^2+\|x_{i+2}-x_{i-1} \|^2
\le 
\frac{16 K n^{\theta}e^{k}}{(n-2)^2} \, .
\end{equation}

Before proving the two claims~\eqref{claim1}-\eqref{claim2},
let us use them to conclude the proof of Lemma~\ref{bcantelli}.
We can cover
 $[0,1] \times [ - \sqrt{K} n^{\theta/2} e^{k/2}, \sqrt{K} n^{\theta/2} e^{k/2}  ]^d$
with a collection of
$C_{K}   n^{1+d}$  overlapping cylinders 
(of the type $[t,t+\frac{8}{n-2}] \times [x,x+ \frac{8 }{n-2}\sqrt{K} n^{\theta/2} e^{k/2} ]^{d}$ for a collections of $t$'s distant by $\frac{4}{n-2}$ and of $x$'s distant by $\frac{4 }{n-2}\sqrt{K} n^{\theta/2} e^{k/2}$).
Thanks to the above claims,
 if $A_{k,n}$ is verified
 then there exists some path $\sigma \in \cP(\go)$
 of length $n$
 satisfying~\eqref{claim1}-\eqref{claim2}, meaning that
at least one of the constructed cylinders contains
three points in $\pi(\go_k)$.
By a union bound,  since 
each cylinder is of area bounded by $C'_{K} n^{\frac{d}{2} \theta -(d+1)} e^{kd/2}$, we therefore get that
 \begin{align*}
  \bbP(A_{k,n} ) &\leq C_{K} n^{1+d}  \times \Big(  C'_{K} n^{\frac{d}{2} \theta - (d+1) }  e^{kd/2}   \lambda_k\Big)^3 
 \leq  C''_{K}  \big( \gl_k e^{kd/2} \big)^3 n^{\frac{3d}{2}\theta-2(d+1)} \, .
 \end{align*}
We conclude simply by observing that this upper bound is summable over $k$ and $n$, thanks to~\eqref{eq:condition}
and since $\frac{3d}{2}\theta-2(d+1) < -\frac12 (d+1)$ thanks to our choice $\theta<1+\frac{1}{d}$.
 
 \smallskip

 The  first claim \eqref{claim1} just follows by observing that the entropy of the path is 
 larger than $\|x_i\|^2/t_i \ge \|x_i\|^2$ for every $i\in \lint 1, n\rint$.
 For the second claim \eqref{claim2}, we observe that 
 we have
$\sum_{i=1}^{n-2}(t_{i+2}-t_i)\le 2$,
which means that setting
\[
 I:= \Big\{ i\in \lint 1,n-2\rint, \ t_{i+2}-t_i\le \frac{4 }{n-2} \Big\} \, ,
\]
we have $|I|\ge \frac{n-2}{2}$. We also  have, by definition of $I$,
\begin{equation*}
 \sum_{i\in I} \|x_{i+1}-x_i \|^2+\|x_{i+2}-x_{i+1}\|^2\le   \frac{4 }{n-2}
 \sum_{i\in I}\frac{\|x_{i+1}-x_i\|^2}{t_{i+1}-t_i}+\frac{\|x_{i+2}-x_{i+1}\|^2}{t_{i+2} -t_{i+1}} 
  \le \frac{8  H(\sigma) }{n-2} \, .
\end{equation*}
Then \eqref{claim2} is simply a consequence of the fact that 
the smallest element of the sum is smaller than the average
and that $H(\sigma) \leq K n^{\theta} e^k$.
\end{proof}

\subsection{Proof of Lemma \ref{lemmaconv}}
\label{sec:lemmaconv}

We now adapt Proposition~\ref{thm:continuous2}
to prove the convergence of $\cZ_{\gb}^{\go,a}(f)$ and $\cZ_{\gb}^{\go,a}$
without the condition $\int_{[1,\infty)} \ups \lambda(\dd \ups) <\infty$, \textit{i.e.}~we prove Lemma~\ref{lemmaconv}.
Recall the definition  \eqref{splitnoise}
and set for $f\in \cB$
\begin{equation}\label{trunkaparti}
  \cZ^{\go,[a,b)}_{\beta}(f):=1+\sum_{k=1}^\infty \beta^k\int_{  \mathfrak{X}^k \times (\bbR^d)^k} \varrho( \bt , \bx , f) \prod_{i=1}^k \xi^{[a,b)}_{\go} ( \dd t_i, \dd x_i) \, .
\end{equation}
Note that Proposition \ref{thm:continuous2} (or rather Corollary \ref{toutlesf}), applied to the measure $\gl_b$ defined by 
\begin{equation}\label{glbdef}
\gl_b(A)=\gl(A\cap[0,b))
\end{equation}
automatically yields the following convergence of  $\cZ^{\go,[a,b)}_{\beta}(f)$.

\begin{cor}\label{lem:gene}
 Under the assumption \eqref{hyposmall}, for any $f\in \cB$ and any fixed $b\geq 1$ we have that $( \cZ^{\go,[a,b)}(f) )_{a\in(0,1]}$ is a uniformly integrable time-reversed martingale.
Hence the following convergence therefore holds almost surely and in $\bbL_1$:
\[
\lim_{a\to 0}\cZ^{\go,[a,b)}(f)  =:\cZ^{\go,[0,b)}_\beta(f)\, .
\]
Note also that we have $\bbE[ \cZ^{\go,[a,b)}(f) ] = e^{\gb \mu_b} \bQ(f)$ for all $a\in (0,1]$, with $\mu_b:= \int_{[1,b)} \ups \lambda(\dd \ups)$.
\end{cor}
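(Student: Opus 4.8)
The plan is to reduce Corollary~\ref{lem:gene} to a direct application of Proposition~\ref{thm:continuous2} and Corollary~\ref{toutlesf} with a suitably truncated Lévy measure. Given $b\geq 1$, consider the measure $\gl_b$ defined in \eqref{glbdef}, i.e.\ $\gl_b(A) = \gl(A\cap [0,b))$. Since $\gl$ satisfies $\gl([a,\infty))<\infty$ for all $a>0$, so does $\gl_b$, and moreover $\gl_b$ is supported on $(0,b)$ so that $\int_{[1,\infty)} \ups\, \gl_b(\dd\ups) = \int_{[1,b)}\ups\,\gl(\dd\ups) =: \mu_b < \infty$. Also, the small-jump condition \eqref{hyposmall} for $\gl_b$ is inherited from the same condition for $\gl$, since $\gl_b$ and $\gl$ agree on a neighbourhood of $0$. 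The key observation is that the noise $\xi^{[a,b)}_\go$ appearing in \eqref{trunkaparti} is exactly the truncated noise $\xi^{(a)}_{\go_b}$ associated with a Poisson process $\go_b$ of intensity $\dd t\otimes\dd x\otimes\gl_b(\dd\ups)$: indeed, restricting $\go$ to atoms of size in $[a,b)$ and compensating by $(\kappa_b-\kappa_a)\cL = -\int_{[a,b)}\ups\gl(\dd\ups)\,\cL$ is precisely the definition \eqref{petitnoise} of $\xi^{(a)}_{\go_b}$ with the convention that $\kappa^{(b)}_a = \int_{[a,1)}\ups\gl_b(\dd\ups) = \int_{[a,1)\cap[0,b)}\ups\gl(\dd\ups)$ (and note $b\geq 1$ so this equals $\kappa_a$). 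Hence $\cZ^{\go,[a,b)}_\beta(f)$ coincides with $\cZ^{\go_b,a}_\beta(f)$ in the notation of \eqref{lafirstdeffpti}.

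With this identification in hand, the proof is immediate. First I would apply Lemma~\ref{lem:martingale} to $\gl_b$ (which satisfies $\int_{[1,\infty)}\ups\gl_b(\dd\ups) = \mu_b<\infty$): this gives that $(\cZ^{\go,[a,b)}_\beta(f))_{a\in(0,1]}$ is a time-reversed martingale for the filtration $\cF$, with mean $e^{\gb\mu_b}\bQ(f)$, which establishes the last displayed formula of the corollary. Next, since $\gl_b$ satisfies both $\int_{[1,\infty)}\ups\gl_b(\dd\ups)<\infty$ and \eqref{hyposmall}, Proposition~\ref{thm:continuous2} applies to $\gl_b$ and shows that $(\cZ^{\go_b,a}_\beta)_{a\in(0,1]} = (\cZ^{\go,[a,b)}_\beta)_{a\in(0,1]}$ is uniformly integrable. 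Then Corollary~\ref{toutlesf}, again applied with $\gl$ replaced by $\gl_b$, upgrades this to the statement that $(\cZ^{\go,[a,b)}_\beta(f))_{a\in(0,1]}$ is uniformly integrable for every $f\in\cB$, and consequently converges almost surely and in $\bbL_1$ as $a\to 0$ to a limit we denote $\cZ^{\go,[0,b)}_\beta(f)$.

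There is essentially no obstacle here: the only thing to be careful about is the bookkeeping around the compensator. One must check that truncating the atoms of $\go$ at level $b$ and compensating with $(\kappa_b-\kappa_a)\cL$ really does produce the canonical truncated-noise object for $\gl_b$, and in particular that the centering constant $\kappa_a$ used for $\gl$ agrees with the centering $\kappa^{(b)}_a$ for $\gl_b$ — this is where $b\geq 1$ is used, ensuring $[a,1)\subset [0,b)$ so that $\kappa^{(b)}_a = \int_{[a,1)}\ups\gl(\dd\ups) = \kappa_a$. Once this is verified, every quantitative statement is a verbatim instance of a result already established in the preceding subsections applied to $\gl_b$ in place of $\gl$, so no new estimates are needed. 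I would write the proof in two or three sentences, pointing out the identification $\cZ^{\go,[a,b)}_\beta(f) = \cZ^{\go_b,a}_\beta(f)$ and then invoking Lemma~\ref{lem:martingale}, Proposition~\ref{thm:continuous2} and Corollary~\ref{toutlesf}.
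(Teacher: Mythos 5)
Your proposal is correct and follows the paper's own route: the paper proves Corollary \ref{lem:gene} exactly by applying Proposition \ref{thm:continuous2}/Corollary \ref{toutlesf} (together with Lemma \ref{lem:martingale} for the martingale property and the mean $e^{\gb\mu_b}\bQ(f)$) to the truncated intensity $\gl_b$ of \eqref{glbdef}, via the identification $\cZ^{\go,[a,b)}_{\beta}(f)=\cZ^{\go_b,a}_{\beta}(f)$. One small slip to fix: for $b>1$ the compensator is $(\kappa_b-\kappa_a)\cL=-\kappa_a\cL=-\int_{[a,1)}\ups\,\gl(\dd \ups)\,\cL$, not $-\int_{[a,b)}\ups\,\gl(\dd \ups)\,\cL$; this is exactly what your subsequent check $\kappa_a^{(b)}=\kappa_a$ (valid since $b\ge 1$, so that \eqref{petitnoise} for $\gl_b$ uses the same centering) relies on, so the identification and the rest of the argument stand.
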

We now prove the convergence of $\cZ^{\go,a}_\beta(f)$ for $f\in \cB_b$. Repeating the argument from the proof of Proposition \ref{prop:contiZfini}, there exists $b_0(\go,f)$ such that 
 for $b\ge b_0$, we have $\cZ^{\go,a}_\beta(f)=\cZ^{\go,[a,b)}_\beta(f)$ for every $a\in(0,1]$. Thus we have, from Corollary~\ref{lem:gene},
 \begin{equation*}
  \lim_{a\to 0}\cZ^{\go,a}_\beta(f)=\cZ^{\go,[0,b_0)}_\beta(f).
 \end{equation*}
It only remains to prove that  $\lim_{a\to 0}\cZ^{\go,a}_{\beta}(f)$  exists and is finite for $f\in \mathcal B$ when \eqref{hypolarge} also holds. We focus on the case $f\equiv 1$ for simplicity but the argument adapts immediately to the case of non-negative bounded $f$, so  there is no loss of generality.
The convergence of~$\cZ^{\go,a}_\beta$ is a consequence of the following statement, valid  
for any $\gep>0$, 
\begin{equation}\label{labello}
  \lim_{b\to \infty} \bbP\Big( \sup_{a\in(0,1]}\big(\cZ^{\go,a}_\beta - \cZ^{\go,[a,b)}_\beta\big) >\gep \Big)  =  0.
\end{equation}
 Indeed, by monotonicity in $b$ (cf.\ Lemma~\ref{eazy}) we have for every $b\in (1,\infty)$,
 $$\liminf_{a\to 0}\cZ^{\go,a}_\beta \ge 
 \cZ^{\go,[0,b)}_\beta.$$ Thus to ensure that $\cZ^{\go,a}_\beta$ converges to $\lim_{b\to \infty}\cZ^{\go,[0,b)}_\beta$ and that the latter is finite,
the only thing that needs  be proved is that for every $\gep>0$, there almost surely exists  $b_0(\go,\gep)$ such that
 \begin{equation}\label{zoukkk}
 \ \limsup_{a\to 0}\cZ^{\go,a}_\beta \le \cZ^{\go,[0,b_0)}_\beta+\gep.
 \end{equation}
Since $\sup_{a\in(0,1]}(\cZ^{\go,a}_\beta - \cZ^{\go,[a,b)}_\beta)$ is non-increasing in $b$,
\eqref{labello} implies that
\begin{equation}
\label{labellobis}
\lim_{b\to\infty} \sup_{a\in(0,1]}\big( \cZ^{\go,a}_\beta - \cZ^{\go,[a,b)}_\beta \big) =0 \qquad \bbP\text{-a.s.}
\end{equation}
and this readily implies that \eqref{zoukkk} holds for some large $b_0$.

To prove~\eqref{labello}, we observe that
the process $\big\{ \gep \wedge (\cZ^{\go,[a,b')}_\beta - \cZ^{\go,[a,b)}_\beta) ; a\in(0,1] \big\}$ is a time-reversed positive (for $b'>b>1$) super-martingale, thanks to Lemma~\ref{lem:martingale}.
Using Doob's  maximal inequality we thus obtain that for any fixed $b'>b>1$
\begin{equation}
 \bbP\Big(\sup_{a\in(0,1]}\big(\cZ^{\go,[a,b')}_\beta - \cZ^{\go,[a,b)}_\beta\big) >\gep \Big)\le  \frac{1}{\gep}\bbE \Big[ \big(\cZ^{\go,[1,b')}_\beta - \cZ^{\go,[1,b)}_\beta\big) \wedge \gep \Big].
\end{equation}
Sending $b'$ to infinity on both sides
 and noting that  $\cZ^{\go,a}_\beta = \lim_{b'\to\infty} \cZ^{\go,[a,b')}_\beta$ by monotone convergence (see Lemma~\ref{eazy}),
we obtain
\begin{equation}\label{thetruc}
  \bbP\Big(\sup_{a\in(0,1]}\big(\cZ^{\go,a}_\beta - \cZ^{\go,[a,b)}_\beta\big) > \gep \Big)\le  \frac{1}{\gep}\bbE \Big[ \big(\cZ^{\go,1}_\beta - \cZ^{\go,[1,b)}_\beta\big) \wedge \gep \Big].
\end{equation}
The right-hand side goes to zero by dominated convergence, thanks to Proposition \ref{prop:Zfinite},
 using again that $\cZ^{\go,1}_\beta = \lim_{b\to\infty} \cZ^{\go,[1,b)}_\beta$.
\qed

\subsection{Almost sure positivity of $\cZ_{\gb}^{\go}(f)$}
\label{sec:positivity}

 In this section, we prove Proposition~\ref{lapositivity}.

Let $f\in \cB_b$ be non-negative and such that $\bQ(f)> 0$.
Recall the definitions \eqref{splitnoise} and~\eqref{trunkaparti} of the truncated noise and of the corresponding partition function. We are going to show first that the positivity of $\cZ^{\go,[0,b)}_\beta (f)$ does not depend on the value of $b$,
for any $ 0<b< b'\le 1$.
\begin{equation}\label{ekwalzs}
 \bbP\Big(  \big\{ \cZ^{\go,[0,b)}_\beta (f)>0 \big\} \triangle  \big\{ \cZ^{\go,[0,b')}_\beta (f) >0 \big\} \Big)   =0.
\end{equation}
where $\triangle$ stands for the symmetric difference (in other words the events are equal in the $\bbL_1$ sense and in particular have the same probability).
 Applying Lemma \ref{eazy} to the measure~$\gl_b$ (recall \eqref{glbdef}) we obtain that
\begin{equation}
  \cZ^{\go,[a,b)}_\beta(f)= e^{-\gb (\kappa_a-\kappa_b)} \sum_{\sigma\in \cP(\go)} \beta^{|\sigma|}   \varrho(\bt,\bx, f)  \prod_{i=1}^{|\sigma|} u_i\ind_{\{ u_i\in[a,b) \}} \, .
\end{equation}
This last expression implies that for every $a<b' \leq b$ we have almost surely 
\begin{equation}
\label{eq:ZaZab}
   \cZ^{\go,a}_\beta (f)  \ge      e^{-\gb\kappa_b}\cZ^{\go,[a,b)}_\beta (f)  \ge     e^{-\gb\kappa_{b'}}  \cZ^{\go,[a,b')}_\beta (f) \,,
\end{equation}
and taking the limit when $a$ tends to zero we obtain 
\begin{equation}
  \cZ^{\go}_\beta (f)  \ge     e^{-\gb\kappa_b}  \cZ^{\go,[0,b)}_\beta (f)  \ge e^{-\gb\kappa_{b'}}  \cZ^{\go,[0,b')}_\beta (f) \, .
\end{equation}
This yields 
\[
\bbP\Big(  \big\{ \cZ^{\go,[0,b')}_\beta (f)>0 \big\} \setminus  \big\{ \cZ^{\go,[0,b)}_\beta (f) >0 \big\}  \Big) =0 \, .
\]
On the other hand, the same argument as in   Lemma \ref{lem:martingale}  yields that $
(\cZ^{\go,[a,b)}_\beta (f))_{b\in(a,1]}$ is a martingale (in $b$) for the filtration~$\mathcal{G}_b$ defined by
\begin{equation}
   \mathcal G_b:= \sigma \big(  \{ (t,x,\ups)\in \go \,\colon  \ups <b \}\big) \,.
\end{equation}
Taking $a$ to zero in the conditional expectation (using uniform integrability cf.\ Corollary~\ref{lem:gene}) we obtain that  for $1\geq b'>b$
\begin{equation}
\bbE \big[ \cZ^{\go,[0, b )}_\beta (f)  \, \big|  \, \mathcal G_{ b'} \big]= \cZ^{\go,[0, b')}_\beta (f)\,  ,
\end{equation}
which yields the second inclusion of \eqref{ekwalzs}
\[
\bbP\Big(  \big\{ \cZ^{\go,[0,b)}_\beta (f)>0 \big\} \setminus  \big\{ \cZ^{\go,[0,b')}_\beta (f) >0 \big\}  \Big) =0.
\]
Now let us fix a decreasing sequence $b_n\in (0,1]$ with $b_n \downarrow 0$ and consider the event $A(f):= \bigcap_{m\ge 0}\bigcup_{n\ge m}\big\{ \cZ^{\go,[0,b_n)}_\beta (f) >0 \big\} $.
An immediate consequence of \eqref{ekwalzs} is that 
 \begin{equation}
 \bbP\Big(  \cZ^{\go,[0,1)}_\beta (f)>0 \Big)= \bbP[ A(f) ]\,.
\end{equation}
Now  $A(f)$ is measurable  with respect to 
the $\sigma$-algebra 
$\mathcal G_0:=\bigcap_{ n\geq 0}\mathcal G_{ b_n} $. Therefore, by Kolmogorov's $0{-}1$ law, it has probability either $0$ or $1$;
 indeed, for any $n\geq 1$, $\cG_0$ is independent of $\cF_{b_n} = \sigma(\{(t,x,\ups) \in \go\, \colon \ups \geq b_n \})$: since $A\in \cG_0$ and $A \in \sigma( \bigcup_{n\geq 0} \cF_{b_n})$, we get that $A$ is independent of itself.
From Corollary~\ref{lem:gene} the martingale $(\cZ_{\gb}^{\go,[a,1)})_{a\in(0,1]}$
is uniformly integrable and thus 
\[
\bbE\big[ \cZ^{\go,[0,1)}_\beta (f) \big]= \bQ(f)>0 \, .
\]
Combining these two facts we obtain that necessarily $\bbP\big(  \cZ^{\go,[0,1)}_\beta (f)>0 \big)=1$, which concludes the proof using \eqref{eq:ZaZab}.
\qed

\begin{rem}
The proofs in Section~\ref{sec:lemmaconv}-\ref{sec:positivity} apply verbatim to the point-to-point partition function. They give that for any $(t,x)\in \bbR_+^* \times \bbR^d$
we have the a.s.\ convergence $\lim_{a\to\infty} \cZ_{\gb}^{\go,a} (t,x) = \cZ_{\gb}^{\go}(t,x)$,
with $\cZ_{\gb}^{\go}(t,x) $ positive and finite almost surely.
\end{rem}

\subsection{Proof of Proposition \ref{proptight}}
\label{sec:convplus}

 We now show that $(\bQ^{\go,a}_{\beta})_{a\in (0,1]}$ is tight.
 We need to find a sequence of
 compact sets $\cK_N$ such that almost surely
\[
\lim_{N\to \infty} \sup_{a\in(0,1]} \bQ^{\go,a}_{\beta}(\cK^{\cc}_N)=0\,.
\]
Since $\cZ^{\go,a}_{\beta}$ converges to a positive limit, this is of course equivalent to proving 
\begin{equation}\label{amontre}
 \lim_{N\to \infty} \sup_{a\in(0,1]} \cZ^{\go,a}_{\beta}  (\ind_{\cK^{\cc}_N}) =0 \, .
\end{equation}
In the case $\mu:=\int_{[1,\infty)} \ups \lambda(\dd \ups) <\infty$,  since $\bbE[\cZ^{\go,a}_{\beta}(\ind_A)]= e^{\gb\mu} \bQ(A)$, using Doob's maximal inequality we have 
\begin{equation}
 \bbP \Big( \sup_{a\in(0,1]} \cZ^{\go,a}_{\beta}(\ind_{\cK^{\cc}_N})\ge  \gep \Big)
 \le \frac{1}{\gep}\,  e^{\gb \mu}\bQ\left[\cK^{\cc}_N\right] . 
\end{equation}
It is then easy to show that \eqref{amontre} holds for an arbitrary increasing sequence of compacts verifying  $\lim_{N\to \infty} \bQ[ \cK_N]=1$
 (using the monotonicity of $f\mapsto \cZ^{\go,a}_{\beta}(f)$, cf.\ Lemma~\ref{eazy}).
For instance  one can take 
$\cK_N:= \big\{ \varphi\in C_0([0,1])\colon    |\varphi(t)-\varphi(s)|\le N |t-s|^{1/4} \ \forall s,t\in [0,1] \big\}$.
In the case $\int_{[1,\infty)} \ups \lambda(\dd \ups) =\infty$,
we proceed analogously with the truncated partition function
$\cZ_{\gb}^{\go,[a,b)}$.
We obtain that for any $b>0$,
\[
\lim_{N\to\infty} \sup_{a\in (0,1]} \cZ_{\gb}^{\go,[a,b)} (\ind_{\cK_N^\cc}) =0 \, .
\]
We then conclude using  \eqref{labellobis}, to get that a.s.
\[
\lim_{b\to\infty} \sup_{a\in (0,1]} \sup_{N\in \bbN} \Big( \cZ_{\gb}^{\go,a} (\ind_{\cK_N^\cc}) - \cZ_{\gb}^{\go,[a,b)} (\ind_{\cK_N^\cc})  \Big) 
\leq \lim_{b\to\infty}  \sup_{a\in (0,1]} \Big( \cZ_{\gb}^{\go,a}  - \cZ_{\gb}^{\go,[a,b)}   \Big) =0\,.
\]
\par\vspace{-\baselineskip}
\qed

\section{Degeneracy of the partition function: Propositions~\ref{prop:toinfinity} and~\ref{prop:tozero}}
\label{sec:trivial}

\subsection{Proof of Proposition~\ref{prop:toinfinity}}
\label{sec:infinity}

Let us assume that $\int_{[1,\infty)} (\log\ups)^{d/2} \lambda(\dd \ups) = \infty$ and show that the partition function $\cZ_{\gb}^{\go,a}$ is a.s.\ infinite.
We use the representation~\eqref{altexp}. Keeping only
paths $\sigma$ with cardinality one in the sum and keeping only those with $t\in[ 1/2,1]$ we have
\begin{equation}
 \cZ^{\go,a}_{\beta}\ge e^{-\beta \kappa_a}\sum_{(t,x,\ups)\in \go^{(a)}} \rho_t(x) \ups
 \ge  
   \frac{e^{-\beta \kappa_a}}{ \pi^{d/2}}\sum_{(t,x,\ups)\in \go^{(a)} \,, \, t\ge 1/2}  \ups\, e^{-\|x\|^2} \, .  
\end{equation}
Hence to conclude it is sufficient to show that almost surely
\begin{equation}
 \sup_{(t,x,\ups)\in \go \colon t\in [1/2,1]} \big\{  \log \ups-    \|x\|^2  \big\}=\infty.
\end{equation}
For this it is sufficient to check that almost surely, the event $A_j$ defined by 
\[
A_j := \big\{ \exists \, (t,x ,\ups)  \in \go\, ,  t\in [\tfrac12,1]  ,\,   \|x\|_{\infty} \in [2^{j-1},2^{j}),\,   \log  \ups \geq
  4^{j+1}  \big\} \, .
\]
 is satisfied for infinitely many $j$. By Borel--Cantelli, since the $A_j$ are by construction independent it suffices to show that $\sum_{j=1}^{\infty} \bbP(A_j) =\infty$.
The number of points $(t,x,\ups)\in \go$
such that $t\in [\tfrac12,1]$,   $\|x\|_{\infty} \in [2^{j-1},2^{j})$ and $ \log \ups \geq    4^{j+1}$
is a Poisson random variable with mean
\[
\lambda_j = \tfrac12\, 2^{d j}(1-2^{-d}) \,  \lambda\big( [\exp(  4^{j+1} ),\infty ) \big) \, .
\]
Hence we have $\bbP(A_j) =1 -e^{-\lambda_j}$ and we simply
need to show that $\sum_{j=1}^{\infty} \lambda_j = \infty$.
But this is a direct consequence of our assumption  $\int_{[1,\infty)} (\log \ups)^{d/2} \lambda(\dd \ups) =\infty$ since
\begin{align*}
 \int_{[1.\infty)} (\log \ups)^{d/2} \lambda(\dd \ups) \leq \sum_{j=1}^{\infty} \int_{\exp(4^{j+1})}^{\exp(4^{j+2})} (4^{j+2})^{d/2} \lambda(\dd \ups)
 \leq  \frac{2}{1-2^{-d}}  \, 4^d \sum_{j=1}^{\infty}  \lambda_j  \, .
\end{align*}
\par\vspace{-\baselineskip}
\qed

\subsection{Proof of Proposition~\ref{prop:tozero}}
\label{sec:zero}

Since the use of the size-biased measure is at the heart of our proof  we are going first to assume that $\mu:=\int_{[1,\infty)} \ups \gl(\ups)\dd \ups<\infty$, in order to be able to use Lemma \ref{spayne}. At the end of the proof we explain how to deal with the case $\mu=\infty$.

\smallskip

Note that $\cZ_{\gb}^{\go,a}$ converges almost surely, as a consequence
of the martingale property: we only need to prove that 
 it converges to zero in probability.
 Since~$\bar \cZ_{\gb}^{\go,a} := e^{-\gb \mu} \cZ_{\gb}^{\go,a}$ is a positive variable with mean $1$ it is sufficient to identify a sequence of events~$J_a$ such that 
\begin{equation}\label{lapremiere}
 \lim_{a\to 0}\bbE\big[\bar \cZ_{\gb}^{\go,a}\ind_{J_{a}}\big]=0 \quad \text{ and } \quad  \lim_{a\to 0}\bbP({J_{a}})=1,
\end{equation}
as it implies that $\bar \cZ_{\gb}^{\go,a}\ind_{J_{a}}$ and thus $\cZ_{\gb}^{\go,a}$ converge to zero in probability.
This is equivalent to proving that the total variation distance between the two measures $\bbP$ and $\tilde \bbP_{\gb}^a$ goes to~$1$.
That is, according to Lemma \ref{spayne}, we need to prove that
\begin{equation}\label{super}
 \lim_{a\to 0} \big\|  \bbP\left(\go \in \cdot\right)- \bQ\otimes \bbE\otimes \bbE'_a\left(\hat \go \in \cdot\right) \big\|_{TV}=1.
\end{equation}
Our proof's strategy relies on finding a statistic that helps to distinguish between $\go$ and~$\hat \go$ for most realizations of the Brownian trajectory.
More precisely, we use the second moment method. We are going to define a functional  $Y_a(\go)$ which verifies
\begin{equation}\label{troukz}
\lim_{a\to 0}
  \frac{\big(\bQ\otimes \bbE\otimes \bbE'_a\left[ Y_a(\hat \go)\right]-\bbE\left[ Y_a(\go)\right]\big)^2}{ \Var_{\bbP}(Y_a(\go))+\Var_{\bQ\otimes \bbE\otimes \bbE'_a}(Y_a(\go))}=\infty.
\end{equation}
The above implies that asymptotically $Y_a(\hat \go)$ and $Y_a(\go)$ concentrate around different values and thus that \eqref{super} holds (see \cite[Prop~7.12]{MCMT} for a quantitative statement and its proof).
We treat separately the cases $d\geq 3$, $d=1$  and $d=2$, in that order.

\subsubsection{The case $d\ge 3$}
We assume that
$\int_{(0,1)} \ups^{1+\frac{2}{d}} \gl(\dd \ups)=\infty$.
In order to find a statistic that allows us to distinguish between  $\bbP$ and $\tilde \bbP^{a}_{\gb}$, 
the idea here is to find a region of $\bbR\times \bbR^d\times \bbR_+$ where $\tilde \bbP^{a}_{\gb}$ 
displays significantly more points than $\bbP$.
We consider a sequence $R_a$ going (slowly) to infinity (we set its value later on)
and we set 
\begin{equation}
 Y_a(\go) := \#\Big\{ (t,x,u) \in \go \, \ : \  \|x\|_\infty \le R_a\sqrt{t}\,  ,\,  u\ge \big( a\vee t^{d/2} \big)\Big\}. 
\end{equation}
Under $\bbP$, $Y_a$ is a Poisson variable with mean given by
\begin{equation}
\label{eq:meanNa}
 \bbE\left[ Y_a(\go)\right]:= (2R_a)^d \int^1_0 t^{d/2} \gl\big( [a\vee t^{d/2}, \infty ) \big)\dd t.
\end{equation}
Note that our assumption on $\gl$ readily implies that $\int^1_0 t^{d/2} \gl([a\vee t^{d/2}, \infty ))\dd t$ and hence  $\bbE\left[ Y_a(\go)\right]$ go to infinity as $a\downarrow 0$.
On the other hand, conditionally on $(B_t)_{t\in[0,1]}$, under $\bbP\otimes \bbP'_a$, we have that $Y_a(\hat \go)$ is a Poisson random variable 
with mean
\begin{equation}\label{condipoisson}
 \bbE\otimes \bbE'_a\left[ Y_a(\hat \go)\right]= (2R_a)^d\int^1_0 t^{d/2} \gl \big( [a\vee t^{d/2}, \infty ) \big)\dd t+ \beta  \cX_a\, ,
\end{equation}
where we have set 
\begin{equation}
  \cX_a:=\int^1_0 \ind_{\{|B_t|\leq  R_a \sqrt{t}\}} \int_{[a\vee t^{d/2}, \infty )} \ups \gl(\dd \ups) \dd t.
\end{equation}
Before averaging with respect to the Brownian motion, since $R_a$ tends to infinity, notice that we can almost replace $\ind_{\{|B_t|\le R_a \sqrt{t}\}}$ by $1$, so $\cX_a$
is close to $m_a:=\int^1_0 \int_{[a\vee t^{d/2}, \infty )} \ups \gl(\dd \ups) \dd t$. With this in mind (and the fact that the variance of  a Poisson variable is equal to its expectation),
the important part that has to be checked for \eqref{troukz} to hold is that 
\begin{equation}\label{keything}
 \lim_{a\to 0}\frac{ m_a^2 }{(R_a)^d \int^1_0 t^{d/2} \gl([a\vee t^{d/2}, \infty ))\dd t}=\infty.
\end{equation}
Since $m_a =\int^1_0 \int_{[a\vee t^{d/2}, \infty )} \ups \gl(\dd \ups) \dd t\ge \int^1_0 t^{d/2} \gl([a\vee t^{d/2}, \infty ))\dd t$
  and 
$\int_{(0,1)}\int^1_0 t^{d/2} \gl([t^{d/2} , \infty ))\dd t =+\infty$ by assumption, the condition
 \eqref{keything} is satisfied as long as $R_a$ diverges slowly enough.
We can choose for instance 
 $$ R_a= \left( \int^1_0 t^{d/2} \gl([a\vee t^{d/2}, \infty ))\dd t\right)^{\frac{1}{2d}}.$$
Now that all the notation have been set, let us complete the proof of~\eqref{troukz}.
Setting $q_a:=\bQ\left(|B_1|\leq  R_a \right)$, we have by that Brownian scaling that
\begin{equation}
\begin{split}\label{largus}
 \bQ(\cX_a)&= q_a \int^1_0\int_{[a\vee t^{d/2}, \infty )} \ups \gl(\dd \ups) \dd t = q_a m_a \, ,\\
 \Var_{\bQ}\left(\cX_a\right)&= \bQ(\cX^2_a)-  \bQ(\cX_a)^2 \le ( 1- q_a^2 )  m_a ^2.
\end{split}
\end{equation}
As a consequence, recalling \eqref{condipoisson}, we have that
\begin{equation}
  \bQ\otimes\bbE\otimes \bbE'_a\left[ Y_a(\hat \go)\right]- \bbE \left[ Y_a( \go)\right]=  \beta q_a\,  m_a\, .
\end{equation}
We also have
\begin{multline}
 \Var_{\bQ\otimes \bbP\otimes \bbP'_a}\left[ Y_a(\hat \go)\right]= \bQ\left(\Var_{\bbP\otimes \bbP'_a}(Y_a(\hat \go) \right)+\beta^2  \Var_{\bQ}\left(\cX_a\right)\\
\le   (2R_a)^d\int^1_0 t^{d/2} \gl([a\vee t^{d/2}, \infty ))\dd t +\beta q_a m_a +
 \beta^2 ( 1- q_a^2) m_a^2.
\end{multline}
Now we can conclude that \eqref{troukz} holds, simply by using \eqref{keything} and the fact that $q_a$ tends to $1$
 as $a\downarrow 0$ (using also that $m_a =o(m_a^2)$ since $m_a$ goes to $\infty$).
\qed

\subsubsection{The case $d= 1$} 
We assume that $\int_{(0,1)} \ups^2 \lambda(\dd \ups) =\infty$.
In this case we set
\begin{equation}
Y_a(\go):= \sum_{(t,x,\ups)\in \go} \ups \ind_{\big\{ \ups \in [a,1),\,  t\in[0,1],\, \|x\|_{\infty} \le R_a \big\}} \, ,
\end{equation} 
where again $R_a$ is a sequence going to infinity sufficiently slowly (it is chosen below).
Then, we have
\begin{equation}
\label{fm1}
 \bbE[Y_a(\go)]= 2R_a \int_{[a,1)}  \ups \gl(\dd \ups) \, , \qquad \Var(Y_a(\go))=2R_a \int_{[a,1)}  \ups^2 \gl(\dd \ups).
\end{equation}
Additionally, 
setting this time  $q_a:=\int^1_0\bQ\left(|B_t|\leq  R_a \right) \dd t$ we have
\begin{equation}
\label{fm2}
 \bQ\otimes \bbE\otimes \bbE'_a \big[ Y_a(\hat\go) \big ]=2R_a \int_{[a,1)}  \ups \gl(\dd \ups)+\beta q_a  \int_{[a,1)}  \ups^2 \gl(\dd \ups) \,, 
\end{equation} 
 and using a variant of the argument used in \eqref{largus}
 \begin{multline}\label{fm3}
 \Var_{\bQ\otimes \bbP\otimes \bbP'_a} \big[ Y_a(\hat\go)\big ]\le 
2R_a \int_{[a,1)} \ups^2 \gl(\dd \ups)+\beta q_a \int_{[a,1)}\ups^3 \gl(\dd \ups) 
 + \beta^2(1-q^2_a)\Big(\int_{[a,1)}  \ups^3 \gl(\dd \ups)\Big)^2\\
 \le (2R_a+\beta q_a)  \int_{[a,1)}  \ups^2 \gl(\dd \ups) + \beta^2(1-q^2_a)\Big(\int_{[a,1)}   \ups^2 \gl(\dd \ups)\Big)^2 \,, 
\end{multline}
where we simply used that $\ups \leq 1$ for the second inequality.
Now, since $q_a$ goes to $1$ as $a\downarrow 0$, to conclude that \eqref{troukz} holds it is sufficient to have $R_a = o\big( \int_{[a,1)} \ups^2 \gl(\dd \ups) \big)$ which can be obtained by setting
$R_a = ( \int_{[a,1)} \ups^2 \gl(\dd \ups) )^{1/2}$.
\qed

\subsubsection{The case $d= 2$}
We assume that 
 $\int_{(0,1)} \ups^2 |\log \ups| \gl (\dd \ups)=\infty$.
 In this case we define
\begin{equation}
Y_a(\go):= \sum_{(t,x,\ups)\in \go} \frac{\ups}{t\vee \ups} \ind_{\big\{ \ups \in [a,1),\,  t\in[0,1],\, \|x\|_{\infty}\le  R_a \sqrt{t} \big\}} \, ,
\end{equation} 
where $R_a$ goes to infinity slowly enough (it is chosen below).
In that case, we have
\begin{equation}\begin{split}
\label{fm11}
 \bbE[Y_a(\go)]&= (2R_a)^2 \int_{[a,1)} \int^1_0\frac{\ups t}{t\vee \ups} \dd t\gl(\dd \ups) \\ \Var(Y_a(\go))&=(2R_a)^2 \int_{[a,1)}\int^1_0 \frac{\ups^2 t}{(t\vee \ups)^2} \dd t \gl(\dd \ups).
 \end{split}
\end{equation}
Now  with $q_a:=\bQ\left(|B_1|\leq  R_a \right)$ we have,
\begin{equation}\label{fm22}
 \bQ\otimes \bbE\otimes \bbE'_a[Y_a(\hat \go)]= (2R_a)^2 \int_{[a,1)} \int^1_0\frac{\ups t}{t\vee \ups} \dd t\gl(\dd \ups)+q_a\beta  \int_{[a,1)} \int^1_0\frac{\ups^2 }{t\vee \ups} \dd t\gl(\dd \ups).
\end{equation}
Using again a variant of \eqref{largus} to bound the variance from above we obtain
 \begin{multline}\label{fm33}
 \Var_{\bQ\otimes \bbP\otimes \bbP'_a} \big[ Y_a(\hat\go)\big ]\le  (2R_a)^2 \int_{[a,1)} \int^1_0\frac{\ups^2 t}{(t\vee \ups)^2} \dd t\gl(\dd \ups)\\+q_a\beta  \int_{[a,1)} \int^1_0\frac{\ups^3 }{(t\vee \ups)^2} \dd t\gl(\dd \ups)+
 \beta^2(1-q_a)^2\Big( \int_{[a,1)} \int^1_0\frac{\ups^2 }{t\vee \ups} \dd t\gl(\dd \ups) \Big)^2.
\end{multline}
To conclude we need to check that \eqref{troukz} holds. It is not difficult to show that the second and third term appearing in the variance of $Y_a(\hat\go)$ can be neglected (recall that $q_a$ goes to~$1$) and hence to conclude one only needs to ensure that
\begin{equation}
 \lim_{a\to 0} \frac{\left(\int_{[a,1)} \int^1_0\frac{\ups^2 }{t\vee \ups} \dd t\gl(\dd \ups)\right)^2}{(2R_a)^2 \int_{[a,1)} \int^1_0\frac{\ups^2 t}{(t\vee \ups)^2} \dd t\gl(\dd \ups)}=\infty \, .
\end{equation}
Now this can be done by setting $R_a= (\int_{[a,1)} \ups^2 |\log \ups| \dd \ups )^{1/4}$
since both integrals in the numerator and the denominator are comparable to 
$\int_{[a,1)} \ups^2 |\log \ups| \dd \ups$.
\qed

\subsubsection{Conclusion of the proof of Proposition~\ref{prop:tozero}}

When $\int_{[1,\infty)} \ups \gl(\ups)\dd \ups<\infty$,
we have shown that if~\eqref{hyposmall2} is not satisfied, then
$\lim_{a\to 0} \cZ_{\gb}^{\go,a}=0$ almost surely.
If $f\in \cB$, simply using that $|\cZ_{\gb}^{\go,a}(f)|\le \|f\|_{\infty}\cZ_{\gb}^{\go,a}$ gives us $\lim_{a\to 0} \cZ_{\gb}^{\go,a}(f)=0$.

\smallskip
Let us now turn to the case $\int_{[1,\infty)} \ups \gl(\ups)\dd \ups=\infty$,.
For $f\in \cB_b$
we can replace the noise $\xi^{(a)}_\go$ by a truncated one
  $\xi^{[a,b)}_\go$  (recall \eqref{splitnoise}), like in the proof of Proposition \ref{prop:contiZfini}, using that~$f$ has a bounded support.
We therefore conclude that we also have 
$\lim_{a\to 0} \cZ_{\gb}^{\go,a}(f)=0$.

It remains to show that if~\eqref{hypolarge} holds, 
then we also have $\lim_{a\to 0} \cZ_{\gb}^{\go,a}=0$ a.s.
We set $f_n(B):= \ind_{\{\max_{t\in[0,1]}|B_t|\le n\}}$ and $\bar f_n=1-f_n$.
We have for any~$n$
$\lim_{a\to 0}\cZ_{\gb}^{\go,a}(f_n)=0$ a.s.\ and 
we can thus conclude if we prove that 
\begin{equation}
\label{toconclude}
 \lim_{n\to \infty} \sup_{a\in (0,1]}\cZ_{\gb}^{\go,a}(\bar f_n)=0
\end{equation}
Using Doob's maximal inequality for the super-martingale $(\cZ^{\go,[a,b)}_{\gb}(\bar f_n)\wedge \gep)_{a\in (0,1]}$, we get
\begin{equation}
  \bbP\Big(\sup_{a\in(0,1]}\cZ^{\go,[a,b)}_\beta(\bar f_n) > \gep \Big) \le  \frac{1}{\gep}\bbE \left[ \cZ^{\go,[1,b)}_\beta(\bar f_n)  \wedge \gep \right].
\end{equation}
Sending $b$ to infinity on both sides 
we get $ \bbP(\sup_{a\in(0,1]}\cZ^{\go,a}_\beta(\bar f_n) > \gep )\le  \frac{1}{\gep}\bbE [ \cZ^{\go,1}_\beta(\bar f_n)  \wedge \gep]$,
which proves~\eqref{toconclude} by dominated convergence,
thanks to Proposition~\ref{prop:Zfinite}.
\qed

\section{Properties of the Continuum Directed Polymer in L\'evy Noise}
\label{sec:properties}

In this section, we prove various properties of the 
measure $\bQ_{\gb}^{\go}$ constructed in Theorem~\ref{thm:zalpha}.
We  always suppose that Assumptions~\eqref{hypolarge}-\eqref{hyposmall} are satisfied.

\subsection{Proof of Proposition \ref{abzolute}}
\label{sec:abzolute}

 The proof is based on the following technical result.
\begin{lemma}\label{mostsurelemma}
 For any fixed  $f\in \cB$ we have almost surely
\begin{equation}\label{mostsure}
 \bQ^{\go}_{T,\beta}(f)= \lim_{a\to 0} \bQ^{\go,a}_{T,\beta}(f)    \,.
\end{equation}
\end{lemma}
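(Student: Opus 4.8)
The statement I must prove is Lemma~\ref{mostsurelemma}: for each fixed $f\in\cB$, almost surely $\bQ^{\go}_{\beta}(f)=\lim_{a\to 0}\bQ^{\go,a}_{\beta}(f)$. The subtle point is that Theorem~\ref{thm:zalpha} (see~\eqref{lescontis}) only gives this almost sure convergence for \emph{continuous} bounded $f$, with a null set depending on $f$; here I want it for an arbitrary fixed bounded measurable $f$. The natural route is to first reduce to $f\in\cB_b$, then approximate $f$ by continuous functions in an $L^1$ sense with respect to the averaged polymer measure, and use a uniform (in $a$) control to pass the approximation through the $a\to 0$ limit.

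\textbf{Step 1: reduction to bounded support and to non-negative $f$.} Writing $f=f^+-f^-$ and normalizing, it suffices to treat $0\le f\le 1$. Next, for $f\in\cB$ one writes $f=f f_n+f\bar f_n$ where $f_n=\ind_{\{\|\varphi\|_\infty\le n\}}$ and $\bar f_n=1-f_n$; then $f f_n\in\cB_b$, and $|\cZ^{\go,a}_{\beta}(f\bar f_n)|\le\cZ^{\go,a}_{\beta}(\bar f_n)$. By~\eqref{toconclude} in Section~\ref{sec:zero} (or rather its analogue: the same Doob/dominated-convergence argument, valid under~\eqref{hypolarge}) we have $\lim_{n\to\infty}\sup_{a\in(0,1]}\cZ^{\go,a}_{\beta}(\bar f_n)=0$ almost surely, and since $\cZ^{\go,a}_{\beta}\to\cZ^{\go}_{\beta}\in(0,\infty)$, this reduces the claim to $f\in\cB_b$.

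\textbf{Step 2: continuous approximation and the main estimate.} Fix $f\in\cB_b$ with $0\le f\le 1$ and $\supp(f)\subset\{\|\varphi\|_\infty\le M\}$. For $\gd>0$, choose $g_\gd\in\cC_b$ with $0\le g_\gd\le 1$, supported in $\{\|\varphi\|_\infty\le M+1\}$, such that $\bQ(|f-g_\gd|)\le\gd$ (possible since continuous bounded functions are dense in $L^1(\bQ)$, e.g.\ by Lusin's theorem composed with a truncation/mollification on $C_0([0,1])$). Then write, for any $a$,
\begin{align*}
\big|\bQ^{\go,a}_{\beta}(f)-\bQ^{\go}_{\beta}(f)\big|
&\le \frac{\cZ^{\go,a}_{\beta}(|f-g_\gd|)}{\cZ^{\go,a}_{\beta}}
+\big|\bQ^{\go,a}_{\beta}(g_\gd)-\bQ^{\go}_{\beta}(g_\gd)\big|
+\frac{\cZ^{\go}_{\beta}(|f-g_\gd|)}{\cZ^{\go}_{\beta}} \, .
\end{align*}
The middle term tends to $0$ as $a\to 0$ almost surely by~\eqref{lescontis}, since $g_\gd\in\cC_b\subset\cC$. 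For the outer terms I need a uniform-in-$a$ bound on $\cZ^{\go,a}_{\beta}(|f-g_\gd|)$. Here is the key point: by Corollary~\ref{toutlesf} (when $\mu<\infty$) and more generally by the truncation argument of Lemma~\ref{lemmaconv}, $\cZ^{\go,a}_{\beta}(h)\to\cZ^{\go}_{\beta}(h)$ in $\bbL_1$ for $h\in\cB_b$ — actually one should be more careful: convergence is almost sure, and in $\bbL_1$ only when $\int_{[1,\infty)}\ups\gl(\dd\ups)<\infty$. To get the uniform bound regardless, observe $\sup_{a}\cZ^{\go,a}_{\beta}(|f-g_\gd|)\le \sup_a \cZ^{\go,[a,b)}_{\beta}(|f-g_\gd|)+\sup_a(\cZ^{\go,a}_{\beta}-\cZ^{\go,[a,b)}_{\beta})$; the second term is small (uniformly in $a$) for large $b$ by~\eqref{labellobis}, and the first is a uniformly integrable martingale (in $a$) by Corollary~\ref{lem:gene}, so $\bbE[\sup_a \cZ^{\go,[a,b)}_{\beta}(|f-g_\gd|)]$ can be controlled via Doob's $L^1$ maximal inequality and $\bbE[\cZ^{\go,[1,b)}_{\beta}(|f-g_\gd|)]=e^{\gb\mu_b}\bQ(|f-g_\gd|)\le e^{\gb\mu_b}\gd$.

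\textbf{Step 3: conclusion via a diagonal/Borel--Cantelli argument.} The above gives, for each $\gd>0$ along a subsequence $b=b_j\to\infty$, that $\bbP(\sup_a\cZ^{\go,a}_{\beta}(|f-g_\gd|)>\gep)$ is bounded by $\gep^{-1}(e^{\gb\mu_{b_j}}\gd+o_j(1))$ plus the tail from~\eqref{labellobis}; choosing $\gd=\gd_k\to 0$ fast and applying Borel--Cantelli, one gets $\limsup_{a\to 0}|\bQ^{\go,a}_{\beta}(f)-\bQ^{\go}_{\beta}(f)|\le \limsup_k\big(\cZ^{\go}_{\beta}(|f-g_{\gd_k}|)/\cZ^{\go}_{\beta}+(\text{small}) \big)=0$ almost surely, using also that $\cZ^{\go}_{\beta}(|f-g_{\gd_k}|)\to 0$ a.s.\ (it is dominated by $\cZ^{\go}_{\beta}$ and has $\bbL_1$-limit $0$ along a subsequence when $\mu<\infty$; when $\mu=\infty$ one first truncates in $b$ and uses monotone convergence as in Lemma~\ref{lemmaconv}). \textbf{The main obstacle} is precisely this last point — transferring the continuous-$f$ almost sure convergence to measurable $f$ — because the null sets depend on $f$; the resolution is that the \emph{uniform-in-$a$} $\bbL_1$ control of $\cZ^{\go,a}_{\beta}(|f-g_\gd|)$ (coming from the martingale structure plus~\eqref{labellobis}) lets one commute the $a\to 0$ and $\gd\to 0$ limits, which is exactly what Borel--Cantelli needs.
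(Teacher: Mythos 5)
Your Step~2 decomposition contains a genuine gap, and it sits exactly at the point the lemma is about. The triangle inequality gives
\begin{equation*}
\big|\bQ^{\go,a}_{\beta}(f)-\bQ^{\go}_{\beta}(f)\big|
\;\le\; \frac{\cZ^{\go,a}_{\beta}(|f-g_\gd|)}{\cZ^{\go,a}_{\beta}}
\;+\;\big|\bQ^{\go,a}_{\beta}(g_\gd)-\bQ^{\go}_{\beta}(g_\gd)\big|
\;+\;\bQ^{\go}_{\beta}(|f-g_\gd|)\,,
\end{equation*}
where the last term is the expectation of the \emph{non-continuous} function $|f-g_\gd|$ under the weak-limit measure $\bQ^{\go}_{\beta}$. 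You replace it by $\cZ^{\go}_{\beta}(|f-g_\gd|)/\cZ^{\go}_{\beta}$, but the identity $\bQ^{\go}_{\beta}(h)=\cZ^{\go}_{\beta}(h)/\cZ^{\go}_{\beta}$ for bounded measurable $h$ is precisely the content of the lemma (see \eqref{watacoincidence}); it is only known for $h\in\cC$ via \eqref{lescontis}. Applied to $h=|f-g_\gd|\in\cB_b$, your substitution assumes what is to be proved, so the argument is circular. Note that the part you do control rigorously (the uniform-in-$a$ bound on $\cZ^{\go,a}_{\beta}(|f-g_\gd|)$ via Doob and \eqref{labellobis}, and the convergence $\cZ^{\go}_{\beta}(|f-g_{\gd_k}|)\to 0$ along a subsequence) only re-establishes that $\lim_{a\to 0}\bQ^{\go,a}_{\beta}(f)=\cZ^{\go}_{\beta}(f)/\cZ^{\go}_{\beta}$, which is already given by Lemma~\ref{lemmaconv} and Proposition~\ref{lapositivity} (see \eqref{convas}); it never connects this ratio to the weak limit $\bQ^{\go}_{\beta}(f)$. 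You also cannot shortcut via ``$g_{\gd_k}\to f$ $\bQ$-a.e.\ hence $\bQ^{\go}_{\beta}$-a.e.'': the quenched measure $\bQ^{\go}_{\beta}$ is in general singular with respect to $\bQ$, and the absolute continuity of the averaged measure (Proposition~\ref{abzolute}) is itself deduced from this lemma, so invoking it here would again be circular.

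The missing ingredient is a mechanism to evaluate the weak limit on non-continuous functions, and any repair of your scheme essentially reproduces the paper's proof: for a bounded measurable set $A$, sandwich it between closed sets $A_n^{(1)}$ and bounded open sets $A_n^{(2)}$ with $\bQ(A_n^{(2)}\setminus A_n^{(1)})\to 0$ (regularity of $\bQ$, \eqref{oxos}), use the Portmanteau theorem to trap $\bQ^{\go}_{\beta}(A)$ between $\lim_a\bQ^{\go,a}_{\beta}(A_n^{(1)})$ and $\lim_a\bQ^{\go,a}_{\beta}(A_n^{(2)})$ as in \eqref{oxossi}, and then kill the boundary layer by showing $\lim_n\cZ^{\go}_{\beta}(\ind_{A_n^{(2)}\setminus A_n^{(1)}})=0$ via Fatou and $\bbE[\cZ^{\go}_{\beta}(\ind_B)]\le e^{\gb\mu}\bQ(B)$ (with a truncation in $b$ when $\int_{[1,\infty)}\ups\,\gl(\dd\ups)=\infty$, as in \eqref{ogum}--\eqref{okok}); the extension from indicators to general $f\in\cB$ is then done by monotone approximation and the complementary function $\|f\|_\infty-f$. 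Your Step~1 reduction and your uniform-in-$a$ truncation estimates are fine, but without the regularity/Portmanteau step the proof does not close.
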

\noindent Let us briefly comment on the result above before we present its proof. Lemma~\ref{lemmaconv} and Proposition~\ref{lapositivity} ensure that $\bQ^{\go,a}_{T,\beta}(f)$ converges a.s.\ as a quotient:
for any $f\in \cB$, we have almost surely 
 \begin{equation}
 \label{convas}
 \lim_{a\to 0}\bQ^{\go,a}_{T,\beta}(f)= \frac{\cZ^{\go}_{T,\beta}(f) }{\cZ^{\go}_{T,\beta}}.
\end{equation}
What requires a proof is that for a fixed $f\in \mathcal B$ we almost surely have  
\begin{equation}\label{watacoincidence}
\cZ^{\go}_{T,\beta}(f)= \cZ^{\go}_{T,\beta}\bQ^{\go}_{T,\beta}(f)=:{\bZ}^{\go}_{T,\beta}(f).
\end{equation}
This is a non trivial statement since the two terms correspond to different definitions: $\bQ^{\go}_{T,\beta}(f)$ is the expectation of $f$ with respect to the weak limit of $\bQ^{\go,a}_{T,\beta}$.
The definition of weak convergence only implies that \eqref{watacoincidence} holds when $f\in \mathcal C$.

\smallskip

Note also here that the position of quantifiers is important. The convergence \eqref{mostsure} holds with probability one, simultaneously for every $f\in \mathcal C$, but  not necessarily simultaneously for all bounded measurable functions. The latter statement would correspond to convergence in total variation and does not hold when $\int_{(0,1)}v \gl(\dd v)=\infty$ (see Remark \ref{tvtrem}).

\begin{proof} 
To prove \eqref{mostsure}, 
we only need to verify that for any fixed measurable bounded set~$A$,
we almost surely have
\begin{equation}\label{mostsurevent}
 \lim_{a\to 0}\bQ^{\go,a}_{\gb}(A)=\bQ^{\go}_{\gb}(A)\,.
\end{equation}
Indeed, \eqref{mostsurevent} implies  that \eqref{mostsure} is satisfied for non-negative simple  functions with bounded support. Then given a non-negative $f\in \cB$,  consider a non-negative sequence of simple functions  with bounded support $(f_n)_{n\ge 1}$  which is such that $f_n\uparrow f$.
Using monotone convergence, we have for every $m\ge 1$
\begin{equation}
 \frac{\cZ^{\go}_{T,\beta}(f)}{\cZ^{\go}_{T,\beta}} =\lim_{a\to 0}\bQ^{\go,a}_{\gb}(f)= \lim_{a\to 0}\lim_{n\to \infty} \bQ^{\go,a}_{\gb}(f_n)\ge  \lim_{a\to 0}\bQ^{\go,a}_{\gb}(f_m)=\bQ^{\go}_{\gb}(f_m).
\end{equation}
Letting $m\to \infty$ and using monotone convergence again, we obtain   
$\cZ^{\go}_{T,\beta}(f)\ge  \cZ^{\go}_{T,\beta}\bQ^{\go}_{\gb}(f)$.
Since we also have (for the same reason) $\cZ^{\go}_{T,\beta}(g)\ge  \cZ^{\go}_{T,\beta}\bQ^{\go}_{\gb}(g)$ for  $g=(\|f\|_{\infty}-f)$, we obtain the equality~\eqref{mostsure}.

\smallskip

Let us now prove \eqref{mostsurevent}.
Since $\bQ$ is a regular measure, one can find an increasing sequence of closed set $(A^{(1)}_n)_{n\ge 1}$ and a decreasing
sequence of bounded open sets $(A^{(2)}_n)_{n\ge 1}$ such that 
\begin{equation}
\label{oxos}
 \forall n\ge 1,    A^{(1)}_n\subset A\subset  A^{(2)}_n, \quad \text{ and } \quad  \lim_{n\to \infty}
 \bQ (A^{(2)}_n\setminus A^{(1)}_n)=0.
\end{equation}
The inclusions implies that  
\begin{equation}
 \lim_{a\to 0}\bQ^{\go,a}_{\gb}(A^{(1)}_n) \le  \lim_{a\to 0}\bQ^{\go,a}_{\gb}(A)\le    \lim_{a\to 0}\bQ^{\go,a}_{\gb}(A^{(2)}_n).
\end{equation}
Now, by the Portmanteau theorem, we have for every $n\ge 1$
\begin{equation}\label{oxossi}
\lim_{a\to 0}\bQ^{\go,a}_{\gb}(A^{(1)}_n)\le \bQ^{\go}_{\gb}(A^{(1)}_n)\le \bQ^{\go}_{\gb}(A)\le  \bQ^{\go}_{\gb}(A^{(2)}_n)\le  
\lim_{a\to 0}\bQ^{\go,a}_{\gb}(A^{(2)}_n)\,.
\end{equation}
Hence to conclude  that $\lim_{a\to 0}\bQ^{\go,a}_{\gb}(A)= \bQ^{\go}_{\gb}(A)$, it is sufficient prove that
\begin{equation}\label{ogum}
  \lim_{n\to \infty}\lim_{a\to 0} \bQ^{\go,a}_{\beta} (A^{(2)}_n\setminus A^{(1)}_n)=(\cZ^{\go}_\beta)^{-1} \lim_{n\to \infty} \cZ^{\go}_\beta(\ind_{A^{(2)}_n\setminus A^{(1)}_n})=0\,,
\end{equation}
where the first equality is just a consequence of the definitions (see~\eqref{convas}).
If we assume that $\mu:=\int_{[1,\infty)} \ups \gl(\dd \ups)<\infty$, then using successively Fatou's lemma,  Corollary~\ref{toutlesf} and  \eqref{oxos}, we obtain that
\begin{equation}\label{okok}
\bbE \Big[ \lim_{n\to \infty}\cZ^{\go}_\beta \big( \ind_{A^{(2)}_n\setminus A^{(1)}_n} \big) \Big]\le \lim_{n\to \infty} \bbE \Big[ \cZ^{\go}_\beta \big( \ind_{A^{(2)}_n\setminus A^{(1)}_n} \big) \Big]=  e^{\beta \mu} \lim_{n\to \infty}\bQ(A^{(2)}_n\setminus A^{(1)}_n)=0, 
\end{equation}
proving \eqref{ogum}.
Now if $\int_{[1,\infty)} \ups \gl(\dd \ups)=\infty$, the previous argument yields that for any $b$,
\[
\lim_{n\to \infty}\cZ^{\go,[0,b)}_\beta \big(\ind_{A^{(2)}_n\setminus A^{(1)}_n}\big)=0 \, .
\]
Since $A^{(2)}_n\setminus A^{(1)}_n\subset A^{(2)}_1$ for all $n$ with $A_1^{(2)}$ bounded, we have that for $b$ sufficiently large, 
\[
\cZ^{\go}_\beta\big(\ind_{A^{(2)}_n\setminus A^{(1)}_n}\big)=\cZ^{\go,[0,b)}_\beta\big(\ind_{A^{(2)}_n\setminus A^{(1)}_n} \big)
\]
for every $n$.  This allows to conclude that $\cZ^{\go}_\beta(\ind_{A^{(2)}_n\setminus A^{(1)}_n})$ goes to $0$ also in that case.
\end{proof}

\begin{proof}[Proof of Proposition \ref{abzolute}]
Given $A$ such that $\bQ(A)=0$,  we have a.s.\ $\bQ^{\go,a}_{\gb}(A)=0$, for any $a\in (0,1]$. 
If $\mu<\infty$, this follows from~\eqref{lamine}. If $\mu=\infty$, one has  a.s.\ $\cZ^{\go,[a,b)}_{\gb} (\ind_A) =0$, for any $b>1$: by monotonicity $\cZ^{\go,a}_{\gb} (\ind_A) =\lim_{b\to\infty} \cZ^{\go,[a,b)}_{\gb} (\ind_A) =0$.
Thus Lemma \ref{mostsurelemma} applied to $\ind_A$ implies that $\bQ^{\go}_{\gb}(A)=0$ for almost every $\go$, giving  that $\bbP \rtimes\bQ^{\go}_{\beta}(A)=0$.
\end{proof}

\subsection{Proof of Proposition \ref{emptydance}}
\label{sec:emptydance} 
 
 Let us start with the item (i), 
$\bQ(  \cA_{\mathrm{empty}}(\go))=1$,
which is the easiest statement.
Note that for any $t>0$ and $x\in \bbR^d$ we have $\bQ(B_t=x)=0$.
Hence, 
\begin{equation}
 \bQ(  \cA^{\cc}_{\mathrm{empty}})\le \sum_{(t,x,\ups)\in \go }\bQ(B_t=x)=0.
\end{equation}

Now, to prove item (ii), we notice that when $\kappa_0 :=\int_{(0,1)} \ups \gl(\dd \ups)<\infty$ then almost surely 
the $\bQ^{\go}_\beta$-probability of an event $A$ is given by 
\begin{equation}\label{alter}
 \bQ^{\go}_\beta(A)=\frac{1}{\cZ^{\go}_{\beta}}\sum_{\sigma\in \cP(\go)}  w_{0,\gb}(\sigma,\ind_A)
\end{equation}
where $w_{0,\gb}(\sigma,\ind_A)$ is defined as in \eqref{weightsf}  with $a=0$. We have $w_{0,\gb}(\sigma,f) =\lim_{a\to 0} w_{a,\gb}(\sigma,f)$ provided that $\kappa_0<+\infty$ (otherwise $\lim_{a\to 0} w_{a,\gb}(\sigma,f)=0$).
 Notice that the sum in the r.h.s.\ of~\eqref{alter} is finite since, 
by monotone convergence (note that $e^{\gb \kappa_a} w_{a,\gb}(\sigma,\ind_A)$ is  non-increasing in $a$),
\[
e^{\gb\kappa_0} \sum_{\sigma\in \cP(\go)} w_{0,\gb}(\sigma,\ind_A) = \lim_{a\to 0} e^{\gb\kappa_a} \sum_{\sigma\in \cP(\go)} w_{a,\gb}(\sigma,\ind_A)
=\lim_{a\to 0}  e^{\gb\kappa_a} \cZ^{\go,a}_{\beta} (\ind_A)
= e^{\gb \kappa_0} \cZ^{\go}_{\beta} (\ind_A) \,,
\]
the last term being finite thanks to Lemma~\ref{lemmaconv}. We can then conclude that \eqref{alter} holds using Lemma \ref{mostsurelemma}.
In particular, from~\eqref{alter} we have
\begin{equation}
   \bQ^{\go}_\beta(  \cA_{\mathrm{empty}} )= \frac{e^{-\beta \kappa_0}}{\cZ^{\go}_{\beta}}\,  \in (0,1). 
\end{equation}
For $\cA_{\infty}$ the same argument as in item~(i) shows that  $w_{0,\gb}(\sigma,\ind_{\cA_{\infty}})=0$, since $\cA_{\infty}$ requires that the trajectory visits at least one point outside of $\sigma$ (recall that $\sigma$ is finite), and hence $\bQ^{\go}_\beta(  \cA_{\mathrm{\infty}} )=0$.

Let us now turn to the more delicate item~(iii).
Our idea is to find a sequence $A_{n}$ of sets in $C_0([0,1])$ which are such that
\begin{equation}\label{leclose}
 \lim_{n\to\infty} \bQ^{\go}_{\gb}(A_n^{\cc}) =0 \, \quad \text{a.s.}  \quad 
\text{ and } \quad 
 \limsup A_{n} := \bigcap_{n\ge 0} \bigcup_{m\ge n} A_m\subset \cA_{\mathrm{dense}} \, .
\end{equation}
We will then get that almost surely
\begin{equation}
  \bQ^{\go}_\beta (\cA_{\mathrm{dense}})\ge  \lim_{n\to \infty }  \bQ^{\go}_\beta \Big(       \bigcup_{m\ge n} A_{n}\Big)=1\,  .
\end{equation}
By \eqref{mostsure} the first requirement in~\eqref{leclose} is equivalent to  
$\lim_{n\to\infty} \lim_{a\to 0}  \bQ^{\go,a}_\beta (A^{\cc}_{n})  =0$
and thus to
\begin{equation}
\label{Zan}
\lim_{n\to\infty} \lim_{a\to 0} \cZ^{\go,a}_\beta (\ind_{A^{\cc}_{n}}) =0 \,,
\end{equation}
since $\cZ^{\go,a}_\beta$
converges to a positive limit and is thus bounded away from~$0$.
The obvious way to bound $\cZ^{\go,a}_\beta (A^{\cc}_{n})$ is via the computation of its expectation. For this reason we first assume
that $\mu=\int_{[1,\infty)}\ups \gl (\dd \ups)<\infty$. Let
us denote $z(a,\varphi,\go)$ the maximal spacing in the times of visit  to points in $\go^{(a)}$, \textit{i.e.}
\begin{equation*}
 z(a,\varphi,\go):= \sup \left\{ s \, \colon \exists t\in [0,1-s], \  \  \varDelta(\varphi,\go^{(a)})\cap[t,t+s]=\emptyset\right\}
\end{equation*}
 where we recall that $\varDelta (\varphi, \go) = \{t\in [0,1] \ \colon \exists \ups>0,\, (t,\varphi(t),\ups) \in \go\} $ (see~\eqref{def:Deltaphi}).
Let us also set
\begin{equation*}
 a_n:= \sup \{ a \colon \kappa_a \ge n \}\, ,
\end{equation*}
which goes to $0$ as $n$ tends to infinity.
 To lighten notation, let us set $ \kappa_n:=  \kappa_{a_n} \geq n$ and $\bar \kappa_n:= \mu+ \kappa_{a_n}$.
We then define 
\begin{equation}
 A_{n}:= \left\{  \varphi \, \colon  z(a_n,\varphi,\go)  \le   \kappa_n^{-1/2}   \right\} \, .
\end{equation}
and notice that $\cA_{\mathrm{dense}}$ is satisfied as soon as infinitely many $A_{n}$'s are satisfied. Now, from Lemma~\ref{lem:martingale} we have that $(\cZ^{\go, a }_{\beta}(A_{n}^{\cc}))_{a\in(0,a_n]}$ is a martingale which is uniformly integrable:
we can therefore extend it at $0$.
By Markov's inequality we have
\begin{equation}
\bbP\left(   \lim_{a\to 0}\cZ^{\go, a}_{\beta}(A^{\cc}_{n})\ge n^{-1} \right) \le n \bbE \big[ \cZ^{\go,a_n}_{\beta}(A^{\cc}_{n}) \big].
\end{equation}
The r.h.s.\ can be computed explicitly: we have (recall $t_0=0$, $t_{|\sigma|+1}=1$)
\begin{equation*}
\cZ^{\go,a_n}_{\beta}(A^{\cc}_{n})= \sum_{\sigma \in \cP(\go^{(a_n)}) \, ,\,  \exists  i\in \lint 0, |\sigma|\rint, \ t_{i+1}-t_{i}>  \kappa_n^{-1/2} }  w_{a_n,\beta}(\sigma).
\end{equation*}
Hence we have
\begin{equation*}
 \bbE \big[\cZ^{\go,a_n}_{\beta}(A^{\cc}_{n})\big]
 \le  e^{-\beta \kappa_{n}}\sum_{k=0}^{\infty} (\beta \bar \kappa_n)^k \int_{ \mathfrak{X}^k} \ind_{\{ \exists  i\in \lint 0,\, k\rint ,  t_{i+1}-t_{i}>  \kappa_n^{-1/2} \}}  \dd \bt \, .
\end{equation*}
Now, by symmetry (the roles of $t_{i+1}-t_i$ can be exchanged), we have for any $k\ge 1$
\begin{equation*}
 \int_{ \mathfrak{X}^k} \ind_{\{ \exists  i\in \lint 0, k\rint , t_{i+1}-t_{i}> (\log n)^2/n \}}  \dd \bt \le   (k+1)  \int_{ \mathfrak{X}^k} \ind_{\{  t_k\le  1- \kappa_n^{-1/2} \}}  \dd \bt =  \frac{  (k+1)  \big(1- \kappa_n^{-1/2} \big)^k}{k!} \,.
\end{equation*}
Thus,  bounding $k+1\leq 2 k$ for $k\geq 1$, we have,
\[
  \bbE \big[\cZ^{\go,a_n}_{\beta}(A^{\cc}_{n})\big]
  \le 
 e^{-\beta \kappa_{n}} \left( 1+   2\beta \bar \kappa_n e^{\gb  \bar \kappa_n (1- \kappa_n^{-1/2}) } \right)
 \le e^{-\beta n}+   2\beta \bar \kappa_n e^{\beta \mu} e^{  -\gb  \kappa_n^{1/2} } 
 \le n^{-3}\,,
\]
 where the last inequality holds for $n$ sufficiently large.
We therefore get that for $n$ sufficiently large
 \begin{equation}
 \bbP\left(   \lim_{a\to 0}\cZ^{\go}_{\beta}(A^{\cc}_{n})\ge n^{-1} \right) \le n^{-2} \, ,
 \end{equation}
which is summable, so we conclude that \eqref{Zan} holds a.s.\ by Borel--Cantelli lemma.
When $\int_{[1,\infty)}\ups \gl (\dd \ups) =\infty$, in order to prove~\eqref{Zan} (for the same $A_n$),
we observe that 
\begin{equation}
\cZ^{\go,a}_{\beta}(A^{\cc}_{n})\le \Big( \cZ^{\go,a}_{\beta}-  \cZ^{\go,[a,b)}_{\beta} \Big) + \cZ^{\go,[a,b)}_{\beta}(A^{\cc}_{n}).
\end{equation}
The second term goes to zero by the above proof and the first one can be made arbitrarily small by choosing $b$ large, thanks to Proposition~\ref{prop:Zfinite}  which ensures that $\cZ_{\gb}^{\go,a}<+\infty$.
\qed

\subsection{Proof of Proposition \ref{twomarg}}
\label{sec:twomarg}

Recall that (see~\eqref{p2p})
$$\cZ^{\go}_{\beta}[(t_1,x_1),(t_2,x_2)]:= \limsup_{a\to 0}\cZ^{\go,a}_{\beta}[(t_1,x_1),(t_2,x_2)].$$  
We use a $\limsup$ so that the quantity is defined everywhere (which is necessary for integration).
However,   as a consequence of Proposition~\ref{th:superlem}  (proven in Section~\ref{sec:UI}) and of translation invariance, 
 for a fixed  choice of end points, the $\limsup$ can almost surely be replaced by a limit, and hence
for any fixed $0<t_1<\dots<t_k=1$ and 
  $(x_1, \ldots, x_k)$ 
\begin{equation}
\bbP\left( \exists  i\in \lint 1,k\rint,\  \cZ^{\go}_{\beta}[(t_{i-1},x_{i-1}),(t_i,x_{i})] >  \liminf_{a\to 0} \cZ^{\go,a}_{\beta}[(t_{i-1},x_{i-1}),(t_i,x_{i})] \right)=0.
\end{equation}
Thus, as a consequence of Fubini's theorem, the set 
 \[
\left\{ (x_1, \ldots, x_k) \, \colon   \exists  i\in \lint 1,k\rint,\  \cZ^{\go}_{\beta}[(t_{i-1},x_{i-1}),(t_i,x_{i})] >  \liminf_{a\to 0} \cZ^{\go,a}_{\beta}[(t_{i-1},x_{i-1}),(t_i,x_{i})]  \  \right\}
\]
has almost surely zero Lebesgue measure.
Now, let $g$ be a bounded continuous function of~$k$ variables in $\bbR^d$, satisfying $0\le g \le 1$.
Applying~  \eqref{lescontis} with $\bar g(\varphi):= g(\varphi(t_1),\dots,\varphi(t_k) )$, we have 
\begin{equation}
 \bQ^{\go}_{\beta}( g(B_{t_1},\dots, B_{t_k}))=\lim_{a\to 0}   \bQ^{\go,a}_{\beta}(g(B_{t_1},\dots, B_{t_k})) \,, \qquad \text{a.s.}
\end{equation}
For any $a>0$ we have
\begin{equation}
  \bQ^{\go,a}_{\beta}(g(B_{t_1},\dots, B_{t_k}))= \frac{1}{\mathcal Z^{\go,a}_{\beta}} \int_{(\bbR^d)^k} g(\bx)\prod_{i=1}^k \cZ^{\go,a}_{\beta}[(t_{i-1},x_{i-1}),(t_i,x_{i})] \dd \bx \, .
\end{equation}
To conclude, we only need to show that almost surely
\begin{equation}\label{afaire}
 \lim_{a\to 0} \int_{(\bbR^d)^k} \!\!  g(\bx)\prod_{i=1}^k \cZ^{\go,a}_{\beta}[(t_{i-1},x_{i-1}),(t_i,x_{i})] \dd x_i  = \int_{(\bbR^d)^k} \!\!  g(\bx)\prod_{i=1}^k \cZ^{\go}_{\beta}[(t_{i-1},x_{i-1}),(t_i,x_{i})] \dd \bx \, .
\end{equation}
In particular, taking $g\equiv 1$, this will give that  (recall~\eqref{integreZ})
\begin{equation}
\label{integreZ2}
 \cZ_{\gb}^{\go} = \int_{(\bbR^d)^k} \prod_{i=1}^k \cZ^{\go}_{\beta}[(t_{i-1},x_{i-1}),(t_i,x_{i})] \dd x_i \,.
\end{equation}
Let us first treat the case $\int_{[1,\infty)} \ups \gl(\dd \ups)<\infty$. In that case we have
\begin{align*}
  \bbE  \bigg[ \Big|&\int_{(\bbR^d)^k}  \!\!  g(\bx)\prod_{i=1}^k \cZ^{\go,a}_{\beta}[(t_{i-1},x_{i-1}),(t_i,x_{i})] \dd \bx
  -\int_{(\bbR^d)^k} \!\! g(\bx)\prod_{i=1}^k \cZ^{\go}_{\beta}[(t_{i-1},x_{i-1}),(t_i,x_{i})] \dd \bx  \Big|\bigg]  \\
&\le \int_{(\bbR^d)^k} \!\! g(\bx)   \bbE \bigg[ \Big| \prod_{i=1}^k \cZ^{\go,a}_{\gb}[(t_{i-1},x_{i-1}),(t_i,x_{i})] 
  -  \prod_{i=1}^k \cZ^{\go}_{\beta}[(t_{i-1},x_{i-1}),(t_i,x_{i})]   \Big|\bigg] \dd \bx \, .
\end{align*}
Now, thanks to Proposition \ref{th:superlem} and the fact 
that the product of independent variables converging in~$\bbL_1$
also converges in $\bbL_1$, we have for every $x_1,\dots, x_k \in \R^d$
\begin{equation}
\label{eq:conv1}
 \lim_{a\to 0}\bbE \bigg[ \Big| \prod_{i=1}^k \cZ^{\go,a}_{\gb}[(t_{i-1},x_{i-1}),(t_i,x_{i})]
  -  \prod_{i=1}^k \cZ^{\go}_{\beta}[(t_{i-1},x_{i-1}),(t_i,x_{i})]   \Big|\bigg]=0.
\end{equation}
Morever, we have (recall \eqref{zamine})
\begin{equation}
\label{eq:domin1}
 \bbE \bigg[ \Big| \prod_{i=1}^k \cZ^{\go,a}_{\gb}[(t_{i-1},x_{i-1}),(t_i,x_{i})]
  -  \prod_{i=1}^k \cZ^{\go}_{\beta}[(t_{i-1},x_{i-1}),(t_i,x_{i})]   \Big|\bigg]
  \le 2 e^{\gb \mu } \varrho(\bt,\bx)
\end{equation}
and thus
the convergence~\eqref{afaire} holds in $\bbL_1$  by dominated convergence.
The fact that the convergence is also  almost sure
comes from the fact that the l.h.s.\ in~\eqref{afaire}
is a martingale.

In the case $\int_{[1,\infty)} \ups \gl(\dd \ups)=\infty$, we can apply~\eqref{afaire} for the truncated environment and obtain 
 that almost surely
\begin{multline}\label{afaire1}
 \lim_{a\to 0} \int_{(\bbR^d)^k} \!\! g(\bx)\prod_{i=1}^k \cZ^{\go,[a,b)}_{\beta}[(t_{i-1},x_{i-1}),(t_i,x_{i})] \dd \bx \\ = \int_{(\bbR^d)^k} g(\bx)\prod_{i=1}^k \cZ^{\go,[0,b)}_{\beta}[(t_{i-1},x_{i-1}),(t_i,x_{i})] \dd \bx.
\end{multline}
Bounding above $\cZ^{\go,[a,b)}_{\beta}$ by $\cZ^{\go,a}_{\gb}$
in the l.h.s.\ and using monotone convergence for the r.h.s., we obtain that
\begin{equation}\label{afaire2}
 \lim_{a\to 0} \int_{(\bbR^d)^k}  \!\!g(\bx)\prod_{i=1}^k \cZ^{\go,a}_{\beta}[(t_{i-1},x_{i-1}),(t_i,x_{i})] \dd \bx
   \ge \int_{(\bbR^d)^k} \!\! g(\bx)\prod_{i=1}^k \cZ^{\go}_{\beta}[(t_{i-1},x_{i-1}),(t_i,x_{i})] \dd \bx.
\end{equation}
Since the same inequality is also valid for $1-g$, to conclude it is sufficient to check that we have equality when $g\equiv 1$. This corresponds to checking that 
\begin{equation}
\label{intpointtopoint}
 \int_{(\bbR^d)^k} \prod_{i=1}^k \cZ^{\go}_{\beta}[(t_{i-1},x_{i-1}),(t_i,x_{i})] \dd x_i=
 \cZ^{\go}_\beta\, .
\end{equation}
But thanks to~\eqref{afaire}  (see in particular~\eqref{integreZ2}), we have
\begin{equation*}
  \int_{(\bbR^d)^k} \prod_{i=1}^k \cZ^{\go,[0,b)}_{\beta}[(t_{i-1},x_{i-1}),(t_i,x_{i})] \dd x_i=
 \cZ^{\go,[0,b)}_\beta \, ,
\end{equation*}
for all $b$, so \eqref{intpointtopoint} follows by monotone convergence.
\qed

\section{Stochastic Heat Equation with L\'evy noise: proof of Proposition~\ref{prop:SHE}}
\label{sec:propSHE}

 Recall that Proposition \ref{lem:SHE} has been proven in Section \ref{sec:UI} (see Remark \ref{proprop}).
It remains to show that for fixed  $t \in [0,1]$ and $x\in \R^d$,
we have
\begin{equation}
\label{gogoal}
\lim_{a\to 0} \int_{\bbR^d} 
\cZ^{\go,a}_{\beta}[(0,y),(t,x)] \, u_0(\dd y ) 
= \int_{\bbR^d} 
\cZ^{\go}_{\beta}[(0,y),(t,x)] \, u_0(\dd y )  
\end{equation} 
and that the right-hand-side is finite.
For simplicity, we assume that $u_0$ is a positive measure,
since otherwise 
we simply treat the positive and negative parts of $u_0$ separately.

In the case $\int_{[1,\infty)} \ups \lambda(\dd \ups) <\infty$,
we can repeat the proof of Proposition~\ref{twomarg}.
We have
\begin{equation*}
\bbE\big[ \big| u^{a}(t,x) -u(t,x) \big| \big]
  \le \int_{\bbR^d}   \bbE \Big[ \Big|  \cZ^{\go,a}_{\gb}[(0,y),(t,x)] 
  -   \cZ^{\go}_{\beta}[(0,y),(t,x)]   \Big|\Big] \,  u_0 (\dd y) \, .
\end{equation*}
Using \eqref{eq:conv1}-\eqref{eq:domin1} with $k=1$
together with the fact that $\int_{\bbR^d} \rho_t(y-x) u_0(\dd y) <\infty$ thanks to our assumption~\eqref{hypou0},
we conclude by dominated convergence that 
\[
\lim_{a\to 0} u^{a}(t,x)   
= u(t,x) \]
in $\bbL_1$ and almost surely (since $(u^a(t,x))_{a\in(0,1]}$ is a martingale).

\smallskip

Let us now turn to the case $\int_{[1,\infty)} \ups \lambda(\dd \ups)  = \infty$.
Recall the definition~\eqref{defuab} of $u^{[a,b)}$ and
notice that for all $a\in(0,1]$ and $b\ge 1$ we have $u^{[a,b)}(t,x) \leq u^{a}(t,x) <\infty$
almost surely.
Applying the $\bbL_1$ and a.s.\ convergence 
with the truncated environment,
we get that
\begin{equation}
\lim_{a\to 0}  u^{[a,b)}(t,x) = u^{[0,b)} (t,x) := 
 \int_{\bbR^d} 
\cZ^{\go,[0,b)}_{\beta}[(0,y),(t,x)] \, u_0(\dd y )\,, \qquad \text{a.s.}
\end{equation}
To conclude, we need to show that we can take the limit
$b\to\infty$ uniformly for $a\in (0,1]$.
More precisely,  similarly to~\eqref{labello}, we show that
for any $\gep>0$ we have
\begin{equation}
\label{labello2}
\lim_{b\to\infty} \bbP\Big(  \sup_{a\in (0,1]} \big( u^{a}(t,x)  - u^{[a,b)}(t,x) \big) >\gep \Big) =0 \, .
\end{equation}
Indeed,  for any $b'>b\geq 1$, considering the super-martingale $\gep \wedge ( u^{[a,b')}(t,x)  - u^{[a,b)}(t,x) )_{a\in(0,1]}$  
and applying Doob's inequality, we get
\[
\bbP\Big(  \sup_{a\in (0,1]} \big( u^{[a,b')}(t,x)  - u^{[a,b)}(t,x) \big) >\gep \Big) \leq \frac1\gep \bbE\Big[ \gep \wedge \big( u^{[1,b')}(t,x)  - u^{[1,b)}(t,x) \big)  \Big] \, .
\]
Sending $b'$ to infinity
we therefore get by monotone convergence (analogously to~\eqref{thetruc})
\[
\bbP\Big(  \sup_{a\in (0,1]} \big( u^{a}(t,x)  - u^{[a,b)}(t,x) \big) >\gep \Big) \leq \frac1\gep \bbE\Big[ \gep \wedge \big( u^{1}(t,x)  - u^{[1,b)}(t,x) \big)  \Big] \, .
\]
Then, since  $u^{1}(t,x)<\infty$,
the limit~\eqref{labello2} follows by dominated convergence.
As a by-product, this shows that $u(t,x)<\infty$ a.s.
\qed

\appendix

\section{On L\'evy noises}

Recall that we consider  a Poisson process  $\go$ on $\bbR\times \bbR^d \times \bbR_+$ with intensity 
$\dd t \otimes \dd x \otimes \gl( \dd \ups)$,
where $\gl$ is a positive measure on $(0,\infty)$ with $\lambda([1,\infty))<\infty$. 
Recall the definition~\eqref{petitnoise} of the truncated and centered measure $\xi_{\go}^{(a)}$ on $\bbR\times \bbR^d$.


\begin{proposition}\label{convergence2}
If $\gl$ satisfy
$\int_{(0,1)} \ups^2 \gl( \dd \ups)<\infty$,
then  $\xi^{(a)}_{\go}$ converges in  $H^{-s}_{\mathrm{loc}}(\bbR\times \bbR^d)$ with $s>(d+1)/2$, \textit{i.e.}\ there exists a distribution $\xi_{\go} \in H^{-s}_{\mathrm{loc}}(\bbR\times \bbR^d)$ such that
for  any non-negative smooth compactly supported $\psi$
\begin{equation}
 \lim_{a\to 0}  \| \psi(\xi_{\go}^{(a)}-\xi_{\go}) \|_{H^{-s}}=0.
\end{equation}
\end{proposition}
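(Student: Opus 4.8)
The plan is to reduce everything to a uniform second–moment estimate in Fourier space. Fix a non-negative $\psi\in C^\infty_c(\bbR^{1+d})$. We show that $(\psi\,\xi_\go^{(a)})_{a\in(0,1]}$ is Cauchy in $L^2(\bbP;H^{-s})$ and converges almost surely in $H^{-s}$; then, choosing an exhausting family $\psi_n\in C^\infty_c$ with $\psi_n\equiv 1$ on $B(0,n)$ and $0\le\psi_n\le1$, the a.s.\ limits $\eta_n:=\lim_a\psi_n\xi_\go^{(a)}$ are consistent (for $m>n$ one has $\psi_n\eta_m=\lim_a\psi_n\psi_m\xi_\go^{(a)}=\lim_a\psi_n\xi_\go^{(a)}=\eta_n$, using $\psi_n\psi_m=\psi_n$ and the boundedness of multiplication by the fixed function $\psi_n$ on $H^{-s}$). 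They therefore define $\xi_\go\in H^{-s}_{\rm loc}$, and for an arbitrary $\varphi\in C^\infty_c$ with support in $B(0,n)$ one gets $\varphi\,\xi_\go^{(a)}=\varphi\,(\psi_n\xi_\go^{(a)})\to\varphi\,\eta_n=\varphi\,\xi_\go$ in $H^{-s}$, again by boundedness of multiplication by $\varphi$.

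For $0<b<a\le1$, the difference
\[
\xi_\go^{(b)}-\xi_\go^{(a)}=\sum_{(t,x,\ups)\in\go}\ups\,\ind_{\{b\le\ups<a\}}\delta_{(t,x)}-(\kappa_b-\kappa_a)\,\cL
\]
is a compensated Poisson integral against the restriction of the intensity $\dd t\otimes\dd x\otimes\gl(\dd\ups)$ to $\{\ups\in[b,a)\}$ (note $\kappa_b-\kappa_a=\int_{[b,a)}\ups\,\gl(\dd\ups)$). Pairing it against the test function $w\mapsto\psi(w)e^{-iz\cdot w}$ produces the Fourier coefficient $\widehat{\psi(\xi_\go^{(b)}-\xi_\go^{(a)})}(z)$, which has mean zero; splitting into real and imaginary parts and applying Mecke's equation (Proposition~\ref{OKLMecke}) with $k=1$ and $k=2$ gives the variance identity
\[
\bbE\Big[\big|\widehat{\psi(\xi_\go^{(b)}-\xi_\go^{(a)})}(z)\big|^2\Big]
=\|\psi\|_{L^2}^2\int_{[b,a)}\ups^2\,\gl(\dd\ups)\qquad\text{for every }z\in\bbR^{1+d}.
\]
The key point is that this bound does not depend on $z$. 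Integrating against $(1+|z|^2)^{-s}$ and using Tonelli,
\[
\bbE\big[\|\psi(\xi_\go^{(b)}-\xi_\go^{(a)})\|_{H^{-s}}^2\big]
=\|\psi\|_{L^2}^2\Big(\int_{[b,a)}\ups^2\,\gl(\dd\ups)\Big)\int_{\bbR^{1+d}}(1+|z|^2)^{-s}\,\dd z,
\]
and $\int_{\bbR^{1+d}}(1+|z|^2)^{-s}\dd z<\infty$ precisely because $s>(d+1)/2$ (in polar coordinates $\int_0^\infty(1+r^2)^{-s}r^d\,\dd r<\infty\iff 2s>d+1$). Since $\int_{(0,1)}\ups^2\,\gl(\dd\ups)<\infty$, the right-hand side tends to $0$ as $a,b\to0$, so $(\psi\,\xi_\go^{(a)})_a$ is Cauchy in $L^2(\bbP;H^{-s})$ and has an $L^2$-limit.

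For the almost sure convergence, note that $(\xi_\go^{(a)})_{a\downarrow0}$ is a distribution-valued (time-reversed) martingale for the filtration $\cF_a=\sigma(\go^{(a)})$, exactly as in Lemma~\ref{lem:martingale}, so $a\mapsto\psi\,\xi_\go^{(a)}$ is an $H^{-s}$-valued martingale. Along $a_n=2^{-n}$ the estimate above gives $\sum_n\bbE\|\psi(\xi_\go^{(a_{n+1})}-\xi_\go^{(a_n)})\|_{H^{-s}}^2<\infty$, so Borel--Cantelli yields a.s.\ convergence of $(\psi\,\xi_\go^{(a_n)})_n$; applying Doob's $L^2$ maximal inequality to the martingale $a\mapsto\psi(\xi_\go^{(a)}-\xi_\go^{(a_n)})$ on $[a_{n+1},a_n]$ bounds $\bbE[\sup_{a\in[a_{n+1},a_n]}\|\psi(\xi_\go^{(a)}-\xi_\go^{(a_n)})\|_{H^{-s}}^2]$ by (a constant times) $\int_{[a_{n+1},a_n)}\ups^2\gl(\dd\ups)$, which is summable, so a second Borel--Cantelli upgrades convergence along $(a_n)$ to convergence as $a\to0$. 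The computations are routine; the only mildly delicate points are the boundedness of multiplication by a fixed $C^\infty_c$ function on $H^{-s}$ (standard, via $\widehat{\varphi f}=\hat\varphi*\hat f$ and Peetre's inequality), used for the localization/consistency, and the passage from $L^2$ to a.s.\ convergence in the continuum parameter $a$ via the Hilbert-space–valued martingale maximal inequality — this last step is the one requiring the most care, though it can be circumvented by working with a fixed countable dense family of test vectors and the scalar Doob inequality.
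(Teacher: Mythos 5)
Your proposal is correct and follows essentially the same route as the paper: the same compensated-Poisson/Mecke second-moment computation for the Fourier coefficient, uniform in $z$, integrated against $(1+|z|^2)^{-s}$ (finite precisely because $s>(d+1)/2$), with the martingale structure in $a$ supplying almost sure convergence. The only cosmetic difference is the last step: the paper applies the scalar Doob maximal inequality to the complex martingale $b\mapsto\widehat{\psi(\xi_\go^{(a)}-\xi_\go^{(b)})}(z)$ for each fixed $z$ and then Fatou to move the supremum inside the $z$-integral, obtaining $\bbE\big[\sup_{b\in(0,a]}\|\psi(\xi_\go^{(a)}-\xi_\go^{(b)})\|_{H^{-s}}^2\big]\to 0$ in one stroke, whereas you go through dyadic scales, Borel--Cantelli and a vector-valued Doob inequality (where, strictly, it is the orthogonality of martingale increments rather than Borel--Cantelli alone that gives convergence along $a_n$); both work, and your explicit patching of the local limits into $\xi_\go\in H^{-s}_{\mathrm{loc}}$ spells out a detail the paper leaves implicit.
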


\begin{rem}
\label{rem:noiseL2}
The optimality of the criterion above on $\gl$ can be checked by computing the characteristic function  of the random variable $\int \theta \dd \xi_{\go}^{(a)}$ for a smooth $\theta$ with compact support. We have  
\begin{equation*}
 \chi^{(a)}(\theta):=\bbE[ e^{i \int \theta \dd \xi_{\go}^{(a)}}]=\exp\left( \int_{ \bbR\times \bbR^d \times [a,\infty)} \left(e^{i  v \theta(t,x)}-1-iv\theta(t,x)\ind_{[a,1)}(v)\right) \dd t \dd x \gl(\dd v)\right).
\end{equation*}
 When $\int_{(0,1)} \ups^2 \gl( \dd \ups)=\infty$, we have $\lim_{a\to 0} \chi^{(a)}(u\theta)=\ind_{\{0\}}(u)$ for any non-trivial $\theta$, and hence   $\int \theta \dd \xi_{\go}^{(a)}$ does not converge (even in distribution). 
\end{rem}

\begin{proof}
Let $\psi$ be non-negative,  $C^{\infty}$ and compactly supported. Let us show that $(\psi\, \xi_{\go}^{(a)})_{a\in(0,1]}$ is a Cauchy sequence in $H^{-s}(\bbR^{1+d})$.
In this proof only, we denote $\bar d = 1+d$, and $x,y$ will denote elements of $\bbR^{\bar d}$.

\smallskip

First, we observe that  $\psi\,  \xi_{\go}^{(a)}$ belongs to $H^{-s}(\bbR^{1+ d})$ almost surely because its Fourier transform is smaller than $|\xi_{\go}^{(a)}| (\psi) <+\infty$,
and $(1+|z|^2)^{-s}$ is integrable as $2s>\bar d$. 
Now, to show that $(\xi_{\go}^{(a)})$ is a Cauchy sequence, we need to prove that 
\[
\lim_{a\to 0} \sup_{b\in(0,a]}\|\psi\, (\xi_{\go}^{(a)}-\xi_{\go}^{(b)}) \|_{H^{-s}}=0.
\]
We have (recall the definition~\eqref{Hsnorm})
\begin{equation*}
 \|\psi (\xi_{\go}^{(a)}-\xi_{\go}^{(b)}) \|^2_{H^{-s}}
 =\int_{\bbR^{\bar d}} (1+|z|^2)^{-s} \left|\int_{\bbR^{\bar d}} e^{i z\cdot x} \psi(x)(\xi_{\go}^{(a)}-\xi_{\go}^{(b)})(\dd x)\right|^2 \dd z 
\end{equation*}
and hence by Fatou's lemma 
\begin{equation}\label{lefatoux}
  \sup_{b\in(0,a]}\|\psi( \xi_{\go}^{(a)}-\xi_{\go}^{(b)} )\|^2_{H^{-s}}
  \le \int_{\bbR^{\bar d}} (1+|z|^2)^{-s}  \sup_{b\in (0,a]} \left|\int_{\bbR^{\bar d}} e^{i z\cdot x} \psi(x)(\xi_{\go}^{(a)}-\xi_{\go}^{(b)})(\dd x) \right|^2 \dd z\,.
\end{equation}
Now, as $\big( \int(\xi_{\go}^{(a)}-\xi_{\go}^{(b)})(\dd x) e^{i z \cdot x} \psi(x) \dd x \big)_{b\in(0,a]}$
is a (complex valued) time reversed martingale with càdlàg path, Doob's maximal inequality yields
\begin{equation*}
 \bbE \left[  \sup_{b\in(0,a]} \left|\int_{\bbR^{\bar d}} e^{i z \cdot x}\psi(x)(\xi_{\go}^{(a)}-\xi_{\go}^{(b)})(\dd x)  \right|^2 \right]
 \le 4 \lim_{b\to 0}\bbE \left[ \left|\int_{\bbR^{\bar d}} e^{i z \cdot x}\psi(x)(\xi_{\go}^{(a)}-\xi_{\go}^{(b)})( \dd x)  \right|^2 \right].
\end{equation*}
The right-hand side is then straightforward to compute.
Expanding the square and using the fact that the off-diagonal integral
\begin{equation*}
\int_{\bbR^{\bar d} \times \bbR^{\bar d}} \psi(x) \psi(y) \ind_{\{x\ne y\}}e^{i z \cdot (x-y)}(\xi_{\go}^{(a)}-\xi_{\go}^{(b)})(\dd x) (\xi_{\go}^{(a)}-\xi_{\go}^{(b)})(\dd y)
\end{equation*}
has zero mean, we obtain that for any $a>b>0$
\begin{equation*}
\begin{split}
 \bbE \left[\left| \int_{\bbR^{\bar d}} e^{i z\cdot x} \psi(x)(\xi_{\go}^{(a)}-\xi_{\go}^{(b)})(\dd x)  \right|^2 \right]
& = \bbE \bigg[ \sum_{(x,\ups)\in \go} \ups^{2} \ind_{\{\ups\in (b,a]\}}\psi(x)^2 \bigg]  \,.
 \end{split} 
\end{equation*}
We can compute the last expression using the formula for Poisson point process (see Proposition~\ref{OKLMecke}): we obtain 
\begin{equation*}
\bbE \bigg[ \sum_{(x,\ups)\in \go} \ups^{2} \ind_{\{\ups\in (b,a]\}}\psi(x)^2 \bigg]  \le  \bbE \bigg[ \sum_{(x,\ups)\in \go} \ups^{2} \ind_{\{\ups\in (0,a]\}} \psi(x)^2 \bigg] = \int_{\bbR^{\bar d}} \psi(x)^2 \dd x \int_{(0,a)} \ups^2 \gl(\dd \ups) \, .
\end{equation*}
Hence we obtain from \eqref{lefatoux}
\begin{equation}
 \bbE \bigg[  \sup_{b\in(0,a]}\|\psi(\xi_{\go}^{(a)}-\xi_{\go}^{(b)} )\|^2_{H^{-s}}\bigg]\le \int_{\bbR^{\bar d}} (1+|z|^2)^{-s} \dd z \int_{\bbR^{\bar d}} \psi(x)^2 \dd x \int_{(0,a)} \ups^2 \gl(\dd \ups) \, ,
\end{equation}
which is sufficient to conclude that $\lim_{a\to \infty} \sup_{b\in(0,a]}\|\psi(\xi_{\go}^{(a)}-\xi_{\go}^{(b)}) \|^2_{H^{-s}}=0$ a.s., by Fatou's lemma.
\end{proof}

\section{Proofs of some properties of $\cZ_{\gb}^{\go,a}$ }
\label{app:conti}

\subsection{Alternative representation of $\cZ_{\gb}^{\go,a}$: proof of Lemma~\ref{eazy}}
\label{app:eazy}

For $\sigma\in \cP(\go)$, we let $\cE^k(\sigma)$ denote the collection of $k$ space-time points which include $(t_i,x_i)_{i=1}^{|\sigma|}$ and no other space-points of the Poisson process, that is
 \begin{equation}
  \cE^k(\sigma):= \{ (\bt,\bx)\in \mathfrak{X}^k\times (\bbR^d)^k \ : \ \{(t_i,x_i)\}_{i=1}^k\cap \pi(\go)= \pi(\sigma)  \},
\end{equation}
where $\pi$ denote the  projection on the first two coordinates.
The following technical result, which immediately implies Lemma~\ref{eazy},  establishes that $w_{a,\beta}(\sigma,f)$  (recall its definition~\eqref{weightsf}) corresponds to the contribution to the partition function of the integral over the disjoint unions of $ \cE^k(\sigma)$, $k\ge |\sigma|$. 
The idea is that the integration over $\cE^k(\sigma)$ decouples the Poisson and the Lebesgue parts of~$\xi_{\go}^{a}$,
\textit{i.e.}\ the contribution of points respectively inside and outside of $\sigma$.

\begin{lemma}
\label{lemma:claim}
For any  $f\in \cB$ and any given $\sigma\in \cP(\go)$ we have
 \begin{equation*}\label{leclaim}
  w_{a,\beta}(\sigma,f):= e^{- \beta  \kappa_a } \gb^{|\sigma|}   \varrho(\bt,\bx,f) \prod_{i=1}^{|\sigma|}  u_i\ind_{\{u_i\geq a\}} =\sum_{k\ge |\sigma|} \beta^k\int_{  \cE^k(\sigma)}  \varrho(\bt,\bx,f)  \prod_{i=1}^k \xi_{\go}^{(a)} (\dd t_i,\dd x_i).
 \end{equation*}
When $\sigma=\emptyset$ the term $k=0$ in the sum is by convention equal to $\bQ(f)$.
\end{lemma}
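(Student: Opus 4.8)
The plan is to split the signed measure $\xi_\go^{(a)}$ into its atomic and Lebesgue parts and keep track, on each set $\cE^k(\sigma)$, of which terms of the resulting expansion survive. Write $\xi_\go^{(a)}=\nu_\go^{(a)}-\kappa_a\cL$ with $\nu_\go^{(a)}:=\sum_{(t,x,\ups)\in\go}\ups\ind_{\{\ups\ge a\}}\delta_{(t,x)}$ the atomic part. One may assume $\sigma\subseteq\go^{(a)}$ (both sides of the claimed identity vanish otherwise), and write $\sigma=(t_j^\sigma,x_j^\sigma,u_j^\sigma)_{j=1}^n$ with the points ordered in time, setting $t_0^\sigma:=0$, $t_{n+1}^\sigma:=1$, $\Delta_j:=t_{j+1}^\sigma-t_j^\sigma$ (so $\sum_{j=0}^n\Delta_j=1$) and writing $\bt^\sigma,\bx^\sigma$ for the pinned coordinates. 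Since almost surely all points of $\go$ with time coordinate in $[0,1]$ have distinct times, the $k$ space-time points appearing in $\cE^k(\sigma)$ are pairwise distinct. Expanding $\prod_{i=1}^k\xi_\go^{(a)}(\dd t_i,\dd x_i)=\sum_{S\subseteq\{1,\dots,k\}}(-\kappa_a)^{k-|S|}\prod_{i\in S}\nu_\go^{(a)}(\dd t_i,\dd x_i)\prod_{i\notin S}\cL(\dd t_i,\dd x_i)$, I would then argue that on $\cE^k(\sigma)$ only the terms with $|S|=n$ and $\{(t_i,x_i):i\in S\}=\pi(\sigma)$ survive: the $\nu_\go^{(a)}$-factors force $(t_i,x_i)\in\pi(\go^{(a)})\subseteq\pi(\go)$ for $i\in S$, hence $(t_i,x_i)\in\pi(\sigma)$ by the defining constraint $\{(t_i,x_i)\}_{i=1}^k\cap\pi(\go)=\pi(\sigma)$ of $\cE^k(\sigma)$; for $i\notin S$ the factor $\cL(\dd t_i,\dd x_i)$ is null on the finite set of configurations for which $(t_i,x_i)\in\pi(\sigma)$ (hence on those for which $(t_i,x_i)\in\pi(\go)$); and since $\pi(\sigma)$ must be contained in $\{(t_i,x_i)\}_{i=1}^k$ with all points distinct, matching is forced. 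By the time ordering, $S=\{a_1<\dots<a_n\}$ with $(t_{a_j},x_{a_j})=(t_j^\sigma,x_j^\sigma)$, and the atomic factor produced is $\prod_{j=1}^n u_j^\sigma$.

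Next I would integrate this surviving contribution, using the elementary identity $\int_{(\bbR^d)^k}g(\bx)\varrho(\bt,\bx,f)\,\dd\bx=\bQ[g(B_{t_1},\dots,B_{t_k})f]$ for bounded measurable $g$, which is immediate from~\eqref{defvarro}, the fact that $\varrho(\bt,\bx)$ is the joint density of $(B_{t_1},\dots,B_{t_k})$, and the tower property. Its consequence is that integrating out the space coordinates of the free indices collapses $\varrho(\cdot,f)$ to $\varrho(\bt^\sigma,\bx^\sigma,f)$, which no longer depends on the free times; the remaining time integrals, for the $m_j$ free points lying in $(t_j^\sigma,t_{j+1}^\sigma)$ in increasing order, each contribute $\Delta_j^{m_j}/m_j!$. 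Thus for fixed $k$ and $S$ the term equals $\beta^k(-\kappa_a)^{k-n}\big(\prod_{j=1}^n u_j^\sigma\big)\varrho(\bt^\sigma,\bx^\sigma,f)\prod_{j=0}^n\Delta_j^{m_j}/m_j!$, with $\sum_{j=0}^n m_j=k-n$. Summing over $k\ge n$ and over $S$ is the same as summing over all $(m_0,\dots,m_n)\in\bbZ_{\ge0}^{n+1}$; since $\beta^k(-\kappa_a)^{k-n}=\beta^n\prod_{j=0}^n\beta^{m_j}(-\kappa_a)^{m_j}$, the sum factorizes into $\beta^n\big(\prod_{j=1}^n u_j^\sigma\big)\varrho(\bt^\sigma,\bx^\sigma,f)\prod_{j=0}^n e^{-\beta\kappa_a\Delta_j}$, which equals $w_{a,\beta}(\sigma,f)$ (recall~\eqref{weightsf}) because $\sum_{j=0}^n\Delta_j=1$. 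The case $\sigma=\emptyset$ is the same computation with no pinned point: only $S=\emptyset$ occurs, integrating out all $k$ space coordinates gives $\bQ(f)$, and $\sum_{k\ge0}\beta^k(-\kappa_a)^k\tfrac1{k!}\bQ(f)=e^{-\beta\kappa_a}\bQ(f)=w_{a,\beta}(\emptyset,f)$, in agreement with the stated convention (the $k=0$ term being $\bQ(f)$).

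The step requiring the most care is the combinatorial bookkeeping of the first paragraph — ruling out all the partially matched index sets $S$ and then reorganizing the double sum — together with the justification that the rearrangements (Fubini and the interchange of the two summations) are licit. For the latter I would observe that $|\varrho(\bt^\sigma,\bx^\sigma,f)|\le\|f\|_\infty\,\varrho(\bt^\sigma,\bx^\sigma)$ and that running the identical computation with $|\xi_\go^{(a)}|=\nu_\go^{(a)}+\kappa_a\cL$ in place of $\xi_\go^{(a)}$ (i.e.\ with $+\kappa_a$ instead of $-\kappa_a$) yields the finite bound $\|f\|_\infty\,\beta^n e^{\beta\kappa_a}\varrho(\bt^\sigma,\bx^\sigma)\prod_{j=1}^n u_j^\sigma<\infty$ for this fixed $\sigma$ — note that here the large-$\ups$ tail of $\lambda$ plays no role, since $\sigma$ is not summed over — so every series converges absolutely. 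No genuinely hard estimate is involved: as remarked in the text just before the statement, the lemma is exactly the observation that integration over $\cE^k(\sigma)$ decouples the Poisson and the Lebesgue parts of $\xi_\go^{(a)}$, the Lebesgue part being responsible for the prefactor $e^{-\beta\kappa_a}$.
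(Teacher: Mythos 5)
Your argument is correct and is essentially the paper's own proof: on $\cE^k(\sigma)$ the atomic part of $\xi^{(a)}_{\go}$ can only charge the points of $\sigma$, the free coordinates only pick up the $-\kappa_a\cL$ part, and counting the free points in each gap between consecutive points of $\sigma$ and resumming the resulting $(-\beta\kappa_a\Delta_j)^{m_j}/m_j!$ series produces the prefactor $e^{-\beta\kappa_a}$ together with $\varrho(\bt,\bx,f)$. Your write-up is merely a bit more explicit on two points the paper glosses over — the combinatorial exclusion of the partially matched terms in the expansion of $(\nu^{(a)}_{\go}-\kappa_a\cL)^{\otimes k}$, and the observation that for a fixed $\sigma$ no tail assumption on $\lambda$ is needed to justify Fubini and the interchange of sums (the paper instead assumes $\int_{[1,\infty)}\ups\,\lambda(\dd\ups)<\infty$ and $f\equiv 1$ and declares the general case a notational adaptation) — but the route is the same.
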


In order to lighten notations, we write the proof only 
in the case of a function $f\equiv 1$. We assume that
$\int_{[1,\infty)} \ups \lambda(\dd \ups) <\infty$ so that 
all the integrals below are well defined (recall Proposition~\ref{prop:contiZfini}).
The general case with a function $f\in \cB_b$ (for which all terms are well defined  assuming that $\lambda([a,\infty))<\infty$, thanks to Proposition~\ref{prop:contiZfini})
is a mere adaptation of notation.

\begin{proof}[Proof of Lemma~\ref{lemma:claim}]
Let $\sigma$ be a fixed set of points with $|\sigma|=\ell \geq 1$ (the case $\sigma=\emptyset$ can be checked separately) and let $(t_i,x_i,u_i)_{i=1}^{\ell}$ denote the (time ordered) points in $\sigma$.
Given $i\in \lint 0,\ell\rint$, and $(s,y)\in \cE^k(\sigma)$,
we let $s^{(i)}_j$, $y^{(i)}_j$, $j\le k_i$ denote the space time points of
$(s,y)$ in the time interval $(t_i,t_{i+1})$ (note that $k_i$ here is a function of $(s,y)$). Then by splitting the integrals according the the value of the~$k_i$'s, grouping the terms and factorizing,  we obtain that $ \sum_{k\ge \ell}  \beta^k    \int_{  \cE^k(\sigma)}    \varrho (\bt,\bx) \prod_{i=1}^k \xi_{\go}^{(a)}(\dd t_i,\dd x_i) $ is equal to
\begin{equation*}
 \label{horbltrk} 
 \begin{split}
& \beta^{\ell}\prod_{i=0}^{\ell-1} u_{i+1}  \ind_{\{u_{i+1} \geq a \}} \sum_{k_i=0}^\infty
 \left(- \gb   \kappa_a  \right)^{k_i} \\
&\times \int_{t_{i}<s^{(i)}_1<\dots<s^{(i)}_{ k_i}<t_{i+1}}\int_{(\bbR^d)^{k_i}}\left( \prod_{j=1}^{k_i+1}
 \rho_{\Delta_j s^{(i)}}(\Delta_j y^{(i)}) \right) \prod_{j=1}^{k_i}\dd s^{(i)}_j \dd y^{(i)}_j \\
 &  \times \sum_{k_{\ell}=0}^\infty  
 \left(-\gb\kappa_a \right)^{k_{\ell}} \int_{t_{\ell}<s^{ (\ell)}_1<\dots<s^{ (\ell)}_{k_{\ell}}<1}\int_{(\bbR^d)^{k_{\ell}}}\left( \prod_{j=1}^{k_{\ell}}
 \rho_{\Delta_j s^{(\ell)}}(\Delta_j y^{(\ell)}) \right) \prod_{j=1}^{k_{\ell}} \dd  s^{(\ell)}_j \dd  y^{(\ell)}_j
 \end{split}
\end{equation*}
with the convention  $\Delta_j s^{(i)}:= s^{(i)}_{j}-s^{(i)}_{j-1} $,
$s^{(i)}_0=t_{i}$, $s^{(i)}_{k_{i}+1}=t_{i+1}$ and analogously for $y^{(i)}_{j}$. When $k_i=0$ resp.\ $k_{\ell}=0$, the  value of the above integrals are by convention 
\[\int_{t_{i}<s^{(i)}_1<\dots<s^{(i)}_k<t_{i+1}}\int_{(\bbR^d)^{k_i}}\left( \prod_{j=1}^{k_i+1}
 \rho_{\Delta_j s^{(i)}}(\Delta_j y^{(i)}) \right) \prod_{j=1}^{k_i}\dd s^{(i)}_j \dd y^{(i)}_j=\rho_{t_{i+1}-t_i}(x_{i+1}-x_i)
 \]
 and 
 \[
 \int_{t_{\ell}<s^{(i)}_1<\dots<s^{(i)}_{k_{\ell}}<1}\int_{(\bbR^d)^{k_{\ell}}} \left( \prod_{j=1}^{k_{\ell}}
 \rho_{\Delta_j s^{(\ell)}}(\Delta_j y^{(\ell)}) \right) \prod_{j=1}^{k_{\ell}} \dd  s^{(\ell)}_j \dd  y^{(\ell)}_j =1.
 \]
Now, one can check that 
\begin{align*}
 \sum_{k_i=0}^\infty
 \left(-\gb\kappa_a \right)^{k_i}  &
 \int_{t_{i}<s^{(i)}_1<\dots<s^{(i)}_k<t_{i+1}}\int_{(\bbR^d)^{k_i}}\left( \prod_{j=1}^{k_i+1}
 \rho_{\Delta_j s^{(i)}}(\Delta_j y^{(i)}) \right) \prod_{j=1}^{k_i}\dd s^{(i)}_j \dd y^{(i)}_j
 \\ 
 &= \rho_{t_{i+1} -t_i}( x_{i+1}-x_i)\left(1+  \sum_{k_i=1}^\infty   \left(-\gb \kappa_a \right)^{k_i}  \int_{t_{i}<s^{(i)}_1<\dots<s^{(i)}_k<t_{i+1}} \prod_{j=1}^{k_i}\dd s^{(i)}_j\right)\\
& = \rho_{t_{i+1} -t_i}( x_{i+1}-x_i) e^{-\gb \kappa_a (t_{i+1}-t_i)},
\end{align*}
and similarly
\[
\sum_{k_{\ell}=0}^\infty   \left(-\gb \kappa_a\right)^{k_{\ell}} \int_{t_{\ell}<s^{ (\ell)}_1<\dots<s^{ (\ell)}_{k_{\ell}}<1}\int_{(\bbR^d)^{k_{\ell}}}
\left( \prod_{j=1}^{k_{\ell}}
 \rho_{\Delta_j s^{(\ell)}}(\Delta_j y^{(\ell)}) \right) \prod_{j=1}^{k_{\ell}} \dd  s^{(\ell)}_j \dd  y^{(\ell)}_j \\
 = e^{-\gb \kappa_a  (1-t_{\ell})},
\]
which concludes the proof.
\end{proof}
%
%
%
%

\begin{proof}[Proof of \eqref{altexpp2p}]
The proof of \eqref{altexpp2p} works exactly as above when  $\int_{[1,\infty)} \ups \lambda(\dd \ups) <\infty$. For the general case, one needs first to have an identity for a positive integral so that we have no trouble with our definition.
Truncating the environment and using monotone convergence we have
\begin{multline}
 \rho_t(x) +  \sum_{k=1}^\infty \beta^k\int_{0<t_1<\dots<t_k<t}\int_{(\bbR^d)^k}  \varrho(\bt,\bx) \rho_{t-t_k}(x-x_k)\prod_{i=1}^k  |\xi_{\go}^{(a)}| ( \dd t_i, \dd x_i) 
\\ = e^{2\kappa_a t\beta }\sum_{\cP_{[0,t]}(\go)}w_a(\sigma,(t,x)).
\end{multline}
This ensures that the sum and integrals in \eqref{def:pointtopoint} are convergent if and only if  we have
 $\sum_{\cP_{[0,t]}(\go)}\go(\sigma,(t,x))<\infty$. Then, repeating the proof above, we obtain that \eqref{altexpp2p} holds.
\end{proof}

\subsection{The size-biased measure: proof of Lemma~\ref{spayne}}\label{App:spayne}

Let us recall here, for the sake of clarity, the content of Lemma~\ref{spayne}.
The \emph{size-biased} measure $\tilde \bbP_{\gb}^{a}$ is defined as
$\tilde \bbP_{\gb}^{a}(J) = \bbE[\bar \cZ_{\gb}^{\go,a} \ind_{J}]$.
Then Lemma~\ref{spayne}
states that for all bounded measurable function $f$,
\[
\tilde \bP_{\gb}^{a}[f(\go)] = \bbP\otimes \bbP'_a\otimes \bQ\left[ f(\hat \go(\go,\go'_a,B))\right] \, ,
\]
where $\hat \go(\go,\go'_a,B) = \go \cup\{ (t,B_t,u) \colon (t,u)\in \go_a'\}$,
with $\bQ$ the distribution of a standard Brownian motion $B$, $\bbP_a'$ the distribution 
of a Poisson point process $\go_a'$ on $[0,1]\times \bbR^+$
with intensity $\dd t \otimes \gb \ga \ups \ind_{\{\ups\geq a\}}  \lambda( \dd u)$ and $\bbP$  the distribution of the Poisson point process $\go$ introduced in~\eqref{Poissondens}. Recall that we assume that $\mu:=\int_{[1,\infty)} \ups \lambda(\dd \ups) <\infty$

\begin{proof}[Proof of Lemma~\ref{spayne}]
It is sufficient to check that the distributions of the two point processes in Equation~\eqref{sides} coincide when restricted to
$[0,1]\times \bbR^d\times [a,\infty)$, since their distributions outside of this set are unaltered by the size-biasing and remain independent of the rest.

Given  a bounded measurable subset $A$ of $[0,1]\times \bbR^d\times [a,\infty)$, we define 
\begin{equation}
 \cN_A:= \#(\go \cap A) 
\end{equation}
Our proof starts with the observation that the distribution of simple point processes is completely characterized by
$\bbP \left( \cN_A=0 \right)$ for  all  bounded and measurable set $A$, see \cite[Theorem 6.11]{PoiBook}, and hence \textit{a fortiori} by the distribution of $\cN_A$.

Hence, setting $\hat \cN_A:= \#(\hat\go \cap A)$,  it is sufficient for us to prove that for every set $A$ and any  $k \geq 0$
\begin{equation}
\label{sidesB}
 \frac{1}{k!}\tilde \bbE^a_{\beta} \big[ \cN_A(\cN_A-1)\cdots (\cN_A-k) \big]=  \frac{1}{k!}\bbP\otimes \bbP'_a \otimes \bQ\left[ \hat \cN_A(\hat\cN_A-1)\cdots (\hat \cN_A-k) \right]
\end{equation}
and that the quantities above do not grow faster than exponentially, so that the distributions of $\cN_A$ and $\hat \cN_A$ are indeed characterized by their moments.

Let us define $f=f_{k,A} \colon \left([0,1]\times \bbR^d\times [a,\infty)\right)^k \to \bbR$ by
\[
f_{k,A} \big( (t_i,x_i,u_i)_{i=1}^k \big):= \ind_{\{t_1< t_2 < \dots < t_k\}} \prod_{i=1}^k \ind_{A}(t_i,x_i,u_i).
\]
Since almost surely, there are no two points in $\go$ with the same time coordinate we have almost-surely
\begin{equation}
 \sum_{(t_i,x_i,u_i)_{i=1}^k\in \, \go^k} f_{k,A}\big( (t_i,x_i,u_i)_{i=i}^k \big) = \frac{1}{k!} \cN_A(\cN_A-1)\cdots (\cN_A-k  +1)
\end{equation}
and the analogous identity is valid for $\hat \cN_A$.
Hence we can check that the identity \eqref{sides} holds simply by applying Mecke's formula (Proposition \ref{OKLMecke}) to each side in \eqref{sidesB}.

 Let us start with the (easier) case of $\hat \go$, \textit{i.e.} the right-hand side of~\eqref{sidesB}. 
We set 
\[
A_k:= \big\{ (t_i,x_i,u_i)_{i=i}^k\in A^k \ : \ t_1<t_2<\dots<t_k \big\}
\]
and we obtain (recalling the definition of $\bbP'_a$)
\begin{multline*}
 \bbP\otimes \bbP'_a \bigg[  \sum_{(t_i,x_i,u_i)_{i=1}^k\in \; \hat \go^k} f_{k,A} \big( (t_i,x_i,u_i)_{i=i}^k \big) \bigg]\\
 = \int_{A_k} \prod_{i=1}^k\left( \dd t_i \dd x_i  \lambda (\dd u_i )  + \beta \dd t_i \delta_{B_{t_i}}(\dd x_i)  u_i \lambda (\dd u_i )  \right)\, .
\end{multline*}
Now expanding the product and averaging with respect to  the Brownian Motion~$B$ we obtain that the 
right-hand side in \eqref{sidesB} is equal to
\begin{equation}
\label{rhsofsides}
 \int_{A_k} \sum_{I\subset\lint 1,k\rint}\beta^{|I|} u_I \rho_I( \bt,\bx)        \prod_{i=1}^k \dd t_i \dd x_i  \lambda (\dd u_i )
\end{equation}
where $u_I=\prod_{i\in I} u_i$ and  $\rho_I(\bt \bx):= \prod_{j=1}^{|I|} \rho_{t_{j_{i}}- t_{j_{i-1}}}(x_{j_{i}}-x_{j_{i-1}})$ with 
$(i_j)_{j=1}^{|I|}$ the ordered indices of $I$ (by convention $i_0=0$ and $t_0=0$, $x_0=0$).

Now let us move to the left-hand side of~\eqref{sidesB}, that is the expectation with respect to the size-biased measure.
Recalling the definition~\eqref{tildemeasure} of $\tilde \bbP_{\gb}^{a}$ 
and the representation~\eqref{altexp} of $\cZ_{\gb}^{\go,a}$,
it is equal to
\begin{multline}
\label{rhsofsides2}
\tilde \bbE_{\gb}^{a} \bigg[  \sum_{(t_i,x_i,u_i)_{i=i}^k\in \; \hat \go^k} f_{k,A} \big( (t_i,x_i,u_i)_{i=i}^k \big) \bigg] \\  = e^{-\beta \mu}\bbE\bigg[ \sum_{\sigma \in \cP(\go) } w_{a,\beta}(\sigma) \!\!\!\!\sum_{(t_i,x_i,u_i)_{i=i}^k\in\;  \go^k }\!\!\!\!  f_{k,A}\big((t_i,x_i,u_i)_{i=1}^k\big) \bigg]\, .
\end{multline}
We are going to decompose the sum above according to how $\sigma$ intersects with the points $(t_i,x_i,u_i)_{i=1}^k$ that are arguments of $f_{k,A}$.
For any given $I\subset \lint 1,k\rint$  with $|I| =m$ and  $\ell= (\ell_i)_{i=1}^{m+1}$, we set 
\begin{multline*}
 \cP_{I,\ell}=\Big\{ \sigma \in \cP(\go) \ : \ \big(\sigma \cap (t_i,x_i,u_i)_{i=1}^{k}\big)= (t_i,x_i,u_i)_{i\in I}  \\ \text{ and }
 \forall j\in \lint 1, m+1\rint,  \   \#\big\{ (t,x,u) \in \sigma \,, \;   t \in (t_{i_{j-1}},t_{i_{j}}) \big\} =\ell_j \Big\} \,,
\end{multline*}
where $(i_j)_{j=1}^m$ are the ordered elements of $I$,
with $i_0=0$ and $i_{m+1}=k+1$ (and $t_0=0$, $t_{k+1} =1$) by convention.
For this computation we introduce 
$ \bar \kappa_a= \int_{[a,\infty)} \ups \gl(\dd \ups) = \kappa_a+\mu$.
Using  again Mecke's formula and recalling the definition~\eqref{weights} of $w_{a,\gb}(\sigma)$,
we have,
for any such $I$ and $\ell$
\begin{align*}
 \bbE & \bigg[  \sum_{\sigma\in   \cP_{I,\ell} } w_{a,\beta}(\sigma)\sum_{(t_i,x_i,u_i)_{i=i}^k\in\;  \go^k} f_{k,A}((t_i,x_i,u_i)_{i=i}^k) \bigg] \\
& =  e^{-\gb \kappa_a}   \int_{A_k} H^{a}_{\beta}(t_{i_{m}},x_{i_m}, \ell_{m})    
 \prod_{j=1}^{m} 
 G^{a}_{\gb} (t_{i_{j-1}},t_{i_j},x_{i_{j-1}}, x_{i_j}, \ell_j)         \beta^{|I|} u_I  \prod_{i=1}^k \dd t_i \dd x_i  \lambda( \dd u_i ) \, ,
\end{align*}
where we set
\begin{multline*}
  G^{a}_{\beta}(t,t',x,x' ,\ell):=  
 \beta^{\ell} \int_{ t< s_1<\dots<s_{\ell}<t' }  \int_{(\bbR^d)^{\ell}} \int_{(a,\infty)^\ell}
 \left( \prod_{i=1}^{\ell+1} 
   \rho_{\Delta_i s}( \Delta_i y)   \right)
\prod_{i=1}^{\ell}  \dd s_i \dd y_i v_i \lambda(\dd v_i)
 \\ 
 =\left( \gb \bar \kappa_a\right)^{\ell}\frac{(t'-t)^{\ell}}{\ell!}   \rho_{t'-t}(x'-x),
\end{multline*}
with $\Delta_i s = s_{i}-s_{i-1}$ ($s_0=t$, $s_{\ell+1}=t'$) and $\Delta_i y  = y_{i}-y_{i-1}$ ($y_0=x$, $y_{\ell+1}=x'$) by convention;
and we also set
\begin{multline*}
  H^{a}_{\beta}(t,x, \ell):=  
 \beta^{\ell} \int_{ t< s_1<\dots<s_{\ell}<1}  \int_{(\bbR^d)^{\ell}} \int_{(a,\infty)^\ell}  \left( \prod_{i=1}^{\ell} 
   \rho_{\Delta_i s}( \Delta_i y)   \right)
 \prod_{i=1}^{\ell} \dd s_i \dd y_i v_i \lambda(\dd v_i)
 \\ 
 =\left( \gb \bar \kappa_a \right)^{\ell}\frac{(1-t)^\ell}{\ell!}.
\end{multline*}
Replacing $G_\gb^a$ and $H_{\gb}^a$ by their value we obtain (recall that $t_{m+1}=1$  by convention)
\begin{multline*}
 \bbE \bigg[  \sum_{\sigma\in   \cP_{I,\ell} } w_{a,\beta}(\sigma)\sum_{(t_i,x_i,u_i)_{i=i}^k\in  \go^k} f_{k,A}\big( (t_i,x_i,u_i)_{i=i}^k \big) \bigg] \\
 =  e^{- \gb \kappa_a }   \int_{A_k} \prod_{j=1}^{m+1}\left(\gb \bar \kappa_a \right)^{\ell_j}\frac{(t_{j}-t_{j-1})^{\ell_j}}{\ell_j! }         \beta^{|I|} u_I \rho_I( \bt ,\bx)   \prod_{i=1}^k \dd t_i \dd x_i  \lambda( \dd u_i )\, .
\end{multline*}
Summing over all the possible $\ell_j$ just results in a factor $e^{\beta \bar \kappa_a} =e^{\gb \mu} e^{\gb \kappa_a}$ and thus after summing over~$I$
we obtain
\[
\tilde \bbE_{\gb}^{a} \bigg[  \sum_{(t_i,x_i,u_i)_{i=i}^k\in \; \hat \go^k} f_{k,A} \big( (t_i,x_i,u_i)_{i=i}^k \big) \bigg]\\ = e^{\beta \mu} \sum_{I\subset \lint 1,k\rint} 
   \int_{A_k}        \beta^{|I|} u_I \rho_I( \bt,\bx)  \prod_{i=1}^k \dd t_i \dd x_i  \lambda(\dd u_i ) \, .
\]
All together, we find that \eqref{rhsofsides2} is equal to~\eqref{rhsofsides}. This proves~\eqref{sidesB},
which concludes the proof of Lemma~\ref{spayne}.
\end{proof}

\section{Stochastic comparisons}

We provide here
two results enabling us to compare some integrals 
with respect to the measure $\lambda$ to integrals with respect
to the Lebesgue measure.
In particular, they establish the two claims~\eqref{plusdidee}-\eqref{plusdidee2}.

\begin{proposition}
\label{prop:compare1}
Assume that $\mu:=\int_{[1,\infty)} \ups \lambda(\dd \ups) <\infty$ and also that  $\int_{(0,1)} \ups^p \lambda(\dd \ups) <\infty$
for some $p \in (1,2)$. Then for $q\geq 1$ there is
a constant $c_q$, verifying $\lim\limits_{q\to\infty} c_q q^{1-p} =0$,
such that for every $m\geq 1$ and any non-decreasing function $g:\bbR^m \to \bbR_+$,  \textit{i.e.}\ non-decreasing in every coordinate, with $\supp(g)\subset (\gep ,\infty)^m$ for some $\gep>0$, we have
\[
\int_{(0,q)^m} g(u_1 , \ldots, u_m) \prod_{i=1}^m u_i \lambda(\dd u_i) \leq  (c_q)^m \int_{(0,2q)^m} g(u_1 , \ldots, u_m) \prod_{i=1}^m u_i^{-p}  \dd u_i \, .
\]
\end{proposition}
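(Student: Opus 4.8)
The plan is to reduce the $m$-dimensional inequality to the one-dimensional one by integrating the variables $u_1,\dots,u_m$ out one at a time, and then to recast that one-dimensional statement as a comparison between the tails of two measures on $(0,\infty)$. Suppose we have the case $m=1$: for every non-decreasing $h\colon\bbR\to\bbR_+$ with $\supp(h)\subset(\gep,\infty)$,
\begin{equation}\label{eq:cmp1d}
\int_{(0,q)}h(u)\,u\,\lambda(\dd u)\;\le\;c_q\int_{(0,2q)}h(u)\,u^{-p}\,\dd u\,.
\end{equation}
Writing $u'=(u_2,\dots,u_m)$, for each fixed $u'$ the section $u_1\mapsto g(u_1,u')$ is non-decreasing and supported in $(\gep,\infty)$, so by \eqref{eq:cmp1d} and Tonelli's theorem
\begin{align*}
\int_{(0,q)^m}g\prod_{i=1}^m u_i\,\lambda(\dd u_i)
&=\int_{(0,q)^{m-1}}\!\Big(\int_{(0,q)}g(u_1,u')\,u_1\,\lambda(\dd u_1)\Big)\prod_{i=2}^m u_i\,\lambda(\dd u_i)\\
&\le\;c_q\int_{(0,q)^{m-1}}\tilde g(u')\prod_{i=2}^m u_i\,\lambda(\dd u_i)\,,
\end{align*}
with $\tilde g(u'):=\int_{(0,2q)}g(u_1,u')\,u_1^{-p}\,\dd u_1$. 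Since $\tilde g$ is again non-decreasing in $u'$ and vanishes as soon as one coordinate of $u'$ is $\le\gep$, we have $\supp(\tilde g)\subset(\gep,\infty)^{m-1}$, so the induction on $m$ closes (Tonelli again turns $\int_{(0,2q)^{m-1}}\tilde g(u')\prod_{i\ge2}u_i^{-p}\dd u_i$ into $\int_{(0,2q)^m}g\prod_i u_i^{-p}\dd u_i$).

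To prove \eqref{eq:cmp1d}, set $\mu_1(\dd u):=u\,\ind_{(0,q)}(u)\,\lambda(\dd u)$ and $\mu_2(\dd u):=c_q\,u^{-p}\,\ind_{(0,2q)}(u)\,\dd u$. By the layer-cake formula $\int h\,\dd\mu=\int_0^\infty\mu(\{h>t\})\,\dd t$ and the observation that each superlevel set $\{h>t\}$ of a non-decreasing function vanishing on $(0,\gep]$ is a half-line contained in $(\gep,\infty)$ (so that, $\mu_2$ being atomless, $\mu_2(\{h>t\})$ depends only on the left endpoint), it suffices to establish the tail comparison
\begin{equation}\label{eq:tail}
\mu_1\big([v,\infty)\big)\;\le\;\mu_2\big([v,\infty)\big)\qquad\text{for every }v>0\,.
\end{equation}
Now $\mu_1([v,\infty))=\int_{[v,q)}u\,\lambda(\dd u)$ is finite for each $v>0$: splitting at $1$ and using $u\le v^{1-p}u^p$ on $[v,1)$ gives $\int_{[v,1)}u\,\lambda(\dd u)\le v^{1-p}\int_{(0,1)}u^p\,\lambda(\dd u)<\infty$, while $\int_{[1,q)}u\,\lambda(\dd u)\le\mu<\infty$. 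On the other hand, for $0<v<q$ one has $\mu_2([v,\infty))=\tfrac{c_q}{p-1}\big(v^{1-p}-(2q)^{1-p}\big)\ge\tfrac{1-2^{1-p}}{p-1}\,c_q\,v^{1-p}$, while $\mu_1([v,\infty))=0$ for $v\ge q$. Hence \eqref{eq:tail} holds as soon as
\begin{equation}\label{eq:cq}
c_q\;:=\;\frac{p-1}{1-2^{1-p}}\,\sup_{v\in(0,q]}v^{p-1}\!\int_{[v,q)}u\,\lambda(\dd u)\,,
\end{equation}
which is finite by the same splitting (the supremand is $\le\int_{(0,1)}u^p\,\lambda(\dd u)+\mu$ for $v\le1$ and $\le q^{p-1}\mu$ for $v\ge1$).

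The crux — and the step I expect to require the most care — is to verify that $c_q$ given by \eqref{eq:cq} satisfies $\lim_{q\to\infty}c_q q^{1-p}=0$, equivalently $\sup_{v\in(0,q]}(v/q)^{p-1}\int_{[v,q)}u\,\lambda(\dd u)\to0$. I would split the supremum at the threshold $v=\sqrt q$. For $v\le1$, the split at $1$ gives $(v/q)^{p-1}\int_{[v,q)}u\,\lambda(\dd u)\le q^{1-p}\int_{(0,1)}u^p\,\lambda(\dd u)+q^{1-p}\mu$ (the powers of $v$ cancel in the first term). For $1<v\le\sqrt q$, $(v/q)^{p-1}\le q^{-(p-1)/2}$ and $\int_{[v,q)}u\,\lambda(\dd u)\le\mu$, so the product is $\le q^{-(p-1)/2}\mu$. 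For $\sqrt q<v\le q$, $(v/q)^{p-1}\le1$ and $\int_{[v,q)}u\,\lambda(\dd u)\le\int_{[\sqrt q,\infty)}u\,\lambda(\dd u)$, which tends to $0$ as a tail of the convergent integral $\int_{[1,\infty)}u\,\lambda(\dd u)=\mu$. Each of the three bounds tends to $0$ with $q$, which is exactly the required statement; the threshold $\sqrt q$ simply balances the decay of $(v/q)^{p-1}$ against that of the tail $\int_{[v,\infty)}u\,\lambda(\dd u)$, and any $q$-dependent scale tending to $\infty$ and to $o(q)$ would serve. This last step is where both hypotheses genuinely enter ($\int_{(0,1)}u^p\,\lambda(\dd u)<\infty$ for small $v$, $\int_{[1,\infty)}u\,\lambda(\dd u)<\infty$ for large $v$) and is what pins down the exponent $1-p$; the same scheme, with $u\,\lambda(\dd u)$ versus $u^{-p}\dd u$ replaced by $u^2\,\lambda(\dd u)$ versus $u^{1-p}\dd u$ and the monotonicity reversed, will also give Proposition~\ref{prop:compare2}, hence the claims~\eqref{plusdidee}--\eqref{plusdidee2}.
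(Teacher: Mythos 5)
Your argument is correct, and it reaches the paper's estimate by a mildly different route, so let me compare. The reduction to $m=1$ (apply the one-dimensional bound coordinate by coordinate, checking that the partially integrated function is still non-decreasing and supported in $(\gep,\infty)^{m-1}$) is the same as in the paper. The quantitative core is also the same: your choice of $c_q$ as a multiple of $\sup_{v\in(0,q]}v^{p-1}\int_{[v,q)}u\,\lambda(\dd u)$ is precisely the paper's pointwise bound \eqref{boundbarmu} on the truncated tail $\bar\mu_q(v)=\int_{(v,q)}\ups\,\lambda(\dd\ups)$, and both proofs use the enlarged interval $(0,2q)$ to absorb the mass sitting near $q$ (you through $v^{1-p}-(2q)^{1-p}\ge(1-2^{1-p})v^{1-p}$, the paper through the boundary term $c_q\,g(q)\,q^{1-p}$ created by integration by parts). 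Where you genuinely differ is in how the monotonicity of $g$ is exploited: the paper first reduces to bounded differentiable $g$ by monotone convergence and then integrates by parts twice, whereas you invoke the layer-cake formula and reduce everything to the tail domination $\mu_1([v,\infty))\le\mu_2([v,\infty))$, which requires no regularization of $g$ and handles possible atoms of $\lambda$ at no extra cost (when a superlevel set is an open half-line, compare it with the closed one, $\mu_2$ being atomless, as you indicate). Your three-zone verification (splitting the supremum at $1$ and at $\sqrt q$) that $c_q q^{1-p}\to0$ spells out explicitly what the paper deduces tersely from the monotonicity and decay of $\bar\mu_q$; both hypotheses on $\lambda$ enter exactly where you say they do. One cosmetic point: in your induction step the function $\tilde g$ could a priori take the value $+\infty$, but the tail-comparison argument applies verbatim to $[0,\infty]$-valued non-decreasing functions (or the inequality is trivial on that event), so nothing is lost.
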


\begin{proof}
Let us begin with a few observations.
First, we only need to treat the case $m=1$ since
applying the result successively to the functions $u_i\mapsto f(u_1, \ldots, u_m)$ concludes the proof.
Second, we can work with a differentiable and bounded
function $g$, the general case being obtained by monotone convergence.

By an integration by part, defining $\bar \mu_q(u) := \int_{(u,q)} \ups \lambda(\dd \ups)$,
we get that
\begin{equation}
\label{IPP1}
\int_{(0,q)} g(u) u \lambda(\dd u) = \int_{[\gep,q)} 
g(u) u \lambda(\dd u) = \int_{ [\gep,q)} g'(u) \bar \mu_q(u) \dd u \, .
\end{equation}

Now, an important observation is that under our assumptions we have that
\begin{equation}
\label{boundbarmu}
\bar \mu_q(u) := \int_{(u,q)} \ups \lambda(\dd \ups) \leq c_q u^{1-p} \, , \qquad  \forall \,  u  \in (0,q)  \, ,
\end{equation}
for a constant $c_q$ verifying $\lim_{q\to\infty} c_q q^{1-p} =0$.
Let us postpone the proof of~\eqref{boundbarmu},
but we can already see that plugged in~\eqref{IPP1} and using that $g'(u) \geq 0$, it implies that
\[
\int_{(0,q)} g(u) u \lambda(\dd u)  
\leq  c_q  \int_{ [\gep,q)} g'(u)  u^{1-p} \dd u = 
c_q g(q) q^{1-p} + (p-1) c_q \int_{[\gep,q)}  g(u)  u^{ -p} \dd u
\]
where we have used another integration by parts in the last identity.
Using again that $g$ is non-decreasing, we get that
$ \int_{[q,2q)} g(u) u^{-p} \dd u \geq \frac{1}{p-1} (1-2^{1-p}) g(q) q^{1-p}$,
and we therefore end up with
\[
\int_{(0,q)} g(u) u \lambda(\dd u)  \leq c_p \,  c_q \int_{(0,2q)}  g(u)  u^{ -p} \dd u\,,
\]
where the constant $c_p$ only depends on $p$.
This concludes the proof.

It remains to see why~\eqref{boundbarmu} is true.
We consider the cases $u<1$ and $u\geq 1$ separately.
If $u < 1$, we use the fact that $c'_p:=\int_{(0,1)} \ups^{p} \lambda(\dd \ups) <+\infty$ to get that
\[
\bar \mu_q(u)  
\leq \int_{(u,1)} \ups^{1-p} \ups^p \lambda(\dd \ups) +\int_{[1,\infty)} \ups \lambda(\dd \ups) 
\leq c'_p u^{1-p}  + \mu \leq c''_p   u^{1-p}\, ,
\]
since $p>1$.
If $u\geq 1$, we simply use that
\[
\bar \mu_q(u)  
\leq  c''_q u^{1-p} \, \quad \text{ with } c''_q := \sup_{u\in [1,q)} \bar \mu_q(u) /u^{1-p} 
\]
and notice that since $ \bar \mu_q(u)$ is non-increasing and goes to $0$ as $u\to\infty$
this implies that $\lim_{q\to\infty} q^{1-p} c''_q =0$.
Combining the above estimates gives~\eqref{boundbarmu}.
\end{proof}

\begin{proposition}
\label{prop:compare2}
Assume that  $\int_{(0,1)} \ups^p \lambda(\dd \ups) <\infty$
for some $p \in (1,2)$. Then for any $q\geq 1$ there is
a constant $C_q$, 
such that for every $m\geq 1$ and any non-increasing function $g:\bbR^m \to \bbR_+$ we have
\[
\int_{(0,q)^m} g(u_1 , \ldots, u_m) \prod_{i=1}^m u_i^2 \lambda(\dd u_i) \leq  (C_q)^m \int_{(0,q)^m} g(u_1 , \ldots, u_m) \prod_{i=1}^m u_i^{1-p}  \dd u_i \, .
\]
\end{proposition}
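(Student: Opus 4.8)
The plan is to follow the proof of Proposition~\ref{prop:compare1} closely; the only structural difference is that here $g$ is \emph{non-increasing} in each coordinate, which will make the boundary terms produced by integration by parts cancel exactly. First I would reduce to the case $m=1$: if
\[
\int_{(0,q)} g(u)\, u^2 \lambda(\dd u) \leq C_q \int_{(0,q)} g(u)\, u^{1-p}\, \dd u
\]
holds for every bounded non-increasing $g\colon (0,q)\to \bbR_+$, then the general statement follows by applying this bound successively to the variables $u_1,\dots,u_m$ (using Fubini, and the fact that partial integration over one variable preserves monotonicity in the others), picking up one factor $C_q$ at each step. A possibly unbounded non-increasing $g$ is then recovered by monotone convergence after replacing $g$ by $g\wedge n$ and mollifying, so it suffices to treat $C^1$, bounded, non-increasing $g$.

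For $m=1$, I would introduce the Stieltjes measure with distribution function $N(u):=\int_{(0,u]} \ups^2 \lambda(\dd \ups)$ and first establish the pointwise bound
\[
N(u) \leq C_q\, u^{2-p} \qquad \text{for all } u\in(0,q),
\]
with $C_q := \max\bigl( \int_{(0,1)} \ups^p\lambda(\dd \ups),\ N(q) \bigr)<\infty$. For $u\leq 1$ this uses $\ups^{2-p}\leq u^{2-p}$ on $(0,u]$ (valid since $2-p\in(0,1)$) together with $\int_{(0,1)}\ups^p\lambda(\dd \ups)<\infty$; for $1<u\leq q$ it uses $u^{2-p}\geq 1$ and the finiteness of $N(q)$, which in turn follows from $\int_{(0,1)}\ups^2\lambda\leq \int_{(0,1)}\ups^p\lambda<\infty$ and $\int_{[1,q]}\ups^2\lambda\leq q^2\lambda([1,\infty))<\infty$.

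With this in hand I would perform two integrations by parts. Writing $g(u)=g(q)-\int_u^q g'(s)\,\dd s$ and applying Fubini gives $\int_{(0,q)} g(u)\,\dd N(u) = g(q)\,N(q) - \int_{(0,q)} g'(s)\,N(s)\,\dd s$ (up to harmless boundary/atom issues). Since $g'\leq 0$ and $0\le N(s)\leq C_q s^{2-p}$, the right-hand side is at most $C_q g(q) q^{2-p} + C_q\int_{(0,q)}(-g'(s))\,s^{2-p}\,\dd s$; a second integration by parts rewrites $\int_{(0,q)}(-g'(s))s^{2-p}\dd s = -g(q)q^{2-p} + (2-p)\int_{(0,q)} g(s)s^{1-p}\dd s$, the boundary term at $0$ vanishing because $g$ is bounded and $2-p>0$. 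The two copies of $g(q)q^{2-p}$ cancel, leaving
\[
\int_{(0,q)} g(u)\,u^2\lambda(\dd u) \leq (2-p)\,C_q \int_{(0,q)} g(u)\,u^{1-p}\,\dd u,
\]
i.e. the $m=1$ bound with constant $(2-p)C_q$. The only mildly delicate points are the justification of this Stieltjes integration-by-parts/Fubini step in the presence of atoms of $\lambda$, and the reduction to smooth bounded $g$; both are routine and mirror the corresponding steps in the proof of Proposition~\ref{prop:compare1}. Finally, the claimed inequality~\eqref{plusdidee2} is the special case $g=\ind_{\{\prod_{j} u_j\le h q^m\}}$, which is non-increasing in each coordinate.
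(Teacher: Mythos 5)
Your proposal is correct and follows essentially the same route as the paper's proof: reduction to $m=1$ and smooth bounded $g$, the pointwise bound $\int_{(0,u]}\ups^2\lambda(\dd\ups)\le C_q u^{2-p}$ obtained from $\int_{(0,1)}\ups^p\lambda(\dd\ups)<\infty$ (plus $\lambda([1,\infty))<\infty$), and two integrations by parts exploiting $g'\le 0$, yielding the constant $(2-p)C_q$. The only differences are cosmetic (your choice of $C_q$ as a maximum, and the explicit Fubini handling of possible atoms), so nothing further is needed.
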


\begin{proof}
The proof is similar to that of Proposition~\ref{prop:compare1}
above.
Again, we only have to treat the case $m=1$ and of a bounded and differentiable function $g$, with $\|g\|_{\infty} \leq 1$ to simplify.

Setting $F(u):= \int_{(0,u]} \ups^2 \lambda(\dd \ups)$ (which is finite for any $u\geq 0$),
an integration by parts gives that
\begin{align*}
\int_{(0,q)} g(u)  u^2 \lambda(\dd u) =  g(q)  F(q) -  \int_{(0,q)} g'(u) F(u) \dd u\, .
\end{align*}
Now, notice that there is a constant $C_q := \int_{(0,q]} \ups^{p} \lambda(\dd\ups)<\infty$ such that
\begin{equation}
F(u)= \int_{(0,u]} \ups^{2-p} \ups^p \lambda(\dd \ups) \leq C_q u^{2-p}
\, , \qquad \forall\, u\in (0,q]\,.
\end{equation}
Using that $g'(u)\leq 0$, we therefore get that
\begin{align*}
\int_{(0,q)} g(u)  u^2 \lambda(\dd u) \leq  C_q  g(q) q^{2-p} -  C_q \int_{(0,q)} g'(u) u^{2-p} \dd u 
= (2-p)  C_q \int_{(0,q)} g(u) u^{1-p} \dd u \, ,
\end{align*}
where we used another integration by parts 
for the last identity.
\end{proof}

\noindent
{\bf Acknowledgement.}
The authors are gratefull to Carsten Chong for insightful comment on a first draft of the paper (see in particular Remark \ref{chongsremark}) as well as for indicating to us relevant references concerning stochastic PDEs with L\'evy noise. 
This work was written during H.L.\ extended stay in Aix-Marseille University funded by the European Union’s Horizon 2020 research and innovation programme under the Marie Skłodowska-Curie grant agreement No 837793. Q.B. acknowledges the support of ANR grant SWiWS (ANR-17-CE40-0032-0).

\bibliographystyle{plain}
\bibliography{biblio.bib}

\end{document}